 \newtheorem{theorem}{Theorem}[section]
 \newtheorem{lemma}[theorem]{Lemma}
 \newtheorem{proposition}[theorem]{Proposition}
 \newtheorem{corollary}[theorem]{Corollary}
 \newtheorem{definition}[theorem]{Definition}
 \newenvironment{proof}{\begin{trivlist} \item[]{\em Proof.}}{\end{trivlist}}
\def\CC{{\mathbb C}}
\def\DD{{\mathbb D}}
 \def\RR{{\mathbb R}}
 \def\NN{{\mathbb N}}
 \title{\bf Boundedness of operators on the Bergman spaces associated with a class of generalized analytic functions
 \thanks{{Supported by the National Natural
 Science Foundation of China (No. 12071295).}
 \newline
 \indent \,\,$^\dag$Corresponding author.
 \newline
 \indent\,\, E-mail: lizk@shnu.edu.cn (Zh.-K. Li); hhwei@cslg.edu.cn (H.-H. Wei).}}
\author{Zhongkai Li$^{1}$ and Haihua Wei$^{\dag,2}$\\
{\small $^{1}$Department of Mathematics, Shanghai Normal University}\\
{\small Shanghai 200234, China} \\
{\small $^{2}$School of Mathematics and Statistics, Changshu Institute of Technology}\\
{\small Changshu 215500, Jiangsu, China}
}
\date{}
\begin{document}
 \maketitle \setcounter{page}{1} \pagestyle{myheadings}
 \markboth{Li and Wei}{Operators on Bergman spaces}

 \begin{abstract}
 \noindent
The purpose of the paper is to study the operators on the weighted Bergman spaces on the unit disk ${\mathbb{D}}$, denoted by $A^{p}_{\lambda,w}({\mathbb{D}})$, that are associated with a class of generalized analytic functions, named the $\lambda$-analytic functions, and with a class of radial weight functions $w$. For $\lambda\ge0$, a $C^2$ function $f$ on ${{\mathbb D}}$ is said to be
$\lambda$-analytic if $D_{\bar{z}}f=0$, where $D_{\bar{z}}$ is the (complex) Dunkl operator given by  $D_{\bar{z}}f=\partial_{\bar{z}}f-\lambda(f(z)-f(\bar{z}))/(z-\bar{z})$.
It is shown that, for $2\lambda/(2\lambda+1)\le p\le1$, the boundedness of an operator from $A^{p}_{\lambda,w}({\mathbb{D}})$ into a Banach space depends only upon the norm estimate of a single vector-valued $\lambda$-analytic function. As applications, we obtain a necessary and sufficient conditions of sequence multipliers on the spaces $A^{p}_{\lambda,w}({\mathbb{D}})$ for general weights $w$, and characterize the dual space of $A^{p}_{\lambda,w}({\mathbb{D}})$ for the power weight $w=(1-|z|^2)^{\alpha-1}$ with $\alpha>0$, and also give a sufficient condition of Carleson type for boundedness of multiplication operators on $A^{p}_{\lambda,w}({\mathbb{D}})$.

 \vskip .2in
 \noindent
 {\bf 2020 MS Classification:} 30H20, 47B91 (Primary), 30G30 (Secondary)
 \vskip .2in
 \noindent
 {\bf Key Words and Phrases:}  Bergman space; $\lambda$-analytic function; dual space, multiplication operator, sequence multiplier
 \end{abstract}

\setcounter{page}{1}

\section{Introduction}

For $\lambda\ge0$, the (complex) Dunkl operators $D_{z}$ and $D_{\bar{z}}$ in the complex plane $\CC$,
as substitutes of
$\partial_z$ and
$\partial_{\bar{z}}$, are defined by
\begin{align*}
    D_{z}f(z)=\partial_z f+
    \lambda \frac{f(z)-f(\bar{z})}{z-\bar{z}},\qquad
    D_{\bar{z}}f(z)=\partial_{\bar{z}}f-
      \lambda\frac{f(z)-f(\bar{z})}{z-\bar{z}}.
\end{align*}
A $C^2$ function $f$ defined on the unit disk $\DD$ is said to be
$\lambda$-analytic, if $D_{\bar{z}}f=0$.
It was proved in \cite{LL1} that $f$ is $\lambda$-analytic in $\DD$ if and only if $f$ has the series representation
\begin{eqnarray}\label{anal-series-1-1}
f(z)=\sum_{n=0}^{\infty}c_{n}\phi_{n}^{\lambda}(z), \qquad |z|<1,
\end{eqnarray}
where
\begin{eqnarray*}
\phi_{n}^{\lambda}(z)=\epsilon_n\sum_{j=0}^{n}\frac{(\lambda)_{j}(\lambda+1)_{n-j}}
{j!(n-j)!}\bar{z}^{j}z^{n-j}, \qquad n\geq 0,
\end{eqnarray*}
and
\begin{align}\label{epsilon-bound-1}
\epsilon_n=\sqrt{n!/(2\lambda+1)_{n}}\asymp\sqrt{\Gamma(2\lambda+1)}(n+1)^{-\lambda}.
\end{align}
For details, see the next section. It is remarked that $\phi_0^{\lambda}(z)\equiv1$, and for $n\ge1$, $\phi_n^{0}(z)=z^n$.

The measure on $\DD$ associated with the operators $D_{z}$ and $D_{\bar{z}}$ is
\begin{eqnarray*}
d\sigma_{\lambda}(z)=c_{\lambda}|y|^{2\lambda}dxdy, \qquad z=x+iy,
\end{eqnarray*}
where $c_{\lambda}=\Gamma(\lambda+2)/\Gamma(\lambda+1/2)\Gamma(1/2)$ so that $\int_{\DD}d\sigma_{\lambda}(z)=1$.
For $0<p<\infty$, we denote by
$L^{p}(\DD;d\sigma_{\lambda})$, or simply by $L_{\lambda}^{p}(\DD)$,
the space of measurable functions $f$ on $\DD$ satisfying
$$
\|f\|_{L_{\lambda}^{p}(\DD)}:=\left(\int_{\DD}|f(z)|^{p}d\sigma_{\lambda}(z)\right)^{1/p}<\infty;
$$
and $L_{\lambda}^{\infty}(\DD)$, or simply $L^{\infty}(\DD)$, is the collection of all essentially
bounded measurable functions on $\DD$ with norm $\|f\|_{L^{\infty}(\DD)} ={\rm esssup}_{z\in\DD} |f(z)|$.
The associated Bergman space $A^{p}_{\lambda}(\DD)$, named the $\lambda$-Bergman
space, consists of those elements in $L_{\lambda}^{p}(\DD)$ that are
$\lambda$-analytic in $\DD$, and the norm of $f\in A^{p}_{\lambda}(\DD)$ is written as $\|f\|_{A_{\lambda}^{p}}$ instead of $\|f\|_{L_{\lambda}^{p}(\DD)}$.

Over the last decades, the theory of Bergman spaces has undergone a remarkable metamorphosis. Several breakthroughs in the early 1990s brought this field to a new stage, for example, H. Hedenmalm's construction of contractive canonical zero-divisors of functions in the $A^2$ space (see \cite{Hed1}), together with its generalization to the $A^p$ spaces for $0<p<\infty$ by P. Duren et al (see \cite{DKSS1}), characterizations of the interpolation and sampling sets of the Bergman space $A^2$ by K. Seip (see \cite{Se1,Se2}), and others.

In our previous paper \cite{LW1}, some fundamental aspects on the $\lambda$-Bergman spaces $A^{p}_{\lambda}(\DD)$ for $p_0\le p<\infty$ have been studied, where
$$
p_0=\frac{2\lambda}{2\lambda+1}.
$$
In the present paper we consider operators mapping the space $A^{p}_{\lambda}(\DD)$ for $p_0\le p\le1$ into a Banach space $X$. We shall prove a general theorem characterizing the boundedness of a operator $T$ from $A^{p}_{\lambda}(\DD)$ to $X$ by the behaviour of a single vector-valued $\lambda$-analytic function related to $T$. Such an idea comes from Blasco \cite{Bla0}, who dealt with the corresponding problem on the weighted Bergman spaces of usual analytic functions. The general theorem allows us to give several applications for particular operators acting on the $\lambda$-Bergman spaces $A^{p}_{\lambda}(\DD)$ with $p_0\le p\le1$.

Note that for $\lambda$-analytic functions, there are no analogue of Cauchy's theorem and also that of the Cauchy integral formula. Furthermore, roughly speaking, the product and the composition of $\lambda$-analytic functions are no longer $\lambda$-analytic. The lack of these tools and properties often causes difficulties, and makes us have to re-examine the processes in the classical theory and to find new approaches.

As in \cite{Bla0}, we shall work with the weighted analogues of the $\lambda$-Bergman spaces $A^{p}_{\lambda}(\DD)$, that are induced by a radial weight $w$ satisfying certain conditions. Precisely, for a function $w:\,[0,1)\mapsto(0,\infty)$, Lebesgue integrable over $[0,1)$, the weighted $\lambda$-Bergman space $A^{p}_{\lambda,w}(\DD)$ with $0<p<\infty$ is the collection of $\lambda$-analytic functions $f$ on $\DD$ for which
$$
\|f\|_{A_{\lambda,w}^{p}}:=\left(\int_{\DD}|f(z)|^{p}w(|z|)d\sigma_{\lambda}(z)\right)^{1/p}<\infty.
$$
Note that for $0<p<1$, $\|\cdot\|_{A_{\lambda,w}^{p}}$ is not a norm, however $\|f-g\|_{A_{\lambda,w}^{p}}^p$ defines a metric. Throughout the paper, the notation ${\mathcal{X}}\lesssim {\mathcal{Y}}$ or ${\mathcal{Y}}\gtrsim {\mathcal{X}}$ means that ${\mathcal{X}}\le c{\mathcal{Y}}$ for some positive constant $c$ independent of variables, functions, etc., and ${\mathcal{X}}\asymp {\mathcal{Y}}$ means that both ${\mathcal{X}}\lesssim {\mathcal{Y}}$ and ${\mathcal{Y}}\lesssim {\mathcal{X}}$ hold.

The weight functions $w$ to be considered satisfies the condition
\begin{align}\label{weight-condition-1-1}
\int_r^1w(s)ds\lesssim (1-r)w(r),\qquad r\in(0,1),
\end{align}
or, for some $q>0$,
\begin{align}\label{weight-condition-1-2}
\int_0^r\frac{w(s)}{(1-s)^q}ds\lesssim \frac{1}{(1-r)^q}\int_r^1w(s)ds,\qquad r\in(0,1).
\end{align}
We remark that the inequality (\ref{weight-condition-1-1}) is called the Dini condition in literatures, and a substitution of (\ref{weight-condition-1-2}) is defined by
\begin{align*}
\int_0^r\frac{w(s)}{(1-s)^q}ds\le \frac{w(r)}{(1-r)^{q-1}},\qquad r\in(0,1),
\end{align*}
that is called the $b_q$-condition. The class of radial weights satisfying one or both of them, or its modification or generalizations, proved to be of importance in the study of various problems. However in earlier researches on the associated weighted Bergman spaces of usual analytic functions, some monotonicity assumption on the weight $w$ is needed, see \cite{Bla0}-\cite{BloS1} for example. If $(1-r)w(r)$ is non-increasing, it follows from \cite[p. 446, Remark 1.1]{Bla0} that the $b_q$-condition and the Dini condition (\ref{weight-condition-1-1}) implies that $\int_r^1w(s)ds\asymp (1-r)w(r)$, i.e., the weight $w$ is regular; and hence, under such an assumption, the conditions (\ref{weight-condition-1-1}) and (\ref{weight-condition-1-2}) together are equivalent to the $b_q$-condition and the condition (\ref{weight-condition-1-1}). In this paper, we do not need to make any monotonicity assumption for weight functions.

In recent years, the operator theory and harmonic analysis on the weighted Bergman spaces of usual analytic functions associated to various classes of weights have received extraordinary attention, see, for example, \cite{PR1}-\cite{PRS1} and the references therein. In \cite{PR1} the class of rapidly increasing radial weights is introduced, so that the associated weighted Bergman spaces lie ``closer" to the Hardy spaces $H^p$ than any classical weighted Bergman spaces (associated to the power weights). A continuous function $w:\,[0,1)\mapsto(0,\infty)$ is said to be rapidly increasing if
\begin{align*}
\lim_{r\rightarrow1-}\frac{\int_r^1w(s)ds}{(1-r)w(r)}=\infty.
\end{align*}
By \cite[Lemma 1.2]{PR1} every rapidly increasing weight $w$ satisfies the condition (\ref{weight-condition-1-2}) for all $q>0$, and every regular weight $w$ satisfies the condition (\ref{weight-condition-1-2}) for appropriately large $q>0$.

For the theory of the classical weighted Bergman spaces, see \cite{DS1,HKZ1,Zhu1,Zhu2}.

The paper is organized as follows. In Section 2 we recall some knowledge on $\lambda$-analytic functions and $\lambda-$harmonic functions on the disk $\DD$, and in Section 3 some basic problems on the weighted $\lambda$-Bergman spaces $A^{p}_{\lambda,w}(\DD)$ are studied. In Section 4 we prove the main theorems of the paper, about the boundedness of operators from the weighted Bergman spaces $A^{p}_{\lambda,w}(\DD)$ into a Banach space, with $p_0\le p\le1$, for weight functions $w$ satisfying (\ref{weight-condition-1-1}) or/and (\ref{weight-condition-1-2}). Section 5 is devoted to the sharp estimates of the $p$-integral means $M_{\tau,p}(f;r)$ involving the parameter $\tau\ge0$, and Sections 6 and 7 to the sharp estimates of the $(\lambda,\alpha)$-Bergman kernel $K_{\lambda,\alpha}(z,\zeta)$.
In Section 8 we apply the main theorems to characterize the dual space of $A^{p}_{\lambda,w}(\DD)$ for the power weight $w=(1-|z|^2)^{\alpha-1}$ with $\alpha>0$, and also give a sufficient condition of Carleson type for boundedness of multiplication operators on the weighted Bergman space. Finally in Section 9, we obtain a necessary and sufficient conditions of sequence multipliers on the spaces $A^{p}_{\lambda,w}(\DD)$ for general weights $w$,

\section{The $\lambda$-analytic and $\lambda$-harmonic functions}

The associated measure on the circle $\partial\DD\simeq[-\pi,\pi]$ is
\begin{eqnarray*}
dm_{\lambda}(\theta)=\tilde{c}_{\lambda}|\sin\theta|^{2\lambda}d\theta, \ \
\ \ \ \ \tilde{c}_{\lambda}=c_{\lambda}/(2\lambda+2).
\end{eqnarray*}
For $0<p<\infty$, we denote by $L^p(\partial\DD;dm_{\lambda})$, or simply by $L_{\lambda}^{p}(\partial\DD)$,
the space of measurable functions $f$ on $\partial\DD$ satisfying
$\|f\|_{L_{\lambda}^{p}(\partial\DD)}:=\left(\int_{-\pi}^{\pi}|f(e^{i\theta})|^p
dm_{\lambda}(\theta)\right)^{1/p}<\infty$,
and for $p=\infty$, $L^{\infty}(\partial\DD;dm_{\lambda})=L^{\infty}(\partial\DD)$, the collection of all essentially
bounded measurable functions on $\partial\DD$ with norm $\|f\|_{L^{\infty}(\partial\DD)} ={\rm esssup}_{\theta} |f(e^{i\theta})|$. In addition, ${\frak B}_{\lambda}(\partial\DD)$ denotes
the space of Borel measures $d\nu$ on $\partial\DD$ for which
$\|d\nu\|_{{\frak B_{\lambda}(\partial\DD)}}
=\tilde{c}_{\lambda}\int_{-\pi}^{\pi}|\sin\theta|^{2\lambda}|d\nu(\theta)|$
are finite.

From \cite{Dun3,Dun4} it is known that
\begin{align}\label{basis-1}
\phi_{n}^{\lambda}(z)&=\epsilon_nr^{n}\left[\frac{n+2\lambda}{2\lambda}P_{n}^{\lambda}
(\cos\theta)+i\sin\theta P_{n-1}^{\lambda+1}(\cos\theta)\right], \qquad z=re^{i\theta},
\end{align}
where $P_{n}^{\lambda}(t)$, $n=0,1,\dots$, are the Gegenbauer
polynomials, and $P_{-1}^{\lambda+1}=0$, and the system
$\left\{1, \, \phi_{n}^{\lambda}(e^{i\theta}), \,
\overline{e^{i\theta}\phi_{n-1}^{\lambda}(e^{i\theta})}, \,\,
n\in\NN\right\}$
is an orthonormal basis of the Hilbert space
$L_{\lambda}^2(\partial\DD)$. Moreover, for $n\ge1$,
\begin{align*}
\bar{z}\overline{\phi_{n-1}^{\lambda}(z)}
=\epsilon_{n-1}r^{n}\left[\frac{n}{2\lambda}P_{n}^{\lambda}(\cos\theta)-
 i\sin\theta P_{n-1}^{\lambda+1}(\cos\theta)\right],\qquad z=re^{i\theta}.
\end{align*}

In what follows, we write $\phi_n(z)=\phi_n^{\lambda}(z)$ for simplicity.
According to \cite[(29)]{LL1} we have
\begin{eqnarray} \label{A-operator-4}
\epsilon_n\phi_n(z)
=2^{2\lambda+1}\tilde{c}_{\lambda-1/2}\int_{0}^{1}(sz+(1-s)\bar{z})^n(1-s)^{\lambda-1}s^{\lambda}ds
\end{eqnarray}
for $n\ge0$, and
\begin{eqnarray}\label{phi-bound-1}
|\phi_n(z)|\le\epsilon_n^{-1}|z|^n\asymp (n+1)^{\lambda}|z|^n/\sqrt{\Gamma(2\lambda+1)}.
\end{eqnarray}

The Laplacian associated with $D_{z}$ and $D_{\bar{z}}$, called the $\lambda$-Laplacian, is defined by
$\Delta_{\lambda}=4D_{z}D_{\bar{z}}=4D_{\bar{z}}D_{z}$,
which can be written explicitly as
\begin{eqnarray*}
\Delta_{\lambda} f=\frac{\partial^2 f}{ \partial x^2}+\frac
{\partial^2 f}{\partial y^2} +\frac{2\lambda}{y}\frac{\partial
f}{\partial y}-\frac{\lambda}{y^2} [f(z)-f(\bar{z})],\qquad z=x+iy.
\end{eqnarray*}
A $C^2$ function $f$ defined on $\DD$ is said to be $\lambda-$harmonic, if $\Delta_{\lambda}f=0$.

\begin{proposition} \label{anal-basis} {\rm(\cite[Proposition 2.2]{LL1})}
The functions $\phi_{n}(z)$ ($n\ge0$) are $\lambda$-analytic and $\bar{z}\overline{\phi_{n-1}}(z)$ ($n\ge1$) are $\lambda$-harmonic. Moreover $D_{z}\phi_{n}(z)=\sqrt{n(n+2\lambda)}\phi_{n-1}(z)$, $D_{z}(\bar{z}\overline{\phi_{n-1}}(z))=-\lambda\phi_{n-1}(z)$, and
\begin{align}\label{Tzphi-2}
D_{z}(z\phi_{n-1}(z))=(n+\lambda)\phi_{n-1}(z).
\end{align}
\end{proposition}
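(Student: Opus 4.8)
The proof of Proposition~\ref{anal-basis} breaks into two independent parts: the structural claims ($\phi_n$ being $\lambda$-analytic, $\bar z\overline{\phi_{n-1}}$ being $\lambda$-harmonic) and the three explicit operator-action formulas. For the first part, the plan is to verify $D_{\bar z}\phi_n=0$ directly from the series definition $\phi_n^{\lambda}(z)=\epsilon_n\sum_{j=0}^n\frac{(\lambda)_j(\lambda+1)_{n-j}}{j!(n-j)!}\bar z^{j}z^{n-j}$. Applying $D_{\bar z}f=\partial_{\bar z}f-\lambda(f(z)-f(\bar z))/(z-\bar z)$ term by term, the $\partial_{\bar z}$ part produces a sum over $j\ge1$, while the difference quotient $(z-\bar z)^{-1}(z^{n-j}\bar z^{j}-z^{j}\bar z^{n-j})$ can be expanded as a finite geometric-type sum; matching the Pochhammer coefficients via the identities $(\lambda)_{j}=(\lambda)_{j-1}(\lambda+j-1)$ and $j(\lambda+1)_{n-j}$ versus $(n-j)(\lambda)_{n-j}$ should make everything cancel. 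Alternatively, and more cheaply, I would invoke the integral representation \eqref{A-operator-4}: since $\epsilon_n\phi_n(z)=2^{2\lambda+1}\tilde c_{\lambda-1/2}\int_0^1(sz+(1-s)\bar z)^n(1-s)^{\lambda-1}s^{\lambda}\,ds$, and $D_{\bar z}$ acts on the integrand $(sz+(1-s)\bar z)^n$, one computes $\partial_{\bar z}(sz+(1-s)\bar z)^n=n(1-s)(sz+(1-s)\bar z)^{n-1}$ and the difference quotient gives $\sum$ of powers; after the substitution $s\mapsto 1-s$ symmetrizes the measure $(1-s)^{\lambda-1}s^{\lambda}$ only partially, so one integrates by parts in $s$ to move the factor $n(1-s)$ onto the weight, and the boundary terms vanish because of the $s^{\lambda}(1-s)^{\lambda-1}$ factors (here $\lambda>0$; the case $\lambda=0$ is trivial since then $\phi_n(z)=z^n$). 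That the $\phi_n$ are then automatically $\lambda$-harmonic follows from $\Delta_\lambda=4D_zD_{\bar z}$.

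For the $\lambda$-harmonicity of $\bar z\overline{\phi_{n-1}}(z)$, the clean route is to write $\Delta_\lambda=4D_{\bar z}D_z$ and show $D_z(\bar z\overline{\phi_{n-1}}(z))=-\lambda\phi_{n-1}(z)$ (one of the formulas to be proved anyway), after which applying $D_{\bar z}$ and using $D_{\bar z}\phi_{n-1}=0$ gives $\Delta_\lambda(\bar z\overline{\phi_{n-1}})=-4\lambda D_{\bar z}\phi_{n-1}=0$. So all three claims reduce to verifying the operator-action formulas. For $D_z\phi_n=\sqrt{n(n+2\lambda)}\,\phi_{n-1}$, I would again differentiate \eqref{A-operator-4}: $D_z$ applied to $(sz+(1-s)\bar z)^n$ yields $ns(\cdots)^{n-1}$ from $\partial_z$ plus the difference-quotient term, which after integration by parts in $s$ collapses to a multiple of $\int_0^1(sz+(1-s)\bar z)^{n-1}(1-s)^{\lambda-1}s^{\lambda}\,ds=\epsilon_{n-1}\phi_{n-1}(z)\big/\big(2^{2\lambda+1}\tilde c_{\lambda-1/2}\big)$; tracking the constant gives the factor $\epsilon_n^{-1}\epsilon_{n-1}\cdot(\text{something})$, and since $\epsilon_n^2/\epsilon_{n-1}^2=n/(n+2\lambda)$ by \eqref{epsilon-bound-1}, the constant works out to $\sqrt{n(n+2\lambda)}$. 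For $D_z(\bar z\overline{\phi_{n-1}}(z))=-\lambda\phi_{n-1}$ I would use the explicit Gegenbauer form: from the displayed formula $\bar z\overline{\phi_{n-1}^{\lambda}(z)}=\epsilon_{n-1}r^n[\tfrac{n}{2\lambda}P_n^\lambda(\cos\theta)-i\sin\theta\,P_{n-1}^{\lambda+1}(\cos\theta)]$, rewrite $D_z$ in polar coordinates and use the Gegenbauer differential/recurrence relations (the same ones underlying \eqref{basis-1}) to identify the result with $-\lambda\phi_{n-1}$; alternatively expand $\bar z\overline{\phi_{n-1}}$ in the monomial basis $\bar z^{k}z^{n-k}$ and apply $D_z$ termwise. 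Finally \eqref{Tzphi-2} follows from the product-type rule $D_z(zg)=zD_zg+g+\lambda(g(z)-g(\bar z))\cdot\tfrac{z-\bar z}{z-\bar z}$—more precisely, a direct computation gives $D_z(zg(z))=g(z)+zD_zg(z)$ when $g$ is $\lambda$-analytic wait; one must be careful, so instead combine: $D_z(z\phi_{n-1})=D_z(z\phi_{n-1})$ and note $z\phi_{n-1}+\bar z\overline{\phi_{n-1}}=2\,\mathrm{Re}(z\phi_{n-1})$ relates to $\phi_n$ and $P_n^\lambda$; concretely, from \eqref{basis-1} one reads off $z\phi_{n-1}(z)$ in terms of $P_n^\lambda,P_{n-1}^{\lambda+1}$ and differentiates, or simply adds the linearity: $(n+\lambda)\phi_{n-1}=\sqrt{n(n+2\lambda)}\phi_{n-1}+\lambda\phi_{n-1}$ is false in general, so the honest path is the direct polar computation or the monomial expansion, which I expect to be routine but bookkeeping-heavy.

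The main obstacle is purely computational: correctly handling the difference quotient $\lambda(f(z)-f(\bar z))/(z-\bar z)$ under $\partial_z$ and $\partial_{\bar z}$ and matching the Gegenbauer/Pochhammer constants, especially keeping track of the normalizing constants $\epsilon_n$ and the factor $2^{2\lambda+1}\tilde c_{\lambda-1/2}$ through integration by parts. There is no conceptual difficulty—everything is finite-dimensional linear algebra on each homogeneous component—but the identity $\epsilon_n^2=n!/(2\lambda+1)_n$ must be used at the right moment to produce the square root $\sqrt{n(n+2\lambda)}$, and the integration-by-parts boundary terms must be checked to vanish (which requires $\lambda>0$, with $\lambda=0$ treated separately and trivially). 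Given that this is cited as \cite[Proposition 2.2]{LL1}, in the paper itself the proof is presumably just a reference; if a self-contained argument is wanted, the integral-representation approach via \eqref{A-operator-4} is the most economical, reducing all four assertions to one integration by parts plus the constant identity \eqref{epsilon-bound-1}.
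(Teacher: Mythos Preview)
The paper gives no proof of this proposition: it is stated with the citation \cite[Proposition 2.2]{LL1} and nothing more. You correctly anticipated this at the end of your proposal. So there is no in-paper argument to compare against; your sketch is already more than what the paper contains.

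As for the sketch itself, the strategy is sound. The integral representation \eqref{A-operator-4} together with integration by parts is indeed the efficient route to $D_{\bar z}\phi_n=0$ and $D_z\phi_n=\sqrt{n(n+2\lambda)}\,\phi_{n-1}$, and your check $\epsilon_n^2/\epsilon_{n-1}^2=n/(n+2\lambda)$ is exactly what is needed for the constant. The reduction of $\lambda$-harmonicity of $\bar z\,\overline{\phi_{n-1}}$ to the formula $D_z(\bar z\,\overline{\phi_{n-1}})=-\lambda\phi_{n-1}$ via $\Delta_\lambda=4D_{\bar z}D_z$ is also correct.

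Your treatment of \eqref{Tzphi-2} is the weakest part: the attempted product rule stalls and you fall back on ``direct polar computation or monomial expansion''. There is a two-line derivation you are missing. Since $\phi_{n-1}$ is homogeneous of degree $n-1$ in $(z,\bar z)$, the Euler identity gives $z\partial_z\phi_{n-1}+\bar z\partial_{\bar z}\phi_{n-1}=(n-1)\phi_{n-1}$; combined with $\partial_{\bar z}\phi_{n-1}=\lambda(\phi_{n-1}(z)-\phi_{n-1}(\bar z))/(z-\bar z)$ (which is $D_{\bar z}\phi_{n-1}=0$ rewritten), a short computation yields
\[
D_z(z\phi_{n-1})=\partial_z(z\phi_{n-1})+\lambda\frac{z\phi_{n-1}(z)-\bar z\phi_{n-1}(\bar z)}{z-\bar z}=(n+\lambda)\phi_{n-1},
\]
the two $\bar z$-difference-quotient terms cancelling exactly. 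This avoids both the Gegenbauer recurrences and the monomial bookkeeping.
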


A finite linear combination of elements in the system $\{1,\,\phi_{1}(z),\cdots,\phi_{n}(z),\cdots\}$
is called a $\lambda$-analytic polynomial, and respectively, a finite linear combination of elements in
\begin{align}\label{harmonic-basis-1}
\left\{1, \, \phi_{n}^{\lambda}(z), \,
\overline{z\phi_{n-1}^{\lambda}(z)}, \,\,
n\in\NN\right\}
\end{align}
is called a $\lambda$-harmonic polynomial.

The $\lambda$-Cauchy kernel $C(z,w)$ and the $\lambda$-Poisson kernel $P(z,w)$, which reproduce, associated with the measure $dm_{\lambda}$ on the circle $\partial\DD$,
all $\lambda$-analytic polynomials
and $\lambda$-harmonic polynomials respectively, are given by (cf. \cite{Dun3})
\begin{align*}
C(z,w)&=\sum_{n=0}^{\infty}\phi_{n}(z)\overline{\phi_{n}(w)},\\
P(z,w)&=C(z,w)+\bar{z}w C(w,z).
\end{align*}

\begin{proposition} \label{Poisson-Cauchy} {\rm(cf. \cite{Dun3})}
The series in $C(z,w)$ is convergent absolutely for $zw\in\DD$ and uniformly for $zw$ in a compact subset of $\DD$. Moreover for $zw\in\DD$,
\begin{align}
C(z,w)&=\frac{1}{1-z\bar{w}}P_{0}(z,w),\label{Cauchy-kernel-2-2}\\
P(z,w)&=\frac{1-|z|^{2}|w|^{2}}{|1-z\bar{w}|^{2}}P_{0}(z,w),\nonumber
\end{align}
where
\begin{align} \label{Poi-0}
P_{0}(z,w)&=\frac{1}{|1-zw|^{2\lambda}}{}_2\!F_{1}\Big({\lambda,\lambda
\atop
  2\lambda+1};\frac{4({\rm Im} z)({\rm Im}
  w)}{|1-zw|^{2}}\Big)\nonumber\\
&=\frac{1}{|1-z\bar{w}|^{2\lambda}}{}_2\!F_{1}\Big({\lambda,\lambda+1
\atop
  2\lambda+1};-\frac{4({\rm Im} z)({\rm Im}
  w)}{|1-z\bar{w}|^{2}}\Big),
\end{align}
and ${}_2\!F_{1}[a,b;c;t]$ is the Gauss hypergeometric function.
\end{proposition}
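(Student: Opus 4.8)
\emph{Overview.} The proposal splits into the convergence claim, which follows from the crude bound (\ref{phi-bound-1}), and the two closed forms, which I would get by summing the generating series after inserting the integral representation (\ref{A-operator-4}). (The case $\lambda=0$ is trivial: then $\epsilon_{n}=1$, $\phi_{n}(z)=z^{n}$, so $C(z,w)=(1-z\bar w)^{-1}$ and $P_{0}\equiv1$; so assume $\lambda>0$.) For convergence, (\ref{phi-bound-1}) and (\ref{epsilon-bound-1}) give $|\phi_{n}(z)\overline{\phi_{n}(w)}|\le\epsilon_{n}^{-2}|z|^{n}|w|^{n}\asymp\Gamma(2\lambda+1)^{-1}(n+1)^{2\lambda}|zw|^{n}$, and since $\sum_{n}(n+1)^{2\lambda}t^{n}$ converges for $0\le t<1$, uniformly on $[0,\rho]$ for each $\rho<1$, the Weierstrass $M$-test yields absolute convergence of $\sum_{n}\phi_{n}(z)\overline{\phi_{n}(w)}$ whenever $|zw|<1$ and uniform convergence of that series on $\{(z,w):|zw|\le\rho\}$, $\rho<1$.

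\emph{A double integral for $C$.} Put $c=2^{2\lambda+1}\tilde c_{\lambda-1/2}$. Using $1/\epsilon_{n}^{2}=(2\lambda+1)_{n}/n!$, the identity (\ref{A-operator-4}), and its complex conjugate applied at $w$ (the weight $(1-s)^{\lambda-1}s^{\lambda}$ and the constant $c$ being real), one obtains
\[
\phi_{n}(z)\overline{\phi_{n}(w)}=\frac{(2\lambda+1)_{n}}{n!}\,c^{2}\int_{0}^{1}\!\!\int_{0}^{1}u^{n}\,(1-s)^{\lambda-1}s^{\lambda}(1-t)^{\lambda-1}t^{\lambda}\,ds\,dt,\qquad u:=\big(sz+(1-s)\bar z\big)\big(t\bar w+(1-t)w\big).
\]
Since $|sz+(1-s)\bar z|\le|z|$ and $|t\bar w+(1-t)w|\le|w|$, one has $|u|\le|zw|<1$ on the square, so Tonelli's theorem permits interchanging $\sum_{n}$ with the double integral; summing $\sum_{n}\frac{(2\lambda+1)_{n}}{n!}u^{n}=(1-u)^{-(2\lambda+1)}$ gives
\[
C(z,w)=c^{2}\int_{0}^{1}\!\!\int_{0}^{1}\frac{(1-s)^{\lambda-1}s^{\lambda}(1-t)^{\lambda-1}t^{\lambda}}{\big[1-\big(sz+(1-s)\bar z\big)\big(t\bar w+(1-t)w\big)\big]^{2\lambda+1}}\,ds\,dt.
\]

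\emph{Evaluating the integral.} I would integrate in $s$ first. With $A:=t\bar w+(1-t)w$, factor $1-A\big(sz+(1-s)\bar z\big)=(1-A\bar z)\big(1-s\,\tfrac{A(z-\bar z)}{1-A\bar z}\big)$ and apply Euler's formula $\int_{0}^{1}s^{\beta-1}(1-s)^{\gamma-\beta-1}(1-\xi s)^{-\alpha}ds=B(\beta,\gamma-\beta)\,{}_2F_1(\alpha,\beta;\gamma;\xi)$ with $\beta=\lambda+1$, $\gamma=\alpha=2\lambda+1$; since ${}_2F_1(c,b;c;\xi)=(1-\xi)^{-b}$ and $1-\tfrac{A(z-\bar z)}{1-A\bar z}=\tfrac{1-Az}{1-A\bar z}$, the inner integral equals $B(\lambda+1,\lambda)(1-A\bar z)^{-\lambda}(1-Az)^{-(\lambda+1)}$. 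For the $t$-integral, factor $1-A\bar z=(1-w\bar z)(1-x't)$ and $1-Az=(1-zw)(1-y't)$ with $x'=\tfrac{\bar z(\bar w-w)}{1-w\bar z}$, $y'=\tfrac{z(\bar w-w)}{1-zw}$, and evaluate $\int_{0}^{1}t^{\lambda}(1-t)^{\lambda-1}(1-x't)^{-\lambda}(1-y't)^{-(\lambda+1)}dt$ by a second Euler integral — equivalently, this is $B(\lambda+1,\lambda)$ times the Appell function $F_1(\lambda+1;\lambda,\lambda+1;2\lambda+1;x',y')$, which, as $\lambda+(\lambda+1)=2\lambda+1$, collapses to a single hypergeometric function via $F_1(b;p,q;p+q;x',y')=(1-y')^{-b}\,{}_2F_1(b,p;p+q;\tfrac{x'-y'}{1-y'})$. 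Now $1-y'=\tfrac{1-z\bar w}{1-zw}$ and $x'-y'=\tfrac{(\bar w-w)(\bar z-z)}{(1-w\bar z)(1-zw)}=\tfrac{-4({\rm Im}\,z)({\rm Im}\,w)}{(1-w\bar z)(1-zw)}$, so, using $\overline{1-z\bar w}=1-w\bar z$ (hence $(1-w\bar z)(1-z\bar w)=|1-z\bar w|^{2}$), one gets $\tfrac{x'-y'}{1-y'}=-\tfrac{4({\rm Im}\,z)({\rm Im}\,w)}{|1-z\bar w|^{2}}$. Collecting the powers of $1-z\bar w$, $1-zw$, $1-w\bar z$ — and using $\bar b^{-\lambda}b^{-\lambda}=|b|^{-2\lambda}$ for $b$ in the right half plane — leads to
\[
C(z,w)=\kappa\cdot\frac{1}{1-z\bar w}\cdot\frac{1}{|1-z\bar w|^{2\lambda}}\,{}_2F_1\Big({\lambda,\lambda+1\atop 2\lambda+1};-\frac{4({\rm Im}\,z)({\rm Im}\,w)}{|1-z\bar w|^{2}}\Big),\qquad\kappa:=c^{2}B(\lambda+1,\lambda)^{2}.
\]
Evaluating at $z=w=0$ (where $C(0,0)=|\phi_{0}(0)|^{2}=1$ and the right side is $\kappa$) forces $\kappa=1$, and comparison with the second form of $P_{0}$ in (\ref{Poi-0}) gives (\ref{Cauchy-kernel-2-2}). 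The first form of $P_{0}$ then follows from the Pfaff transformation ${}_2F_1(a,b;c;\eta)=(1-\eta)^{-a}{}_2F_1(a,c-b;c;\tfrac{\eta}{\eta-1})$ with $a=\lambda$, $b=\lambda+1$, $c=2\lambda+1$, together with the identity $|1-zw|^{2}-|1-z\bar w|^{2}=4({\rm Im}\,z)({\rm Im}\,w)$.

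\emph{The Poisson kernel, and the main obstacle.} Since $P_{0}$ is symmetric in $z,w$ (both $|1-zw|$ and $({\rm Im}\,z)({\rm Im}\,w)$ are), $C(w,z)=P_{0}(z,w)/(1-w\bar z)$, so from $P(z,w)=C(z,w)+\bar z w\,C(w,z)$,
\[
P(z,w)=P_{0}(z,w)\Big(\frac{1}{1-z\bar w}+\frac{\bar z w}{1-w\bar z}\Big)=P_{0}(z,w)\,\frac{(1-w\bar z)+\bar z w(1-z\bar w)}{|1-z\bar w|^{2}}=\frac{1-|z|^{2}|w|^{2}}{|1-z\bar w|^{2}}\,P_{0}(z,w),
\]
using $(1-w\bar z)+\bar z w(1-z\bar w)=1-|z|^{2}|w|^{2}$. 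I expect the main obstacle to be the double integral: one must recognize that after the (elementary) inner integration the outer integrand is of Euler/Appell type with the special relation $p+q=c$, so that it reduces back to a single ${}_2F_1$, and then push through the algebra identifying the hypergeometric argument as $-4({\rm Im}\,z)({\rm Im}\,w)/|1-z\bar w|^{2}$. A subsidiary point needing routine care is the choice of branches: all quantities raised to non-integer powers — $1-A\bar z$, $1-Az$, $1-z\bar w$, $1-zw$, $1-w\bar z$ — lie in the disc of radius $|zw|<1$ about $1$ (hence in the right half plane), and all hypergeometric arguments stay in $\CC\setminus[1,\infty)$, so the binomial, Euler and Pfaff identities used hold for the principal branches by analytic continuation.
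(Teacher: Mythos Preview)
The paper does not supply its own proof of this proposition: it is stated with the attribution ``(cf.\ \cite{Dun3})'' and used as background. So there is nothing in the paper to compare against; what can be said is whether your argument stands on its own.

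It does. The convergence claim follows exactly as you say from (\ref{phi-bound-1}). For the closed form, inserting (\ref{A-operator-4}) and its conjugate, summing $\sum_n\frac{(2\lambda+1)_n}{n!}u^n=(1-u)^{-2\lambda-1}$ under the double integral, and then performing the two Euler integrations is correct; the inner $s$-integral indeed collapses via ${}_2F_1(c,b;c;\xi)=(1-\xi)^{-b}$ to $B(\lambda+1,\lambda)(1-A\bar z)^{-\lambda}(1-Az)^{-(\lambda+1)}$. The key step --- recognizing the $t$-integral as $B(\lambda+1,\lambda)F_1(\lambda+1;\lambda,\lambda+1;2\lambda+1;x',y')$ and invoking the reduction
\[
F_1(a;b_1,b_2;b_1+b_2;x,y)=(1-y)^{-a}\,{}_2F_1\!\Big(a,b_1;b_1+b_2;\frac{x-y}{1-y}\Big),
\]
which is valid precisely because $b_1+b_2=2\lambda+1$ equals the fourth parameter --- is the heart of the computation, and your identification of $1-y'=(1-z\bar w)/(1-zw)$ and $\tfrac{x'-y'}{1-y'}=-4({\rm Im}\,z)({\rm Im}\,w)/|1-z\bar w|^2$ is right. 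The normalization $\kappa=c^2B(\lambda+1,\lambda)^2=1$ can also be checked directly from the duplication formula, but your evaluation at $z=w=0$ is sufficient. The Pfaff transformation then yields the first form of $P_0$ via $|1-zw|^2-|1-z\bar w|^2=4({\rm Im}\,z)({\rm Im}\,w)$, and the Poisson formula drops out from the symmetry of $P_0$ as you show. Your remarks on branches are adequate: every base $1-(\cdot)$ raised to a non-integer power has positive real part since $|zw|<1$, and the hypergeometric arguments avoid $[1,\infty)$.
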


A $\lambda$-harmonic function in $\DD$ has a series representation
in terms of the system (\ref{harmonic-basis-1}) as given in the following proposition.

\begin{proposition} \label{har-series} {\rm(\cite[Theorem 3.1]{LL1})}
If $f$ is a $\lambda$-harmonic function in $\DD$, then there are two
sequences $\{c_n\}$ and $\{\tilde{c}_n\}$ of complex numbers, such
that
\begin{eqnarray*}
f(z)=\sum_{n=0}^{\infty}
c_n \phi_{n}(z)+
  \sum_{n=1}^{\infty} \tilde{c}_n \bar{z}\overline{\phi_{n-1}(z)}
\end{eqnarray*}
for $z\in\DD$. Furthermore, the two sequences above are given by
\begin{align*}
c_n&=\lim_{r\rightarrow1-}\int_{-\pi}^{\pi}f(re^{i\varphi})\overline{\phi_{n}(e^{i\varphi})}
dm_{\lambda}(\varphi),\\
\tilde{c}_n&=\lim_{r\rightarrow1-}\int_{-\pi}^{\pi}f(re^{i\varphi})
e^{i\varphi}\phi_{n-1}(e^{i\varphi})dm_{\lambda}(\varphi).
\end{align*}
Moreover, for each real $\gamma$, the series
$\sum_{n\ge1}n^{\gamma}(|c_n|+|\tilde{c}_n|)r^n$ converges uniformly
for $r$ in every closed subset of $[0,1)$.
\end{proposition}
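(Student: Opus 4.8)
The plan is to analyse $f$ one circle at a time: for each $r\in(0,1)$ we expand $\varphi\mapsto f(re^{i\varphi})$ against the orthonormal basis of $L_{\lambda}^{2}(\partial\DD)$ displayed after (\ref{basis-1}), and then use the $\lambda$-harmonicity of $f$ to force the coefficients to be constant multiples of $r^{n}$. Since $\{|z|\le r\}\subset\DD$, the function $\varphi\mapsto f(re^{i\varphi})$ is continuous, hence lies in $L_{\lambda}^{2}(\partial\DD)$, and
\[
f(re^{i\theta})=\sum_{n\ge0}A_{n}(r)\phi_{n}(e^{i\theta})+\sum_{n\ge1}B_{n}(r)\,\overline{e^{i\theta}\phi_{n-1}(e^{i\theta})}
\]
in $L_{\lambda}^{2}(\partial\DD)$, where $A_{n}(r)=\int_{-\pi}^{\pi}f(re^{i\varphi})\overline{\phi_{n}(e^{i\varphi})}\,dm_{\lambda}(\varphi)$ and $B_{n}(r)=\int_{-\pi}^{\pi}f(re^{i\varphi})e^{i\varphi}\phi_{n-1}(e^{i\varphi})\,dm_{\lambda}(\varphi)$. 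Because $f\in C^{2}(\DD)$ and its derivatives are bounded on compact annuli, differentiation under the integral sign gives $A_{n},B_{n}\in C^{2}(0,1)$.

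The key step is a radial ordinary differential equation. Writing $\Delta_{\lambda}$ in polar coordinates yields $\Delta_{\lambda}=\partial_{r}^{2}+\frac{1+2\lambda}{r}\partial_{r}+\frac{1}{r^{2}}\Lambda_{\lambda}$, where $(\Lambda_{\lambda}g)(\theta)=g''(\theta)+2\lambda\cot\theta\,g'(\theta)-\frac{\lambda}{\sin^{2}\theta}\bigl(g(\theta)-g(-\theta)\bigr)$ is an angular operator which is symmetric for the pairing $\langle g,h\rangle=\int_{-\pi}^{\pi}g\bar h\,dm_{\lambda}$ — the Jacobi part by integration by parts, the weight $|\sin\theta|^{2\lambda}$ annihilating the endpoint contributions, and the reflection part by the substitution $\theta\mapsto-\theta$. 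Applying $\Delta_{\lambda}$ to the $\lambda$-harmonic functions $\phi_{n}(re^{i\theta})=r^{n}\phi_{n}(e^{i\theta})$ and $\bar z\,\overline{\phi_{n-1}(z)}=r^{n}\,\overline{e^{i\theta}\phi_{n-1}(e^{i\theta})}$ (by (\ref{basis-1}) and Proposition \ref{anal-basis}) identifies both $\phi_{n}(e^{i\theta})$ and $\overline{e^{i\theta}\phi_{n-1}(e^{i\theta})}$ as eigenfunctions of $\Lambda_{\lambda}$ with eigenvalue $-n(n+2\lambda)$. Projecting $\Delta_{\lambda}f=0$ against $\overline{\phi_{n}(e^{i\varphi})}\,dm_{\lambda}(\varphi)$, the radial terms produce $A_{n}''(r)+\frac{1+2\lambda}{r}A_{n}'(r)$ and, moving $\Lambda_{\lambda}$ onto $\phi_{n}$ by symmetry, the angular term produces $-\frac{n(n+2\lambda)}{r^{2}}A_{n}(r)$; thus $A_{n}$ solves the Cauchy--Euler equation $u''+\frac{1+2\lambda}{r}u'-\frac{n(n+2\lambda)}{r^{2}}u=0$, whose solution space is spanned by $r^{n}$ and $r^{-n-2\lambda}$ for $n\ge1$ (by $1$ and $r^{-2\lambda}$, or $\log r$, for $n=0$). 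Since $|A_{n}(r)|\le\|\phi_{n}\|_{L_{\lambda}^{2}(\partial\DD)}\,\bigl(\int_{-\pi}^{\pi}|f(re^{i\varphi})|^{2}dm_{\lambda}\bigr)^{1/2}\le\sup_{|z|\le r}|f(z)|$ stays bounded as $r\to0$, the singular solution must be absent, so $A_{n}(r)=c_{n}r^{n}$ for a constant $c_{n}$; the same argument against $e^{i\varphi}\phi_{n-1}(e^{i\varphi})\,dm_{\lambda}(\varphi)$ gives $B_{n}(r)=\tilde c_{n}r^{n}$.

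The remaining assertions then follow quickly. Substituting back, $f(re^{i\theta})=\sum_{n\ge0}c_{n}r^{n}\phi_{n}(e^{i\theta})+\sum_{n\ge1}\tilde c_{n}r^{n}\,\overline{e^{i\theta}\phi_{n-1}(e^{i\theta})}$, which by (\ref{basis-1}) is $\sum_{n\ge0}c_{n}\phi_{n}(z)+\sum_{n\ge1}\tilde c_{n}\bar z\,\overline{\phi_{n-1}(z)}$ with $z=re^{i\theta}$; the coefficient formulas follow by letting $r\to1-$ in $A_{n}(r)=c_{n}r^{n}$ and $B_{n}(r)=\tilde c_{n}r^{n}$. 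For the growth bound, fix $r_{0}<1$, choose $\rho\in(r_{0},1)$, and use $|c_{n}|=|A_{n}(\rho)|\rho^{-n}\le M(\rho)\rho^{-n}$ with $M(\rho)=\sup_{|z|\le\rho}|f(z)|<\infty$, and likewise $|\tilde c_{n}|\le M(\rho)\rho^{-n}$; then
\[
\sum_{n\ge1}n^{\gamma}\bigl(|c_{n}|+|\tilde c_{n}|\bigr)r^{n}\le 2M(\rho)\sum_{n\ge1}n^{\gamma}(r_{0}/\rho)^{n}<\infty
\]
uniformly for $r\in[0,r_{0}]$, which combined with $|\phi_{n}(e^{i\theta})|\lesssim(n+1)^{\lambda}$ from (\ref{phi-bound-1}) also makes the two series for $f$ converge uniformly on $\{|z|\le r_{0}\}$, so their sum is continuous and equals $f$.

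I expect the main obstacle to be the polar decomposition of $\Delta_{\lambda}$ together with the symmetry of $\Lambda_{\lambda}$ on $L_{\lambda}^{2}(\partial\DD)$: the angular operator has coefficients singular at $\theta=0,\pi$ and carries the nonlocal reflection term, so one must verify carefully that the weight $|\sin\theta|^{2\lambda}$ makes the relevant integrations by parts free of boundary terms, and that the two differentiations under the integral sign defining $A_{n}',A_{n}''$ are legitimate on every compact annulus. An alternative route that avoids the angular analysis is to observe that $f_{\rho}(z):=f(\rho z)$ is $\lambda$-harmonic on a neighbourhood of $\overline{\DD}$, since $\Delta_{\lambda}(f_{\rho})(z)=\rho^{2}(\Delta_{\lambda}f)(\rho z)$, to represent $f_{\rho}$ by its $\lambda$-Poisson integral $f_{\rho}(z)=\int_{-\pi}^{\pi}P(z,e^{i\varphi})f_{\rho}(e^{i\varphi})\,dm_{\lambda}(\varphi)$ (using Proposition \ref{Poisson-Cauchy}, the approximate-identity property of the $\lambda$-Poisson kernel, and a maximum principle for $\lambda$-harmonic functions continuous up to the boundary), to insert the series for $P$ and interchange summation and integration (licensed by the uniform convergence on compacta in Proposition \ref{Poisson-Cauchy}), and finally to rescale $z\mapsto z/\rho$ and let $\rho\to1$; matching coefficients against the orthonormal basis yields the same representation and formulas.
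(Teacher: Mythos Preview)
The paper does not prove Proposition \ref{har-series}; it is quoted from \cite[Theorem 3.1]{LL1} as background material, so there is no in-paper argument to compare against directly.

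Your separation-of-variables scheme is sound and is one of the standard routes to such expansions: expand on each circle in the orthonormal basis, use the polar form $\Delta_{\lambda}=\partial_r^2+\frac{1+2\lambda}{r}\partial_r+\frac{1}{r^2}\Lambda_\lambda$ to obtain the Euler ODE $u''+\frac{1+2\lambda}{r}u'-\frac{n(n+2\lambda)}{r^2}u=0$ for each radial coefficient, and discard the singular solution $r^{-n-2\lambda}$ by boundedness at the origin. The symmetry of $\Lambda_\lambda$ that you worry about is indeed the only delicate point, but it goes through: on $(0,\pi)$ and $(-\pi,0)$ one has $|\sin\theta|^{2\lambda}\bigl(g''+2\lambda\cot\theta\,g'\bigr)=\bigl(|\sin\theta|^{2\lambda}g'\bigr)'$, and the boundary contributions at $0,\pm\pi$ vanish because $|\sin\theta|^{2\lambda}\to0$ for $\lambda>0$ while the remaining factors stay bounded (for $\lambda=0$ periodicity handles the endpoints); the reflection term is symmetric by the substitution $\theta\mapsto-\theta$ exactly as you say. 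One small point worth making explicit: once you have $A_n(r)=c_nr^n$ and $B_n(r)=\tilde c_nr^n$, the series identity for $f$ holds a priori only in $L_\lambda^2(\partial\DD)$ on each circle; the pointwise equality then follows because uniform convergence on compacta (which you establish) forces the uniform limit to coincide with the $L^2$ limit on every circle.

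Your alternative route via the $\lambda$-Poisson integral of $f_\rho(z)=f(\rho z)$ is in fact closer in spirit to the machinery of \cite{LL1}, which builds the theory around the kernels $P(z,w)$ and $C(z,w)$: representing $f_\rho$ as $P(f_\rho;\cdot)$, inserting the series $P(z,w)=\sum\phi_n(z)\overline{\phi_n(w)}+\sum\bar z\overline{\phi_{n-1}(z)}\,w\phi_{n-1}(w)$, and reading off the coefficients avoids the angular integration by parts entirely at the cost of invoking a boundary maximum principle or the approximate-identity property. Either argument is acceptable; the ODE approach is more self-contained, while the Poisson-integral approach leverages the reproducing-kernel structure that the rest of the paper already relies on.
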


As stated in the first section, a $\lambda$-analytic
function $f$ on $\DD$ has a series representation as in (\ref{anal-series-1-1}); and moreover, such an $f$ could be characterized by a Cauchy-Riemann type system.

\begin{proposition} \label{anal-thm} {\rm(\cite[Theorem 3.7]{LL1})}
For a $C^2$ function $f=u+iv$ defined on $\DD$,
the following statements are equivalent:

{\rm (i)}  $f$ is $\lambda$-analytic;

{\rm (ii)} $u$ and $v$ satisfy the generalized Cauchy-Riemann
system
\begin{eqnarray*}
\left \{\partial_{x}u=D_{y}v,\atop  D_{y}u=-\partial_{x}v,\right.
\end{eqnarray*}
where
\begin{align*}
D_y u(x,y)=\partial_y u(x,y)+\frac{\lambda}{y}\left[u(x,y)-u(x,-y)\right].
\end{align*}

{\rm (iii)}  $f$ has the series representation
\begin{eqnarray} \label{anal-series-1}
f(z)=\sum_{n=0}^{\infty}c_{n}\phi_{n}(z), \qquad |z|<1,
\end{eqnarray}
where
$$
c_{n}=\lim_{r\rightarrow 1-}\int_{-\pi}^{\pi}f(re^{i\varphi})
\overline{\phi_{n}(e^{i\varphi})}dm_{\lambda}(\varphi).
$$
Moreover, for each real $\gamma$, the series
$\sum_{n\ge1}n^{\gamma}|c_n|r^n$ converges uniformly for $r$ in
every closed subset of $[0,1)$.
\end{proposition}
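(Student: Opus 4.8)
\medskip
\noindent\textbf{Proof proposal.} The plan is to establish (i)$\Leftrightarrow$(ii), then (iii)$\Rightarrow$(i), and finally the substantive implication (i)$\Rightarrow$(iii), out of which the coefficient formula and the convergence of $\sum_{n\ge1}n^{\gamma}|c_n|r^n$ will fall automatically. The equivalence (i)$\Leftrightarrow$(ii) is a direct computation with the Dunkl operator: writing $z=x+iy$ and using $2\partial_{\bar z}=\partial_x+i\partial_y$ together with $\lambda(f(z)-f(\bar z))/(z-\bar z)=(\lambda/(2iy))(f(x,y)-f(x,-y))$, one gets $D_{\bar z}f=\frac12\bigl(\partial_xf+iD_yf\bigr)$ with $D_y$ exactly as in (ii); splitting $f=u+iv$ into real and imaginary parts shows that $D_{\bar z}f=0$ is precisely the system $\partial_xu=D_yv$, $D_yu=-\partial_xv$. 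So (ii) is merely a reformulation, and the content lies in the equivalence with (iii).

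For (iii)$\Rightarrow$(i), suppose $f=\sum_nc_n\phi_n$ with $\sum_{n\ge1}n^{\gamma}|c_n|r^n$ locally uniformly convergent on $[0,1)$ for every $\gamma$. Besides the bound $|\phi_n(z)|\lesssim(n+1)^{\lambda}|z|^n$ of (\ref{phi-bound-1}), every Dunkl or Wirtinger derivative of $\phi_n$ of order at most two is again a polynomial in $z,\bar z$ of degree $n-|\alpha|$ whose coefficients grow only polynomially in $n$ — this follows from $D_z\phi_n=\sqrt{n(n+2\lambda)}\,\phi_{n-1}$, $D_{\bar z}\phi_n=0$ and the fact that $\phi_n(z)-\phi_n(\bar z)$ is divisible by $z-\bar z$ with quotient of controlled size (here Vandermonde's identity and (\ref{epsilon-bound-1}) pin down the constants). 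Hence the series and all its termwise derivatives up to order two converge locally uniformly, $f\in C^2(\DD)$, and $D_{\bar z}f=\sum_nc_nD_{\bar z}\phi_n=0$ because each $\phi_n$ is $\lambda$-analytic (Proposition \ref{anal-basis}).

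Now (i)$\Rightarrow$(iii). Since $\Delta_{\lambda}=4D_zD_{\bar z}$, a $\lambda$-analytic $f$ is $\lambda$-harmonic, so Proposition \ref{har-series} gives $f(z)=\sum_{n\ge0}c_n\phi_n(z)+\sum_{n\ge1}\tilde c_n\bar z\overline{\phi_{n-1}(z)}$ with $\sum_{n\ge1}n^{\gamma}(|c_n|+|\tilde c_n|)r^n$ locally uniformly convergent for all $\gamma$ and with $c_n,\tilde c_n$ given there by boundary integrals. The polynomial bounds of the previous paragraph let me apply $D_{\bar z}$ termwise; as $D_{\bar z}\phi_n=0$, it remains to show $\tilde c_n=0$ for every $n$. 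For this I would use the reflection identity $D_{\bar z}\bigl(g(\bar z)\bigr)=(D_zg)(\bar z)$ (a one-line Wirtinger check on both the derivative and the difference quotient) together with the fact that $\phi_{n-1}^{\lambda}$ has real coefficients, so $\overline{\phi_{n-1}(z)}=\phi_{n-1}(\bar z)$ and, by (\ref{Tzphi-2}), $D_{\bar z}\bigl(\bar z\overline{\phi_{n-1}(z)}\bigr)=(n+\lambda)\overline{\phi_{n-1}(z)}$. Thus $0=D_{\bar z}f=\sum_{n\ge1}(n+\lambda)\tilde c_n\overline{\phi_{n-1}(z)}$; since each $\phi_n$ is homogeneous of degree $n$, i.e. $\phi_n(re^{i\theta})=r^n\phi_n(e^{i\theta})$, fixing $\theta$ makes this a power series in $r$ vanishing identically, so $(n+\lambda)\tilde c_n\phi_{n-1}(e^{i\theta})\equiv0$ in $\theta$, and $\phi_{n-1}$ being a nontrivial member of the orthonormal system on $\partial\DD$ forces $\tilde c_n=0$. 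Hence $f=\sum_{n\ge0}c_n\phi_n$, and the formula for $c_n$ and the convergence of $\sum n^{\gamma}|c_n|r^n$ are exactly those furnished by Proposition \ref{har-series} with $\tilde c_n\equiv0$.

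I expect the main obstacle to sit in this last implication: proving the identity $D_{\bar z}\bigl(\bar z\overline{\phi_{n-1}}\bigr)=(n+\lambda)\overline{\phi_{n-1}}$, and — more a matter of bookkeeping than of ideas — rigorously justifying that $D_{\bar z}$ may be passed inside the series of Proposition \ref{har-series}, which rests on polynomial-in-$n$ estimates for the Dunkl derivatives of $\phi_n$ coming from (\ref{epsilon-bound-1}). By contrast, (i)$\Leftrightarrow$(ii) and (iii)$\Rightarrow$(i) should be routine.
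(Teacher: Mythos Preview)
The paper does not supply its own proof of this proposition; it is quoted as \cite[Theorem 3.7]{LL1} and left uncited, so there is nothing here to compare your argument against. That said, your strategy is correct and essentially the natural one: (i)$\Leftrightarrow$(ii) is a direct rewriting of $D_{\bar z}f=0$ in real and imaginary parts, (iii)$\Rightarrow$(i) follows from termwise application of $D_{\bar z}$ and Proposition~\ref{anal-basis}, and for (i)$\Rightarrow$(iii) invoking the $\lambda$-harmonic expansion of Proposition~\ref{har-series} and then killing the $\tilde c_n$ via $D_{\bar z}\bigl(\bar z\,\overline{\phi_{n-1}}\bigr)=(n+\lambda)\overline{\phi_{n-1}}$ is exactly right. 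Your reflection identity $D_{\bar z}[g(\bar z)]=(D_zg)(\bar z)$ combined with (\ref{Tzphi-2}) does give this formula, and the termwise differentiation is justified by the polynomial-in-$n$ bounds on the derivatives of $\phi_n$ (obtainable directly from the integral representation (\ref{A-operator-4})).
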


%

The $\lambda$-Poisson integral of $f\in L_{\lambda}^{1}(\partial\DD)$ is defined by
\begin{align} \label{Poisson-integral-1}
P(f;z)=\int_{-\pi}^{\pi}f(e^{i\varphi})P(z,e^{i\varphi})
dm_{\lambda}(\varphi),\qquad z=re^{i\theta}\in\DD,
\end{align}
and that of a measure $d\nu\in{\frak B}_{\lambda}(\partial\DD)$ by
\begin{eqnarray} \label{Poisson-Borel-1}
P(d\nu;z)=\tilde{c}_{\lambda}\int_{-\pi}^{\pi}P(z,e^{i\varphi})
|\sin\varphi|^{2\lambda}d\nu(\varphi),\qquad z=re^{i\theta}\in\DD.
\end{eqnarray}

\begin{proposition} \label{Poi-property-1} {\rm(\cite[Propositions 2.4 and 2.5]{LL1})}
Let $f\in L_{\lambda}^{1}(\partial\DD)$. Then

 {\rm (i)} the $\lambda$-Poisson integral $u(x,y)=P(f;z)$ ($z=x+iy$) is
$\lambda$-harmonic
in $\DD$;

{\rm (ii)} if write $P_r(f;\theta)=P(f;re^{i\theta})$, we have the ``semi-group" property
$$
P_s(P_rf;\theta)=P_{s\,r}(f;\theta), \qquad 0\le s,r<1;
$$

{\rm (iii)} $P(f;z) \ge 0$ if $f \ge 0$;

{\rm (iv)} for $f\in X=L_{\lambda}^{p}(\partial\DD)$ ($1\le p\le\infty$), or $C(\partial\DD)$,
$\|P_r(f;\cdot)\|_{X}\le \|f\|_{X}$;

{\rm (v)} for $f\in X=L_{\lambda}^{p}(\partial\DD)$ ($1\le p<\infty$), or
$C(\partial\DD)$, $\lim_{r\rightarrow1-}\|P_r(f;\cdot)-f\|_X=0$;

{\rm (vi)} the conclusions in (i)-(iii) are true also for $P_r(d\nu;\cdot)$ in place of $P_r(f;\cdot)$, if
$d\nu\in{\frak B}_{\lambda}(\partial\DD)$; moreover, $\|P_r(d\nu;\cdot)\|_{L_{\lambda}^{1}(\partial\DD)}\le \|d\nu\|_{\frak B_{\lambda}(\partial\DD)}$.
\end{proposition}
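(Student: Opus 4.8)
The plan is to derive the six assertions from the series expansion of the $\lambda$-Poisson kernel together with its closed form in Proposition \ref{Poisson-Cauchy}. Writing $C(z,w)=\sum_{n\ge0}\phi_n(z)\overline{\phi_n(w)}$, we get $\bar z w\,C(w,z)=\sum_{n\ge0}\overline{z\phi_n(z)}\,w\phi_n(w)$, so that $P(z,w)=\sum_{n\ge0}\phi_n(z)\overline{\phi_n(w)}+\sum_{n\ge0}\overline{z\phi_n(z)}\,w\phi_n(w)$, in which every summand, regarded as a function of $z$, is $\lambda$-harmonic by Proposition \ref{anal-basis} (the $\phi_n$ are even $\lambda$-analytic, and $\overline{z\phi_n(z)}=\bar z\overline{\phi_n(z)}$ is $\lambda$-harmonic). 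By (\ref{phi-bound-1}), $|\phi_n(z)|\lesssim(n+1)^{\lambda}|z|^n$ and $|\overline{z\phi_n(z)}|\lesssim(n+1)^{\lambda}|z|^{n+1}$, while $|\phi_n(e^{i\varphi})|\lesssim(n+1)^{\lambda}$; hence the two series and all their $z$-derivatives converge uniformly on compact subsets of $\DD$, so $\Delta_{\lambda}P(\cdot,e^{i\varphi})=0$ term by term and differentiation under the integral sign in (\ref{Poisson-integral-1}) gives (i). For (ii), note from (\ref{basis-1}) and the displayed formula following it that $\phi_n(re^{i\theta})=r^n\phi_n(e^{i\theta})$ and $\bar z\overline{\phi_{n-1}(z)}\big|_{z=re^{i\theta}}=r^n\,\overline{e^{i\theta}\phi_{n-1}(e^{i\theta})}$. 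Expanding the kernel against the orthonormal basis $\{1,\phi_n(e^{i\theta}),\overline{e^{i\theta}\phi_{n-1}(e^{i\theta})}\}$ of $L_\lambda^2(\partial\DD)$ and using orthonormality, the Fourier coefficients of $P_r(f;\cdot)$ equal those of $f$ multiplied by $r^n$; consequently the coefficients of $P_s(P_rf;\cdot)$ are multiplied by $(sr)^n$, which are precisely those of $P_{sr}(f;\cdot)$, and the interchange of sum and integral is justified by the uniform convergence in Proposition \ref{har-series}.

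The heart of the matter is the positivity in (iii), which reduces to $P(z,e^{i\varphi})\ge0$ for all $z\in\DD$. By (\ref{Cauchy-kernel-2-2}) and the formula for $P$ in Proposition \ref{Poisson-Cauchy}, $P(z,e^{i\varphi})=\dfrac{1-|z|^{2}}{|1-z\,\overline{e^{i\varphi}}|^{2}}\,P_0(z,e^{i\varphi})$, so it suffices to show $P_0(z,e^{i\varphi})\ge0$. When $\lambda=0$, $P_0\equiv1$; for $\lambda>0$ apply Euler's integral representation to the first form of $P_0$ in (\ref{Poi-0}), namely ${}_2\!F_1(\lambda,\lambda;2\lambda+1;t)=\dfrac{\Gamma(2\lambda+1)}{\Gamma(\lambda)\Gamma(\lambda+1)}\int_0^1 s^{\lambda-1}(1-s)^{\lambda}(1-ts)^{-\lambda}\,ds$, which is valid and strictly positive whenever $t<1$. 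With $t=4({\rm Im}\,z)({\rm Im}\,e^{i\varphi})/|1-z e^{i\varphi}|^{2}$ one verifies the identity $1-t=|1-z\,\overline{e^{i\varphi}}|^{2}/|1-z e^{i\varphi}|^{2}$, which is $>0$ for $z\in\DD$, whence $P_0(z,e^{i\varphi})>0$. This case analysis and the estimate $t<1$ are the only genuinely delicate points of the argument.

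With (iii) in hand, and using that $P$ reproduces constants, i.e.\ $\int_{-\pi}^{\pi}P(z,e^{i\varphi})\,dm_\lambda(\varphi)=1$ (the coefficient attached to $\phi_0\equiv1$), together with the companion identity $\int_{-\pi}^{\pi}P(re^{i\theta},e^{i\varphi})\,dm_\lambda(\theta)=1$ obtained the same way from orthonormality, assertion (iv) follows: for $p=1$ by Tonelli's theorem, for $p=\infty$ by bounding the kernel integral, and for $1<p<\infty$ by Jensen's inequality against the probability measure $P(re^{i\theta},e^{i\varphi})\,dm_\lambda(\varphi)$ followed by Tonelli; the continuity of $(z,\varphi)\mapsto P(z,e^{i\varphi})$ on $\DD\times[-\pi,\pi]$ (again from the uniformly convergent series) handles $X=C(\partial\DD)$. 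For (v) I would run the standard approximate-identity scheme: from the closed form one shows the concentration property $\sup_{\theta}\int_{\delta\le|\varphi-\theta|\le\pi}P(re^{i\theta},e^{i\varphi})\,dm_\lambda(\varphi)\to0$ as $r\to1^-$ for every $\delta\in(0,\pi]$ (the integrand being $O(1-r)$ uniformly off the diagonal), whence $P_r f\to f$ uniformly for $f\in C(\partial\DD)$; the case $X=L_\lambda^{p}(\partial\DD)$, $1\le p<\infty$, then follows from the density of $C(\partial\DD)$ in $L_\lambda^{p}(\partial\DD)$ and the uniform bound of (iv).

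Finally, (vi) requires no new idea: all the arguments above localize the boundary variable and use only term-by-term $\lambda$-harmonicity with the kernel bounds, Tonelli's theorem, positivity of the kernel, and the identity $\int_{-\pi}^{\pi}P(re^{i\theta},e^{i\varphi})\,dm_\lambda(\theta)=1$; since none of these uses anything beyond the finiteness of the total variation of $d\nu$, replacing $f(e^{i\varphi})\,dm_\lambda(\varphi)$ by $|\sin\varphi|^{2\lambda}\,d\nu(\varphi)$ in (\ref{Poisson-Borel-1}) yields (i)--(iii) for $P_r(d\nu;\cdot)$ and the bound $\|P_r(d\nu;\cdot)\|_{L_\lambda^{1}(\partial\DD)}\le\|d\nu\|_{{\frak B}_\lambda(\partial\DD)}$. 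The principal obstacles are thus the positivity in (iii) and the diagonal-concentration estimate underlying (v); everything else is bookkeeping with the orthonormal basis and with the uniformly convergent defining series.
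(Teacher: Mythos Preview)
The paper does not actually prove this proposition; it is quoted verbatim from \cite[Propositions~2.4 and~2.5]{LL1} with no argument supplied. So there is no ``paper's own proof'' to compare against beyond the citation.

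Your sketch, on the other hand, is a correct self-contained outline. The positivity argument in (iii) via Euler's integral representation of ${}_2F_1(\lambda,\lambda;2\lambda+1;t)$ together with the identity $1-t=|1-z\bar w|^2/|1-zw|^2$ is exactly the right mechanism, and your treatment of (iv) via Jensen's inequality against the probability measure $P(re^{i\theta},e^{i\varphi})\,dm_\lambda(\varphi)$, followed by Tonelli with the companion identity $\int P(re^{i\theta},e^{i\varphi})\,dm_\lambda(\theta)=1$, is clean. The approximate-identity argument for (v) is also standard and works here despite the lack of translation invariance, since you only use nonnegativity, total mass one, and off-diagonal decay (which does follow from the closed form, or from Lemma~\ref{Poisson-kernel-estimate-a-1}). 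One small point worth tightening in (i): $\Delta_\lambda$ contains the nonlocal difference quotient $[f(z)-f(\bar z)]/y$, so ``term-by-term differentiation'' should be phrased as term-by-term application of $D_{\bar z}$ and $D_z$; the needed locally uniform convergence of the differenced series follows from the integral representation (\ref{A-operator-4}) just as the derivative bounds do.
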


As usual, the $p$-means of a function $f$ defined on $\DD$, for $0<p<\infty$, are given by
\begin{eqnarray*}
M_p(f;r)=\left\{\int_{-\pi}^{\pi}|f(re^{i\theta})|^p\,dm_{\lambda}(\theta)\right\}^{1/p},\qquad 0\le r<1;
\end{eqnarray*}
and $M_{\infty}(f;r)=\sup_{\theta}|f(re^{i\theta})|$.
The $\lambda$-Hardy space $H_{\lambda}^p(\DD)$ is the collection of $\lambda$-analytic functions on $\DD$ satisfying
$$
\|f\|_{H_{\lambda}^p}:=\sup_{0\le r <1}M_p(f;r)<\infty.
$$
Obviously $H_{\lambda}^{\infty}(\DD)$ is identical with $A_{\lambda}^{\infty}(\DD)$.

The fundamental theory of the $\lambda$-Hardy spaces $H_{\lambda}^p(\DD)$ for $p\ge p_0$ was studied in \cite{LL1}.
The following theorem asserts the existence of boundary values of functions in $H_{\lambda}^p(\DD)$.

\begin{theorem} \label{Hardy-boundary-value-a} {\rm (\cite[Theorem 6.6]{LL1})}
Let $p\ge p_0$ and $f\in H_{\lambda}^p(\DD)$. Then for almost every $\theta\in[-\pi,\pi]$, $\lim f(r
e^{i\varphi})=f(e^{i\theta})$  exists as $r e^{i\varphi}$ approaches to the point
$e^{i\theta}$ nontangentially, and if $p_0<p<\infty$, then
\begin{align}\label{Hp-norm-convergence-1}
\lim_{r\rightarrow1-}\int_{-\pi}^{\pi}|f(r
e^{i\theta})-f(e^{i\theta})|^{p} dm_{\lambda}(\theta)=0
\end{align}
and
\begin{eqnarray}\label{Hp-norm-ineq-1}
\|f\|_{H^{p}_{\lambda}}\asymp\left(\int_{-\pi}^{\pi}|f(e^{i\theta})|^{p}
dm_{\lambda}(\theta)\right)^{1/p}.
\end{eqnarray}
%
\end{theorem}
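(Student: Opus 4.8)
The plan is to reproduce, in the $\lambda$-setting, the classical scheme for $H^p$ boundary-value theorems; the main difficulty is that for $\lambda$-analytic functions there is no Blaschke-type factorization and no device for passing to fractional powers, so several steps that are routine in the analytic case must be replaced by intrinsically $\lambda$-harmonic arguments. For $p\ge1$ I would first represent $f$ as a $\lambda$-Poisson integral. Since $\sup_{0\le r<1}M_p(f;r)<\infty$, the circular restrictions $f_r(e^{i\theta}):=f(re^{i\theta})$ are bounded in $L_\lambda^p(\partial\DD)$; a weak-$\ast$ compactness argument along some $r_k\to1^-$, testing the limit $g$ (an $L_\lambda^p$ function if $p>1$, a measure in ${\frak B}_\lambda(\partial\DD)$ if $p=1$) against the orthonormal system $\{1,\phi_n(e^{i\theta}),\overline{e^{i\theta}\phi_{n-1}(e^{i\theta})}\}$ and using the semigroup identity $P_s(P_rf;\theta)=P_{sr}(f;\theta)$ of Proposition~\ref{Poi-property-1}, yields $f=P(g;\cdot)$. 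The existence of nontangential limits of $f$ a.e.\ then reduces to the same statement for $\lambda$-Poisson integrals of $L_\lambda^1$ functions and of measures in ${\frak B}_\lambda(\partial\DD)$, which I would obtain from a weak-type $(1,1)$ bound for the Hardy--Littlewood maximal operator of the doubling measure $dm_\lambda$, a pointwise domination of the nontangential maximal function of $P(d\nu;\cdot)$ by that operator via sharp size estimates for $P(z,w)$ derived from Proposition~\ref{Poisson-Cauchy}, and the density of the continuous functions, on which nontangential convergence is immediate.

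For the delicate range $p_0\le p<1$ the crux is that $|f|^p$ is $\lambda$-subharmonic precisely for $p\ge p_0=2\lambda/(2\lambda+1)$ --- the threshold $(d-2)/(d-1)$ familiar from the theory of Riesz systems in dimension $d=2\lambda+2$, consistent with the growth $|\phi_n(z)|\asymp(n+1)^\lambda|z|^n$ in (\ref{phi-bound-1}) --- equivalently $|f|^p$ satisfies the sub-mean-value inequality against the $\lambda$-Poisson kernel. Granting this, $\{|f_r|^p\}$ has a least $\lambda$-harmonic majorant $u=P(d\mu;\cdot)$ with $d\mu\ge0$ and $\|d\mu\|\asymp\|f\|_{H_\lambda^p}^p$; as $|f(z)|^p\le u(z)$ and $u$, being the $\lambda$-Poisson integral of a finite measure, has a finite nontangential limit a.e.\ by the previous step, $f$ is nontangentially bounded a.e.\ on $\partial\DD$. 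A local Fatou (Privalov--Plessner) theorem for $\lambda$-harmonic functions, proved once more from the maximal inequality and the kernel bounds by a sawtooth-region argument, upgrades nontangential boundedness a.e.\ to the existence of nontangential limits a.e.; applied to $f$ this establishes the first assertion for all $p\ge p_0$.

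Finally, for $p_0<p<\infty$, integrating the sub-mean-value inequality shows $r\mapsto M_p(f;r)^p$ is nondecreasing, hence increases to $\|f\|_{H_\lambda^p}^p$, and Fatou's lemma gives $\int_{-\pi}^{\pi}|f(e^{i\theta})|^p\,dm_\lambda(\theta)\le\|f\|_{H_\lambda^p}^p$, so $|f(e^{i\cdot})|^p\in L_\lambda^1(\partial\DD)$. Since $|f|^p$ is $\lambda$-subharmonic, has a $\lambda$-harmonic majorant, and has nontangential boundary values $|f(e^{i\cdot})|^p$, one has $|f(z)|^p\le P(|f(e^{i\cdot})|^p;z)$; comparison with the least majorant forces $u=P(|f(e^{i\cdot})|^p;\cdot)$, that is $d\mu=|f(e^{i\cdot})|^p\,dm_\lambda$ and $\|f\|_{H_\lambda^p}^p=\int_{-\pi}^{\pi}|f(e^{i\theta})|^p\,dm_\lambda(\theta)$, which is (\ref{Hp-norm-ineq-1}). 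Then $\{|f_r|^p\}_{r<1}$ is dominated by $P_r(|f(e^{i\cdot})|^p)$ and hence uniformly $dm_\lambda$-integrable by Proposition~\ref{Poi-property-1}, so $f_r\to f(e^{i\cdot})$ a.e.\ together with Vitali's theorem yields (\ref{Hp-norm-convergence-1}).

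I expect the hardest points to be the $\lambda$-subharmonicity of $|f|^p$ at the sharp exponent $p_0$ --- where the nonlocal term $-\lambda\big(f(z)-f(\bar z)\big)/y^2$ in $\Delta_\lambda$ must be absorbed and the classical ``raise to a power'' trick is unavailable --- and the local Fatou theorem for $\lambda$-harmonic functions; the inequality $|f(z)|^p\le P(|f(e^{i\cdot})|^p;z)$ degenerating at the endpoint is what restricts the norm statements to $p>p_0$. An alternative that bypasses part of this is to use the intertwining formula (\ref{A-operator-4}), which writes $f(z)=2^{2\lambda+1}\tilde c_{\lambda-1/2}\int_0^1 F\big(sz+(1-s)\bar z\big)(1-s)^{\lambda-1}s^\lambda\,ds$ for an ordinary analytic $F$, and to transfer the classical $H^p$ boundary theory of $F$ to $f$ via Fubini and dominated convergence; the price is controlling $F$ in the space corresponding to $f\in H_\lambda^p$.
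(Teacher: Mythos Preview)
The paper does not prove this theorem; it is quoted verbatim from \cite[Theorem 6.6]{LL1} and used as a black box. So there is no proof in the paper to compare against. That said, the companion result you need, the $\lambda$-harmonic majorization $|f(z)|^{p_0}\le P(g;z)$ (respectively $\le P(d\nu;z)$), is also quoted here as Lemma~\ref{majorization-a-1} from \cite[Theorem 6.3]{LL1}, and your outline is built precisely around that ingredient, so your strategy is in the right spirit and is almost certainly close to what \cite{LL1} actually does.

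Two points in your write-up deserve correction. First, for $p_0\le p<1$ the map $r\mapsto M_p(f;r)$ is \emph{not} known to be genuinely nondecreasing in this setting; the paper itself records only the quasi-monotonicity $M_p(f;r')\le 2^{2/p-1}M_p(f;r)$ for $r'<r$ (Lemma~\ref{monotonicity-integral-mean-2-a-1}), and accordingly states \eqref{Hp-norm-ineq-1} as $\asymp$, not as an equality. Your claim ``integrating the sub-mean-value inequality shows $r\mapsto M_p(f;r)^p$ is nondecreasing'' and the subsequent identification $\|f\|_{H_\lambda^p}^p=\int|f(e^{i\theta})|^p\,dm_\lambda$ overreach; what one actually gets from Lemma~\ref{majorization-a-1}(i) is $M_p(f;r)^p\le\|g\|_{L_\lambda^{p/p_0}}^{p/p_0}\lesssim\|f\|_{H_\lambda^p}^p$, and Fatou gives the reverse inequality, yielding only the two-sided estimate stated. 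Second, and relatedly, the step ``comparison with the least majorant forces $u=P(|f(e^{i\cdot})|^p;\cdot)$'' is not justified: the majorant in Lemma~\ref{majorization-a-1} is of $|f|^{p_0}$, not $|f|^p$, and identifying the boundary datum of the least majorant with $|f(e^{i\cdot})|^{p_0}$ requires an additional argument (absolute continuity of the limiting measure, which is where $p>p_0$ enters via $p/p_0>1$). For the norm convergence \eqref{Hp-norm-convergence-1} the cleaner route is the one you sketch at the end: from $|f(z)|^{p_0}\le P(g;z)$ with $g\in L_\lambda^{p/p_0}$ one gets $|f_r|^p\le P_r(g)^{p/p_0}$, and since $P_r(g)\to g$ in $L_\lambda^{p/p_0}$ by Proposition~\ref{Poi-property-1}(v), the family $\{|f_r|^p\}$ is uniformly integrable; combined with a.e.\ convergence this gives \eqref{Hp-norm-convergence-1} by Vitali.
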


%

C. Dunkl's work \cite{Dun3} built up a framework associated with the dihedral group
$G=D_k$ on the disk $\DD$. The study of the $\lambda$-analytic functions in this paper, and also in \cite{LL1} and \cite{LW1}, focus on the special case with $G=D_1$ having
the reflection $z\mapsto\overline{z}$ only, in order to find possibilities to develop a deep theory of associated function spaces. We note that C. Dunkl has a general theory named after him associated with reflection-invariance on the Euclidean spaces, see \cite{Dun1}, \cite{Dun2} and \cite{Dun4} for example. For the Hardy spaces in the upper half plane $\RR_+^2$ associated to the Dunkl operators $D_{z}$ and $D_{\bar{z}}$, see \cite{LL2}.


\section{Preliminaries to the weighted $\lambda$-Bergman spaces $A^{p}_{\lambda,w}(\DD)$}

We first consider the point-evaluation of functions in the weighted $\lambda$-Bergman spaces $A^{p}_{\lambda,w}(\DD)$.

\begin{lemma}\label{monotonicity-integral-mean-2-a-1} {\rm (\cite[Lemma 4.2]{LW1} or \cite{LL1})}
If $f$ is $\lambda$-harmonic in $\DD$ and $1\le p<\infty$, then its integral mean $r\mapsto M_p(f;r)$ is nondecreasing over $[0,1)$; and if $f$ is $\lambda$-analytic in $\DD$ and $p_0\le p<1$, then $M_p(f;r')\le2^{2/p-1}M_p(f;r)$ for $0\le r'<r<1$.
\end{lemma}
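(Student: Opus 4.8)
\emph{Proof strategy.} I would treat the two assertions separately. Both rest on a reproduction-plus-positivity mechanism, but for sub-unit exponents the $L^p$-contraction of the $\lambda$-Poisson integral is lost and must be replaced by a subharmonicity argument that becomes available precisely when $p\ge p_0$.

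For the harmonic statement ($1\le p<\infty$) I would fix $0\le\rho<1$ and $r\in(0,1)$, put $f_r(e^{i\varphi})=f(re^{i\varphi})$, and use Proposition~\ref{har-series} to write $f_r=\sum_{n} c_n r^n\phi_n(e^{i\cdot})+\sum_{n\ge1}\tilde c_n r^n\,\overline{e^{i\cdot}\phi_{n-1}(e^{i\cdot})}$ with uniform convergence. Since the $\lambda$-Poisson kernel reproduces $\lambda$-harmonic polynomials (Proposition~\ref{Poisson-Cauchy} and the remarks preceding it), applying $P_\rho$ term by term gives $P_\rho(f_r;\theta)=\sum_n c_n(r\rho)^n\phi_n(e^{i\theta})+\sum_{n\ge1}\tilde c_n(r\rho)^n\overline{e^{i\theta}\phi_{n-1}(e^{i\theta})}=f(r\rho e^{i\theta})$, the interchange being legitimate by that uniform convergence together with $|\phi_n(e^{i\varphi})|\le\epsilon_n^{-1}$. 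Then Proposition~\ref{Poi-property-1}(iv) with $X=L^p_\lambda(\partial\DD)$ yields
\[
M_p(f;r\rho)=\|P_\rho(f_r;\cdot)\|_{L^p_\lambda(\partial\DD)}\le\|f_r\|_{L^p_\lambda(\partial\DD)}=M_p(f;r),
\]
and setting $r'=r\rho$ gives the monotonicity (with the sharp constant $1$).

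For the analytic statement ($p_0\le p<1$) I would introduce $W(z):=|f(z)|^2+|f(\bar z)|^2$, which is even in $y$ and satisfies $|f(z)|^p\le W(z)^{p/2}$. Writing $f=g_1+ig_2$, where $g_1,g_2$ are the $\lambda$-analytic functions with coefficients $\operatorname{Re}c_n$ and $\operatorname{Im}c_n$, the parallelogram law gives $W(z)=2\big(|g_1(z)|^2+|g_2(z)|^2\big)$. Now each $g_j$ satisfies the generalized Cauchy--Riemann system of Proposition~\ref{anal-thm}(ii); under the standard identification of such a system with the conjugate-harmonic (Riesz) system formed by the gradient of a harmonic function on $\RR^{2\lambda+2}$ --- valid for half-integer $\lambda$ and extended to all $\lambda\ge0$ by the usual analytic continuation in the parameter --- one has $|g_j(z)|^2=|\nabla\Psi_j|^2$ for harmonic $\Psi_j$, so that
\[
W(z)^{p/2}=2^{p/2}\big(|\nabla\Psi_1|^2+|\nabla\Psi_2|^2\big)^{p/2}
\]
is subharmonic on $\RR^{2\lambda+2}$ precisely when $p\ge\frac{(2\lambda+2)-2}{(2\lambda+2)-1}=p_0$ (the ``E.~Stein'' subharmonicity of powers of the length of a conjugate system). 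Because $dm_\lambda$ restricted to the $2$-plane section corresponds to the surface measure on $S^{2\lambda+1}$, this forces $r\mapsto\int_{-\pi}^{\pi}W(re^{i\theta})^{p/2}\,dm_\lambda(\theta)$ to be nondecreasing. Granting this, for $r'<r$ I would estimate
\[
M_p(f;r')^p=\int_{-\pi}^{\pi}|f(r'e^{i\theta})|^p\,dm_\lambda(\theta)\le\int_{-\pi}^{\pi}W(r'e^{i\theta})^{p/2}\,dm_\lambda(\theta)\le\int_{-\pi}^{\pi}W(re^{i\theta})^{p/2}\,dm_\lambda(\theta),
\]
and then use $W(re^{i\theta})^{p/2}=\big(|f(re^{i\theta})|^2+|f(re^{-i\theta})|^2\big)^{p/2}\le|f(re^{i\theta})|^p+|f(re^{-i\theta})|^p$, which after integration and the symmetry $\theta\mapsto-\theta$ of $dm_\lambda$ gives $M_p(f;r')^p\le 2\,M_p(f;r)^p$, i.e.\ $M_p(f;r')\le2^{1/p}M_p(f;r)$; since $2^{1/p}\le2^{2/p-1}$ for $p\le1$, this implies (and is a bit stronger than) the asserted bound, the author's constant $2^{2/p-1}$ presumably coming from a slightly less efficient bookkeeping of the same ingredients.

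The hard part is the subharmonicity invoked in the analytic case: unlike the classical situation $\log|f|$ is not $\lambda$-harmonic, so $|f|^p$ is not $\lambda$-subharmonic for $p<1$, and one cannot argue by a direct sub-mean-value inequality for $|f|^p$ against the $\lambda$-Poisson kernel (Jensen's inequality points the wrong way). The substitute is exactly that $\lambda$-analyticity forces $(g_1,g_2)$ into a generalized conjugate-harmonic system, for which the length raised to the power $p$ is subharmonic precisely on the range $p\ge\frac{n-2}{n-1}$ with $n=2\lambda+2$ --- which is why the threshold in the statement is $p_0=2\lambda/(2\lambda+1)$. Making this identification rigorous (in particular the analytic continuation in $\lambda$, and the verification that the circular $p$-mean against $|\sin\theta|^{2\lambda}\,d\theta$ really matches the spherical $p$-mean on $S^{2\lambda+1}$) is the only non-routine step; everything else is elementary manipulation with $\ell^p$-type inequalities and the reflection symmetry of $dm_\lambda$.
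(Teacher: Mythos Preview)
Your treatment of the harmonic case ($1\le p<\infty$) is correct and is exactly the intended argument: reproduce $f(r\rho\,\cdot)$ as $P_\rho(f_r;\cdot)$ via the series expansion and the reproducing property of the $\lambda$-Poisson kernel, then invoke the $L^p_\lambda$-contraction in Proposition~\ref{Poi-property-1}(iv).

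For the analytic case your global strategy --- split $f=g_1+ig_2$ with $g_j$ having real coefficients, exploit that $|f(z)|^2+|f(\bar z)|^2=2(|g_1|^2+|g_2|^2)$, and use a subharmonicity property valid precisely for $p\ge p_0$ --- is the right one and matches the argument in the cited references. The difference is in how the key subharmonicity is justified. Your route (lift to $\RR^{2\lambda+2}$ for half-integer $\lambda$, then ``analytic continuation in the parameter'') is only a heuristic: the inequality $\Delta_\lambda\bigl((u^2+v^2)^{p/2}\bigr)\ge0$ is not obviously an analytic identity in $\lambda$ that one can continue, and the identification of $dm_\lambda$ with surface measure on $S^{2\lambda+1}$ is again only literal for integer $2\lambda$. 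In \cite{LL1} (following Muckenhoupt--Stein) this step is carried out by a \emph{direct computation}: for $g=u+iv$ with $u$ even and $v$ odd in $y$, the generalized Cauchy--Riemann system of Proposition~\ref{anal-thm}(ii) reduces to $\partial_x u=\partial_y v+\tfrac{2\lambda}{y}v$, $\partial_y u=-\partial_x v$, and one computes $\Delta_\lambda(u^2+v^2)^{p/2}$ explicitly and checks nonnegativity for $p\ge p_0=\tfrac{2\lambda}{2\lambda+1}$; no integrality of dimension and no continuation are needed. Once this is in hand, the bookkeeping in \cite{LL1,LW1} uses $|f|^p\le|g_1|^p+|g_2|^p$ together with $|g_j|\le\tfrac12(|f(z)|+|f(\bar z)|)$ (from $g_1=\tfrac12(f+\overline{f(\bar\cdot)})$, $g_2=\tfrac1{2i}(f-\overline{f(\bar\cdot)})$), which yields exactly the constant $2^{2/p-1}$.

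Your variant via $W^{p/2}$ is legitimate once the same direct computation is supplied (it extends verbatim to $(|g_1|^2+|g_2|^2)^{p/2}$), and as you observe it gives the sharper constant $2^{1/p}$. So the only genuine gap is the justification of the subharmonicity: replace the analytic-continuation sketch by the explicit $\Delta_\lambda$-calculation and the proof is complete.
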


\begin{lemma} \label{Bergman-mean-estimate-w-a}
Let $p_0\le p<\infty$ and let $w$ be a nonzero weight function. If $f\in A^{p}_{\lambda,w}(\DD)$, then
$M_p(f;r)=o\left(\left(\int_r^1w(s)ds\right)^{-1/p}\right)$ as $r\rightarrow1-$, and
$$
M_p(f;r)\lesssim\left(\int_r^1w(s)ds\right)^{-1/p}\|f\|_{A_{\lambda,w}^{p}},\qquad 0\le r<1.
$$
\end{lemma}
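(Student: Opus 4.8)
The plan is to exploit the monotonicity/quasi-monotonicity of $p$-means from Lemma \ref{monotonicity-integral-mean-2-a-1} together with polar integration against the weight. First I would write out the defining integral in polar-type coordinates adapted to $d\sigma_\lambda$: since $w$ is radial, one has
\begin{align*}
\|f\|_{A_{\lambda,w}^{p}}^{p}=\int_{\DD}|f(z)|^{p}w(|z|)\,d\sigma_{\lambda}(z)
=2\int_0^1 M_p(f;\rho)^{p}\,w(\rho)\,\rho\,d\rho,
\end{align*}
after absorbing the $|\sin\theta|^{2\lambda}$ factor and the normalizing constants into the definition of $dm_\lambda$ and checking the Jacobian; the point is simply that $\int_{-\pi}^{\pi}|f(\rho e^{i\theta})|^{p}\,dm_\lambda(\theta)=M_p(f;\rho)^{p}$.

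Next, fix $r\in(0,1)$ and restrict the $\rho$-integral to $\rho\in[r,1)$. For such $\rho$ I want a lower bound $M_p(f;\rho)\gtrsim M_p(f;r)$. When $p\ge1$ this is immediate from the first half of Lemma \ref{monotonicity-integral-mean-2-a-1}, since $M_p(f;\cdot)$ is then nondecreasing (a $\lambda$-analytic function is in particular $\lambda$-harmonic, as $\Delta_\lambda=4D_{\bar z}D_z$ and Proposition \ref{anal-basis} gives $D_z\phi_n=\sqrt{n(n+2\lambda)}\phi_{n-1}$, so $D_{\bar z}D_z f=0$). When $p_0\le p<1$ the second half of the lemma gives $M_p(f;r)\le 2^{2/p-1}M_p(f;\rho)$ for $r<\rho<1$, i.e.\ $M_p(f;\rho)\ge 2^{1-2/p}M_p(f;r)$, which is the same kind of bound with a $p$-dependent constant. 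In either case, for $\rho\in[r,1)$,
\begin{align*}
M_p(f;\rho)^{p}\gtrsim M_p(f;r)^{p},
\end{align*}
with an implied constant depending only on $p$. Plugging this in,
\begin{align*}
\|f\|_{A_{\lambda,w}^{p}}^{p}\ge 2\int_r^1 M_p(f;\rho)^{p}w(\rho)\rho\,d\rho
\gtrsim M_p(f;r)^{p}\int_r^1 w(\rho)\,\rho\,d\rho.
\end{align*}
Finally I would replace $\int_r^1 w(\rho)\rho\,d\rho$ by $\int_r^1 w(\rho)\,d\rho$ up to a constant: for $r\ge 1/2$ this is trivial since $\rho\asymp1$, and for $0\le r<1/2$ one uses $\int_r^1 w\ge\int_{1/2}^1 w\ge c(w)\int_{1/2}^1 w\rho\,d\rho$ together with the fact that $\int_r^1 w(\rho)d\rho$ is bounded above by $\int_0^1 w<\infty$, so both sides are comparable to a positive constant on $[0,1/2]$. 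This yields $M_p(f;r)\lesssim\big(\int_r^1 w(s)ds\big)^{-1/p}\|f\|_{A_{\lambda,w}^{p}}$, which is the pointwise estimate claimed.

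For the little-$o$ statement, the idea is that the same computation localized near $1$ shows $M_p(f;r)^{p}\int_r^1 w(s)ds\lesssim \int_r^1 M_p(f;\rho)^{p}w(\rho)\,\rho\,d\rho$, and the right-hand side is the tail of a convergent integral, hence tends to $0$ as $r\to1-$. I expect the main (minor) obstacle to be the bookkeeping needed to pass cleanly from $\rho\,d\rho$ to $d\rho$ near $r=0$ and to handle the possibility that $w$ is badly behaved there — but since $w$ is only assumed integrable on $[0,1)$ and we are bounding $M_p(f;r)$ uniformly, the degenerate case is exactly the case $r$ near $0$, where $\|f\|_{A^p_{\lambda,w}}$ already dominates $M_p(f;r)$ up to the fixed constant $\big(\int_0^1w\big)^{-1/p}$ by the same argument. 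No deep tool beyond Lemma \ref{monotonicity-integral-mean-2-a-1} and elementary integration is needed.
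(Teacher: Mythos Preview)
Your approach is essentially the paper's: use Lemma~\ref{monotonicity-integral-mean-2-a-1} to get
$M_p(f;r)^p\int_r^1 w(s)\,ds\lesssim\int_r^1 M_p(f;s)^p w(s)\,ds$, then compare the latter to the tail of the polar representation of $\|f\|_{A^{p}_{\lambda,w}}^p$, handling the vanishing radial factor near $0$ by splitting at a fixed radius where $\int_\cdot^1 w>0$. The only slip is the polar formula: since $|y|^{2\lambda}=\rho^{2\lambda}|\sin\theta|^{2\lambda}$, one gets
\[
\|f\|_{A^{p}_{\lambda,w}}^p=(2\lambda+2)\int_0^1 M_p(f;\rho)^p\,w(\rho)\,\rho^{2\lambda+1}\,d\rho,
\]
not $2\int_0^1(\cdots)\rho\,d\rho$; this changes nothing in the argument, as your comparison of $\int_r^1 w$ with $\int_r^1 w\rho^{2\lambda+1}$ goes through verbatim.
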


\begin{proof} By Lemma \ref{monotonicity-integral-mean-2-a-1},
\begin{align*}
M_p(f;r)^p\int_r^1w(s)ds\lesssim\int_r^1M_p(f;s)^pw(s)ds,\qquad r\in[0,1).
\end{align*}
It remains to show that for $r\in[0,1)$,
\begin{align}\label{integral-mean-w-1}
\int_r^1M_p(f;s)^pw(s)ds\lesssim\int_r^1M_p(f;s)^pw(s)s^{2\lambda+1}ds.
\end{align}
Choose fixed $r_0\in(0,1)$ so that $\int_{r_0}^1w(s)ds>0$. For $r\in[r_0,1)$, (\ref{integral-mean-w-1}) is obvious, and for $r\in[0,r_0)$,
since
\begin{align*}
\int_r^{r_0}M_p(f;s)^pw(s)ds\lesssim M_p(f;r_0)^p\int_r^{r_0}w(s)ds
\lesssim \frac{\int_r^{r_0}w(s)ds}{\int_{r_0}^1w(s)ds}\int_{r_0}^1 M_p(f;s)^p w(s)ds,
\end{align*}
it follows that
\begin{align*}
\int_r^1M_p(f;s)^pw(s)ds\lesssim\left(\frac{\int_r^{r_0}w(s)ds}{\int_{r_0}^1w(s)ds}+1\right)\int_{r_0}^1M_p(f;s)^pw(s)s^{2\lambda+1}ds,
\end{align*}
which concludes (\ref{integral-mean-w-1}).
\end{proof}

\begin{lemma} \label{Hardy-point-evaluation-a-1} {\rm (\cite[Theorem 4.10]{LW1})}
Let $p\ge p_0$ and $f\in H_{\lambda}^p(\DD)$. Then
\begin{eqnarray*}
|f(z)|\lesssim\frac{(1-|z|)^{-1/p}}{|1-z^2|^{2\lambda/p}}\|f\|_{H_{\lambda}^{p}},\qquad z\in\DD.
\end{eqnarray*}
\end{lemma}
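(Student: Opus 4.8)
The plan is to reduce the pointwise bound on the full disk to a Hardy-space estimate on circles, by exploiting the fact that $f\in H_\lambda^p(\DD)$ controls the integral means $M_p(f;r)$ uniformly, and then to combine this with a local (single-circle) estimate of the form $|f(z)|\lesssim (\text{gap factor})\,M_p(f;\rho)$ for a slightly larger radius $\rho$. Concretely, I would first fix $z=re^{i\theta}\in\DD$ and set $\rho=(1+r)/2$, so that $1-\rho\asymp 1-r$ and $\rho\ge 1/2$ once $r\ge 0$ (treating $|z|$ small separately, where the bound is trivial since $M_p(f;\cdot)$ is bounded and $|1-z^2|$ is bounded below). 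The idea is that $f$ restricted to the disk $|\zeta|<\rho$ is, after dilation, a $\lambda$-analytic function on $\DD$ whose $H^p_\lambda$ norm is $M_p(f;\rho)\le\|f\|_{H_\lambda^p}$, so a scale-invariant pointwise bound for functions on $\DD$ evaluated at the interior point $z/\rho$ (which has modulus $r/\rho = 2r/(1+r)$, at distance $\asymp 1-r$ from $\partial\DD$) should give the claim — provided the scaling behaves well with respect to the weight $|\sin\theta|^{2\lambda}$ and the singular set $\{z^2=1\}$.

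The technical heart of the argument is a \emph{single-circle} pointwise estimate: for $f$ $\lambda$-analytic on $\DD$ and any $\rho\in(0,1)$, bound $|f(\rho e^{i\theta})|$ in terms of $M_p(f;\rho')$ for $\rho'$ slightly larger than $\rho$, with the correct dependence on $1-\rho$ and on the distance of $\rho e^{i\theta}$ to the set where $\sin\theta=0$, i.e. $\theta\in\{0,\pi\}$. I would approach this via the series representation $f(z)=\sum_n c_n\phi_n(z)$ together with the reproducing property of the $\lambda$-Cauchy kernel $C(z,\zeta)$ on circles (Proposition~\ref{Poisson-Cauchy}), writing $f(z)=\int_{-\pi}^{\pi} f(\rho' e^{i\varphi})\,\overline{(\cdot)}\,dm_\lambda(\varphi)$-type formulas and estimating the kernel using the closed form $C(z,\zeta)=\frac{1}{1-z\bar\zeta}P_0(z,\zeta)$ with $P_0$ as in (\ref{Poi-0}). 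The factor $\frac{1}{|1-z\bar\zeta|}$ produces the $(1-|z|)^{-1}$-type growth after integrating over the circle, while the hypergeometric factor $P_0$, which for $\mathrm{Im}\,z$ near $0$ is comparable to $|1-z\zeta|^{-2\lambda}$, is what generates the extra singularity $|1-z^2|^{-2\lambda}$ when $z$ approaches $\pm1$. For $p<1$ one cannot simply use Hölder; instead I would invoke Lemma~\ref{monotonicity-integral-mean-2-a-1} (the quasi-monotonicity $M_p(f;\rho)\le 2^{2/p-1}M_p(f;\rho')$) to pass to a convenient larger radius, and use the sub-mean-value / subharmonicity-type control of $|f|^p$ against $\lambda$-harmonic majorants to convert an $L^p$-mean on a circle into a pointwise bound; the $\lambda$-Poisson kernel estimates encoded in $P_0$ again supply the geometry.

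The main obstacle, I expect, is precisely tracking the \emph{anisotropic} singularity: unlike the classical case where $|f(z)|\lesssim (1-|z|)^{-1/p}\|f\|_{H^p}$, here the measure $dm_\lambda$ degenerates like $|\sin\varphi|^{2\lambda}$ at $\varphi=0,\pi$, so a point $z$ near $\pm1$ sees ``less mass'' nearby and the bound must deteriorate by the extra factor $|1-z^2|^{-2\lambda/p}$. Making this quantitative requires a careful two-regime estimate of $P_0(z,\zeta)$ — one regime where $|\mathrm{Im}\,z|$ dominates and the $_2F_1$ is harmlessly bounded, another where $z$ is close to the real axis near $\pm 1$ and the $_2F_1$ contributes its boundary behavior — and then summing/integrating these against the circle measure to extract exactly the exponents $-1/p$ and $-2\lambda/p$. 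Once the single-circle estimate is in hand with the right constants, the dilation/scaling step from the previous paragraph and the uniform bound $M_p(f;\rho')\le\|f\|_{H_\lambda^p}$ finish the proof routinely.
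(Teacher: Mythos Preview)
The paper does not prove this lemma directly---it is quoted from \cite{LW1}. However, the paper does prove the strictly more general Theorem~\ref{p-means-estimate-c}(i), whose special case $\ell=\infty$, $\tau=\delta=1/p$ is exactly the present statement: $M_{1/p,\infty}(f;r)=\sup_\theta |1-r^2e^{2i\theta}|^{2\lambda/p}|f(re^{i\theta})|\lesssim(1-r)^{-1/p}\|f\|_{H^p_\lambda}$. So that is the proof one should compare to.

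That argument is much shorter and more direct than your proposal. It runs: apply the $\lambda$-harmonic majorization (Lemma~\ref{majorization-a-1}) to get $|f(z)|^{p_0}\le P(g;z)$ with $g\in L^{p/p_0}_\lambda(\partial\DD)$ and $\|g\|_{L^{p/p_0}}\asymp\|f\|_{H^p_\lambda}^{p_0}$; then bound $P(g;z)$ by H\"older, $|P(g;z)|\le\|g\|_{L^{p/p_0}}\|P(z,\cdot)\|_{L^{(p/p_0)'}}$, and control the kernel norm via $\|P(z,\cdot)\|_{L^q}^q\le(\sup_\varphi P(z,e^{i\varphi}))^{q-1}$ together with the pointwise Poisson bound of Lemma~\ref{Poisson-kernel-estimate-a-1}, which gives $\sup_\varphi P(re^{i\theta},e^{i\varphi})\lesssim (1-r)^{-1}|1-z^2|^{-2\lambda}$. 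Taking $p_0$-th roots yields the lemma. (For $p=p_0$ one uses the measure version of the majorization and the same kernel estimate.)

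Your plan eventually reaches the right ingredient (``subharmonicity-type control of $|f|^p$ against $\lambda$-harmonic majorants''), but the surrounding scaffolding---the dilation $z\mapsto z/\rho$, the single-circle Cauchy reproducing formula, the two-regime analysis of $P_0$---is unnecessary and in fact does not obviously close for $p<1$. The Cauchy-integral route gives a linear representation of $f(z)$ in terms of boundary values, which you cannot pair with an $L^p$-norm when $p<1$; you acknowledge this and then switch to the majorant, at which point the Cauchy kernel and the dilation are no longer doing any work. The clean path is to go straight to the majorization and the Poisson kernel $\sup$-bound; the anisotropic factor $|1-z^2|^{-2\lambda/p}$ falls out immediately from $\sup_\varphi P(re^{i\theta},e^{i\varphi})$, with no separate case analysis needed.
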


\begin{theorem} \label{Bergman-point-evaluation-w-a}
Let $p_0\le p<\infty$ and let $w$ be a nonzero weight function. If $f\in A_{\lambda,w}^p(\DD)$, then
\begin{eqnarray}\label{Bergman-point-evaluation-w-1}
|f(z)|\lesssim\frac{(1-|z|)^{-1/p}}{|1-z^2|^{2\lambda/p}}\left(\int_{r^{1/2}}^1w(s)ds\right)^{-1/p}\|f\|_{A_{\lambda,w}^{p}},\qquad z\in\DD.
\end{eqnarray}
\end{theorem}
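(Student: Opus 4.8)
The plan is to reduce the weighted estimate to the Hardy-space point-evaluation bound of Lemma~\ref{Hardy-point-evaluation-a-1} by a dilation argument, and then convert the resulting $H_{\lambda}^p$-norm into a bound on an integral mean via Lemma~\ref{Bergman-mean-estimate-w-a}. The only choice that matters is the dilation parameter, which must be $\rho=\sqrt{|z|}$ so that the dilated point has modulus $\rho$ and Lemma~\ref{Bergman-mean-estimate-w-a} produces the integral $\int_{r^{1/2}}^1w(s)ds$.

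Concretely, fix $z\in\DD\setminus\{0\}$, put $r=|z|$, $\rho=r^{1/2}$, and $\zeta=z/\rho$, so that $|\zeta|=r^{1/2}=\rho<1$. Consider the dilate $f_\rho(\xi)=f(\rho\xi)$. A direct computation gives $D_{\bar{\xi}}[f(\rho\xi)]=\rho\,(D_{\bar{z}}f)(\rho\xi)=0$, so $f_\rho$ is $\lambda$-analytic on $\DD$, and since $f$ is continuous on the compact set $\overline{\rho\DD}\subset\DD$, $f_\rho$ is bounded, hence $f_\rho\in H_{\lambda}^p(\DD)$. Applying Lemma~\ref{Hardy-point-evaluation-a-1} to $f_\rho$ at $\zeta$ gives
$$
|f(z)|=|f_\rho(\zeta)|\lesssim\frac{(1-|\zeta|)^{-1/p}}{|1-\zeta^2|^{2\lambda/p}}\,\|f_\rho\|_{H_{\lambda}^{p}}.
$$
Now I would identify the three factors. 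Since $M_p(f_\rho;t)=M_p(f;\rho t)$, the monotonicity of $M_p(f;\cdot)$ from Lemma~\ref{monotonicity-integral-mean-2-a-1} (valid up to the constant $2^{2/p-1}$ for every $p\ge p_0$, as a $\lambda$-analytic function is $\lambda$-harmonic) yields $\|f_\rho\|_{H_{\lambda}^{p}}=\sup_{0\le s<\rho}M_p(f;s)\lesssim M_p(f;\rho)=M_p(f;r^{1/2})$. For the geometric factors, $1-|\zeta|=1-r^{1/2}=(1-r)/(1+r^{1/2})\asymp 1-|z|$; and writing $z=re^{i\theta}$ so that $\zeta^2=z^2/r=re^{2i\theta}$ and $z^2=r^2e^{2i\theta}$, the identity $1-r^2e^{2i\theta}=(1-re^{2i\theta})+re^{2i\theta}(1-r)$ combined with $|1-re^{2i\theta}|\ge 1-r$ gives $|1-z^2|\le 2|1-\zeta^2|$, i.e. $|1-\zeta^2|^{-2\lambda/p}\lesssim|1-z^2|^{-2\lambda/p}$. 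Substituting these into the displayed inequality and then applying the mean estimate $M_p(f;r^{1/2})\lesssim\big(\int_{r^{1/2}}^1w(s)ds\big)^{-1/p}\|f\|_{A_{\lambda,w}^{p}}$ of Lemma~\ref{Bergman-mean-estimate-w-a} gives the assertion for $z\ne0$; the case $z=0$ follows from $|f(0)|=M_p(f;0)\lesssim M_p(f;\rho)\lesssim\big(\int_\rho^1w(s)ds\big)^{-1/p}\|f\|_{A_{\lambda,w}^{p}}$ upon letting $\rho\to0^+$.

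I do not expect a serious obstacle: the two substantive ingredients are already available, and what remains are routine checks, namely that dilation preserves $\lambda$-analyticity and the elementary comparison $|1-\zeta^2|\gtrsim|1-z^2|$. If anything is delicate, it is simply the recognition that the exponent $1/2$ in the dilation is forced by the requirement $|\zeta|=\rho$ together with the shape of the mean estimate.
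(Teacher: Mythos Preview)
Your proposal is correct and follows essentially the same route as the paper: dilate by $\rho=r^{1/2}$, apply the Hardy point-evaluation (Lemma~\ref{Hardy-point-evaluation-a-1}) to $f_\rho$, bound $\|f_\rho\|_{H_\lambda^p}$ by $M_p(f;r^{1/2})$, and invoke Lemma~\ref{Bergman-mean-estimate-w-a}. The only cosmetic differences are that the paper does not single out the point $\zeta=z/\rho$ (it applies the Hardy estimate at $se^{i\theta}$ and then sets $\rho=s=r^{1/2}$, avoiding the separate treatment of $z=0$), and it replaces your one-sided inequality $|1-z^2|\le 2|1-\zeta^2|$ by the two-sided equivalence $|1-z^2|\asymp 1-r+|\sin\theta|\asymp|1-re^{2i\theta}|$; your monotonicity argument for $\|f_\rho\|_{H_\lambda^p}\lesssim M_p(f;\rho)$ via Lemma~\ref{monotonicity-integral-mean-2-a-1} is in fact slightly cleaner than the paper's appeal to \eqref{Hp-norm-ineq-1}, since it covers the endpoint $p=p_0$ directly.
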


\begin{proof}
For $0<\rho<1$ set $f_{\rho}(z)=f(\rho z)$. We apply Lemma \ref{Hardy-point-evaluation-a-1} to $f_{\rho}$ to get
\begin{eqnarray*}
|f(\rho se^{i\theta})|\lesssim\frac{(1-s)^{-1/p}}{|1-s^2e^{2i\theta}|^{2\lambda/p}}\|f_{\rho}\|_{H_{\lambda}^{p}},\qquad \rho,s\in[0,1),
\end{eqnarray*}
but from (\ref{Hp-norm-ineq-1}) and Lemma \ref{Bergman-mean-estimate-w-a},
$$
\|f_{\rho}\|_{H_{\lambda}^{p}}\lesssim M_p(f;\rho)\lesssim\left(\int_{\rho}^1w(s)ds\right)^{-1/p}\|f\|_{A_{\lambda,w}^{p}}.
$$
Combining the above two estimates and letting $\rho=s=r^{1/2}$ for $r\in[0,1)$, we have
\begin{eqnarray*}
|f(z)|\lesssim\frac{(1-r^{1/2})^{-1/p}}{|1-re^{2i\theta}|^{2\lambda/p}}\left(\int_{r^{1/2}}^1w(s)ds\right)^{-1/p}\|f\|_{A_{\lambda,w}^{p}}, \qquad z=re^{i\theta}\in\DD,
\end{eqnarray*}
that is identical with (\ref{Bergman-point-evaluation-w-1}) in view of the fact
\begin{align}\label{elementary-equality-2}
|1-z^2|\asymp 1-r+|\sin\theta|\asymp|1-re^{2i\theta}|,\qquad z=re^{i\theta}\in\DD.
\end{align}
\end{proof}

Next we turn to the completeness of the weighted $\lambda$-Bergman spaces $A^{p}_{\lambda,w}(\DD)$.

\begin{lemma}\label{analytic-converge-a-1} {\rm (\cite[Lemma 5.4]{LW1})}
Let $\{f_n\}$ be a sequence of $\lambda$-analytic functions on $\DD$. If $\{f_n\}$ converges uniformly on each compact subset of $\DD$, then its limit function is also $\lambda$-analytic in $\DD$.
\end{lemma}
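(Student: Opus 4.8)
The plan is to reconstruct the limit function from the series coefficients of the $f_n$, because uniform convergence on compacta does not by itself transmit the differential identity $D_{\bar z}f=0$ to the limit (a uniform limit of $C^2$ functions need not even be $C^2$). Denote by $f$ the limit function; it is continuous on $\DD$. By $(\ref{basis-1})$ each $\phi_n$ is homogeneous of degree $n$, so $\phi_n(re^{i\theta})=r^n\phi_n(e^{i\theta})$, and $\{1,\phi_n(e^{i\theta}),\overline{e^{i\theta}\phi_{n-1}(e^{i\theta})}:n\in\NN\}$ is an orthonormal basis of $L_{\lambda}^2(\partial\DD)$. Writing $f_n=\sum_m a_m^{(n)}\phi_m$ with locally uniform absolute convergence (Proposition $\ref{anal-thm}$), for every $r\in(0,1)$, $k\ge0$ and $j\ge1$ one gets, after exchanging sum and integral,
\begin{align*}
\int_{-\pi}^{\pi}f_n(re^{i\varphi})\overline{\phi_k(e^{i\varphi})}\,dm_{\lambda}(\varphi)&=a_k^{(n)}r^k,\\
\int_{-\pi}^{\pi}f_n(re^{i\varphi})e^{i\varphi}\phi_{j-1}(e^{i\varphi})\,dm_{\lambda}(\varphi)&=0.
\end{align*}
Since $f_n\to f$ uniformly on each circle $\{|z|=r\}$, letting $n\to\infty$ shows that $a_k:=\lim_n a_k^{(n)}$ exists, that $\int_{-\pi}^{\pi}f(re^{i\varphi})\overline{\phi_k(e^{i\varphi})}\,dm_{\lambda}(\varphi)=a_kr^k$, and that the second integral still vanishes with $f$ in place of $f_n$. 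Because $\|\phi_k\|_{L_{\lambda}^2(\partial\DD)}=1$, the first identity yields the growth bound $|a_k|\le r^{-k}M_{\infty}(f;r)$ for each $r\in(0,1)$.

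Next I would introduce the candidate $g(z)=\sum_{k\ge0}a_k\phi_k(z)$. Combining the bound on $|a_k|$ with $(\ref{phi-bound-1})$ shows that, for any $\rho<1$ and any real $\gamma$, $\sum_k(k+1)^{\gamma}|a_k|\rho^k<\infty$, so the series for $g$ converges absolutely and locally uniformly in $\DD$. This super-polynomial decay of $\{a_k\rho^k\}$ also permits term-by-term differentiation of the series for $g$ to any order on compact subsets of $\DD$, including the nonlocal difference quotient $(\phi_k(z)-\phi_k(\bar z))/(z-\bar z)$ occurring in $D_{\bar z}$ — which for the polynomial $\phi_k$ is itself a polynomial of degree $k-1$ with coefficients bounded by a constant multiple of $k\,\epsilon_k^{-1}$. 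Hence $g\in C^{\infty}(\DD)$, and since each $\phi_k$ is $\lambda$-analytic (Proposition $\ref{anal-basis}$) we get $D_{\bar z}g=\sum_k a_kD_{\bar z}\phi_k=0$; thus $g$ is $\lambda$-analytic.

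It remains to show $f=g$. Fix $r\in(0,1)$. Both $f$ and $g$ are continuous on $\{|z|=r\}$, hence belong to $L_{\lambda}^2(\partial\DD)$, and by the computations above their expansions with respect to the orthonormal basis $\{1,\phi_k(e^{i\theta}),\overline{e^{i\theta}\phi_{k-1}(e^{i\theta})}\}$ have the same coefficients: the $\phi_k$-coefficient of each equals $a_kr^k$, while the $\overline{e^{i\theta}\phi_{k-1}(e^{i\theta})}$-coefficient of each equals $0$ — for $g$ by orthonormality of $\{\phi_m\}$, and for $f$ because it is the uniform limit of the $f_n$, whose expansions involve only the $\phi_m$. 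Completeness of the basis then forces $(f-g)(re^{i\theta})=0$ for almost every $\theta$, hence for all $\theta$ by continuity; as $r\in(0,1)$ is arbitrary and $f(0)=a_0=g(0)$, we conclude $f\equiv g$ on $\DD$, so $f$ is $\lambda$-analytic. The one genuinely delicate step is the term-by-term application of the Dunkl operator $D_{\bar z}$ to the series for $g$ — in particular controlling its nonlocal part — but this is routine once the rapid decay of $\{a_k\rho^k\}$ is in hand; everything else rests only on the homogeneity of the $\phi_n$ together with the orthonormality and completeness of the boundary basis.
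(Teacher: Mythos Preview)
Your proof is correct. Note, however, that the present paper does not itself prove this lemma --- it is simply quoted from \cite[Lemma~5.4]{LW1} --- so there is no in-paper argument to compare against. Your route via coefficient extraction on circles and the completeness of the boundary basis $\{1,\phi_n(e^{i\theta}),\overline{e^{i\theta}\phi_{n-1}(e^{i\theta})}\}$ is the natural one in this setting. One small simplification for the step you flag as delicate: since $D_{\bar z}\phi_k=0$, for $\lambda>0$ the nonlocal quotient $(\phi_k(z)-\phi_k(\bar z))/(z-\bar z)$ is exactly $\lambda^{-1}\partial_{\bar z}\phi_k(z)$, so controlling it is no harder than controlling $\partial_{\bar z}\phi_k$; the single bound $|\partial_{\bar z}\phi_k(z)|\lesssim k\,\epsilon_k^{-1}|z|^{k-1}$ (immediate from \eqref{A-operator-4}) therefore justifies termwise application of $D_{\bar z}$ to $g$ on compacta.
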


\begin{theorem}\label{completeness-Bergman-w-a}
Let $p_0\le p<\infty$ and let $w$ be a nonzero radial weight function. Then the space $A^p_{\lambda,w}(\DD)$ is complete.
\end{theorem}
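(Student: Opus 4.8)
The plan is to follow the standard Riesz--Fischer argument adapted to the quasi-Banach setting. First I would take a Cauchy sequence $\{f_n\}$ in $A^p_{\lambda,w}(\DD)$ (with respect to the metric $d(f,g)=\|f-g\|_{A^p_{\lambda,w}}^{\min(p,1)}$). The point-evaluation estimate of Theorem~\ref{Bergman-point-evaluation-w-a} gives, for each fixed compact $K\subset\DD$, a constant $C_K$ with $\sup_{z\in K}|f(z)|\le C_K\|f\|_{A^p_{\lambda,w}}$ for every $f\in A^p_{\lambda,w}(\DD)$; applying this to the differences $f_n-f_m$ shows that $\{f_n\}$ is uniformly Cauchy on every compact subset of $\DD$, hence converges uniformly on compacta to some function $f$ on $\DD$. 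By Lemma~\ref{analytic-converge-a-1}, $f$ is $\lambda$-analytic on $\DD$.

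Next I would check that $f\in A^p_{\lambda,w}(\DD)$ and that $f_n\to f$ in the metric. Fix $\varepsilon>0$ and choose $N$ so that $\|f_n-f_m\|_{A^p_{\lambda,w}}^p<\varepsilon$ for $n,m\ge N$. For any $0<\rho<1$, since $f_m\to f$ uniformly on the closed disk $\{|z|\le\rho\}$, Fatou's lemma (or dominated convergence on that compact set) gives
$$
\int_{|z|\le\rho}|f_n(z)-f(z)|^p w(|z|)\,d\sigma_\lambda(z)
=\lim_{m\to\infty}\int_{|z|\le\rho}|f_n(z)-f_m(z)|^p w(|z|)\,d\sigma_\lambda(z)\le\varepsilon
$$
for $n\ge N$. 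Letting $\rho\to1-$ by monotone convergence yields $\|f_n-f\|_{A^p_{\lambda,w}}^p\le\varepsilon$ for $n\ge N$. In particular $f-f_N\in A^p_{\lambda,w}(\DD)$, and since $f_N\in A^p_{\lambda,w}(\DD)$ we get $f\in A^p_{\lambda,w}(\DD)$ (using that $\|\cdot\|_{A^p_{\lambda,w}}^{\min(p,1)}$ satisfies the triangle inequality), and $f_n\to f$ in the space. This proves completeness.

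I do not expect a serious obstacle here; the only point requiring care is the quasi-norm case $p_0\le p<1$, where $\|\cdot\|_{A^p_{\lambda,w}}$ is not a norm and one must consistently work with the metric $\|f-g\|_{A^p_{\lambda,w}}^p$ (subadditive because $0<p<1$), rather than with the triangle inequality for $\|\cdot\|_{A^p_{\lambda,w}}$ itself. The passage from local $L^p$-convergence to global convergence relies on the monotone/dominated convergence theorems applied to the increasing family of truncated integrals over $\{|z|\le\rho\}$, which is routine once the uniform-on-compacta convergence has been established via Theorem~\ref{Bergman-point-evaluation-w-a}.
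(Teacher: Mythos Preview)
Your argument is correct and follows essentially the same approach as the paper: the paper also invokes Theorem~\ref{Bergman-point-evaluation-w-a} to get locally uniform convergence of a Cauchy sequence, applies Lemma~\ref{analytic-converge-a-1} to deduce $\lambda$-analyticity of the limit, and then passes the limit inside the truncated integrals over $\DD_r$ before letting $r\to1-$ to obtain $\|f_n-f\|_{A^p_{\lambda,w}}\le\liminf_{m\to\infty}\|f_n-f_m\|_{A^p_{\lambda,w}}$. Your explicit $\varepsilon$--$N$ formulation and your remarks about the quasi-norm case are a slightly more detailed rendering of the same idea.
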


\begin{proof}
Let $\{f_n\}$ be a Cauchy sequence in $A^p_{\lambda,w}(\DD)$. For $r\in(0,1)$, it follows from Theorem \ref{Bergman-point-evaluation-w-a} that
\begin{eqnarray*}
|f_m(z)-f_n(z)|\lesssim\frac{1}{(1-r)^{(2\lambda+1)/p}}\left(\int_{r^{1/2}}^1w(s)ds\right)^{-1/p}\|f_m-f_n\|_{A_{\lambda,w}^{p}},\qquad z\in\overline{\DD}_r,
\end{eqnarray*}
which shows that $\{f_n\}$ converges to a function $f$ uniformly on each compact subset of $\DD$.
Lemma \ref{analytic-converge-a-1} asserts that $f$ is $\lambda$-analytic in $\DD$. The locally uniform convergence implies, for $r\in[0,1)$,
\begin{align*}
\int_{\DD_r}|f_n(z)-f(z)|^{p}w(|z|)d\sigma_{\lambda}(z) &=\lim_{m\rightarrow\infty}\int_{\DD_r}|f_n(z)-f_m(z)|^{p}w(|z|)d\sigma_{\lambda}(z)\\
&\le\liminf_{m\rightarrow\infty}\|f_n-f_m\|^p_{A_{\lambda,w}^{p}},
\end{align*}
so that $\|f_n-f\|_{A_{\lambda,w}^{p}}\le\liminf\limits_{m\rightarrow\infty}\|f_n-f_m\|_{A_{\lambda,w}^{p}}$. Hence $A^p_{\lambda,w}(\DD)$ is complete.
\end{proof}

The above theorem shows that for a radial weight function $w$, the weighted $\lambda$-Bergman space $A^{p}_{\lambda,w}(\DD)$ is a Banach space when $1\le p<\infty$, and a Fr\'{e}chet space with the quasi-norm $\|\cdot\|^p_{A_{\lambda,w}^{p}}$ when $p_0\le p<1$.

Finally we come to the density of $\lambda$-analytic polynomials in the weighted $\lambda$-Bergman spaces $f\in A^{p}_{\lambda,w}(\DD)$. For a $\lambda$-analytic function $f$ on $\DD$, we denote by $S_n:=S^f_n(z)$ the $n$th partial sum of the series (\ref{anal-series-1}).

\begin{lemma}\label{partial-sum-converge-a-1} {\rm (\cite[Lemma 5.1]{LW1})}
If $f(z)$ is $\lambda$-analytic in $\DD$, then for fixed $0<r_0<1$, $S_n(z)$ converges uniformly on $\overline{\DD}_{r_0}$ as $n\rightarrow\infty$, where $\DD_{r_0}=\{z:\, |z|<r_0\}$.
\end{lemma}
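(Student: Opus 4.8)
The plan is to identify $S_n(z)$ with the $n$th truncation of the $\lambda$-analytic expansion of $f$ furnished by Proposition~\ref{anal-thm}, and then to dominate the tail of that series uniformly on $\overline{\DD}_{r_0}$ by a convergent numerical series through the Weierstrass $M$-test.

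First I would invoke Proposition~\ref{anal-thm}: since $f$ is $\lambda$-analytic in $\DD$, it has the representation $f(z)=\sum_{n=0}^{\infty}c_{n}\phi_{n}(z)$ for $|z|<1$, where $c_n=\lim_{r\to1-}\int_{-\pi}^{\pi}f(re^{i\varphi})\overline{\phi_n(e^{i\varphi})}\,dm_\lambda(\varphi)$, and --- the clause that does the real work --- for every real $\gamma$ the series $\sum_{n\ge1}n^{\gamma}|c_n|r^{n}$ converges uniformly for $r$ in every closed subset of $[0,1)$. Applying this with $\gamma=\lambda$ at the point $r=r_0$ gives $\sum_{n\ge1}n^{\lambda}|c_n|r_0^{n}<\infty$.

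Next I would use the pointwise size estimate (\ref{phi-bound-1}), $|\phi_n(z)|\le\epsilon_n^{-1}|z|^n\asymp(n+1)^{\lambda}|z|^n/\sqrt{\Gamma(2\lambda+1)}$, so that for every $z$ with $|z|\le r_0$,
\[
|c_n\phi_n(z)|\lesssim (n+1)^{\lambda}|c_n|\,r_0^{n}.
\]
The right-hand side is independent of $z$ and, by the previous step, is the general term of a convergent series; hence $\sum_n c_n\phi_n(z)$ converges uniformly on $\overline{\DD}_{r_0}$. Since the series converges pointwise to $f$, its partial sums $S_n$ converge to $f$ uniformly on $\overline{\DD}_{r_0}$. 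Concretely, $\sup_{|z|\le r_0}|f(z)-S_n(z)|\le\sum_{k>n}|c_k|\,|\phi_k(z)|\lesssim\sum_{k>n}(k+1)^{\lambda}|c_k|\,r_0^{k}\to0$ as $n\to\infty$, which is the assertion.

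There is no essential obstacle: the lemma is a direct consequence of the last statement of Proposition~\ref{anal-thm}(iii) combined with the growth bound (\ref{phi-bound-1}). The only point calling for a little care is the bookkeeping of exponents --- the basis functions $\phi_n$ grow like $(n+1)^{\lambda}$ on the unit circle (equivalently $\epsilon_n^{-1}\asymp(n+1)^{\lambda}$), so one must feed the coefficient-decay statement the value $\gamma=\lambda$ (any $\gamma\ge\lambda$ would do) rather than $\gamma=0$, in order that the $M$-test majorant series converge.
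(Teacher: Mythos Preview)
Your argument is correct. The paper does not supply its own proof of this lemma---it simply quotes the result from \cite[Lemma 5.1]{LW1}---and your derivation via the last clause of Proposition~\ref{anal-thm}(iii) together with the bound (\ref{phi-bound-1}) and the Weierstrass $M$-test is exactly the natural route and presumably what the cited reference does.
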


\begin{theorem} \label{Bergman-density-w-a}
Let $p_0\le p<\infty$ and let $w$ be a nonzero weight function. Then the set of $\lambda$-analytic polynomials is dense in the $\lambda$-Bergman space $A^{p}_{\lambda,w}(\DD)$; and in particular, the system
\begin{align*}
\left\{1, \,\, \phi_{n}^{\lambda}(z)/\|\phi_n\|_{A_{\lambda,w}^{2}}, \,\, n\in\NN\right\}
\end{align*}
is an orthonormal basis of the Hilbert space $A^{2}_{\lambda,w}(\DD)$, where
$$
\|\phi_n\|_{A_{\lambda,w}^{2}}=\left(2(\lambda+1)\int_0^1s^{2n+2\lambda+1}w(s)ds\right)^{1/2}.
$$
\end{theorem}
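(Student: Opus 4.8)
The plan is to establish the density statement first and then read off the orthonormal basis assertion from it together with a short orthogonality computation.

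For the density, I would not try to approximate $f\in A^{p}_{\lambda,w}(\DD)$ directly by its partial sums $S^{f}_{n}$: for $p\le1$ (as for $p=\infty$) the $p$-means of $S^{f}_{n}$ are not controlled by those of $f$, so Lemma~\ref{partial-sum-converge-a-1} alone only yields locally uniform convergence with no grip on the tail near $\partial\DD$. Instead I would use dilations. For $0<\rho<1$ set $f_{\rho}(z)=f(\rho z)$; since each $\phi^{\lambda}_{n}$ is homogeneous of degree $n$ in $(z,\bar z)$, one has $\phi^{\lambda}_{n}(\rho z)=\rho^{n}\phi^{\lambda}_{n}(z)$, hence $f_{\rho}(z)=\sum_{n\ge0}c_{n}\rho^{n}\phi^{\lambda}_{n}(z)$ and its $n$th partial sum $S^{f_{\rho}}_{n}=S^{f}_{n}(\rho\,\cdot)$ is a $\lambda$-analytic polynomial. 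Using the ``moreover'' clause of Proposition~\ref{anal-thm} with $\gamma=\lambda$ together with (\ref{phi-bound-1}), the majorant $|c_{n}\rho^{n}\phi^{\lambda}_{n}(z)|\lesssim(n+1)^{\lambda}|c_{n}|\rho^{n}$, valid for all $z\in\overline{\DD}$, shows that $\sum_{n}c_{n}\rho^{n}\phi^{\lambda}_{n}$ converges uniformly on $\overline{\DD}$ to $f_{\rho}$; since $\int_{\DD}w(|z|)\,d\sigma_{\lambda}(z)=2(\lambda+1)\int_{0}^{1}r^{2\lambda+1}w(r)\,dr<\infty$, this gives $\|S^{f_{\rho}}_{n}-f_{\rho}\|_{A^{p}_{\lambda,w}}\to0$ as $n\to\infty$ for each fixed $\rho$.

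The crux is then to prove $\|f_{\rho}-f\|_{A^{p}_{\lambda,w}}\to0$ as $\rho\to1-$. Passing to polar coordinates, $\|f_{\rho}-f\|^{p}_{A^{p}_{\lambda,w}}=2(\lambda+1)\int_{0}^{1}r^{2\lambda+1}w(r)\,M_{p}(f_{\rho}-f;r)^{p}\,dr$, and for each fixed $r$ the uniform continuity of the $C^{2}$ function $f$ on $\overline{\DD}_{r}$ gives $M_{p}(f_{\rho}-f;r)\to0$. To interchange limit and integral I would dominate: from $M_{p}(f_{\rho};r)=M_{p}(f;\rho r)$ and Lemma~\ref{monotonicity-integral-mean-2-a-1} (monotonicity of $M_{p}$ when $p\ge1$, and $M_{p}(f;r')\le2^{2/p-1}M_{p}(f;r)$ when $p_{0}\le p<1$) one obtains $M_{p}(f_{\rho}-f;r)^{p}\lesssim M_{p}(f;r)^{p}$ uniformly in $\rho$, and $\int_{0}^{1}r^{2\lambda+1}w(r)M_{p}(f;r)^{p}\,dr=\|f\|^{p}_{A^{p}_{\lambda,w}}/(2(\lambda+1))<\infty$; dominated convergence then finishes it. Combining the two convergences via the triangle inequality when $p\ge1$, and via $\|f-h\|^{p}_{A^{p}_{\lambda,w}}\le\|f-g\|^{p}_{A^{p}_{\lambda,w}}+\|g-h\|^{p}_{A^{p}_{\lambda,w}}$ when $p<1$, proves density of the $\lambda$-analytic polynomials. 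I expect this domination step, and the innocuous-looking reduction to dilates, to be the only genuine obstacle; the rest is bookkeeping.

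For the orthonormal basis I would compute directly, using $\phi^{\lambda}_{n}(re^{i\theta})=r^{n}\phi^{\lambda}_{n}(e^{i\theta})$, polar coordinates, and the orthonormality of $\{1,\phi^{\lambda}_{n}(e^{i\theta}),\overline{e^{i\theta}\phi^{\lambda}_{n-1}(e^{i\theta})}\}$ in $L^{2}_{\lambda}(\partial\DD)$, that
$$\int_{\DD}\phi^{\lambda}_{m}(z)\overline{\phi^{\lambda}_{n}(z)}\,w(|z|)\,d\sigma_{\lambda}(z)=2(\lambda+1)\,\delta_{mn}\int_{0}^{1}s^{2n+2\lambda+1}w(s)\,ds,\qquad m,n\ge0.$$
This simultaneously yields the mutual orthogonality of $\{1,\phi^{\lambda}_{n},\ n\ge1\}$ in $A^{2}_{\lambda,w}(\DD)$ and the asserted value of $\|\phi_{n}\|_{A^{2}_{\lambda,w}}$, which is finite and nonzero since $0<\int_{0}^{1}s^{2n+2\lambda+1}w(s)\,ds\le\int_{0}^{1}w(s)\,ds<\infty$ for a nonzero (positive-valued, Lebesgue-integrable) weight $w$. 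Since the linear span of $\{1,\phi^{\lambda}_{1},\phi^{\lambda}_{2},\dots\}$ is precisely the space of $\lambda$-analytic polynomials, which is dense in $A^{2}_{\lambda,w}(\DD)$ by the first part, the normalized system is a complete orthonormal set, i.e., an orthonormal basis of $A^{2}_{\lambda,w}(\DD)$.
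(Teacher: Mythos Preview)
Your proof is correct and follows essentially the same approach as the paper: approximate $f$ first by its dilate $f_{\rho}$ via dominated convergence (using Lemma~\ref{monotonicity-integral-mean-2-a-1} for the majorant and uniform continuity on $\overline{\DD}_r$ for the pointwise limit), then approximate $f_{\rho}$ by partial sums via uniform convergence on $\overline{\DD}$. Your orthogonality computation is simply a more explicit version of the paper's one-line appeal to $\|\phi_n\|_{L^2_\lambda(\partial\DD)}=1$.
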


\begin{proof}
For $f\in A^{p}_{\lambda,w}(\DD)$, set $f_s(z)=f(sz)$ for $0\le s<1$.
We assert that $\|f_s-f\|_{A_{\lambda,w}^{p}}\rightarrow0$ as $s\rightarrow1-$, namely,
\begin{eqnarray}\label{Ap-norm-convergence-w-1}
\lim_{s\rightarrow1-}\int_0^1M_p(f_s-f;r)^p\,w(r)r^{2\lambda+1}dr=0.
\end{eqnarray}
Indeed, applying (\ref{Hp-norm-convergence-1}) yields $\lim_{s\rightarrow1-}M_p(f_s-f;r)=0$ for $0\le r<1$ (indeed, for all $p\ge p_0$ and fixed $r$ this follows from the uniform continuity of $f$ on $\overline{\DD}_r$), and for all $0\le r,s<1$, $M_p(f_s-f;r)\lesssim M_p(f;rs)+M_p(f;r)\lesssim M_p(f;r)$ by Lemma \ref{monotonicity-integral-mean-2-a-1}. Thus (\ref{Ap-norm-convergence-w-1}) follows immediately by Lebesgue's dominated convergence theorem. In addition, by Lemma \ref{partial-sum-converge-a-1}, for fixed $s\in(0,1)$ $\|(S_n)_s-f_s\|_{A^{p}_{\lambda,w}}\rightarrow0$ as $n\rightarrow\infty$. Thus the density in Theorem \ref{Bergman-density-w-a} is proved.
The last assertion in the theorem follows from the fact $\|\phi_n\|_{L_{\lambda}^{2}(\partial\DD)}=1$.
\end{proof}

\section{Operators mapping $A^{p}_{\lambda,w}(\DD)$ into a general Banach space}

\subsection{The main theorems and lemmas}

Let $(X,\|\cdot\|_X)$ be a Banach space and $T$ a linear operator mapping a $\lambda$-analytic polynomial into an element in $X$. In this section, we give a characterization of the boundedness of the operator $T$ from $A^{p}_{\lambda,w}(\DD)$ to $X$ by the behaviour of a single vector-valued $\lambda$-analytic function related to $T$.

We define the function $K_{\lambda,\alpha}(z,\zeta)$ for $|z\zeta|<1$, called the $(\lambda,\alpha)$-Bergman kernel, by
\begin{eqnarray}\label{Bergman-kernel-1}
K_{\lambda,\alpha}(z,\zeta)=\sum_{n=0}^{\infty}
\frac{\Gamma(\lambda+1)\Gamma(n+\lambda+\alpha+2)}{\Gamma(\lambda+\alpha+2)\Gamma(n+\lambda+1)}
\phi_{n}(z)\phi_{n}(\zeta).
\end{eqnarray}
It is obvious that $K_{\lambda,-1}(z,\overline{\zeta})$ is identical with the $\lambda$-Cauchy kernel $C(z,\zeta)$, and $K_{\lambda,0}(z,\overline{\zeta})$ with the $\lambda$-Bergman kernel $K_{\lambda}(z,\zeta)$ defined in \cite{LW1}.

Letting $x_n=T\phi_n$($\in X$) for $n=0,1,\cdots$, we define
\begin{eqnarray}\label{F-functin-1}
F_{\lambda,\alpha}(z)=\sum_{n=0}^{\infty}
\frac{\Gamma(\lambda+1)\Gamma(n+\lambda+\alpha+2)}{\Gamma(\lambda+\alpha+2)\Gamma(n+\lambda+1)}\,
x_n\phi_{n}(z),\qquad z\in\DD.
\end{eqnarray}

Our main theorems in this section are stated as follows.

\begin{theorem} \label{operator-boundedness-main-a}
Let $p_0\le p\le1$ and $0<q<\infty$, and let $w$ be a nonzero weight function satisfying the condition (\ref{weight-condition-1-1}). If $F_{\lambda,\alpha}$ is an $X$-valued $\lambda$-analytic function and satisfies
\begin{align}\label{F-function-norm-1}
\|F_{\lambda,\alpha}(z)\|_X^p\lesssim \frac{|1-z^{2}|^{2\lambda(1-p)}}{(1-|z|)^{(\alpha+2)p-1}}\int_{|z|^{\frac{1}{2}}}^1w(s)ds, \qquad z\in\DD,
\end{align}
where
\begin{align}\label{parameter-equation-1}
\alpha=\frac{q+2\lambda+1}{p}-2\lambda-2,
\end{align}
then $T$ can be extended to a bounded operator from $A^{p}_{\lambda,w}(\DD)$ to $X$.
\end{theorem}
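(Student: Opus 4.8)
The plan is to reduce the boundedness of $T$ to a duality-type estimate on $\lambda$-analytic polynomials, using $F_{\lambda,\alpha}$ as a reproducing device. First I would establish a reproducing identity: for a $\lambda$-analytic polynomial $g(z)=\sum_n c_n\phi_n(z)$, one has $Tg=\sum_n c_n x_n$, and this should be recoverable as a pairing of $g$ against $F_{\lambda,\alpha}$ with respect to a suitable weighted measure on $\DD$. Concretely, using the orthogonality relations $\int_{-\pi}^{\pi}\phi_n(re^{i\theta})\overline{\phi_m(re^{i\theta})}\,dm_\lambda(\theta)=\delta_{nm}r^{2n}$ and the Beta-integral $\int_0^1 r^{2n+2\lambda+1}(1-r^2)^{\alpha+1}\,dr=\tfrac12 B(n+\lambda+1,\alpha+2)$, the coefficients $\Gamma(\lambda+1)\Gamma(n+\lambda+\alpha+2)/(\Gamma(\lambda+\alpha+2)\Gamma(n+\lambda+1))$ in (\ref{F-functin-1}) are (up to a normalizing constant) exactly the reciprocals of $\int_\DD|\phi_n(z)|^2(1-|z|^2)^{\alpha+1}\,d\sigma_\lambda(z)$. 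Hence
\begin{align*}
Tg=c_{\lambda,\alpha}\int_{\DD}\big\langle F_{\lambda,\alpha}(z),\,\overline{g(z)}\big\rangle\,(1-|z|^2)^{\alpha+1}\,d\sigma_\lambda(z)
\end{align*}
in an appropriate weak sense (the bracket meaning the $X$-valued function $F$ scaled by the scalar $\overline{g(z)}$); I would first verify this for polynomials, where all sums are finite.

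Next, the core estimate. Taking $X$-norms and applying the hypothesis (\ref{F-function-norm-1}), I would bound
\begin{align*}
\|Tg\|_X\lesssim\int_{\DD}\|F_{\lambda,\alpha}(z)\|_X\,|g(z)|\,(1-|z|^2)^{\alpha+1}\,d\sigma_\lambda(z)
\lesssim\int_{\DD}\frac{|1-z^2|^{2\lambda(1/p-1)}}{(1-|z|)^{\alpha+2-1/p}}\Big(\int_{|z|^{1/2}}^1w(s)\,ds\Big)^{1/p}|g(z)|\,(1-|z|^2)^{\alpha+1}\,d\sigma_\lambda(z).
\end{align*}
Now I would invoke the pointwise bound on $|g(z)|$ from Theorem \ref{Bergman-point-evaluation-w-a}, but only to the power $1-p$ (using $|g|=|g|^p\,|g|^{1-p}$), leaving a factor $|g(z)|^p$ inside the integral. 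The factor $|g(z)|^{1-p}$ contributes $(1-|z|)^{-(1-p)/p}|1-z^2|^{-2\lambda(1-p)/p}\big(\int_{|z|^{1/2}}^1 w\big)^{-(1-p)/p}\|g\|_{A^p_{\lambda,w}}^{1-p}$. Multiplying the powers of $(1-|z|)$, of $|1-z^2|$, and of $\int_{|z|^{1/2}}^1 w$, and choosing $\alpha$ via (\ref{parameter-equation-1}) precisely so these exponents collapse, the remaining integral is
\begin{align*}
\|Tg\|_X\lesssim\|g\|_{A^p_{\lambda,w}}^{1-p}\int_{\DD}|g(z)|^p\,\Big(\int_{|z|^{1/2}}^1 w(s)\,ds\Big)^{1/q\cdot(\cdot)}\cdots d\sigma_\lambda(z),
\end{align*}
and the Dini condition (\ref{weight-condition-1-1}), which gives $\int_{|z|^{1/2}}^1 w(s)\,ds\asymp(1-|z|)w(|z|)$ up to the one-sided bound needed, converts the leftover weight factor into $w(|z|)$ times a bounded quantity. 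This yields $\|Tg\|_X\lesssim\|g\|_{A^p_{\lambda,w}}^{1-p}\cdot\|g\|_{A^p_{\lambda,w}}^{p}=\|g\|_{A^p_{\lambda,w}}$, as desired. Finally, density of $\lambda$-analytic polynomials in $A^p_{\lambda,w}(\DD)$ (Theorem \ref{Bergman-density-w-a}) extends $T$ to all of $A^p_{\lambda,w}(\DD)$.

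The main obstacle I anticipate is the bookkeeping in the exponent-matching step: one must track four competing factors — the powers of $(1-|z|)$, the powers of $|1-z^2|$ (which via (\ref{elementary-equality-2}) behaves like $1-|z|+|\sin\theta|$ and interacts with the measure $|y|^{2\lambda}\,dx\,dy$), the power of $\int_{|z|^{1/2}}^1 w$, and the power $1-p$ split of the point-evaluation bound — and verify that the choice (\ref{parameter-equation-1}) makes the residual integral $\int_\DD|g(z)|^p w(|z|)\,d\sigma_\lambda(z)$ exactly, with no leftover divergent factor near $|z|\to1$ or near $\sin\theta\to0$. A secondary subtlety is justifying the reproducing identity beyond polynomials, i.e. controlling the interchange of $T$ (only assumed defined on polynomials) with limits; this is handled by working with polynomials throughout and passing to the limit only at the very end via the a priori bound just obtained, together with the point-evaluation estimate to identify the limiting operator. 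I would also need the hypothesis that $F_{\lambda,\alpha}$ is genuinely $X$-valued $\lambda$-analytic (not merely formal) to make sense of the integral; this is exactly what is assumed.
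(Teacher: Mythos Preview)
Your approach is correct in outline and, once the bookkeeping is carried through, does yield the bound. It is however genuinely different from the paper's argument, so let me compare.

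Both proofs begin identically: derive the reproducing identity (your exponent $(1-|z|^2)^{\alpha+1}$ should be $(1-|z|^2)^{\alpha}$ --- check the Beta integral against the coefficients $\tau_n$ in (\ref{tau-notation-1})), bound $\|Tg\|_X$ by the integral of $\|F_{\lambda,\alpha}(z)\|_X\,|g(z)|\,(1-|z|^2)^{\alpha}$, and insert (\ref{F-function-norm-1}). At this point the paper integrates in $\theta$ first, producing the weighted mean $M_{p^{-1}-1,1}(g;r)$ of (\ref{p-means-1}), and then invokes Lemma~\ref{p-means-estimate-a} (whose proof occupies Section~5 and rests on the $\lambda$-harmonic majorization of Lemma~\ref{majorization-a-1}) to obtain $M_{p^{-1}-1,1}(g;r)\lesssim(1-r)^{1-1/p}M_p(g;r^{1/2})$. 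After a change of variable $t=r^{1/2}$ the paper factors $M_p(g;t)=M_p(g;t)^{1-p}\cdot M_p(g;t)^{p}$, bounds the first factor via $\Lambda(g)=\sup_t M_p(g;t)^p\int_t^1 w$, and finishes with (\ref{weight-condition-1-1}).

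You instead factor $|g(z)|=|g(z)|^{1-p}\cdot|g(z)|^{p}$ \emph{pointwise} and bound $|g(z)|^{1-p}$ by Theorem~\ref{Bergman-point-evaluation-w-a}. The exponents then collapse exactly as you anticipated: the $|1-z^2|$ powers cancel, the $(1-|z|)$ powers leave $(1-|z|)^{-1}$, and the $\int_{|z|^{1/2}}^1 w$ powers leave exponent~$1$, so one arrives at
\[
\|Tg\|_X\lesssim\|g\|_{A^p_{\lambda,w}}^{1-p}\int_{\DD}\frac{1}{1-|z|}\Big(\int_{|z|^{1/2}}^1 w(s)\,ds\Big)|g(z)|^p\,d\sigma_\lambda(z).
\]
One small point you glossed over: Dini applied at $r=|z|^{1/2}$ gives $w(|z|^{1/2})$, not $w(|z|)$; the clean fix is to first use the trivial inclusion $\int_{|z|^{1/2}}^1 w\le\int_{|z|}^1 w$ and \emph{then} apply (\ref{weight-condition-1-1}) at $r=|z|$, which yields $\lesssim(1-|z|)w(|z|)$ and closes the argument. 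Note also that the parameter $q$ (equivalently the specific value of $\alpha$) plays no role in this direction; it enters only in Theorem~\ref{operator-boundedness-main-b} through condition~(\ref{weight-condition-1-2}).

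What each approach buys: your route is shorter and avoids the entire $M_{\tau,p}$ machinery of Section~5, relying only on the point-evaluation Theorem~\ref{Bergman-point-evaluation-w-a} (itself a consequence of the Hardy-space pointwise bound, Lemma~\ref{Hardy-point-evaluation-a-1}). The paper's route, via Lemma~\ref{p-means-estimate-a}, is more elaborate but develops tools (the sharp $M_{\tau,\ell}$ estimates of Theorem~\ref{p-means-estimate-c}) that are of independent interest and may be needed elsewhere.
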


\begin{theorem} \label{operator-boundedness-main-b}
Let $p_0\le p\le1$ and $0<q<\infty$, and let $w$ be a weight function satisfying the condition (\ref{weight-condition-1-2}). If the operator $T$ has a bounded extension from $A^{p}_{\lambda,w}(\DD)$ to $X$, then $F_{\lambda,\alpha}$ is an $X$-valued $\lambda$-analytic function and satisfies (\ref{F-function-norm-1}) with $\alpha$ given by (\ref{parameter-equation-1}).
\end{theorem}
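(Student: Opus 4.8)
The plan is to run the duality-type argument in reverse: we start from the assumption that $T$ is bounded, i.e. $\|T f\|_X \lesssim \|f\|_{A^p_{\lambda,w}}$ for all $\lambda$-analytic polynomials $f$, and we must deduce both that the series defining $F_{\lambda,\alpha}(z)$ converges to an $X$-valued $\lambda$-analytic function on $\DD$ and that it satisfies the pointwise norm bound \eqref{F-function-norm-1}. The natural device is to recognize $F_{\lambda,\alpha}(z)$ as $T$ applied (formally) to the kernel $\zeta \mapsto K_{\lambda,\alpha}(z,\zeta)$, or more precisely to a suitable "reproducing" function: since $x_n = T\phi_n$, we have $F_{\lambda,\alpha}(z) = \sum_n a_n(\alpha)\, (T\phi_n)\, \phi_n(z)$ with $a_n(\alpha)=\Gamma(\lambda+1)\Gamma(n+\lambda+\alpha+2)/(\Gamma(\lambda+\alpha+2)\Gamma(n+\lambda+1)) \asymp (n+1)^{\alpha+1}$. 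First I would fix $z\in\DD$ and consider the partial sums $F_N(z)=\sum_{n=0}^N a_n(\alpha)\phi_n(z)\,x_n = T\big(\sum_{n=0}^N a_n(\alpha)\phi_n(z)\,\phi_n\big)$; the inner function is a $\lambda$-analytic polynomial $g_{z,N}(\zeta)$, so $\|F_N(z)\|_X \le \|T\|\cdot\|g_{z,N}\|_{A^p_{\lambda,w}}$. Thus everything reduces to proving the norm estimate
\begin{align}\label{kernel-norm-goal}
\Big\|\sum_{n=0}^{\infty} a_n(\alpha)\,\phi_n(z)\,\phi_n(\cdot)\Big\|_{A^p_{\lambda,w}}^p \;\lesssim\; \frac{|1-z^{2}|^{2\lambda(1-p)}}{(1-|z|)^{(\alpha+2)p-1}}\int_{|z|^{1/2}}^1 w(s)\,ds,
\end{align}
uniformly in $z$, i.e. a sharp $A^p_{\lambda,w}$-norm estimate for the $(\lambda,\alpha)$-Bergman kernel $\zeta\mapsto K_{\lambda,\alpha}(z,\zeta)$ (note $\sum_n a_n(\alpha)\phi_n(z)\phi_n(\zeta)=K_{\lambda,\alpha}(z,\bar\zeta)$ up to the conjugation convention in \eqref{Bergman-kernel-1}). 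Once \eqref{kernel-norm-goal} is in hand, the bound on $\|F_N(z)\|_X$ is uniform in $N$ and locally uniform in $z$; a Cauchy-sequence argument (using that $g_{z,N}-g_{z,M}$ also has small norm, via tail estimates of the same kernel bound, or simply that $F_N$ is Cauchy in $X$ pointwise and locally uniformly by Lemma~\ref{partial-sum-converge-a-1} applied componentwise) gives convergence of $F_N(z)$ to some $F_{\lambda,\alpha}(z)\in X$, and $\lambda$-analyticity of $F_{\lambda,\alpha}$ follows from the $X$-valued analogue of Lemma~\ref{analytic-converge-a-1} together with the series representation. Passing to the limit in $\|F_N(z)\|_X\le\|T\|\,\|g_{z,N}\|_{A^p_{\lambda,w}}$ then yields exactly \eqref{F-function-norm-1}.

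So the heart of the matter — and the main obstacle — is establishing the sharp kernel norm estimate \eqref{kernel-norm-goal}, which is precisely what the paper defers to Sections 6 and 7 (the "sharp estimates of the $(\lambda,\alpha)$-Bergman kernel $K_{\lambda,\alpha}(z,\zeta)$"). The strategy there, which I would follow, is: use \eqref{Cauchy-kernel-2-2} and the integral formula \eqref{A-operator-4} for $\phi_n$ to get a closed form or a tractable integral representation of $K_{\lambda,\alpha}(z,\zeta)$ in terms of $(1-z\bar\zeta)^{-(\alpha+2)}$ times a bounded hypergeometric-type factor $P_0$-like correction, obtaining a pointwise bound of the shape $|K_{\lambda,\alpha}(z,\zeta)|\lesssim |1-z\bar\zeta|^{-(\lambda+\alpha+2)}\,|1-z\zeta|^{-\lambda}$ or similar (the two $|1-z\bar\zeta|$ and $|1-z\zeta|$ factors reflecting the Dunkl structure, cf. the two forms of $P_0$ in \eqref{Poi-0}); then compute $M_p(K_{\lambda,\alpha}(z,\cdot);r)$ by a Forelli–Rudin / Gegenbauer-type integral estimate, and finally integrate in $r$ against $w(r)r^{2\lambda+1}dr$. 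The key analytic inputs are: (i) the standard asymptotics $\int_{-\pi}^\pi |1-\rho e^{i\theta}|^{-\beta}|\sin\theta|^{2\lambda}\,d\theta \asymp (1-\rho)^{2\lambda+1-\beta}$ for $\beta>2\lambda+1$ (and its refinements for $\beta$ at or below that threshold), adapted here because of the extra $|1-z\zeta|^{-\lambda}$ factor and the dependence on both $|z|$ and $\arg z$; (ii) the weight condition \eqref{weight-condition-1-2} with the specific $q$ tied to $\alpha$ via \eqref{parameter-equation-1} — this is what converts $\int_0^1 (1-rs)^{-\text{(power)}} w(r)r^{2\lambda+1}dr$ into the right-hand side of \eqref{kernel-norm-goal}, by splitting the integral at $r=|z|$ (or $|z|^{1/2}$) and using the $b_q$-type comparison on $[0,|z|]$ while trivially bounding the tail on $[|z|,1]$ by $\int_{|z|^{1/2}}^1 w$; and (iii) the elementary geometric equivalence \eqref{elementary-equality-2}, which is what produces the $|1-z^2|^{2\lambda(1-p)}$ factor (coming from $|1-z\zeta|^{-\lambda p}$ evaluated near $\zeta\sim\bar z$, combined with the $\sin$-weight). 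The delicate point is bookkeeping the exponents so that the power of $(1-|z|)$ comes out exactly $(\alpha+2)p-1$ and the auxiliary power of $|1-z^2|$ exactly $2\lambda(1-p)$; here $p\le 1$ is used to pull the $p$-th power inside sums/convexity where needed, and $p\ge p_0=2\lambda/(2\lambda+1)$ guarantees the relevant exponent $(\alpha+2)p = q+2\lambda+1 - (2\lambda+2)p + 2p$... wait, more simply $\alpha+2 = (q+2\lambda+1)/p - 2\lambda$, so $(\alpha+2)p = q+2\lambda+1-2\lambda p$, and one checks $\beta := \lambda p + (\alpha+2)p = q + 2\lambda + 1 - \lambda p > 2\lambda+1$ iff $q > \lambda p$, which together with $q>0$ and the admissible range of $p$ keeps us in the regime where the Forelli–Rudin estimate is of the clean power type — this threshold check is exactly where the hypotheses $p\le 1$ and $p\ge p_0$ enter.

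Finally, two subsidiary points need care. First, one must justify that $F_{\lambda,\alpha}$ as defined by the series is the "correct" object, i.e. that $T\phi_n = x_n$ really are its coefficients in the sense that $F_{\lambda,\alpha}$ recovers $T$; but for the statement of Theorem~\ref{operator-boundedness-main-b} as given we only need existence, $\lambda$-analyticity, and the bound \eqref{F-function-norm-1}, so this is not strictly required here (it belongs to the applications). Second, in the Cauchy-sequence step for $F_N(z)$ I would want a tail version of \eqref{kernel-norm-goal}, namely $\|\sum_{n=M}^{N} a_n(\alpha)\phi_n(z)\phi_n(\cdot)\|_{A^p_{\lambda,w}}\to 0$ as $M,N\to\infty$ for fixed $z$; since $a_n(\alpha)\phi_n(z)\phi_n(\zeta)$ for fixed $z$ with $|z|<1$ decays geometrically in $n$ (each $|\phi_n(z)|\lesssim (n+1)^\lambda|z|^n$ by \eqref{phi-bound-1}), the tail is a $\lambda$-analytic polynomial of small sup-norm on $\overline\DD_{r}$ for any $r<1$, hence small in $A^p_{\lambda,w}$ — this is elementary and needs no new estimate. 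With \eqref{kernel-norm-goal} granted from Sections 6–7, the proof assembles in a few lines: fix $z$, write $F_N(z)=Tg_{z,N}$, bound $\|F_N(z)\|_X\le\|T\|\,\|g_{z,N}\|_{A^p_{\lambda,w}}$, let $N\to\infty$ using the tail estimate and completeness of $X$ to get $F_{\lambda,\alpha}(z)=\lim F_N(z)$, and conclude $\|F_{\lambda,\alpha}(z)\|_X\le\|T\|\,\|K_{\lambda,\alpha}(z,\cdot)\|_{A^p_{\lambda,w}}$, which is the asserted \eqref{F-function-norm-1}; $\lambda$-analyticity of $z\mapsto F_{\lambda,\alpha}(z)$ is then immediate from the locally uniform convergence of the partial sums and the $X$-valued version of Lemma~\ref{analytic-converge-a-1}.
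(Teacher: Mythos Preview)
Your proposal is correct and follows essentially the same route as the paper: write the partial sums $F_N(z)=T\bigl(\sum_{n\le N}\tau_n\phi_n(z)\phi_n(\cdot)\bigr)$, bound $\|F_N(z)\|_X$ by $\|T\|$ times the $A^p_{\lambda,w}$-norm of the truncated kernel, show the tail goes to zero (the paper does this in one line via the $p$-subadditivity $\|\sum\|^p\le\sum\|\cdot\|^p$ and the crude bound $\|\phi_n\|_{A^p_{\lambda,w}}^p\lesssim(n+1)^{\lambda p}$, which is exactly your geometric-decay remark), and then pass to the limit and invoke the kernel norm estimate $\|K_{\lambda,\alpha}(z,\cdot)\|_{A^p_{\lambda,w}}^p\lesssim |1-z^2|^{2\lambda(1-p)}(1-|z|)^{1-(\alpha+2)p}\int_{|z|^{1/2}}^1 w$, which is Lemma~\ref{kernel-p-norm-a}. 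Your sketch of how to prove that kernel estimate (pointwise bound on $K_{\lambda,\alpha}$, then a Forelli--Rudin-type angular integral, then split the radial integral at $|z|^{1/2}$ and use condition~\eqref{weight-condition-1-2}) is also what the paper does in Lemma~\ref{kernel-p-norm-a} together with Sections~6--7, though the paper's pointwise bound (Theorem~\ref{Bergman-type-kernel-a}) has the more symmetric form $|K_{\lambda,\alpha}(z,w)|\lesssim |1-zw|^{-1}(|1-z\bar w|+|1-zw|)^{-2\lambda}\bigl(|1-z\bar w|^{-\alpha-1}+|1-zw|^{-\alpha-1}\bigr)$ rather than your guessed $|1-z\bar\zeta|^{-(\lambda+\alpha+2)}|1-z\zeta|^{-\lambda}$.
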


Combining Theorems \ref{operator-boundedness-main-a} and \ref{operator-boundedness-main-b} we have the following theorem.

\begin{theorem} \label{operator-boundedness-main-c}
Let $p_0\le p\le1$ and $0<q<\infty$, and let $w$ be a nonzero weight function satisfying the conditions (\ref{weight-condition-1-1}) and (\ref{weight-condition-1-2}). Then $T$ can be extended to a bounded operator from $A^{p}_{\lambda,w}(\DD)$ to $X$ if and only if $F_{\lambda,\alpha}$ is an $X$-valued $\lambda$-analytic function and satisfies  (\ref{F-function-norm-1}) with $\alpha$ given by (\ref{parameter-equation-1}).
\end{theorem}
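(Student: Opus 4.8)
The plan is to deduce Theorem~\ref{operator-boundedness-main-c} as a formal consequence of Theorems~\ref{operator-boundedness-main-a} and \ref{operator-boundedness-main-b}, since the hypotheses of the combined statement are exactly the union of the hypotheses of the two component theorems, and the conclusion is the biconditional assembled from their two implications. Concretely, fix $p$ with $p_0\le p\le1$, fix $q\in(0,\infty)$, let $\alpha$ be given by \eqref{parameter-equation-1}, and suppose $w$ is a nonzero weight function satisfying both \eqref{weight-condition-1-1} and \eqref{weight-condition-1-2}.

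First I would prove the ``if'' direction: assume $F_{\lambda,\alpha}$ is an $X$-valued $\lambda$-analytic function satisfying the norm estimate \eqref{F-function-norm-1} with this $\alpha$. Since $w$ satisfies \eqref{weight-condition-1-1} (a fortiori, since we have assumed both conditions), Theorem~\ref{operator-boundedness-main-a} applies verbatim and yields that $T$ extends to a bounded operator from $A^{p}_{\lambda,w}(\DD)$ to $X$. Next I would prove the ``only if'' direction: assume $T$ has a bounded extension from $A^{p}_{\lambda,w}(\DD)$ to $X$. Since $w$ satisfies \eqref{weight-condition-1-2}, Theorem~\ref{operator-boundedness-main-b} applies and gives that $F_{\lambda,\alpha}$ is $X$-valued $\lambda$-analytic and satisfies \eqref{F-function-norm-1} with the same $\alpha$ from \eqref{parameter-equation-1}. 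Combining the two implications gives the stated equivalence, and the proof is complete.

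There is essentially no obstacle here beyond bookkeeping: the one point to check carefully is that the value of $\alpha$ appearing in both component theorems is the \emph{same} expression \eqref{parameter-equation-1} in terms of $p$, $q$, and $\lambda$, so that the function $F_{\lambda,\alpha}$ and the estimate \eqref{F-function-norm-1} referenced in the two theorems genuinely match and the biconditional is about one and the same object. One should also note that the hypothesis ``$w$ nonzero'' is needed (and is assumed) so that the quantities $\int_{|z|^{1/2}}^1 w(s)\,ds$ are positive and the estimate \eqref{F-function-norm-1} is meaningful, and that $0<q<\infty$ is the common range. Since all of these align by construction, the combined theorem follows immediately, and I would present the proof as the single sentence: ``This is an immediate consequence of Theorems~\ref{operator-boundedness-main-a} and \ref{operator-boundedness-main-b}.''
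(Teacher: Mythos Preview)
Your proposal is correct and matches the paper's approach exactly: the paper simply states that the theorem follows by combining Theorems~\ref{operator-boundedness-main-a} and \ref{operator-boundedness-main-b}, which is precisely the bookkeeping argument you describe.
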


{\bf Remark.} We point out that the number $1/2$ on the right hand side of (\ref{F-function-norm-1}) could be replaced by arbitrary fixed number $\delta\in(0,1)$ for $p_0\le p<1$, and by $1$ for $p=1$; and in particular one may choose $\delta=p$ for all $p_0\le p\le1$. This could be seen from the proofs of these theorems below. We have to use such a choice in Section 9 for characterizing sequence multipliers on the weighted Bergman spaces $A^{p}_{\lambda,w}(\DD)$.

Lemmas \ref{p-means-estimate-a} and \ref{kernel-p-norm-a} below are the key in the proof of the above three theorems.

The $p$-integral means $M_{\tau,p}(f;r)$, involving the parameter $\tau\ge0$, of functions $f$ in the $\lambda$-Hardy space $H_{\lambda}^p(\DD)$ is defined by
\begin{align}\label{p-means-1}
M_{\tau,p}(f;r)=\left[\int_{-\pi}^{\pi}|1-r^{2}e^{2i\theta}|^{2\lambda\tau p} |f(re^{i\theta})|^{p}dm_{\lambda}(\theta)\right]^{1/p}, \qquad r\in[0,1).
\end{align}

\begin{lemma}\label{p-means-estimate-a}
Let $p_{0}\le p<1$, $\delta=p^{-1}-1$ and $0\le\tau\le\delta$. Then for $f\in H_{\lambda}^{p}(\DD)$,
\begin{eqnarray*}
M_{\tau,1}(f;r)\lesssim (1-r)^{2\lambda\tau-(2\lambda+1)\delta}\|f\|_{H_{\lambda}^{p}}, \qquad r\in[0,1).
\end{eqnarray*}
\end{lemma}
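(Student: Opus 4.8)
The plan is to bound the weighted integral $M_{\tau,1}(f;r)$ by duality and interpolation between the case $\tau=0$, which is essentially the point estimate for $H^p_\lambda$ in Lemma \ref{Hardy-point-evaluation-a-1}, and a trivial endpoint. First I would reduce to $\lambda$-analytic polynomials (or to $f_\rho(z)=f(\rho z)$) so that all manipulations are legitimate, and then recover the general case by letting $\rho\to1-$ using $\|f_\rho\|_{H^p_\lambda}\lesssim\|f\|_{H^p_\lambda}$ from Lemma \ref{monotonicity-integral-mean-2-a-1}. The core idea is the classical ``$H^p$-to-$H^1$'' trick: write $p<1$, set $\delta=p^{-1}-1$, and factor the integrand
\[
|1-r^2e^{2i\theta}|^{2\lambda\tau p}|f(re^{i\theta})|^p
=\Big(|1-r^2e^{2i\theta}|^{2\lambda\tau}|f(re^{i\theta})|\Big)^p .
\]
Split $|f|^p=|f|^{p}\cdot 1$ cleverly: use $|f(re^{i\theta})|\le \big(M_\infty(f;r)\big)$ is too crude, so instead use the pointwise bound from Lemma \ref{Hardy-point-evaluation-a-1} applied at radius $r$, namely $|f(re^{i\theta})|\lesssim (1-r)^{-1/p}|1-r^2e^{2i\theta}|^{-2\lambda/p}\|f\|_{H^p_\lambda}$, on a fraction $(1-p)$ of the exponent and keep the remaining $H^p$ mass.

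Concretely, I would write, for $0<p<1$,
\[
|f(re^{i\theta})|
=|f(re^{i\theta})|^{p}\,|f(re^{i\theta})|^{1-p}
\lesssim |f(re^{i\theta})|^{p}\,
\left(\frac{(1-r)^{-1/p}}{|1-r^2e^{2i\theta}|^{2\lambda/p}}\|f\|_{H^p_\lambda}\right)^{1-p},
\]
so that
\[
|1-r^2e^{2i\theta}|^{2\lambda\tau}|f(re^{i\theta})|
\lesssim
(1-r)^{-\delta}\|f\|_{H^p_\lambda}^{1-p}\,
|1-r^2e^{2i\theta}|^{2\lambda\tau-2\lambda\delta}\,|f(re^{i\theta})|^{p}.
\]
Now integrate $dm_\lambda(\theta)$. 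Provided the exponent on $|1-r^2e^{2i\theta}|$ is nonnegative, i.e. $\tau\le\delta$ — which is exactly the hypothesis — we can bound $|1-r^2e^{2i\theta}|^{2\lambda(\tau-\delta)}$ simply by $(1-r)^{2\lambda(\tau-\delta)}$ using $|1-r^2e^{2i\theta}|\ge 1-r^2\ge 1-r$ (and $2\lambda(\tau-\delta)\le0$, so the smaller the base the larger the power — wait, here the power is negative, so we need a lower bound on $|1-r^2e^{2i\theta}|$; that lower bound is $1-r$, giving an upper bound $(1-r)^{2\lambda(\tau-\delta)}$, good). Then
\[
M_{\tau,1}(f;r)\lesssim
(1-r)^{-\delta}(1-r)^{2\lambda(\tau-\delta)}\|f\|_{H^p_\lambda}^{1-p}
\int_{-\pi}^{\pi}|f(re^{i\theta})|^{p}dm_\lambda(\theta)
\le
(1-r)^{2\lambda\tau-(2\lambda+1)\delta}\|f\|_{H^p_\lambda},
\]
since $\int|f(re^{i\theta})|^p dm_\lambda\le\|f\|_{H^p_\lambda}^p$ and $-\delta+2\lambda(\tau-\delta)=2\lambda\tau-(2\lambda+1)\delta$. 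This is precisely the claimed estimate.

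The main obstacle, and the one point requiring care rather than routine computation, is the legitimacy of applying the $H^p_\lambda$ point estimate at the radius $r$ itself: Lemma \ref{Hardy-point-evaluation-a-1} gives $|f(z)|\lesssim (1-|z|)^{-1/p}|1-z^2|^{-2\lambda/p}\|f\|_{H^p_\lambda}$ for $z\in\DD$, and with $z=re^{i\theta}$ this reads $|f(re^{i\theta})|\lesssim (1-r)^{-1/p}|1-r^2e^{2i\theta}|^{-2\lambda/p}\|f\|_{H^p_\lambda}$, which is exactly what is used — so in fact no dilation argument is even needed, and the proof is direct. One should also note the endpoint $\tau=\delta$ works because the power of $|1-r^2e^{2i\theta}|$ in the integral is then $0$; and the constant in $|1-z^2|\asymp 1-r+|\sin\theta|\asymp|1-re^{2i\theta}|$ from \eqref{elementary-equality-2} can be invoked if one prefers to phrase the lower bound $|1-r^2e^{2i\theta}|\gtrsim 1-r$ through that equivalence. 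The restriction $p\ge p_0$ enters only through the validity of Lemma \ref{Hardy-point-evaluation-a-1} and of the $H^p_\lambda$ theory, so nothing extra is needed there.
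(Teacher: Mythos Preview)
Your argument is correct and considerably more direct than the paper's. You use the pointwise bound of Lemma~\ref{Hardy-point-evaluation-a-1} on the fraction $|f|^{1-p}$, pull out the resulting $(1-r)^{-\delta}|1-r^2e^{2i\theta}|^{-2\lambda\delta}$, then use $|1-r^2e^{2i\theta}|\ge 1-r$ and $\tau\le\delta$ to replace the remaining weight by $(1-r)^{2\lambda(\tau-\delta)}$, and finally integrate $|f|^p$ against $dm_\lambda$ to pick up $M_p(f;r)^p\le\|f\|_{H^p_\lambda}^p$. All of this is valid, and the exponents assemble exactly as claimed.

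The paper takes a different route. It first proves a general result (Theorem~\ref{p-means-estimate-b}) for the $\lambda$-Poisson integral of $L^p_\lambda(\partial\DD)$ functions via Riesz--Th\"orin interpolation and the Poisson kernel estimate of Lemma~\ref{Poisson-kernel-estimate-a-1}, then transfers this to $H^p_\lambda(\DD)$ for $p\ge p_0$ through the $\lambda$-harmonic majorization $|f|^{p_0}\le P(g;\cdot)$ of Lemma~\ref{majorization-a-1}, yielding Theorem~\ref{p-means-estimate-c}; the lemma you proved is then the special case $\ell=1$. The payoff of the paper's longer approach is that it simultaneously delivers the full family of estimates $M_{\tau,\ell}(f;r)$ for all $\ell\in[p,\infty]$, the little-$o$ refinement in part~(ii), and the integrated inequality in part~(iii). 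Your argument, by contrast, is tailored to $\ell=1$ but gets there with essentially one line once Lemma~\ref{Hardy-point-evaluation-a-1} is in hand; it bypasses both the interpolation machinery and the harmonic majorization entirely.
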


The proof of the lemma is postponed to the next section (see Corollary \ref{p-means-estimate-c}).

\begin{lemma}\label{kernel-p-mean-a}
For $0<p\le1$ and $\alpha>(2\lambda+1)(p^{-1}-1)-1$,
\begin{align*}
\int_{-\pi}^{\pi}|K_{\lambda,\alpha}(z,se^{i\varphi})|^{p}\,dm_{\lambda}(\varphi)
\lesssim \frac{|1-s^{2}z^{2}|^{2\lambda(1-p)}}{(1-s|z|)^{(2+\alpha)p-1}},\qquad z\in\DD,\,\,\,s\in[0,1).
\end{align*}
\end{lemma}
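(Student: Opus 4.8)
The plan is to estimate the $p$-mean of $K_{\lambda,\alpha}(z,se^{i\varphi})$ by reducing it to the $\tau$-weighted $p$-means of the classical Cauchy-type kernel already understood through the factorization $C(z,w)=(1-z\bar w)^{-1}P_0(z,w)$ in Proposition \ref{Poisson-Cauchy}, and through the $p$-means bound of Lemma \ref{p-means-estimate-a}. First I would seek an integral representation of $K_{\lambda,\alpha}(z,\zeta)$ analogous to the well-known identity $\sum_n \frac{\Gamma(n+\beta)}{\Gamma(n+1)}w^n \asymp (1-w)^{-\beta}$; concretely, using the Beta-function identity $\frac{\Gamma(\lambda+1)\Gamma(n+\lambda+\alpha+2)}{\Gamma(\lambda+\alpha+2)\Gamma(n+\lambda+1)}=\frac{1}{B(\lambda+1,\alpha+1)}\int_0^1 t^{n+\lambda}(1-t)^{\alpha}\,dt \cdot (\text{normalization})$, one writes
\begin{align*}
K_{\lambda,\alpha}(z,\zeta)=c_{\lambda,\alpha}\int_0^1 C(\sqrt{t}\,z,\overline{\sqrt{t}\,\zeta})\,(1-t)^{\alpha}\,t^{\lambda}\,dt,
\end{align*}
or a close variant, since $C(z,w)=\sum_n \phi_n(z)\overline{\phi_n(w)}$ and $\phi_n$ is homogeneous of degree $n$. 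This converts the kernel into a superposition of Cauchy kernels at shrunken arguments, with weight $(1-t)^{\alpha}$ carrying the parameter $\alpha$.

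Next, I would take the $p$-mean in $\varphi$ and pull it under the $t$-integral (using that for $0<p\le1$ the map $g\mapsto \|g\|_{L^p}^p$ is subadditive, so $\big(\int |{\textstyle\int}\cdots dt|^p dm_\lambda\big) \le \int\big(\int |\cdots|^p dm_\lambda\big)\,dt$ after absorbing the weight). This reduces matters to bounding $\int_{-\pi}^{\pi}|C(\sqrt{t}\,z, \overline{\sqrt{t}\,se^{i\varphi}})|^p\,dm_\lambda(\varphi)$. Writing $C(z,w)=(1-z\bar w)^{-1}P_0(z,w)$ and using the explicit hypergeometric form of $P_0$ in \eqref{Poi-0}, together with the elementary comparison $|1-z^2|\asymp 1-r+|\sin\theta|\asymp|1-re^{2i\theta}|$ from \eqref{elementary-equality-2}, the factor $P_0$ contributes a power of $|1-\,\cdot\,|$ of order $-2\lambda$, which after raising to the $p$th power and integrating against $|\sin\varphi|^{2\lambda}$ is precisely the kind of quantity controlled by $M_{\tau,1}$ for a suitable $\tau$ proportional to $\lambda(1-p)$. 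Applying Lemma \ref{p-means-estimate-a} (with $f$ essentially the analytic Cauchy kernel $w\mapsto (1-z\bar w)^{-1}$, whose $H^p_\lambda$ behaviour is standard) yields a bound of the form $(1-\sqrt{t}\,s|z|)^{-\,\text{something}}$ times $|1-t s^2 z^2|^{2\lambda(1-p)}$. Finally I would integrate in $t$ against $(1-t)^{\alpha}t^{\lambda}\,dt$; here the hypothesis $\alpha>(2\lambda+1)(p^{-1}-1)-1$ is exactly what makes $\int_0^1(1-t)^{\alpha}(1-\sqrt t\, s|z|)^{-\gamma}\,dt \asymp (1-s|z|)^{\alpha+1-\gamma}$ with the correct exponent $\gamma$, producing the claimed power $(1-s|z|)^{-((2+\alpha)p-1)}$ and the factor $|1-s^2z^2|^{2\lambda(1-p)}$.

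The main obstacle I anticipate is the bookkeeping of the extra boundary factor $|1-s^2z^2|^{2\lambda(1-p)}$: it is \emph{not} radial, so one cannot simply estimate $P_0$ by $|1-\,\cdot\,|^{-2\lambda}$ and forget the angular variable — one must genuinely invoke the weighted means $M_{\tau,p}$ with $\tau$ chosen so that $2\lambda\tau$ matches the target exponent $2\lambda(1-p)$, and verify that this $\tau$ falls in the admissible range $0\le \tau\le p^{-1}-1$ used in Lemma \ref{p-means-estimate-a}. A secondary technical point is justifying the interchange of the $\varphi$-integral and the $t$-integral and the absolute convergence of the series defining $K_{\lambda,\alpha}$ uniformly on compacta (which follows from \eqref{phi-bound-1} and the polynomial growth of the Gamma-ratio), so that the integral representation is legitimate. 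Once the representation and the range of $\tau$ are pinned down, the remaining computation is a routine Beta-integral asymptotic.
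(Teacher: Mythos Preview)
Your integral representation has the Gamma ratio backwards, and this breaks the whole scheme. The coefficient
\[
\tau_n=\frac{\Gamma(\lambda+1)\Gamma(n+\lambda+\alpha+2)}{\Gamma(\lambda+\alpha+2)\Gamma(n+\lambda+1)}\asymp (n+1)^{\alpha+1}
\]
\emph{grows} in $n$ for $\alpha>-1$, so it cannot be written as $c\int_0^1 t^{n+\lambda}(1-t)^{\alpha}\,dt$ with a positive weight; what that Beta integral equals is $B(n+\lambda+1,\alpha+1)\asymp (n+1)^{-\alpha-1}$, i.e.\ a constant multiple of $1/\tau_n$, not $\tau_n$. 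Hence $K_{\lambda,\alpha}$ is \emph{not} a superposition of shrunken Cauchy kernels --- it is obtained from $C(z,w)$ by \emph{differentiation} (cf.\ Lemma~\ref{K-kernal-2-a}(i), where $\tilde K_{\lambda,m}=\sum_j a_{m,j}(r\partial_r)^jC$). There is therefore no $t$-integral under which to pull the $p$-norm, and the plan to feed the resulting Cauchy kernel into Lemma~\ref{p-means-estimate-a} has no starting point. A secondary issue: the function $w\mapsto(1-z\bar w)^{-1}$ is not $\lambda$-analytic, so Lemma~\ref{p-means-estimate-a} does not apply to it; and its $\lambda$-analytic replacement $C(z,\cdot)$ does not have ``standard'' $H^p_\lambda$ behaviour --- computing its $p$-means is essentially the $\alpha=-1$ instance of the very estimate at stake.

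The paper follows a completely different route. It first establishes the sharp pointwise bound (Theorem~\ref{Bergman-type-kernel-a})
\[
|K_{\lambda,\alpha}(z,w)|\lesssim \frac{|1-zw|^{-1}}{(|1-z\bar w|+|1-zw|)^{2\lambda}}\left(\frac{1}{|1-z\bar w|^{\alpha+1}}+\frac{1}{|1-zw|^{\alpha+1}}\right),
\]
via a lengthy derivative analysis of $C(z,w)$ (Theorem~\ref{Cauchy-derivative-a} and Theorem~\ref{K-kernal-b}). With this pointwise estimate in hand, the $p$-mean in $\varphi$ is then computed directly: after symmetry reductions one arrives at $\int_0^{\pi/2}(1-rs+\theta+\varphi)^{-2\lambda p}(1-rs+|\theta-\varphi|)^{-(\alpha+2)p}\varphi^{2\lambda}\,d\varphi$, which is handled by splitting into the ranges $[0,\theta/2]$, $[\theta/2,3\theta/2]$, $[3\theta/2,\pi/2]$ and estimating each piece elementarily. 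Lemma~\ref{p-means-estimate-a} is not used here; the hypothesis $\alpha>(2\lambda+1)(p^{-1}-1)-1$ enters only to make the exponent $(\alpha+2)p-2\lambda(1-p)-1$ positive in the third piece.
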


The proof of Lemma \ref{kernel-p-mean-a} is postponed to Sections six and seven (see Theorem \ref{kernel-p-mean-b}), but we now use it to prove the following lemma.

\begin{lemma}\label{kernel-p-norm-a}
Let $0<p\le1$ and $0<q<\infty$, and let $w$ be a weight function satisfying the condition (\ref{weight-condition-1-2}). Then
\begin{align}\label{kernel-p-norm-1}
\|{K_{\lambda,\alpha}}(z,\cdot)\|_{A_{\lambda,w}^{p}}^{p} \lesssim \frac{|1-z^{2}|^{2\lambda(1-p)}}{(1-|z|)^{(2+\alpha)p-1}}\int_{|z|^{\frac{1}{2}}}^1w(s)ds, \qquad z\in\DD,
\end{align}
where $\alpha$ is given by (\ref{parameter-equation-1}).
\end{lemma}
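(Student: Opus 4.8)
The plan is to pass to polar coordinates, apply the circular $p$-mean bound of Lemma \ref{kernel-p-mean-a} radius by radius, and then dispose of the resulting one‑variable integral by two elementary estimates: a geometric comparison of $|1-s^{2}z^{2}|/(1-s|z|)$ with $|1-z^{2}|/(1-|z|)$, and a weighted comparison exploiting the $b_q$-type hypothesis (\ref{weight-condition-1-2}).

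First I would write $\zeta=se^{i\varphi}$ and use $d\sigma_{\lambda}(\zeta)=(2\lambda+2)\,s^{2\lambda+1}\,ds\,dm_{\lambda}(\varphi)$ to get
\[
\|K_{\lambda,\alpha}(z,\cdot)\|_{A_{\lambda,w}^{p}}^{p}=(2\lambda+2)\int_0^1 s^{2\lambda+1}w(s)\Big(\int_{-\pi}^{\pi}|K_{\lambda,\alpha}(z,se^{i\varphi})|^{p}\,dm_{\lambda}(\varphi)\Big)\,ds .
\]
With $\alpha$ as in (\ref{parameter-equation-1}) one checks $\alpha-\big((2\lambda+1)(p^{-1}-1)-1\big)=q/p>0$, so Lemma \ref{kernel-p-mean-a} applies and bounds the inner integral by $|1-s^{2}z^{2}|^{2\lambda(1-p)}(1-s|z|)^{-((2+\alpha)p-1)}$; moreover $(2+\alpha)p-1=q+2\lambda(1-p)$. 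Writing $\beta=2\lambda(1-p)\ge 0$ and $r=|z|$, and discarding the harmless factor $s^{2\lambda+1}\le 1$, the task reduces to estimating $\int_0^1 w(s)\,|1-s^{2}z^{2}|^{\beta}(1-sr)^{-(q+\beta)}\,ds$.

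Next I would handle the geometry. Applying (\ref{elementary-equality-2}) to $sz\in\DD$ gives $|1-s^{2}z^{2}|\asymp 1-sr+|\sin\theta|$ (where $z=re^{i\theta}$), so, using $1-sr\ge 1-r$,
\[
\frac{|1-s^{2}z^{2}|}{1-sr}\asymp 1+\frac{|\sin\theta|}{1-sr}\le 1+\frac{|\sin\theta|}{1-r}=\frac{(1-r)+|\sin\theta|}{1-r}\asymp\frac{|1-z^{2}|}{1-r}.
\]
Raising to the power $\beta\ge 0$ and factoring this out leaves $\int_0^1 w(s)\,|1-s^{2}z^{2}|^{\beta}(1-sr)^{-(q+\beta)}\,ds\lesssim \dfrac{|1-z^{2}|^{\beta}}{(1-r)^{\beta}}\int_0^1 \dfrac{w(s)}{(1-sr)^{q}}\,ds$. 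For the weighted integral I would split at $s=r^{1/2}$: on $[r^{1/2},1]$ use $1-sr\ge 1-r$ to bound it by $(1-r)^{-q}\int_{r^{1/2}}^1 w$; on $[0,r^{1/2}]$ use $1-sr\ge 1-s$ (since $s-sr=s(1-r)\ge 0$), then invoke (\ref{weight-condition-1-2}) with $r^{1/2}$ in place of $r$ together with $1-r^{1/2}\asymp 1-r$ (their ratio is $1+r^{1/2}\in[1,2]$), giving
\[
\int_0^{r^{1/2}}\frac{w(s)}{(1-sr)^{q}}\,ds\le\int_0^{r^{1/2}}\frac{w(s)}{(1-s)^{q}}\,ds\lesssim\frac{1}{(1-r^{1/2})^{q}}\int_{r^{1/2}}^1 w(s)\,ds\asymp\frac{1}{(1-r)^{q}}\int_{r^{1/2}}^1 w(s)\,ds .
\]
Collecting the estimates yields $\|K_{\lambda,\alpha}(z,\cdot)\|_{A_{\lambda,w}^{p}}^{p}\lesssim |1-z^{2}|^{\beta}(1-r)^{-(q+\beta)}\int_{r^{1/2}}^1 w$, which is (\ref{kernel-p-norm-1}) since $\beta=2\lambda(1-p)$ and $q+\beta=(2+\alpha)p-1$.

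The only genuinely substantial ingredient is Lemma \ref{kernel-p-mean-a}, whose proof is deferred to Sections 6--7; granting it, the argument above is a careful but routine manipulation of weighted one-dimensional integrals (the chain of inequalities between nonnegative quantities also delivers finiteness). The point requiring the most attention is that (\ref{kernel-p-norm-1}) asks for $\int_{|z|^{1/2}}^1 w$ rather than the $\int_{|z|}^1 w$ that a naive use of (\ref{weight-condition-1-2}) would produce; this is precisely why the splitting point must be taken at $r^{1/2}$ and why the elementary equivalence $1-r^{1/2}\asymp 1-r$ is needed. This also matches the ``$1/2$'' appearing in the point-evaluation estimate of Theorem \ref{Bergman-point-evaluation-w-a} and in the Remark following Theorem \ref{operator-boundedness-main-c}.
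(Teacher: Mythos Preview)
Your proof is correct and follows essentially the same route as the paper's: pass to polar coordinates, apply Lemma~\ref{kernel-p-mean-a}, use the asymptotic $|1-s^2z^2|\asymp 1-sr+|\sin\theta|$, split the radial integral at $s=r^{1/2}$, and invoke condition~(\ref{weight-condition-1-2}). Your uniform-in-$s$ bound $\dfrac{|1-s^{2}z^{2}|}{1-sr}\lesssim\dfrac{|1-z^{2}|}{1-r}$ is a mild streamlining---the paper instead carries $(1-s+|\sin\theta|)^{2\lambda(1-p)}$ through the split and expands it as $(1-s)^{2\lambda(1-p)}+|\sin\theta|^{2\lambda(1-p)}$---but the two arguments are otherwise the same.
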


\begin{proof}
Note that for $q>0$, (\ref{parameter-equation-1}) implies $\alpha>(2\lambda+1)(p^{-1}-1)-1$, and then, by Lemma \ref{kernel-p-mean-a},
\begin{align*}
\|K_{\lambda,\alpha}(z,\cdot)\|_{A_{\lambda,w}^{p}}^{p}
\lesssim\int_{0}^{1}w(s)\frac{|1-s^{2}z^{2}|^{2\lambda(1-p)}}{(1-s|z|)^{(2+\alpha)p-1}}\,ds.
\end{align*}
Since $(2+\alpha)p-1=q+2\lambda(1-p)$ and
\begin{align}\label{elementary-equality-1}
|1-s^2z^2|\asymp 1-rs+|\sin\theta|\asymp 1-s+1-r+|\sin\theta|,\qquad z=re^{i\theta}\in\DD,\,\, s\in[0,1],
\end{align}
we have
\begin{align*}
\|K_{\lambda,\alpha}(z,\cdot)\|_{A_{\lambda,w}^{p}}^{p}
\lesssim\int_{0}^{1}w(s)\frac{(1-s+1-r+|\sin\theta|)^{2\lambda(1-p)}}{(1-sr)^{q+2\lambda(1-p)}}\,ds,\qquad z=re^{i\theta}\in\DD.
\end{align*}
Furthermore, dividing the above integral into two parts gives
\begin{align}\label{kernel-p-norm-2}
\|K_{\lambda,\alpha}(z,\cdot)\|_{A_{\lambda,\sigma}^{p}}^{p}
\lesssim &\int_{0}^{r^{1/2}}w(s)\frac{(1-s+|\sin\theta|)^{2\lambda(1-p)}}{(1-s)^{q+2\lambda(1-p)}}\,ds  \nonumber\\
&\qquad\qquad +\frac{(1-r+|\sin\theta|)^{2\lambda(1-p)}}{(1-r)^{q+2\lambda(1-p)}}\int_{r^{1/2}}^{1}w(s)\,ds.
\end{align}
Since $(1-s+|\sin\theta|)^{2\lambda(1-p)}\asymp(1-s)^{2\lambda(1-p)}+|\sin\theta|^{2\lambda(1-p)}$, the first integral above is dominated by
a multiple of
\begin{align*}
&\left(1+\frac{|\sin\theta|^{2\lambda(1-p)}}{(1-r)^{2\lambda(1-p)}}\right)\int_{0}^{r^{1/2}}\frac{w(s)}{(1-s)^{q}}\,ds.
\end{align*}
Substituting this into (\ref{kernel-p-norm-2}) and applying (\ref{weight-condition-1-2}) yields
\begin{align*}
\|K_{\lambda,\alpha}(z,\cdot)\|_{A_{\lambda,\sigma}^{p}}^{p}
\lesssim\frac{(1-r+|\sin\theta|)^{2\lambda(1-p)}}{(1-r)^{q+2\lambda(1-p)}}\int_{r^{1/2}}^{1}w(s)\,ds,\qquad z=re^{i\theta}\in\DD,
\end{align*}
that is identical with (\ref{kernel-p-norm-1}) on account of (\ref{elementary-equality-1}) with $s=1$.
\end{proof}

\subsection{Proofs of Theorems \ref{operator-boundedness-main-a} and \ref{operator-boundedness-main-b}}

{\it Proof of Theorem \ref{operator-boundedness-main-a}.}

By the density theorem, Theorem \ref{Bergman-density-w-a}, it suffices to show that $\|Tf\|_{X}\le C\|f\|_{A^{p}_{\lambda,w}}$ for all $\lambda$-analytic polynomials under the conditions (\ref{weight-condition-1-1}) and (\ref{F-function-norm-1}).

Assume that $f(z)=\sum_{n=0}^{m}c_{n}\phi_{n}(z)$. It follows that $Tf=\sum_{n=0}^{m}c_{n}x_n$, where $x_n=T\phi_n$ for $n=0,1,\cdots$. We now express $Tf$ into an integral in terms of $F_{\lambda,\alpha}(z)$.

Since, using the notation $\tau_n$ given by (\ref{tau-notation-1}),
$$
\frac{\Gamma(\lambda+1)\Gamma(\alpha+1)}{2\Gamma(\lambda+\alpha+2)}\frac{1}{\tau_n}=\frac{1}{2}\int_{0}^{1}(1-r)^{\alpha}r^{n+\lambda}dr=\int_{0}^{1}(1-r^{2})^{\alpha}r^{2n+2\lambda+1}dr,
$$
we have
\begin{align*}
Tf=\sum_{n=0}^{m}c_{n}x_{n}=\frac{2\Gamma(\lambda+\alpha+2)}{\Gamma(\lambda+1)\Gamma(\alpha+1)} \int _{0}^{1}(1-r^{2})^{\alpha}\left(\sum_{n=0}^{m}\tau_nc_{n}x_{n}r^{2n+2\lambda+1}\right)dr.
\end{align*}
But by the orthogonality,
\begin{align*}
\int^{\pi}_{-\pi}f(re^{i\theta})F_{\lambda,\alpha}(re^{-i\theta})\,dm_{\lambda}(\theta)=\sum_{n=0}^{m}\tau_nc_{n}x_{n}r^{2n},
\end{align*}
and hence
\begin{align*}
Tf=\frac{2\Gamma(\lambda+\alpha+2)}{\Gamma(\lambda+1)\Gamma(\alpha+1)} \int _{0}^{1}\int^{\pi}_{-\pi}f(re^{i\theta})F_{\lambda,\alpha}(re^{-i\theta})(1-r^{2})^{\alpha}r^{2\lambda+1}\,dm_{\lambda}(\theta)dr.
\end{align*}
Now applying the condition (\ref{F-function-norm-1}) we obtain
\begin{align}\label{operator-X-morm-1}
\|Tf \|_{X}\lesssim & \int _{0}^{1}\int^{\pi}_{-\pi}|f(re^{i\theta})|\|F_{\lambda,\alpha}(re^{-i\theta})\|_X(1-r^{2})^{\alpha}r^{2\lambda+1}\,dm_{\lambda}(\theta)dr \nonumber\\
\lesssim & \int _{0}^{1}\int^{\pi}_{-\pi}|f(re^{i\theta})|\frac{|1-r^{2}e^{-2i\theta}|^{2\lambda(p^{-1}-1)}}{(1-r)^{\alpha+2-p^{-1}}}
\left(\int_{r^{\frac{1}{2}}}^1w(s)ds\right)^{1/p} (1-r^{2})^{\alpha}r^{2\lambda+1}\,dm_{\lambda}(\theta)dr \nonumber\\
\lesssim & \int _{0}^{1}\frac{r^{2\lambda+1}}{(1-r)^{2-p^{-1}}}\left(\int_{r^{\frac{1}{2}}}^1w(s)ds\right)^{1/p}
 M_{p^{-1}-1,1}(f;r)dr,
\end{align}
where $M_{p^{-1}-1,1}(f;r)$ is given by (\ref{p-means-1}).

Again set $f_{\rho}(z)=f(\rho z)$ for $0\le\rho<1$. We apply Lemma \ref{p-means-estimate-a} to $f_{\rho}$ with $\tau=p^{-1}-1$ to get
\begin{eqnarray*}
M_{p^{-1}-1,1}(f_\rho;s)\lesssim (1-s)^{1-p^{-1}}\|f_{\rho}\|_{H_{\lambda}^{p}}, \qquad s\in[0,1).
\end{eqnarray*}
Choosing $\rho=s=r^{1/2}$ for $r\in[0,1)$ and appealing to (\ref{Hp-norm-ineq-1}) gives
\begin{align*}
\int^{\pi}_{-\pi}|1-re^{2i\theta}|^{2\lambda(p^{-1}-1)}|f(re^{i\theta})|\,dm_{\lambda}(\theta)
\lesssim (1-r)^{1-p^{-1}}M_p(f;r^{1/2}), \qquad r\in[0,1),
\end{align*}
which shows, on account of (\ref{elementary-equality-2}), $M_{p^{-1}-1,1}(f;r)\lesssim (1-r)^{1-p^{-1}}M_p(f;r^{1/2})$ for $r\in[0,1)$. Inserting this into (\ref{operator-X-morm-1}) yields
\begin{align*}
\|Tf \|_{X}
&\lesssim \int _{0}^{1}\frac{r^{2\lambda+1}}{1-r}\left(\int_{r^{1/2}}^1w(s)ds\right)^{1/p}M_p(f;r^{1/2})dr\\
&\lesssim \int _{0}^{1}\frac{t^{2\lambda+1}}{1-t}\left(\int_{t}^1w(s)ds\right)^{1/p}M_p(f;t)dt.
\end{align*}
If we set
\begin{eqnarray*}
\Lambda(f)=\sup_{0\le t<1 } M_p(f;t)^p\int_{t}^1w(s)ds,
 \end{eqnarray*}
then
 \begin{align}\label{operator-X-morm-2}
\|Tf \|_{X}
\lesssim \Lambda(f)^{p^{-1}-1}\int _{0}^{1}\frac{t^{2\lambda+1}}{1-t}\left(\int_{t}^1w(s)ds\right) M_p(f;t)^p\,dt.
\end{align}

By Lemma \ref{monotonicity-integral-mean-2-a-1} and (\ref{integral-mean-w-1}), one has
\begin{eqnarray*}
\Lambda(f)\lesssim\sup_{0\le t<1 } \int_{t}^1 M_p(f;s)^p w(s)ds
\lesssim\sup_{0\le t<1 } \int_{t}^1 M_p(f;s)^p w(s)s^{2\lambda+1}ds
\lesssim\|f\|_{A_{\lambda,w}^{p}}^p;
 \end{eqnarray*}
and using the condition (\ref{weight-condition-1-1}),
\begin{align*}
\int _{0}^{1}\frac{t^{2\lambda+1}}{1-t}\left(\int_{t}^1w(s)ds\right) M_p(f;t)^p\,dt
\lesssim\int _{0}^{1} M_p(f;t)^p\,w(t)t^{2\lambda+1}dt\lesssim\|f\|_{A_{\lambda,w}^{p}}^p.
\end{align*}
Substituting the two estimates above into (\ref{operator-X-morm-2}) gives
$\|Tf \|_{X}\lesssim \|f\|_{A_{\lambda,w}^{p}}^{1-p}\|f\|_{A_{\lambda,w}^{p}}^p=\|f\|_{A_{\lambda,w}^{p}}$.
In view of the density of $\lambda$-analytic polynomials in $A^{p}_{\lambda,w}(\DD)$, the proof of Theorem \ref{operator-boundedness-main-a} is finished.

{\it Proof of Theorem \ref{operator-boundedness-main-b}.}

Assume that the operator $T$ has a bounded extension from $A^{p}_{\lambda,w}(\DD)$ to $X$. For $M>N\ge0$, we have
\begin{align}\label{Bergman-kernel-X-morm-1}
\left\|\sum_{n=N}^{M}\tau_n x_n\phi_{n}(z)\right\|_{X}
&=\left\|T\left(\sum_{n=N}^{M}\tau_n \phi_n(z)\phi_{n}(\cdot)\right)\right\|_{X} \nonumber\\
&\le\|T\|\left\|\sum_{n=N}^{M}\tau_n \phi_n(z)\phi_{n}(\cdot)\right\|_{A^{p}_{\lambda,w}}, \qquad z\in\DD,
\end{align}
where
\begin{align}\label{tau-notation-1}
\tau_n=\frac{\Gamma(\lambda+1)\Gamma(n+\lambda+\alpha+2)}{\Gamma(\lambda+\alpha+2)\Gamma(n+\lambda+1)}
\asymp (n+1)^{\alpha+1}.
\end{align}
However from (\ref{phi-bound-1}), $\|\phi_{n}\|_{A^{p}_{\lambda,w}}^p\lesssim(n+1)^{\lambda p}$, and so
\begin{align*}
\left\|\sum_{n=N}^{M}\tau_n \phi_n(z)\phi_{n}(\cdot)\right\|_{A^{p}_{\lambda,w}}^p
&\le\sum_{n=N}^{M}\tau_n^p |\phi_n(z)|^p\|\phi_{n}\|_{A^{p}_{\lambda,w}}^p \\
&\lesssim \sum_{n=N}^{M}(n+1)^{(\alpha+2\lambda+1)p}|z|^{np},\qquad x\in\DD.
\end{align*}
In view of completeness of the space $A^{p}_{\lambda,w}(\DD)$,
this shows that for $|z|<1$, the series $\zeta\mapsto\sum_{n=0}^{\infty}\tau_n \phi_n(z)\phi_{n}(\zeta)$ converges in the space $A^{p}_{\lambda,w}(\DD)$, namely, for $|z|<1$ the $(\lambda,\alpha)$-Bergman kernel $K_{\lambda,\alpha}(z,\cdot)$ defined in (\ref{Bergman-kernel-1}) is in $A^{p}_{\lambda,w}(\DD)$; and moreover, in conjunction with (\ref{Bergman-kernel-X-morm-1}), it is concluded that the series $\sum_{n=0}^{\infty}\tau_n x_n\phi_{n}(z)$ defines an $X$-valued $\lambda$-analytic function of $z\in\DD$, denoted by $F_{\lambda,\alpha}(z)$ as in (\ref{F-functin-1}). Now letting $N=0$ and $M\rightarrow\infty$ in (\ref{Bergman-kernel-X-morm-1}), we obtain
\begin{eqnarray*}
\left\|F_{\lambda,\alpha}(z)\right\|_{X}
\le\|T\|\left\|K_{\lambda,\alpha}(z,\cdot)\right\|_{A^{p}_{\lambda,w}},\qquad z\in\DD.
\end{eqnarray*}
Direct application of Lemma \ref{kernel-p-norm-a} proves that $F_{\lambda,\alpha}(z)$ satisfies (\ref{F-function-norm-1}).
The proof of Theorem \ref{operator-boundedness-main-b} is completed.

\section{Estimation of the $p$-integral means $M_{\tau,p}(f;r)$}

We define the $p$-integral means $M_{\tau,p}(f;r)$ involving the parameter $\tau\ge0$ by
\begin{align*}
M_{\tau,p}(f;r)&=\left[\int_{-\pi}^{\pi}|1-r^{2}e^{2i\theta}|^{2\lambda\tau p} |f(re^{i\theta})|^{p}dm_{\lambda}(\theta)\right]^{1/p}, \qquad 0<p<\infty,\\
M_{\tau,\infty}(f;r)&=\sup\limits_{\theta}|1-r^{2}e^{2i\theta}|^{2\lambda\tau}|f(re^{i\theta})|.
\end{align*}

We are going to give some estimates of the $p$-integral means $M_{\tau,p}(f;r)$ of functions $f$ in the $\lambda$-Hardy space $H_{\lambda}^p(\DD)$. We first consider the case when $1\le p\le\infty$. In this case we shall work with the $\lambda$-Poisson integral $P(f;z)$ of $f\in L_{\lambda}^{1}(\partial\DD)$. Similarly, for $\tau\ge0$ we define
\begin{align*}
M_{\tau,p}(f;r)&=\left[\int_{-\pi}^{\pi}|1-r^{2}e^{2i\theta}|^{2\lambda\tau p} |P(f;re^{i\theta})|^p dm_{\lambda}(\theta)\right]^{1/p}, \qquad 1\le p<\infty,\\
M_{\tau,\infty}(f;r)&=\sup\limits_{\theta}|1-r^{2}e^{2i\theta}|^{2\lambda\tau}|P(f;re^{i\theta})|,
\end{align*}
and in the same way, $M_{\tau,p}(d\nu;r)$ for $d\nu\in {\frak B}_{\lambda}(\partial\DD)$.

\begin{lemma} \label{Poisson-kernel-estimate-a-1} {\rm (\cite[Corollary 4.3]{LL1})}
For $\theta,\varphi\in[-\pi,\pi]$ and $r\in[0,1)$, we have
\begin{eqnarray*}
P(re^{i\theta},e^{i\varphi})
\lesssim\frac{(1-r)(1-r+|\sin(\theta-\varphi)/2|)^{-2}}{(1-r+|\sin\theta|+|\sin(\theta-\varphi)/2|)^{2\lambda}}
\ln\Big(\frac{|\sin(\theta-\varphi)/2|}{1-r}+2\Big).
\end{eqnarray*}
\end{lemma}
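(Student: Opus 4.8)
The plan is to reduce the whole estimate to a single bound for the Gauss hypergeometric function of Proposition~\ref{Poisson-Cauchy}. Setting $z=re^{i\theta}$, $w=e^{i\varphi}$ in the formulas there and using the second representation of $P_{0}$ in (\ref{Poi-0}), one has
\[
P(re^{i\theta},e^{i\varphi})=\frac{1-r^{2}}{|1-re^{i(\theta-\varphi)}|^{2}}\cdot\frac{1}{|1-re^{i(\theta-\varphi)}|^{2\lambda}}\;{}_2F_{1}\Big({\lambda,\lambda+1\atop 2\lambda+1};t\Big),\qquad t=-\frac{4r\sin\theta\sin\varphi}{|1-re^{i(\theta-\varphi)}|^{2}} .
\]
The algebraic identity $|1-re^{i(\theta-\varphi)}|^{2}+4r\sin\theta\sin\varphi=|1-re^{i(\theta+\varphi)}|^{2}$ shows $t<1$ for $r<1$, and more usefully gives $1-t=|1-re^{i(\theta+\varphi)}|^{2}/|1-re^{i(\theta-\varphi)}|^{2}$. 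Since $1-r^{2}\asymp1-r$ and $|1-re^{i\psi}|^{2}\asymp(1-r)^{2}+\sin^{2}(\psi/2)$, the first factor above is already $\asymp(1-r)/\big(1-r+|\sin\tfrac{\theta-\varphi}{2}|\big)^{2}$, which is precisely the classical Poisson part of the asserted bound.

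The analytic heart of the argument is the uniform estimate
\[
{}_2F_{1}\Big({\lambda,\lambda+1\atop 2\lambda+1};t\Big)\;\lesssim\;(1+|t|)^{-\lambda}\Big(1+\ln^{+}\tfrac{1}{1-t}\Big),\qquad t<1 .
\]
For $\lambda=0$ this is trivial, since then $P_{0}\equiv1$; so assume $\lambda>0$ and start from the Euler integral ${}_2F_{1}(\lambda,\lambda+1;2\lambda+1;t)=\tfrac{\Gamma(2\lambda+1)}{\Gamma(\lambda+1)\Gamma(\lambda)}\int_{0}^{1}u^{\lambda}(1-u)^{\lambda-1}(1-tu)^{-\lambda}\,du$. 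For $t\le0$, where $1-tu=1+|t|u\ge1$, splitting the integral at $u=(1+|t|)^{-1}$ yields the decay factor $(1+|t|)^{-\lambda}$. For $0\le t<1$, the substitution $v=1-u$ together with $1-tu=(1-t)+tv$ and a split at $v=1-t$ isolates the piece $\int_{1-t}^{1}v^{\lambda-1}(tv)^{-\lambda}\,dv\asymp\ln\frac{1}{1-t}$, producing exactly one power of the logarithm; that only a single power occurs is forced by the borderline relation $c-a-b=(2\lambda+1)-\lambda-(\lambda+1)=0$. I expect this hypergeometric estimate to be the main obstacle, since it requires simultaneous control of the logarithmic blow-up at $t=1$ and of the algebraic decay as $t\to-\infty$.

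It then remains to translate the bound into the geometric form of the statement. Write $a=1-r$, $b=|\sin\tfrac{\theta-\varphi}{2}|$, $c=|\sin\theta|$, so that $|1-re^{i(\theta-\varphi)}|^{2}\asymp(a+b)^{2}$. From $1-t=|1-re^{i(\theta+\varphi)}|^{2}/|1-re^{i(\theta-\varphi)}|^{2}\gtrsim a^{2}/(a^{2}+b^{2})$ one obtains $1+\ln^{+}\tfrac{1}{1-t}\lesssim\ln(2+b/a)=\ln\big(|\sin\tfrac{\theta-\varphi}{2}|/(1-r)+2\big)$. For the remaining factor I would prove the elementary equivalence
\[
|1-re^{i(\theta-\varphi)}|^{2}+4r|\sin\theta\,\sin\varphi|\;\asymp\;\big(1-r+|\sin\theta|+|\sin\tfrac{\theta-\varphi}{2}|\big)^{2},
\]
which (the case $r\le\tfrac{1}{2}$ being trivial since both sides are then $\asymp1$) reduces via $|1-re^{i\psi}|^{2}\asymp(1-r)^{2}+\sin^{2}(\psi/2)$ and the inequality $\big|\,|\sin\varphi|-|\sin\theta|\,\big|\le2|\sin\tfrac{\theta-\varphi}{2}|$ to checking $(a+b)^{2}+|\sin\theta\sin\varphi|\asymp(a+b+c)^{2}$, separating the subcases $c\le4b$ and $c>4b$. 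Since $1+|t|=\big(|1-re^{i(\theta-\varphi)}|^{2}+4r|\sin\theta\sin\varphi|\big)/|1-re^{i(\theta-\varphi)}|^{2}$, this gives $|1-re^{i(\theta-\varphi)}|^{-2\lambda}(1+|t|)^{-\lambda}\asymp(a+b+c)^{-2\lambda}$. Multiplying the three factors $(1-r)/(a+b)^{2}$, $(a+b+c)^{-2\lambda}$ and $\ln(2+b/a)$ reproduces exactly the right-hand side of the lemma, which completes the proof.
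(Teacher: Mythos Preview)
Your argument is correct. Note, however, that the paper does not supply its own proof of this lemma: it is quoted verbatim as \cite[Corollary 4.3]{LL1} and used as a black box. Your approach---feeding $z=re^{i\theta}$, $w=e^{i\varphi}$ into the closed form of Proposition~\ref{Poisson-Cauchy}, proving the uniform hypergeometric bound
\[
{}_2F_{1}\Big({\lambda,\lambda+1\atop 2\lambda+1};t\Big)\lesssim(1+|t|)^{-\lambda}\Big(1+\ln^{+}\tfrac{1}{1-t}\Big),\qquad t<1,
\]
and then converting via the equivalences $|1-re^{i(\theta-\varphi)}|\asymp a+b$ and $|1-re^{i(\theta-\varphi)}|^{2}+4r|\sin\theta\sin\varphi|\asymp(a+b+c)^{2}$---is exactly the natural route, and each step checks out. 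The Euler-integral split for the hypergeometric factor is clean; the only minor cosmetic point is that $1\lesssim\ln(2+b/a)$ requires an implicit constant $1/\ln2$, which is of course harmless. Since the original proof in \cite{LL1} is derived from the same explicit representation (Proposition~\ref{Poisson-Cauchy} here is taken from \cite{Dun3} and \cite{LL1}), your argument is essentially a self-contained reconstruction of that result.
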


\begin{theorem} \label{p-means-estimate-b}
Let $1\le p\le\ell\le\infty$, $\delta=p^{-1}-\ell^{-1}$ and $0\le\tau\le\delta$, and let $f\in L_{\lambda}^{p}(\partial\DD)$. Then

{\rm (i)} \ for $1\le p\le\ell\le+\infty$, $M_{\tau,\ell}(f;r)\lesssim (1-r)^{2\lambda\tau-(2\lambda+1)\delta}\|f\|_{L_{\lambda}^{p}(\partial\DD)}$ for $r\in[0,1)$;

{\rm (ii)} \ for $1\le p<\ell\le+\infty$,
$M_{\tau,\ell}(f;r)=o\left((1-r)^{2\lambda\tau-(2\lambda+1)\delta}\right)$ as $r\rightarrow1-$;

{\rm (iii)} \ for $1<p<\ell\le+\infty$, $p\le k<+\infty$ and $0\le\tau<\delta$,
\begin{eqnarray} \label{p-means-estimate-2}
\left(\int_0^1(1-r)^{k\delta(2\lambda+1)-2\lambda\tau k-1}M_{\tau,\ell}(f;r)^kdr\right)^{1/k}
\lesssim\|f\|_{L_{\lambda}^{p}(\partial\DD)};
\end{eqnarray}

{\rm (iv)} for $d\nu\in {\frak B}_{\lambda}(\partial\DD)$, $1\le\ell\le+\infty$, $\delta=1-\ell^{-1}$ and $0\le\tau\le\delta$,
$$
M_{\tau,\ell}(d\nu;r)\lesssim(1-r)^{2\lambda\tau-\delta(1+2\lambda)}\|d\nu\|_{{\frak
B}_{\lambda}(\partial\DD)},\qquad r\in[0,1).
$$
\end{theorem}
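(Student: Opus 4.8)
The plan is to prove all four parts by reducing to pointwise estimates on the $\lambda$-Poisson kernel from Lemma \ref{Poisson-kernel-estimate-a-1}, together with the basic $L^p$ contractivity of the Poisson integral in Proposition \ref{Poi-property-1}(iv) and (vi). The guiding principle is that the weight $|1-r^2e^{2i\theta}|^{2\lambda\tau p}\asymp (1-r+|\sin\theta|)^{2\lambda\tau p}$ by (\ref{elementary-equality-2}), so everything comes down to controlling integrals of $(1-r+|\sin\theta|)^{2\lambda\tau}P(re^{i\theta},e^{i\varphi})$ against $dm_\lambda(\varphi)$ or $dm_\lambda(\theta)$.

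First I would prove (i). For $\ell=\infty$, write $|P(f;re^{i\theta})|\le \int_{-\pi}^\pi|f(e^{i\varphi})|P(re^{i\theta},e^{i\varphi})\,dm_\lambda(\varphi)$, apply H\"older with exponents $p,p'$, and use Lemma \ref{Poisson-kernel-estimate-a-1} to bound $\left(\int_{-\pi}^\pi P(re^{i\theta},e^{i\varphi})^{p'}dm_\lambda(\varphi)\right)^{1/p'}$; the logarithmic factor is absorbed into an arbitrarily small power of $(1-r)^{-1}$, and the key computation is the size of $\int|\sin(\theta-\varphi)/2|^{-2p'}(1-r+\cdots)^{-2\lambda}|\sin\varphi|^{2\lambda}d\varphi$, which produces exactly the power $(1-r)^{2\lambda\tau-(2\lambda+1)\delta}$ after multiplying by the weight $(1-r+|\sin\theta|)^{2\lambda\tau}$ and taking the sup over $\theta$. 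For general $1\le p\le\ell<\infty$, interpolate: the case $\ell=p$ is Proposition \ref{Poi-property-1}(iv) (with $\tau=\delta=0$), the case $\ell=\infty$ is just done, and Riesz--Thorin (or a direct Jensen/H\"older interpolation of the means) gives the intermediate $\ell$ with the affine dependence of the exponents on $\tau,\delta$. Part (iv) is identical with $d\nu$ in place of $f\,dm_\lambda$, using Proposition \ref{Poi-property-1}(vi) and the total-variation bound in place of $\|f\|_{L^1_\lambda}$; here $p=1$ is forced, so $\delta=1-\ell^{-1}$.

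Next, part (ii): the $o(\cdot)$ refinement for $p<\ell$. I would use the density of $C(\partial\DD)$ (equivalently of $\lambda$-harmonic polynomials) in $L^p_\lambda(\partial\DD)$, Proposition \ref{Poi-property-1}(v): given $\varepsilon$, split $f=g+h$ with $g$ continuous and $\|h\|_{L^p_\lambda}<\varepsilon$. For $g$ continuous one has $\|P_r(g;\cdot)\|_{L^\infty}\le\|g\|_\infty$ uniformly, so $M_{\tau,\ell}(g;r)\lesssim (1-r)^{2\lambda\tau}\|g\|_\infty = o\bigl((1-r)^{2\lambda\tau-(2\lambda+1)\delta}\bigr)$ since $\delta>0$; for $h$ apply part (i) to get $M_{\tau,\ell}(h;r)\lesssim (1-r)^{2\lambda\tau-(2\lambda+1)\delta}\varepsilon$. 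Combining and letting $\varepsilon\to0$ gives the claim. Part (iii), the integral (tent-space / Hardy-type) bound, is where the real work lies. I would again dominate by the Poisson kernel and reduce to showing
\begin{align*}
\int_0^1 (1-r)^{k\delta(2\lambda+1)-2\lambda\tau k-1}\left(\int_{-\pi}^\pi (1-r+|\sin\theta|)^{2\lambda\tau}\Bigl(\int_{-\pi}^\pi|f(e^{i\varphi})|P(re^{i\theta},e^{i\varphi})dm_\lambda(\varphi)\Bigr)dm_\lambda(\theta)\right)^{k/p}\!\!\cdots
\end{align*}
is controlled by $\|f\|_{L^p_\lambda}^k$ — more precisely, one interpolates between the $r$-uniform bound of part (i) at the endpoint $\tau=\delta$ (where the power of $1-r$ vanishes) and an $L^p(dr)$-type estimate obtained from the boundedness of a maximal or fractional-integral operator adapted to the measure $dm_\lambda$, in the spirit of the classical Hardy--Littlewood embedding $H^p\hookrightarrow$ Bergman-type space. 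The strict inequality $\tau<\delta$ and $p>1$ are exactly what make the relevant fractional integral operator bounded on $L^p$.

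The main obstacle will be part (iii): organizing the double integral over $(\theta,\varphi)$ with the Dunkl-type weight $|\sin\theta|^{2\lambda}$ present in both slots and the extra factor $(1-r+|\sin\theta|)^{2\lambda\tau}$, and then performing the $r$-integration to land on precisely the exponent $k\delta(2\lambda+1)-2\lambda\tau k-1$. I expect the cleanest route is to first establish the sharp pointwise bound on $P(re^{i\theta},e^{i\varphi})$ integrated in one variable (a "$\lambda$-adapted Schur test"), reduce the claim to boundedness of an operator of fractional-integration type on $L^p(dm_\lambda)$ — whose kernel decays like $|\sin((\theta-\varphi)/2)|^{-2+\epsilon}$ weighted appropriately — and invoke a weighted Schur test or the known $L^p$-mapping properties of such operators on the space $(\partial\DD, dm_\lambda)$. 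Lemma \ref{p-means-estimate-a}, needed earlier, is then just the special case $p=\ell<1$ of this circle of ideas, handled separately via subharmonicity (Lemma \ref{monotonicity-integral-mean-2-a-1}) since the Poisson-integral machinery is unavailable for $p<1$; but that is stated as a corollary to be proved afterwards, so here I only need parts (i)--(iv) as above.
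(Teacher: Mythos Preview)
Your plan for parts (i), (ii) and (iv) is essentially correct and close to the paper. For (i) the paper organizes the interpolation slightly differently: it fixes $\delta=1-q^{-1}$ (so a single choice of $\tau\in[0,\delta]$ and a single operator $U_rf(\theta)=|1-r^2e^{2i\theta}|^{2\lambda\tau}P(f;re^{i\theta})$), proves the two endpoint bounds $U_r:L^{q'}_\lambda\to L^\infty$ and $U_r:L^1_\lambda\to L^q_\lambda$, and applies ordinary Riesz--Thorin along the line $p^{-1}-\ell^{-1}=\delta$. Your scheme (fix $p$, take $\ell=p$ with $\tau=0$ and $\ell=\infty$ with $\tau_1\in[0,1/p]$, then interpolate) also works, but note that the operator changes with $\tau$, so strictly speaking you need either Stein interpolation or the direct H\"older splitting
\[
M_{\tau,\ell}(f;r)\le M_{\tau\ell/(\ell-p),\infty}(f;r)^{1-p/\ell}\,M_{0,p}(f;r)^{p/\ell},
\]
which does give the right exponent. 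Also, rather than computing $\int P(re^{i\theta},e^{i\varphi})^{p'}\,dm_\lambda(\varphi)$ by integrating the pointwise kernel bound, it is cleaner to write $P^{p'}=P^{p'-1}\cdot P$ and use $\int P\,dm_\lambda=1$, so that only $\sup_\varphi P(re^{i\theta},e^{i\varphi})$ enters; this is what the paper does.

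The real gap is part (iii). Your endpoint remark ``at $\tau=\delta$ the power of $1-r$ vanishes'' is not correct: plugging $M_{\tau,\ell}\lesssim(1-r)^{2\lambda\tau-(2\lambda+1)\delta}\|f\|_p$ into the integrand of \eqref{p-means-estimate-2} leaves $(1-r)^{-1}$ for \emph{every} admissible $\tau$, so part (i) alone gives only a divergent bound and no endpoint to interpolate from. The fractional-integral\,/\,Schur-test route you sketch may be made to work but is considerably heavier than necessary. The paper's device is short and you should use it: for fixed $\ell,\tau$ consider the sublinear operator
\[
Tf(r)=(1-r)^{-2\lambda\tau-(2\lambda+1)/\ell}\,M_{\tau,\ell}(f;r)
\]
from $L^p_\lambda(\partial\DD)$ into functions on $([0,1),(1-r)^{2\lambda}dr)$. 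Part (i) gives the pointwise bound $Tf(r)\lesssim(1-r)^{-(2\lambda+1)/p}\|f\|_{L^p_\lambda}$, which immediately yields that $T$ is of weak type $(p,p)$ for every $1\le p\le 1/(\tau+\ell^{-1})$ (the superlevel set $\{Tf>s\}$ is contained in an interval $[1-s',1)$ of the right $\nu$-measure). Marcinkiewicz interpolation then gives strong type $(p,p)$ for $1<p<1/(\tau+\ell^{-1})$, which is exactly \eqref{p-means-estimate-2} with $k=p$; the case $k>p$ follows by writing the integrand as $[(1-r)^{\delta(2\lambda+1)-2\lambda\tau}M_{\tau,\ell}]^{k-p}$ times the $k=p$ integrand and bounding the first factor by part (i).
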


\begin{proof}
Choose $q$ so that $q^{-1}=1-(p^{-1}-\ell^{-1})$. Then $1\leq q \leq p'$, or equivalently $1\leq p \leq q' \leq\infty$. For $r\in[0,1)$, define the operator $U_r$ by
\begin{eqnarray}\label{definition-U-1}
U_rf(\theta)=|1-r^{2}e^{2i\theta}|^{2\lambda\tau}P(f;re^{i\theta}), \qquad \theta\in[-\pi,\pi).
\end{eqnarray}

We shall prove
\begin{eqnarray}\label{Poisson-norm-1}
\|U_rf\|_{L_{\lambda}^{\ell}(\partial\DD)} \leq \|f\|_{L_{\lambda}^p(\partial\DD)} \sup_{\theta,\varphi} \mid 1-r^{2}e^{2i\theta}\mid^{2\lambda \tau} \left[P(re^{i\theta},e^{i\varphi})\right]^{1-q^{-1}}.
\end{eqnarray}
At first, for $p_1=q'$ and $\ell_1=+\infty $, from (\ref{Poisson-integral-1}), (\ref{definition-U-1}) and H\"older's inequality one has
\begin{align*}
\|U_rf\|_{L^{\infty}(\partial\DD)}
\leq  \|f\|_{L_{\lambda}^{q'}(\partial\DD)}\sup_{\theta} |1-r^{2}e^{2i\theta}|^{2\lambda\tau} \|P(re^{i\theta},\cdot)\|_{L_{\lambda}^q(\partial\DD)}.
\end{align*}
Writing
\begin{align*}
\left[P(re^{i\theta},e^{i\varphi})\right]^q=\left[P(re^{i\theta},e^{i\varphi})\right]^{q-1}P(re^{i\theta},e^{i\varphi})
\end{align*}
and noting that $\|P(re^{i\theta},\cdot)\|_{L_{\lambda}^1(\partial\DD)}=1$, we further obtain
\begin{align}\label{Poisson-norm-2}
\|U_rf\|_{L^{\infty}(\partial\DD)}
\leq & \|f\|_{L_{\lambda}^{q'}(\partial\DD)} \sup_{\theta,\varphi } |1-r^{2}e^{2i\theta}|^{2\lambda \tau} \left[P(re^{i\theta},e^{i\varphi})\right]^{1-q^{-1}}.
\end{align}

Secondly, for $p_{2}=1$ and $\ell_{2}=q$, it follows from (\ref{Poisson-integral-1}) and H\"older's inequality that
\begin{align*}
|P(f,re^{i\theta})|\leq \|f\|_{L_{\lambda}^1(\partial\DD)}^{\frac{1}{q'}} \||f|^{\frac{1}{q}}P(re^{i\theta },\cdot )\|_{L_{\lambda}^q(\partial\DD)},
\end{align*}
and then, from (\ref{definition-U-1}) and Fubini's theorem,
\begin{align*}
\|U_rf\|_{L_{\lambda}^q(\partial\DD)} &\le \|f\|_{L_{\lambda}^1(\partial\DD)}^{\frac{1}{q'}}
\left(\int_{-\pi}^{\pi}|1-r^{2}e^{2i\theta}|^{2\lambda\tau q}\||f|^{\frac{1}{q}}P(re^{i\theta},\cdot)\|_{L_{\lambda}^q(\partial\DD)}^{q} dm_{\lambda}(\theta)\right)^{\frac{1}{q}}\\
&= \|f\|_{L_{\lambda}^1(\partial\DD)}^{\frac{1}{q'}} \left(\int _{-\pi}^{\pi}|f(e^{i\varphi})| \int_{-\pi}^{\pi}
 |1-r^{2}e^{2i\theta}|^{2\lambda\tau q} \left[P(re^{i\theta},e^{i\varphi})\right]^{q} dm_{\lambda}(\theta)dm_{\lambda}(\varphi)\right)^{\frac{1}{q}}.
\end{align*}
Again using $\|P(r\cdot,e^{i\varphi})\|_{L_{\lambda}^1(\partial\DD)}=1$, the inner integral above is dominated by
\begin{align}\label{sup-Poisson-kernel-1}
\sup_{\theta,\varphi} |1-r^{2}e^{2i\theta}|^{2\lambda\tau q} \left[P(re^{i\theta},e^{i\varphi})\right]^{q-1},
\end{align}
and hence
\begin{align}\label{Poisson-norm-3}
\|U_rf\|_{L_{\lambda}^q(\partial\DD)}\le \|f\|_{L_{\lambda}^1(\partial\DD)} \sup_{\theta,\varphi} |1-r^{2}e^{2i\theta}|^{2\lambda\tau} \left[P(re^{i\theta},e^{i\varphi})\right]^{1-q^{-1}}.
\end{align}

Finally (\ref{Poisson-norm-1}) is yielded by applying the Riesz-Th\"orin interpolation theorem to (\ref{Poisson-norm-2}) and (\ref{Poisson-norm-3}).

Now by Lemma \ref{Poisson-kernel-estimate-a-1} and on account of (\ref{elementary-equality-1}) with $s=1$, we have
\begin{eqnarray*}
P(re^{i\theta},e^{i\varphi})
\lesssim\frac{(1-r)^{-1}}{(1-r+|\sin\theta|)^{2\lambda}}
\asymp\frac{(1-r)^{-1}}{|1-r^{2}e^{2i\theta}|^{2\lambda}},\qquad \theta,\varphi\in[-\pi,\pi],\,\,\, r\in[0,1),
\end{eqnarray*}
and so
\begin{align}\label{sup-Poisson-kernel-2}
|1-r^{2}e^{2i\theta}|^{2\lambda\tau} \left[P(re^{i\theta},e^{i\varphi})\right]^{\delta}
\lesssim\frac{(1-r)^{-\delta}}{|1-r^{2}e^{2i\theta}|^{2\lambda(\delta-\tau)}}
\lesssim(1-r)^{2\lambda\tau-(2\lambda+1)\delta}.
\end{align}
Inserting this into (\ref{Poisson-norm-1}) and noting $1-q^{-1}=p^{-1}-\ell^{-1}=\delta$, part (i) of the theorem is proved.

Next we come to showing part (ii). For $\epsilon>0$, we write
$f=f_1+f_2$, where $f_1$ is bounded and $\|f_2\|_{L_{\lambda}^p(\partial\DD)}<\epsilon$. Since for $1\le p<\ell\le\infty$ and $0\le\tau\le\delta$, one has $2\lambda\tau-(2\lambda+1)\delta\le-\delta<0$,
there exists $\eta>0$ such that $\|f_1\|_{L_{\lambda}^{\ell}(\partial\DD)}\le(1-r)^{2\lambda\tau-(2\lambda+1)\delta}\epsilon$ whenever
$1-r<\eta$. Note that $M_{\tau,\ell}(f;r)\le M_{\tau,\ell}(f_1;r)+M_{\tau,\ell}(f_2;r)$, but by the definition of $M_{\tau,\ell}(f_1;r)$ and Proposition \ref{Poi-property-1}(iv),
\begin{align*}
M_{\tau,\ell}(f_1;r)\lesssim
\left[\int_{-\pi}^{\pi}|P(f_1;re^{i\theta})|^{\ell} dm_{\lambda}(\theta)\right]^{1/\ell}
\le\|f_1\|_{L_{\lambda}^{\ell}(\partial\DD)}
\le(1-r)^{2\lambda\tau-(2\lambda+1)\delta}\epsilon.
\end{align*}
Moreover by part (i) just proved,
$$
M_{\tau,\ell}(f_2;r)\lesssim(1-r)^{2\lambda\tau-(2\lambda+1)\delta}\|f_2\|_{L_{\lambda}^{p}(\partial\DD)}
<(1-r)^{2\lambda\tau-(2\lambda+1)\delta}\epsilon.
$$
Thus it is concluded that $M_{\tau,\ell}(f;r)\lesssim(1-r)^{2\lambda\tau-(2\lambda+1)\delta}\epsilon$. Part (ii) is proved.

In order to prove part (iii), we fix $\ell\in(1,\infty)$ and $\tau\ge0$ satisfying $\ell^{-1}+\tau<1$. Consider the operator $T$ defined by, for $f\in L_{\lambda}^{1}(\partial\DD)$,
$$
Tf(r)=(1-r)^{-2\lambda\tau-(1+2\lambda)/\ell}M_{\tau,\ell}(f;r),\qquad r\in[0,1).
$$
It is clear that $T$ is quasi-linear. We are going to prove that for $1\le p\le 1/(\tau+\ell^{-1})$, the operator $T$ is of weak type $(p,p)$ from $L_{\lambda}^{p}(\partial\DD)$ to the measure space $\left([0,1),d\nu\right)$ with $d\nu=(1-r)^{2\lambda}dr$.

Since $1\le p\le 1/(\tau+\ell^{-1})$ implies $0\le\tau\le p^{-1}-\ell^{-1}$, from part (i) it follows that
$$
Tf(r)\le c(1-r)^{-(1+2\lambda)/p}\|f\|_{L_{\lambda}^{p}(\partial\DD)},\qquad r\in[0,1).
$$
Now for $s>0$, $\{r:\,\, Tf(r)>s\}\subseteq[1-s',1)$, where $s'=\left(c\|f\|_{L_{\lambda}^{p}(\partial\DD)}/s\right)^{p/(1+2\lambda)}$, and then
\begin{eqnarray*}
\nu\left(\{r:\,\, Tf(r)>s\}\right)\le\int_{1-s'}^1(1-r)^{2\lambda}dr=\frac{1}{1+2\lambda}\left(\frac{c}{s}\|f\|_{L_{\lambda}^{p}(\partial\DD)}\right)^p.
\end{eqnarray*}
Hence $T$ is of weak type $(p,p)$ for all $1\le p\le 1/(\tau+\ell^{-1})$.

By Marcinkiewicz's interpolation theorem, $T$ is of strong type $(p,p)$ for all $1<p<1/(\tau+\ell^{-1})$, i.e., $\int_0^1Tf(r)^pd\nu(r)\lesssim\|f\|_{L_{\lambda}^{p}(\partial\DD)}^p$, or equivalently,
\begin{eqnarray}\label{p-means-estimate-3}
\int_0^1(1-r)^{p\delta(2\lambda+1)-2\lambda\tau p-1}M_{\tau,\ell}(f;r)^pdr
\lesssim\|f\|_{L_{\lambda}^{p}(\partial\DD)}^p.
\end{eqnarray}
This is the case for $k=p$ in (\ref{p-means-estimate-2}).

For $p<k<\infty$, we write $(1-r)^{k\delta(2\lambda+1)-2\lambda\tau k-1}M_{\tau,\ell}(f;r)^k$ into the form
\begin{eqnarray*}
\left[(1-r)^{\delta(2\lambda+1)-2\lambda\tau}M_{\tau,\ell}(f;r)\right]^{k-p}(1-r)^{p\delta(2\lambda+1)-2\lambda\tau p-1}M_{\tau,\ell}(f;r)^p.
\end{eqnarray*}
By part (i), the first factor above is dominated by a multiple of $\|f\|_{L_{\lambda}^{p}(\partial\DD)}^{k-p}$, and so
\begin{align*}
\int_0^1(1-r)^{k\delta(2\lambda+1)-2\lambda\tau k-1}M_{\tau,\ell}(f;r)^kdr
\lesssim \|f\|_{L_{\lambda}^{p}(\partial\DD)}^{k-p} \int_0^1(1-r)^{p\delta(2\lambda+1)-2\lambda\tau p-1}M_{\tau,\ell}(f;r)^pdr.
\end{align*}
In view of (\ref{p-means-estimate-3}), this is dominated by a multiple of $\|f\|_{L_{\lambda}^{p}(\partial\DD)}^k$, and part (iii) is proved.

Part (iv) can be verified in a similar way to part (i). By
H\"older's inequality, from (\ref{Poisson-Borel-1}) we have
$$
|P(d\nu;re^{i\theta})|^{\ell}\le\|d\nu\|_{{\frak
B}_{\lambda}(\partial\DD)}^{\ell/\ell'}c_{\lambda}\int_{-\pi}^{\pi}|P(re^{i\theta},e^{i\varphi})|^{\ell}
|\sin\varphi|^{2\lambda}\left|d\nu(\varphi)\right|.
$$
Taking integration over $[-\pi,\pi]$ with respect to $|1-r^{2}e^{2i\theta}|^{2\lambda\tau\ell}dm_{\lambda}(\theta)$ yields
$$
M_{\tau,\ell}(d\nu;r)^{\ell} \le\|d\nu\|_{{\frak
B}_{\lambda}(\partial\DD)}^{\ell/\ell'} c_{\lambda}\int_{-\pi}^{\pi} \left[\int_{-\pi}^{\pi}|1-r^{2}e^{2i\theta}|^{2\lambda\tau\ell}|P(re^{i\theta},e^{i\varphi})|^{\ell} dm_{\lambda}(\theta)\right]
|\sin\varphi|^{2\lambda}\left|d\nu(\varphi)\right|.
$$
The inner integral above is dominated by the quantity (\ref{sup-Poisson-kernel-1}) with $\ell$ instead of $q$, and so
\begin{align*}
M_{\tau,\ell}(d\nu;r)\le \|d\nu\|_{{\frak
B}_{\lambda}(\partial\DD)} \sup_{\theta,\varphi} |1-r^{2}e^{2i\theta}|^{2\lambda\tau} \left[P(re^{i\theta},e^{i\varphi})\right]^{1-\ell^{-1}}.
\end{align*}
Finally appealing to (\ref{sup-Poisson-kernel-2}) with $\delta=1-1/\ell$ proves part (iv) immediately.
\end{proof}

We now turn to estimates of the $p$-integral means $M_{\tau,p}(f;r)$ of $H^p_{\lambda}(\DD)$ functions.
We shall need the $\lambda$-harmonic majorizations of functions in $H_{\lambda}^p(\DD)$.

\begin{lemma} \label{majorization-a-1} {\rm (\cite[Theorem 6.3]{LL1}, $\lambda$-harmonic majorization)}
Let $p\ge p_0$ and $f\in H_{\lambda}^p(\DD)$.

  {\rm (i)} \  If $p >p_0$, then there exists a nonnegative function $g(\theta)\in
L_{\lambda}^{p/p_0}(\partial\DD)$ such that for $z\in\DD$,
$|f(z)|^{p_0}\leq P(g;z)$,
and
$\|f\|^{p_0}_{H^{p}_{\lambda}}\le\|g\|_{L_{\lambda}^{p/p_0}(\partial\DD)}\le2^{2-p_0}\|f\|^{p_0}_{H^{p}_{\lambda}}$.

{\rm (ii)} \ If $p=p_0,$ there exists a finite positive measure
  $d\nu\in {\frak B}_{\lambda}(\partial\DD)$ such that for $z\in\DD$,
$|f(z)|^{p_0}\le P(d\nu;z)$,
and
$\|f\|^{p_0}_{H^{p_0}_{\lambda}}\le \|d\nu\|_{{\frak B}_{\lambda}(\partial\DD)}
\le2^{2-p_0}\|f\|^{p_0}_{H^{p_0}_{\lambda}}$.
\end{lemma}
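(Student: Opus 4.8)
The plan is to reduce the two cases to the classical circle of ideas around subharmonicity and boundary behaviour, adapted to the $\lambda$-setting via the $\lambda$-Poisson integral. First I would recall that for $f\in H_\lambda^p(\DD)$ with $p\ge p_0$, the function $|f|^{p_0}$ is $\lambda$-subharmonic in $\DD$, i.e.\ $\Delta_\lambda|f|^{p_0}\ge0$ in the distributional sense. This is the key structural fact: since $p_0=2\lambda/(2\lambda+1)$ is precisely the critical exponent making $|f|^{p_0}$ $\lambda$-subharmonic for $\lambda$-analytic $f$, one gets a subordination principle $|f(z)|^{p_0}\le P(h_r;z/r)\big|_{\ldots}$ where $h_r(\theta)=|f(re^{i\theta})|^{p_0}$, by comparing $|f|^{p_0}$ on $\overline{\DD}_r$ with the $\lambda$-Poisson integral of its boundary values on $|\zeta|=r$ (using Proposition~\ref{Poi-property-1}(ii), the semigroup property, to rescale).

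Next, for case (i) with $p>p_0$, set $g_r(\theta)=|f(re^{i\theta})|^{p_0}$. By Theorem~\ref{Hardy-boundary-value-a}, since $p/p_0>1$, the functions $f(re^{i\cdot})$ converge in $L_\lambda^p(\partial\DD)$, hence $g_r\to g:=|f(e^{i\cdot})|^{p_0}$ in $L_\lambda^{p/p_0}(\partial\DD)$, and $\|g\|_{L_\lambda^{p/p_0}(\partial\DD)}=\|f\|_{H_\lambda^p}^{p_0}$ by (\ref{Hp-norm-ineq-1}); more precisely the $\asymp$ in (\ref{Hp-norm-ineq-1}) must be tracked to produce the explicit constants $\|f\|_{H^p_\lambda}^{p_0}\le\|g\|\le 2^{2-p_0}\|f\|_{H^p_\lambda}^{p_0}$ — the lower bound being trivial from the mean and the factor $2^{2-p_0}$ coming from Lemma~\ref{monotonicity-integral-mean-2-a-1} (the constant $2^{2/p_0-1}=2^{(2\lambda+1)-1}$... here one uses the $p_0$-version, giving $2^{2/p_0-1}$ on $M_{p_0}$, which raised appropriately yields $2^{2-p_0}$). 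Then pass to the limit $r\to1-$ in the subordination inequality $|f(\rho z)|^{p_0}\le P(g_\rho;z)$ using Proposition~\ref{Poi-property-1}(v) (or Fatou) to obtain $|f(z)|^{p_0}\le P(g;z)$ for all $z\in\DD$.

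For case (ii) with $p=p_0$, the family $\{g_r\,dm_\lambda\}_{r<1}$ is bounded in total variation (its mass is $M_{p_0}(f;r)^{p_0}\le\|f\|_{H^{p_0}_\lambda}^{p_0}$, and it is \emph{nondecreasing} in $r$ by Lemma~\ref{monotonicity-integral-mean-2-a-1} applied to the $\lambda$-harmonic majorant — or directly), so by weak-$*$ compactness of ${\frak B}_\lambda(\partial\DD)$ one extracts a subsequence $g_{r_k}\,dm_\lambda\rightharpoonup d\nu$, a finite positive measure with $\|d\nu\|_{{\frak B}_\lambda(\partial\DD)}=\lim M_{p_0}(f;r_k)^{p_0}$; monotonicity of $M_{p_0}(f;\cdot)$ forces this limit to equal $\sup_r M_{p_0}(f;r)^{p_0}=\|f\|_{H^{p_0}_\lambda}^{p_0}$, and again tracking the constant in (\ref{Hp-norm-ineq-1}) gives the stated bounds $\|f\|_{H^{p_0}_\lambda}^{p_0}\le\|d\nu\|\le 2^{2-p_0}\|f\|_{H^{p_0}_\lambda}^{p_0}$. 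Passing to the limit along $r_k$ in $|f(\rho z)|^{p_0}\le P(g_\rho;z)$, using continuity of $z\mapsto P(z,e^{i\varphi})$ against the weak-$*$ convergence (and Proposition~\ref{Poi-property-1}(ii) to handle the $\rho\to1$ together with $r_k\to1$), yields $|f(z)|^{p_0}\le P(d\nu;z)$.

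The main obstacle is establishing the $\lambda$-subharmonicity of $|f|^{p_0}$ and the attendant subordination inequality: unlike the classical case there is no Cauchy formula, and $|f|^{p_0}$ is only $C^0$, so one must argue with $\Delta_\lambda$ in a weak sense and handle the nonlocal reflection term $-\lambda y^{-2}[u(z)-u(\bar z)]$ in $\Delta_\lambda$ — this is where the precise value $p_0=2\lambda/(2\lambda+1)$ enters and where the bulk of the work lies; everything afterwards (limits, constant-chasing, weak-$*$ compactness) is routine. Since this lemma is quoted from \cite[Theorem 6.3]{LL1}, in the present paper one would simply cite it, but the above is the route by which it is proved there.
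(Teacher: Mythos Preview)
The paper does not prove this lemma at all --- it is stated with the citation \cite[Theorem 6.3]{LL1} and used as a black box, exactly as you note in your final sentence. Your sketch of the underlying argument ($\lambda$-subharmonicity of $|f|^{p_0}$ at the critical exponent, subordination to the $\lambda$-Poisson integral of boundary data, then $L^{p/p_0}$-convergence for $p>p_0$ and weak-$*$ compactness for $p=p_0$) is the standard route and is consistent with how such majorizations are obtained; your tracking of the constant $2^{2-p_0}$ via Lemma~\ref{monotonicity-integral-mean-2-a-1} (since $(2^{2/p_0-1})^{p_0}=2^{2-p_0}$) is also correct.
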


\begin{theorem} \label{p-means-estimate-c}
Let $p_0\le p\le\ell\le\infty$, $\delta=p^{-1}-\ell^{-1}$ and $0\le\tau\le\delta$, and let $f\in H_{\lambda}^{p}(\DD)$. Then

{\rm (i)} \ for $p_0\le p\le\ell\le+\infty$, $M_{\tau,\ell}(f;r)\lesssim (1-r)^{2\lambda\tau-(2\lambda+1)\delta}\|f\|_{H_{\lambda}^{p}}$ for $r\in[0,1)$;

{\rm (ii)} \ for $p_0<p<\ell\le+\infty$,
$M_{\tau,\ell}(f;r)=o\left((1-r)^{2\lambda\tau-(2\lambda+1)\delta}\right)$ as $r\rightarrow1-$;

{\rm (iii)} \ for $p_0<p<\ell\le+\infty$, $p\le k<+\infty$ and $0\le\tau<\delta$,
\begin{eqnarray} \label{p-means-estimate-4}
\left(\int_0^1(1-r)^{k\delta(2\lambda+1)-2\lambda\tau k-1}M_{\tau,\ell}(f;r)^kdr\right)^{1/k}
\lesssim\|f\|_{H_{\lambda}^{p}};
\end{eqnarray}
\end{theorem}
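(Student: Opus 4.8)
The plan is to deduce Theorem~\ref{p-means-estimate-c} from its $L^p(\partial\DD)$ counterpart, Theorem~\ref{p-means-estimate-b}, by interpolating with the $\lambda$-harmonic majorization of Lemma~\ref{majorization-a-1}. The mechanism is standard: a function $f\in H_{\lambda}^p(\DD)$ with $p\ge p_0$ admits a majorant $|f(z)|^{p_0}\le P(g;z)$ for some nonnegative $g\in L_{\lambda}^{p/p_0}(\partial\DD)$ when $p>p_0$, or $|f(z)|^{p_0}\le P(d\nu;z)$ for a finite positive measure $d\nu$ when $p=p_0$, with the norms comparable to $\|f\|_{H_\lambda^p}^{p_0}$. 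One then applies the results about $M_{\tau',\ell'}$ of $\lambda$-Poisson integrals from Theorem~\ref{p-means-estimate-b} to $P(g;z)$ (or $P(d\nu;z)$), with exponents and the parameter $\tau$ rescaled by the factor $p_0$, and takes a $p_0$-th root at the end.

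For part (i), first I would treat $p=p_0$. Here $|f(re^{i\theta})|\le P(d\nu;re^{i\theta})^{1/p_0}$, so
\begin{align*}
M_{\tau,\ell}(f;r)^{p_0}
=\left[\int_{-\pi}^{\pi}|1-r^2e^{2i\theta}|^{2\lambda\tau\ell}|f(re^{i\theta})|^{\ell}dm_\lambda(\theta)\right]^{p_0/\ell}
\le M_{\tau,\,\ell/p_0}(d\nu;r),
\end{align*}
and since $\ell/p_0\in[1,\infty]$ and $\tau\le p_0^{-1}-\ell^{-1}=p_0(1-(\ell/p_0)^{-1})$, part (iv) of Theorem~\ref{p-means-estimate-b}, applied with $\ell$ replaced by $\ell/p_0$ and $\tau$ replaced by $\tau$ but against the exponent setup there (so its ``$\delta$'' equals $1-(\ell/p_0)^{-1}$ and its ``$\tau$'' equals $\tau/p_0$... — more precisely one checks $M_{\tau,\ell/p_0}(d\nu;r)\lesssim(1-r)^{2\lambda\tau-(2\lambda+1)(p_0^{-1}-\ell^{-1})p_0}\|d\nu\|_{\frak B_\lambda}$), gives exactly $(1-r)^{p_0[2\lambda\tau/p_0-(2\lambda+1)(p_0^{-1}-\ell^{-1})]}\|d\nu\|_{\frak B_\lambda}$; taking a $p_0$-th root and using $\|d\nu\|_{\frak B_\lambda}\asymp\|f\|_{H_\lambda^{p_0}}^{p_0}$ yields the claim. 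For $p>p_0$ the same computation uses $g\in L_\lambda^{p/p_0}(\partial\DD)$ and part (i) of Theorem~\ref{p-means-estimate-b} with the pair $(p/p_0,\,\ell/p_0)$ in the role of $(p,\ell)$ and $\tau/p_0$ in the role of $\tau$: one verifies $\tau/p_0\le (p/p_0)^{-1}-(\ell/p_0)^{-1}$ (this is just $\tau\le p^{-1}-\ell^{-1}=\delta$), obtains $M_{\tau,\ell/p_0}(g;r)\lesssim(1-r)^{2\lambda(\tau/p_0)-(2\lambda+1)(p^{-1}-\ell^{-1})p_0}\|g\|_{L_\lambda^{p/p_0}}$, then takes the $p_0$-th root. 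Part (ii) follows the identical pattern but invokes Theorem~\ref{p-means-estimate-b}(ii) instead, which requires the strict inequality $p_0<p$ so that $p_0<p/p_0\cdot p_0$... — concretely $p/p_0>1$, hence the little-$o$ statement applies to $M_{\tau,\ell/p_0}(g;r)$ and survives the $p_0$-th root.

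Part (iii) is the delicate one and I expect it to be the main obstacle, because a raw $p_0$-th root does not interact well with the $L^k(dr)$ integral: writing $M_{\tau,\ell}(f;r)^{p_0}\le M_{\tau,\ell/p_0}(g;r)$ and raising to the power $k/p_0$ gives an integrand controlled by $(1-r)^{\cdots}M_{\tau,\ell/p_0}(g;r)^{k/p_0}$, and one wants to apply Theorem~\ref{p-means-estimate-b}(iii) with the exponent triple $(p/p_0,\ \ell/p_0,\ k/p_0)$ and parameter $\tau/p_0$. I would check that the hypotheses transfer: $1<p/p_0<\ell/p_0\le\infty$ (this is $p_0<p<\ell$), $p/p_0\le k/p_0<\infty$ (this is $p\le k$), and $0\le\tau/p_0<(p/p_0)^{-1}-(\ell/p_0)^{-1}$ (this is $0\le\tau<\delta$); and that the weight exponent in \eqref{p-means-estimate-2} for the rescaled parameters, namely $(k/p_0)\,(p^{-1}-\ell^{-1})p_0\,(2\lambda+1)-2\lambda(\tau/p_0)(k/p_0)-1$, matches $k\delta(2\lambda+1)-2\lambda\tau k-1$ after the change $M_{\tau,\ell/p_0}(g;r)^{k/p_0}$ is reinterpreted. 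The bookkeeping of how the $2\lambda\tau k$ term scales is the fussy point — one must carry the $p_0$ factors through carefully — but once the exponents are matched, Theorem~\ref{p-means-estimate-b}(iii) gives $\int_0^1(\cdots)M_{\tau,\ell/p_0}(g;r)^{k/p_0}dr\lesssim\|g\|_{L_\lambda^{p/p_0}}^{k/p_0}\asymp\|f\|_{H_\lambda^p}^{k}$, and taking a $k$-th root finishes the proof. Finally, Lemma~\ref{p-means-estimate-a} is the special case $\ell=\infty$, $\tau\le\delta=p^{-1}-1$, $k$ irrelevant, of part (i) with the outer exponent chosen to be $1$ rather than $\ell$ — concretely one applies part (i) with $\ell=1$ is wrong; rather one notes $M_{\tau,1}(f;r)\le M_{\tau,\infty}(f;r)^{1-p}M_{0,p}(f;r)^p\cdot(\text{trivial bound})$, or more cleanly applies Theorem~\ref{p-means-estimate-b}(i)-type reasoning directly to $M_{\tau,1}(f;r)=\int|1-r^2e^{2i\theta}|^{2\lambda\tau}|f|\,dm_\lambda$ via Hölder and the majorant; I would make this reduction explicit in the Corollary~\ref{p-means-estimate-c} remark.
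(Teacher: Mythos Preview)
Your overall strategy is exactly the paper's: lift Theorem~\ref{p-means-estimate-b} to $H_\lambda^p$ via the $\lambda$-harmonic majorization of Lemma~\ref{majorization-a-1}. However, your bookkeeping of the $\tau$-rescaling is off, and this error propagates through all three parts.

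From $|f|^{p_0}\le P(g;\cdot)$ one gets, for any $\ell\ge p_0$,
\[
M_{\tau,\ell}(f;r)^{p_0}
=\left[\int|1-r^2e^{2i\theta}|^{2\lambda\tau\ell}|f|^{\ell}\,dm_\lambda\right]^{p_0/\ell}
\le\left[\int|1-r^2e^{2i\theta}|^{2\lambda(\tau p_0)(\ell/p_0)}P(g;\cdot)^{\ell/p_0}\,dm_\lambda\right]^{1/(\ell/p_0)}
=M_{\tau p_0,\,\ell/p_0}(g;r),
\]
so the correct rescaled parameter is $\tau'=\tau p_0$, not $\tau/p_0$ and not $\tau$ unchanged. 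With $\delta_0=(p/p_0)^{-1}-(\ell/p_0)^{-1}=p_0\delta$, the hypothesis $0\le\tau'\le\delta_0$ is exactly $0\le\tau\le\delta$, and the exponent produced by Theorem~\ref{p-means-estimate-b}(i) is $2\lambda(\tau p_0)-(2\lambda+1)\delta_0=p_0\bigl(2\lambda\tau-(2\lambda+1)\delta\bigr)$, which after the $p_0$-th root gives the desired bound. Your version with $\tau/p_0$ would force the stronger restriction $\tau\le p_0^2\delta$ and, more seriously, the inequality $M_{\tau,\ell}(f;r)^{p_0}\le M_{\tau,\,\ell/p_0}(g;r)$ you wrote is simply false because the weight exponents do not match. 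In part~(iii) the same correction makes the weight exponent in \eqref{p-means-estimate-2} come out to $(k/p_0)\delta_0(2\lambda+1)-2\lambda(\tau p_0)(k/p_0)-1=k\delta(2\lambda+1)-2\lambda\tau k-1$, matching \eqref{p-means-estimate-4} on the nose; with your $\tau/p_0$ the $\tau$-term becomes $2\lambda\tau k/p_0^2$ and the exponents do not line up.

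Finally, your closing remarks about Corollary~\ref{p-means-estimate-d} are confused: Lemma~\ref{p-means-estimate-a} is literally the case $\ell=1$ of part~(i) (note $p_0\le p<1=\ell$ is allowed there), so no additional H\"older or interpolation trick is needed.
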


\begin{proof}
For $\ell\ge p>p_0$ and $f\in H_{\lambda}^{p}(\DD)$, by Lemma \ref{majorization-a-1}(i) we have $|f(z)|^{p_0}\leq P(g;z)$ ($z\in\DD$) for some $g(\theta)\in
L_{\lambda}^{p/p_0}(\partial\DD)$ satisfying $\|g\|_{L_{\lambda}^{p/p_0}(\partial\DD)}\asymp\|f\|^{p_0}_{H^{p}_{\lambda}}$, and hence,
\begin{align}\label{majorization-1}
|f(z)|^{\ell}\leq P(g;z)^{\ell/p_0}, \qquad z\in\DD.
\end{align}
Since for $0\le\tau\le\delta=p^{-1}-\ell^{-1}$, $0\le\tau p_0\le\delta_0$ with $\delta_0=p_0/p-p_0/\ell$, it follows from Theorem \ref{p-means-estimate-b}(i) that,
for $r\in[0,1)$,
\begin{align*}
M_{\tau,\ell}(f;r)^{p_0}\le M_{\tau p_0,\ell/p_0}(g;r)
\lesssim (1-r)^{2\lambda\tau p_0-(2\lambda+1)\delta_0}\|g\|_{L_{\lambda}^{p/p_0}(\partial\DD)},
\end{align*}
so that $M_{\tau,\ell}(f;r)\lesssim (1-r)^{2\lambda\tau-(2\lambda+1)\delta}\|f\|_{H_{\lambda}^{p}}$ for $r\in[0,1)$. Similarly, for $\ell\ge p=p_0$ and $f\in H_{\lambda}^{p}(\DD)$,
appealing to the measure $d\nu\in {\frak B}_{\lambda}(\partial\DD)$ given in Lemma \ref{majorization-a-1}(ii) implies $M_{\tau,\ell}(f;r)^{p_0}\le M_{\tau p_0,\ell/p_0}(d\nu;r)$, and then,
applying Theorem \ref{p-means-estimate-b}(iv) yields
\begin{align*}
M_{\tau,\ell}(f;r)
\lesssim (1-r)^{2\lambda\tau-(2\lambda+1)(p_0^{-1}-\ell^{-1})}\|f\|_{H_{\lambda}^{p_0}},\qquad r\in[0,1).
\end{align*}
Thus part (i) of the theorem is proved.

If $p_0<p<\ell\le+\infty$, by means of (\ref{majorization-1}) and Theorem \ref{p-means-estimate-b}(ii) one has
\begin{align*}
M_{\tau,\ell}(f;r)\le M_{\tau p_0,\ell/p_0}(g;r)^{1/p_0}
\lesssim o\left((1-r)^{2\lambda\tau-(2\lambda+1)\delta}\right),\qquad r\rightarrow1-,
\end{align*}
that concludes part (ii).

As for part (iii), it follows from (\ref{majorization-1}) that the left hand
side of (\ref{p-means-estimate-4}) is dominated by
\begin{eqnarray*}
\left(\int_0^1(1-r)^{(k/p_0)\delta_0(1+2\lambda)-2\lambda(\tau p_0) k/p_0-1}M_{\tau p_0,\ell/p_0}(g;r)^{k/p_0}dr\right)^{1/k}.
\end{eqnarray*}
Since $1<p/p_0<\ell/p_0\le+\infty$, $p/p_0\le k/p_0<+\infty$ and $0\le\tau p_0<\delta_0$, by Theorem \ref{p-means-estimate-b}(iii) the above expression is further dominated by a multiple of $\|g\|_{L_{\lambda}^{p/p_0}(\partial\DD)}^{1/p_0}\asymp\|f\|_{H^{p}_{\lambda}}$. This prove part (iii), and the proof of the theorem is completed.
\end{proof}

For $p_0\le p<1$, taking $\ell=1$ in Theorem \ref{p-means-estimate-c}(i) we have (Lemma \ref{p-means-estimate-a})

\begin{corollary}\label{p-means-estimate-d}
Let $p_{0}\le p<1$, $\delta=p^{-1}-1$ and $0\le\tau\le\delta$. Then for $f\in H_{\lambda}^{p}(\DD)$,
\begin{eqnarray*}
M_{\tau,1}(f;r)\lesssim (1-r)^{2\lambda\tau-(2\lambda+1)\delta}\|f\|_{H_{\lambda}^{p}}, \qquad r\in[0,1).
\end{eqnarray*}
\end{corollary}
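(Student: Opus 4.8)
The corollary is the special case $\ell=1$ of Theorem~\ref{p-means-estimate-c}(i), restricted to the sub-range $p_0\le p<1$, so the plan is simply to check that the hypotheses of that theorem are met under the present assumptions and then read off the conclusion. First I would verify that $\ell=1$ is an admissible choice: the theorem requires $p_0\le p\le\ell\le\infty$, and since $p<1\le\ell=1$ this holds (indeed $p\le 1=\ell$, with equality only excluded because we are in the case $p<1$). Next I would match the parameters: with $\ell=1$ we have $\delta=p^{-1}-\ell^{-1}=p^{-1}-1$, which is exactly the $\delta$ in the corollary, and the range $0\le\tau\le\delta$ is identical. Finally, the exponent $2\lambda\tau-(2\lambda+1)\delta$ in the bound of Theorem~\ref{p-means-estimate-c}(i) coincides verbatim with the exponent asserted here, and $M_{\tau,1}(f;r)$ is precisely $M_{\tau,\ell}(f;r)$ with $\ell=1$.

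The only point requiring the slightest care is that Theorem~\ref{p-means-estimate-c}(i) allows $p=p_0$, and for that endpoint its proof invokes the measure-valued $\lambda$-harmonic majorization (Lemma~\ref{majorization-a-1}(ii)) together with Theorem~\ref{p-means-estimate-b}(iv) rather than the function-valued majorant and Theorem~\ref{p-means-estimate-b}(i). I would note that both cases are already subsumed in the statement of Theorem~\ref{p-means-estimate-c}(i) as proved above, so nothing new is needed; one just specializes $\ell=1$ in the already-established inequality. Since there is no genuine obstacle, the ``main difficulty'' is purely bookkeeping — confirming that no hypothesis of Theorem~\ref{p-means-estimate-c} is violated for the narrower parameter window $p_0\le p<1$, $\ell=1$ — and the proof reduces to a one-line invocation.

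\begin{proof}
Since $p_0\le p<1$, we have $p\le 1$, so the choice $\ell=1$ satisfies $p_0\le p\le\ell\le\infty$, and $\delta=p^{-1}-\ell^{-1}=p^{-1}-1$ agrees with the $\delta$ of the statement. Applying Theorem~\ref{p-means-estimate-c}(i) with $\ell=1$ gives, for $f\in H_{\lambda}^{p}(\DD)$ and $0\le\tau\le\delta$,
\begin{eqnarray*}
M_{\tau,1}(f;r)\lesssim (1-r)^{2\lambda\tau-(2\lambda+1)\delta}\|f\|_{H_{\lambda}^{p}}, \qquad r\in[0,1),
\end{eqnarray*}
which is the desired inequality.
\end{proof}
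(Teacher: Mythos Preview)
Your proposal is correct and matches the paper's own approach exactly: the paper obtains the corollary by taking $\ell=1$ in Theorem~\ref{p-means-estimate-c}(i), which is precisely what you do.
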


\section{Estimates of the derivatives of the $\lambda$-Cauchy kernel $C(z,w)$}

In this section we give a sharp estimate for the derivatives of the $\lambda$-Cauchy kernel $C(z,w)$ (see Theorem \ref{Cauchy-derivative-a} below), which is necessary in evaluating the $(\lambda,\alpha)$-Bergman kernel $K_{\lambda,\alpha}(z,\zeta)$ in the next section.

From \cite{LL1} we have

\begin{lemma} \label{Cauchy-estimate-a}  {\rm(\cite[Theorem 4.2]{LL1})}
For $|zw|<1$, we have
\begin{eqnarray} \label{Cauchy-inequality-1}
|C(z,w)|\lesssim \frac{(|1-zw|+|1-z\bar{w}|)^{-2\lambda}}{|1-z\bar{w}|}\ln\left(\frac{|1-z\bar{w}|^2}{|1-zw|^2}+2\right).
\end{eqnarray}
\end{lemma}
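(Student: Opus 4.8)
The plan is to read off the estimate from the explicit formula for $C(z,w)$ in Proposition \ref{Poisson-Cauchy}, namely $C(z,w)=(1-z\bar w)^{-1}P_0(z,w)$, and from the two hypergeometric representations of $P_0(z,w)$ in \eqref{Poi-0}. The factor $(1-z\bar w)^{-1}$ is already exactly the $|1-z\bar w|^{-1}$ appearing in \eqref{Cauchy-inequality-1}, so everything reduces to proving the two-sided type bound
\begin{align*}
|P_0(z,w)|\lesssim (|1-zw|+|1-z\bar w|)^{-2\lambda}\,\ln\!\left(\frac{|1-z\bar w|^2}{|1-zw|^2}+2\right).
\end{align*}
I would treat $\lambda>0$ (the case $\lambda=0$ is trivial since then $P_0\equiv 1$ and the logarithm is $\geq\ln 2>0$).

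First I would set $A=|1-zw|$, $B=|1-z\bar w|$, and observe the elementary geometric fact that $4(\operatorname{Im}z)(\operatorname{Im}w)=\operatorname{Re}\big((1-z\bar w)\overline{(1-zw)}-\text{something}\big)$; more usefully, $4(\operatorname{Im}z)(\operatorname{Im}w) = |1-zw|^2-|1-z\bar w|^2 + (|z|^2-|w|^2)\cdot(\cdots)$ — rather than chase this, the clean identity I would use is that the argument of the ${}_2F_1$ in the \emph{first} representation, $t_1 := 4(\operatorname{Im}z)(\operatorname{Im}w)/|1-zw|^2$, satisfies $1-t_1 = |1-z\bar w|^2/|1-zw|^2 \cdot(\text{positive bounded factor})$, equivalently $1-t_1\asymp B^2/A^2$ when $A\ge B$; and symmetrically the argument $t_2:=-4(\operatorname{Im}z)(\operatorname{Im}w)/|1-z\bar w|^2$ of the \emph{second} representation obeys $1-t_2\asymp A^2/B^2$ when $B\ge A$. (These comparisons, with constants depending only on $|z|,|w|<1$ staying bounded away from the degenerate regime, are exactly the kind of computation already packaged in \cite{LL1}; I would cite \cite{LL1} for them rather than reprove them.) The point of having \emph{two} formulas is that one always uses the representation whose hypergeometric argument is $\le 0$ or bounded away from $1$ from the safe side: if $B\ge A$ use the second formula (then $t_2\le 0$, so ${}_2F_1$ is harmless, bounded, and $|1-z\bar w|^{-2\lambda}=B^{-2\lambda}\asymp(A+B)^{-2\lambda}$, while $\ln(B^2/A^2+2)\gtrsim 1$ absorbs the ${}_2F_1$ factor); if $A\ge B$ use the first formula, where the argument $t_1$ may approach $1$.

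The main obstacle is therefore the regime $A\ge B$ with $t_1\to 1^-$, where one needs the logarithmic singularity of ${}_2F_1$. Here I would invoke the classical behaviour of the Gauss function at its logarithmic endpoint: when $c=a+b$ (and indeed here $c=2\lambda+1=\lambda+(\lambda+1)=a+b$ in the first representation), one has
\begin{align*}
{}_2F_1\!\left({\lambda,\lambda \atop 2\lambda+1};t\right) = O\!\left(\ln\frac{1}{1-t}\right)\qquad (t\to 1^-),
\end{align*}
the standard connection formula for the exceptional case $c-a-b=0$. Since $1-t_1\asymp B^2/A^2$, this gives ${}_2F_1(\cdots;t_1)=O\big(\ln(A^2/B^2)\big)=O\big(\ln(B^2/A^2+2)+\text{const}\big)$ — wait, more precisely $O(\ln(A/B)+1)$, which is $\lesssim \ln(B^2/A^2+2)$ only if we are careful: when $A\ge B$ we have $\ln(1/(1-t_1))\asymp \ln(A/B) \asymp \ln(|1-z\bar w|^2/|1-zw|^2 + 2)$ is false in the wrong direction, so I would instead note $\ln(A^2/B^2) = \ln\!\big((|1-zw|^2/|1-z\bar w|^2)\big)$ and check that the claimed RHS logarithm $\ln(B^2/A^2+2)$ is $\ge\ln 2$ while the needed factor is $\ln(A^2/B^2)$; these match only after observing that in \eqref{Cauchy-inequality-1} the roles are arranged so that the relevant ratio inside the log is $|1-z\bar w|^2/|1-zw|^2=B^2/A^2$, and the genuinely singular direction for $P_0$ via the \emph{first} formula is $A\to 0$ with $B$ fixed, i.e. $A\le B$, not $A\ge B$. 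So I would re-examine which formula is singular where: the first formula's argument $t_1$ blows toward $1$ precisely when $|1-zw|$ is small relative to $(\operatorname{Im}z)(\operatorname{Im}w)$, i.e. when $A\ll B$; then $1-t_1\asymp B^2/A^2$ is large, not small — no singularity. It is the \emph{second} formula, with $t_2=-4(\operatorname{Im}z)(\operatorname{Im}w)/B^2$, whose argument could be problematic, but $t_2\le 0$ always, so ${}_2F_1$ there is bounded by a constant. Hence in fact \emph{no} logarithmic factor arises from $P_0$ at all for $\lambda$ not a negative integer?? — this is the subtlety I would have to resolve carefully against \cite{LL1}; the honest statement is that the $\ln$ in \eqref{Cauchy-inequality-1} is a \emph{known sharp} feature and the cleanest route is:

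\emph{Route I would actually take.} Rather than re-derive the $\ln$, I would simply cite the companion paper: the inequality \eqref{Cauchy-inequality-1} is \cite[Theorem 4.2]{LL1} (the lemma statement already attributes it there), so the ``proof'' in this excerpt is just the invocation of that reference together with, if desired, the one-line reduction $C(z,w)=(1-z\bar w)^{-1}P_0(z,w)$ from Proposition \ref{Poisson-Cauchy} and the hypergeometric endpoint asymptotics recalled above as motivation. Concretely: quote Proposition \ref{Poisson-Cauchy}; bound $|P_0(z,w)|$ by case analysis on whether $|1-zw|\lessgtr|1-z\bar w|$, using in each case the representation in \eqref{Poi-0} with the harmless (nonpositive, or bounded-away-from-$1$) argument and, in the borderline case, the $c=a+b$ logarithmic connection formula for ${}_2F_1$; and combine with $|1-z\bar w|^{-1}$ to land on \eqref{Cauchy-inequality-1}. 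The step I expect to be genuinely delicate is controlling the constants in $1-t_j\asymp (\cdot)$ uniformly in $z,w\in\DD$ near the boundary and matching the logarithm's argument exactly to $|1-z\bar w|^2/|1-zw|^2+2$; everything else is bookkeeping.
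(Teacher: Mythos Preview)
The paper gives no proof of this lemma; it is imported wholesale from \cite[Theorem 4.2]{LL1}. Your ``Route I would actually take''---simply cite the reference---is therefore exactly what the paper does.

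That said, your exploratory sketch contains a genuine error that is worth correcting, since the same computation reappears in the proof of Theorem~\ref{Cauchy-derivative-a}. You assert that the argument $t_2=-4(\operatorname{Im}z)(\operatorname{Im}w)/|1-z\bar w|^2$ of the second hypergeometric in \eqref{Poi-0} satisfies $t_2\le 0$ always. This is false: $t_2>0$ precisely when $\operatorname{Im}z$ and $\operatorname{Im}w$ have opposite signs. In fact both hypergeometric arguments satisfy \emph{exact} identities (not $\asymp$ with constants to be tracked): from $|1-zw|^2-|1-z\bar w|^2=4(\operatorname{Im}z)(\operatorname{Im}w)$ one gets
\[
1-t_1=\frac{|1-z\bar w|^2}{|1-zw|^2},\qquad 1-t_2=\frac{|1-zw|^2}{|1-z\bar w|^2},
\]
which are the identities \eqref{A-equality-1} and \eqref{A-equality-2} used later in the paper. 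So when $|1-zw|\ll|1-z\bar w|$ one has $t_2\to 1^{-}$, and the second representation---with parameters $(\lambda,\lambda+1;2\lambda+1)$, again of type $c=a+b$---gives the logarithmic blow-up via \eqref{Gauss-1-1}:
\[
{}_2F_1(\lambda,\lambda+1;2\lambda+1;t_2)\asymp\ln\!\left(\frac{|1-z\bar w|^2}{|1-zw|^2}+2\right).
\]
This is exactly the source of the logarithm in \eqref{Cauchy-inequality-1}; compare Case~II of the proof of Theorem~\ref{Cauchy-derivative-a} and \eqref{Gauss-function-3}. Your worry that ``no logarithmic factor arises from $P_0$ at all'' was the consequence of the sign slip on $t_2$.
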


To evaluate the derivatives of $C(z,w)$, we need three lemmas.

\begin{lemma}\label{elementary-derivative-a}
For $k=0,1,\cdots$ and for $z=re^{i\theta}$,
\begin{align*}
\left|\left(r\frac{\partial}{\partial r}\right)^k\left[\frac{1}{|1-zw|^{2\lambda}}\right]\right|
\lesssim \frac{1}{|1-zw|^{2\lambda+k}}.
\end{align*}
\end{lemma}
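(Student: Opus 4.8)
The statement to prove is the elementary estimate
\begin{align*}
\left|\left(r\frac{\partial}{\partial r}\right)^k\left[\frac{1}{|1-zw|^{2\lambda}}\right]\right|
\lesssim \frac{1}{|1-zw|^{2\lambda+k}},\qquad z=re^{i\theta},
\end{align*}
and the natural route is induction on $k$. For $k=0$ there is nothing to prove. The key is to set up a slightly more flexible inductive hypothesis: I would prove that for every $k\ge0$,
\begin{align*}
\left(r\frac{\partial}{\partial r}\right)^k\left[\frac{1}{|1-zw|^{2\lambda}}\right]
=\frac{Q_k(z,w)}{|1-zw|^{2\lambda+2k}},
\end{align*}
where $Q_k$ is a polynomial in $r$, $zw$ and $\overline{zw}$ (with coefficients depending on $\lambda$) that is a sum of terms each of the form $c\,(zw)^{a}(\overline{zw})^{b}$ with $a+b\le k$, so that $|Q_k(z,w)|\lesssim|1-zw|^{k}$ because $|zw|<1$ forces each monomial to be $O(1)$ and, more to the point, one can extract a factor bounded by $|1-zw|^{k}$ from the fact that differentiating introduces factors like $\overline{w}\,\overline{z}$ times $(1-zw)$-type quantities. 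Actually the cleanest bookkeeping is simpler: I would just track that $Q_k$ is a polynomial of total degree at most $k$ in the two monomials $zw,\overline{zw}$, hence $|Q_k|\le C_k$, and then observe that $C_k/|1-zw|^{2\lambda+2k}$ is not yet what we want — we need an extra $|1-zw|^{k}$ in the numerator.

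So the honest approach requires showing the numerator actually vanishes to order $k$ as $1-zw\to0$, equivalently that the numerator is divisible by $(1-zw)^{k}$ (or is $O(|1-zw|^k)$). To see this I would write $u=zw$, $v=\overline{zw}=\bar z\bar w$, and note $|1-zw|^2=(1-u)(1-\bar u)$ only when $w$ is real; in general $|1-zw|^{2\lambda}=\big((1-zw)(1-\overline{zw})\big)^{\lambda}=\big((1-u)(1-v)\big)^{\lambda}$, treating $u$ and $v$ as independent is wrong since $v=\bar u$ requires $|z|,|w|$ real — cleanest is to keep $|1-zw|^{2\lambda}=\big((1-zw)\overline{(1-zw)}\big)^{\lambda}$ and differentiate via $r\partial_r = u\partial_u$ acting on $u=zw$ together with $r\partial_r$ acting on $\bar u$. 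Then $r\partial_r\big[(1-u)^{-\lambda}(1-\bar u)^{-\lambda}\big]$ produces $\lambda u(1-u)^{-\lambda-1}(1-\bar u)^{-\lambda}+\lambda\bar u(1-u)^{-\lambda}(1-\bar u)^{-\lambda-1}$, and inductively every application of $r\partial_r$ either lowers a power of $(1-u)$ or of $(1-\bar u)$ by one while multiplying by a new factor of $u$ or $\bar u$, or differentiates such a factor (which merely reproduces it, possibly with a sign, since $r\partial_r u = u$). Hence after $k$ steps one gets a finite sum $\sum_{i+j=k,\ i,j\ge 0}\, c_{ij}\,p_{ij}(u,\bar u)\,(1-u)^{-\lambda-i}(1-\bar u)^{-\lambda-j}$ where each $p_{ij}$ is a monomial $u^{a}\bar u^{b}$ with $a\le i$, $b\le j$ (so $|p_{ij}|\le1$). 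Since $|1-u|=|1-\bar u|=|1-zw|$, each term has modulus $\lesssim |1-zw|^{-2\lambda-i-j}=|1-zw|^{-2\lambda-k}$, which is exactly the claimed bound.

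Thus the real content is the inductive structural claim about the form of $(r\partial_r)^k[(1-u)^{-\lambda}(1-\bar u)^{-\lambda}]$, and the main step to get right is the induction step: apply $r\partial_r$ to a generic term $c\,u^{a}\bar u^{b}(1-u)^{-\lambda-i}(1-\bar u)^{-\lambda-j}$ with $a\le i,b\le j$ and $i+j=k$, and check by the product rule that it splits into a bounded number of terms of the same shape with $i+j$ increased to $k+1$ and the new exponents still obeying $a\le i$, $b\le j$ — the three contributions come from hitting $u^a$ (gives $a\,u^{a}\bar u^{b}(\cdots)$, no change in $i,j$, which is fine since we can absorb it), hitting $\bar u^b$ similarly, hitting $(1-u)^{-\lambda-i}$ (gives $(\lambda+i)u\cdot u^{a}\bar u^{b}(1-u)^{-\lambda-i-1}(1-\bar u)^{-\lambda-j}$, raising $i$ to $i+1$ and $a$ to $a+1\le i+1$), and symmetrically hitting $(1-\bar u)^{-\lambda-j}$. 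The terms that did not raise $i+j$ need to be handled: they keep $i+j=k$ but we are computing the $(k+1)$-st derivative, so actually one more differentiation is pending — no, I have this backwards; the clean formulation is to induct on the claim ``$(r\partial_r)^k$ of the base function is a finite sum of terms $c\,u^a\bar u^b(1-u)^{-\lambda-i}(1-\bar u)^{-\lambda-j}$ with $a\le i$, $b\le j$, $i+j\le k$ and $i+j\equiv k\pmod 2$ is not needed — just $i+j\le k$ with the bound $|1-zw|^{-2\lambda-(i+j)}\le |1-zw|^{-2\lambda-k}$ since $|1-zw|\le 2$''. With that weaker $i+j\le k$ form, every one of the four contributions above clearly stays within the class for $k+1$, and the final modulus bound follows from $a\le i\Rightarrow |u^a|\le|1-zw|^{?}$... no, simply $|u^a\bar u^b|\le1$ and $|1-u|^{-\lambda-i}|1-\bar u|^{-\lambda-j}=|1-zw|^{-2\lambda-i-j}\le|1-zw|^{-2\lambda-k}$ because $i+j\le k$ and $|1-zw|\le 1+|zw|<2$, but we need $|1-zw|\le$ something; since $|1-zw|^{-1}$ could be large, $i+j\le k$ does give $|1-zw|^{-2\lambda-i-j}\le|1-zw|^{-2\lambda-k}$ only when $|1-zw|\le 1$ — for $|1-zw|\ge1$ it is trivially $\le 1\le |1-zw|^{-2\lambda}$ times a constant, wait $|1-zw|\ge 1$ makes $|1-zw|^{-2\lambda-k}\le |1-zw|^{-2\lambda-i-j}$ false. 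I would resolve this minor wrinkle by splitting into the regimes $|1-zw|\le 1$ and $|1-zw|\ge 1$: in the former $i+j\le k$ suffices; in the latter the whole expression is bounded (all factors are $O(1)$ since $|zw|<1$ keeps $u,\bar u$ bounded and $(1-u)^{-1}$ bounded), hence $\lesssim 1\lesssim |1-zw|^{-2\lambda-k}$ trivially. Either way the bound holds.

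The main obstacle, then, is not analytic depth but careful combinatorial bookkeeping of the differentiation: ensuring the inductive class of ``admissible terms'' is closed under $r\partial_r$ and that the exponent constraints propagate. I expect the step requiring the most care is simply verifying the product-rule expansion stays inside the class and handling the $|1-zw|\gtrless 1$ dichotomy so that the final estimate is uniform; everything else is routine.
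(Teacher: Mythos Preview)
Your approach is correct and genuinely different from the paper's. The paper works entirely with real quantities: it writes $|1-zw|^2$ in terms of $rs$ and $\cos(\theta+\varphi)$, then proves by induction the explicit structural formula
\[
\left(r\partial_r\right)^k\!\left[\tfrac{1}{|1-zw|^{2\lambda}}\right]
=\sum_{j=0}^{[k/2]}\frac{a_{k,j}(rs)}{|1-zw|^{2(\lambda+j)}}
+\sum_{j=[k/2]+1}^k\frac{(1-r^2s^2)^{2j-k}a_{k,j}(rs)}{|1-zw|^{2(\lambda+j)}},
\]
with $a_{k,j}$ polynomials in $rs$; the estimate then follows from $1-r^2s^2\lesssim|1-zw|$ for the second sum. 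You instead complexify: set $u=zw$, factor $|1-zw|^{-2\lambda}=(1-u)^{-\lambda}(1-\bar u)^{-\lambda}$ via the principal branch (legitimate since $|u|<1$ keeps $1-u$ in the right half-plane), use $r\partial_r u=u$, and show inductively that $(r\partial_r)^k$ produces a finite sum of terms $c\,u^a\bar u^b(1-u)^{-\lambda-i}(1-\bar u)^{-\lambda-j}$ with $i+j\le k$. Each such term has modulus at most $|1-zw|^{-2\lambda-(i+j)}\le 2^{k-(i+j)}|1-zw|^{-2\lambda-k}$ since $|1-zw|<2$, so your case split $|1-zw|\lessgtr1$ is unnecessary --- the single inequality $|1-zw|^{k-(i+j)}<2^k$ suffices. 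Your route is cleaner for the bare estimate; the paper's more elaborate real-variable decomposition has the advantage of serving as a template for the analogous structural formulas in the two lemmas that follow (on $A(r)$ and on $F(A(r))$), which is presumably why the authors chose it.
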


\begin{proof}
It suffices to prove the following equality
\begin{align} \label{elementary-derivative-2}
\left(r\frac{\partial}{\partial r}\right)^k\left[\frac{1}{|1-zw|^{2\lambda}}\right]
=\sum_{j=0}^{[k/2]}\frac{a_{k,j}(rs)}{|1-zw|^{2(\lambda+j)}}
+\sum_{j=[k/2]+1}^k\frac{(1-r^2s^2)^{2j-k}a_{k,j}(rs)}{|1-zw|^{2(\lambda+j)}}
\end{align}
for $k=1,2,\cdots$, where $z=re^{i\theta}$, $w=se^{i\varphi}$, and $a_{k,j}$ are some polynomials of one variable.

It is easy to find, for $z=re^{i\theta}$,
\begin{align*}
r\frac{d}{dr}\left[\frac{1}{|1-zw|^{2\lambda}}\right]
=\lambda\frac{1-|zw|^2-|1-zw|^2}{|1-zw|^{2\lambda+2}},
\end{align*}
that can be written into the form (\ref{elementary-derivative-2}) with $k=1$.

Assume that (\ref{elementary-derivative-2}) holds for $k$. We are going to prove that it holds too for $k+1$ instead of $k$.
From (\ref{elementary-derivative-2}) we have
\begin{align}\label{elementary-derivative-4}
&\left(r\frac{\partial}{\partial r}\right)^{k+1}\left[\frac{1}{|1-zw|^{2\lambda}}\right]
=\sum_{j=0}^{[k/2]}\frac{\tilde{a}_{k,j}(rs)}{|1-zw|^{2(\lambda+j)}}
+\sum_{j=0}^{[k/2]}(\lambda+j)\frac{(1-r^2s^2)a_{k,j}(rs)}{|1-zw|^{2(\lambda+j+1)}}\\
&\qquad\quad +\sum_{j=[k/2]+1}^k\frac{(1-r^2s^2)^{2j-k-1}\tilde{a}_{k,j}(rs)}{|1-zw|^{2(\lambda+j)}}
+\sum_{j=[k/2]+1}^k(\lambda+j)\frac{(1-r^2s^2)^{2j-k+1}a_{k,j}(rs)}{|1-zw|^{2(\lambda+j+1)}},\nonumber
\end{align}
where
\begin{align*}
\tilde{a}_{k,j}(t)
&=ta'_{k,j}(t)-(\lambda+j)a_{k,j}(t),\qquad 0\le j\le[k/2],\\
\tilde{a}_{k,j}(t)
&=(1-t^2)\left[ta'_{k,j}(t)-(\lambda+j)a_{k,j}(t)\right]-2(2j-k) t^2a_{k,j}(t),\qquad [k/2]+1\le j\le k.
\end{align*}

If $k$ is odd, $[k/2]=(k-1)/2$ and $[(k+1)/2]=[k/2]+1=(k+1)/2$, and the third sum on the right hand side of (\ref{elementary-derivative-4}) can be rewritten into
\begin{align*}
\frac{\tilde{a}_{k,[k/2]}(rs)}{|1-zw|^{2(\lambda+(k+1)/2)}}+\sum_{j=[(k+1)/2]+1}^k\frac{(1-r^2s^2)^{2j-k-1}\tilde{a}_{k,j}(rs)}{|1-zw|^{2(\lambda+j)}}.
\end{align*}
Combining this with the other sums on the right hand side of (\ref{elementary-derivative-4}) proves (\ref{elementary-derivative-2}) for $k+1$ instead of $k$ when $k$ is odd.

If $k$ is even, $[k/2]=[(k-1)/2]+1=k/2$ and $[(k+1)/2]=[k/2]=k/2$, and the last term of the second sum on the right hand side of (\ref{elementary-derivative-4}) can be written into
\begin{align*}
\left(\lambda+\frac{k}{2}\right)\frac{(1-r^2s^2)a_{k,k/2}(rs)}{|1-zw|^{2(\lambda+(k/2)+1)}}.
\end{align*}
The equality (\ref{elementary-derivative-2}) for $k+1$ instead of $k$ is yielded by inserting this into (\ref{elementary-derivative-4}) when $k$ is even.
\end{proof}

Let
\begin{align}\label{A-definition-1}
A(r)=\frac{4({\rm Im} z)({\rm Im} w)} {|1-zw|^{2}},\qquad z=re^{i\theta}.
\end{align}

\begin{lemma}\label{elementary-derivative-b}
For $k=1,2,\cdots$ and for $z=re^{i\theta}$, $w=se^{i\varphi}$,
\begin{align} \label{elementary-derivative-5}
\left(r\frac{\partial}{\partial r}\right)^kA(r)
=A(r)\left[\sum_{j=1}^{[k/2]}\frac{b_{k,j}(rs)}{|1-zw|^{2j}}
+\sum_{j=[k/2]+1}^k\frac{(1-r^2s^2)^{2j-k}a_{k,j}(rs)}{|1-zw|^{2j}}\right],
\end{align}
where $a_{k,j}$, $b_{k,j}$ are some polynomials of one variable.
\end{lemma}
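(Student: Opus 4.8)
The plan is to mimic the inductive scheme used for Lemma~\ref{elementary-derivative-a}, but now tracking how the operator $r\partial_r$ acts on the product structure of $A(r)$. First I would record the base case: writing $A(r) = 4(\operatorname{Im} z)(\operatorname{Im} w)/|1-zw|^2$ with $z=re^{i\theta}$, the factor $\operatorname{Im} z = r\sin\theta$ gives $r\partial_r(\operatorname{Im} z) = \operatorname{Im} z$, while $r\partial_r$ applied to $|1-zw|^{-2}$ is, by the $k=1$ instance of \eqref{elementary-derivative-2} (or the explicit computation there), $\lambda^{-1}$ times a term of the shape $(1-r^2s^2)a_{1,1}(rs)/|1-zw|^{2}$ plus a polynomial-in-$rs$ multiple of $|1-zw|^{-2}$ — wait, more precisely $r\partial_r|1-zw|^{-2\lambda}$ has that form, so dividing out and adjusting constants, $r\partial_r|1-zw|^{-2} = (1-r^2s^2-|1-zw|^2)/|1-zw|^4$. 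Hence $r\partial_r A(r) = A(r)\bigl[1 + (1-r^2s^2-|1-zw|^2)/|1-zw|^2\bigr] = A(r)\bigl[(1-r^2s^2)/|1-zw|^2\bigr]$, which is exactly the right-hand side of \eqref{elementary-derivative-5} for $k=1$ (empty first sum, single term $j=1$ with exponent $2j-k=1$ and $a_{1,1}\equiv 1$).

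Next I would run the induction on $k$. Assume \eqref{elementary-derivative-5} holds for $k$. Apply $r\partial_r$ once more; by the Leibniz rule it lands either on the prefactor $A(r)$ — producing, via the base case, an extra factor $A(r)\cdot(1-r^2s^2)/|1-zw|^2$ times the bracket, which shifts every denominator exponent up by one and multiplies by $1-r^2s^2$ — or on the bracket itself. On the bracket, $r\partial_r$ hits each summand $c(rs)/|1-zw|^{2j}$ or $(1-r^2s^2)^{2j-k}a_{k,j}(rs)/|1-zw|^{2j}$: differentiating the polynomial $c(rs)$ in $rs$ keeps the same denominator exponent $2j$; differentiating $(1-r^2s^2)^{2j-k}$ lowers its exponent by one and pulls out $-2(2j-k)r^2s^2$, a polynomial in $rs$; and differentiating $|1-zw|^{-2j}$ produces, as in Lemma~\ref{elementary-derivative-a}, a term with denominator exponent $2j+2$ carrying an extra factor $1-r^2s^2$ (from the identity $r\partial_r|1-zw|^{-2} = (1-r^2s^2 - |1-zw|^2)/|1-zw|^4$, the $-|1-zw|^2$ part cancelling back one power). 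Collecting all contributions, the new bracket is again a sum over $j$ from $1$ to $k+1$; one then checks the bookkeeping on the split point $[k/2]$ versus $[(k+1)/2]$ — handled exactly as in the two parity cases of Lemma~\ref{elementary-derivative-a} — to see that for $j\le [(k+1)/2]$ the accumulated powers of $1-r^2s^2$ are nonnegative integers that can be absorbed into the polynomial $b_{k+1,j}$, while for $j\ge [(k+1)/2]+1$ the power of $1-r^2s^2$ is precisely $2j-(k+1)$, matching the claimed form.

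The main obstacle is purely the combinatorial bookkeeping at the threshold index: one must verify that every time a factor $1-r^2s^2$ is created (from differentiating the reciprocal modulus) or destroyed (from differentiating an existing power $(1-r^2s^2)^{2j-k}$), the exponent at index $j$ stays $\ge 0$ in the ``low'' range and stays exactly equal to $2j-(k+1)$ in the ``high'' range, and that the index range closes up correctly when $[k/2]$ jumps. This is the same parity-case analysis carried out in the proof of Lemma~\ref{elementary-derivative-a}, and no genuinely new idea is needed; the extra prefactor $A(r)$ is inert under the estimate since it simply rides along multiplicatively. Finally, since $|1-r^2s^2| \le |1-zw| + |1-z\bar w| \lesssim$ a bounded quantity and $|rs|\le 1$, formula \eqref{elementary-derivative-5} immediately yields the size bound $|(r\partial_r)^k A(r)| \lesssim |A(r)|\,|1-zw|^{-k}$, which is the form in which the lemma will be used in the next section.
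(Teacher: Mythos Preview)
Your proposal is correct and follows essentially the same approach as the paper: the paper also establishes the base case $r\partial_r A(r)=A(r)(1-r^2s^2)/|1-zw|^2$ directly, then runs the induction by applying $r\partial_r$ via Leibniz to the product of $A(r)$ with the bracket, and finishes by invoking the same odd/even parity bookkeeping on the threshold index $[k/2]$ as in the proof of Lemma~\ref{elementary-derivative-a}. Your closing remark that the lemma yields $|(r\partial_r)^kA(r)|\lesssim |A(r)|\,|1-zw|^{-k}$ is exactly how it is applied downstream.
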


\begin{proof}
The proof is similar to that of (\ref{elementary-derivative-2}).
It is easy to find, for $z=re^{i\theta}$,
\begin{align} \label{elementary-derivative-6}
r\frac{d}{dr}A(r)
=A(r)\frac{1-|zw|^2}{|1-zw|^{2}},
\end{align}
that can be written into the form (\ref{elementary-derivative-5}) with $k=1$. The general case of (\ref{elementary-derivative-5}) is proved by induction.

Assume that (\ref{elementary-derivative-5}) holds for $k$. We are going to prove that it holds too for $k+1$ instead of $k$.
Similarly to (\ref{elementary-derivative-4}), from (\ref{elementary-derivative-5}) and (\ref{elementary-derivative-6}) we have
\begin{align}\label{elementary-derivative-7}
&\frac{1}{A(r)}\left(r\frac{\partial}{\partial r}\right)^{k+1}A(r)
=\sum_{j=1}^{[k/2]}\left[\frac{\tilde{a}_{k,j}(rs)}{|1-zw|^{2j}}
+(j+1)\frac{(1-r^2s^2)a_{k,j}(rs)}{|1-zw|^{2(j+1)}}\right]\\
&\qquad\qquad\qquad +\sum_{j=[k/2]+1}^k \frac{(1-r^2s^2)^{2j-k-1}}{|1-zw|^{2j}} \left[\tilde{a}_{k,j}(rs)
+(j+1)\frac{(1-r^2s^2)^2a_{k,j}(rs)}{|1-zw|^2}\right],\nonumber
\end{align}
where
\begin{align*}
\tilde{a}_{k,j}(t)
&=ta'_{k,j}(t)-ja_{k,j}(t),\qquad 1\le j\le[k/2],\\
\tilde{a}_{k,j}(t)
&=(1-t^2)\left[ta'_{k,j}(t)-ja_{k,j}(t)\right]-2(2j-k) t^2a_{k,j}(t),\qquad [k/2]+1\le j\le k.
\end{align*}
By considering $k$ odd or even respectively as in the proof of Lemma \ref{elementary-derivative-a}, the expression on the right hand side of (\ref{elementary-derivative-7})
can be written in the form within the square brackets on the right hand side of (\ref{elementary-derivative-5}) for $k+1$ instead of $k$. Thus the proof of the lemma is completed.
\end{proof}

\begin{lemma}\label{elementary-derivative-c}
Let $F$ be a smooth function on $(-1,1)$, and $A=A(r):\,[0,1)\mapsto(-1,1)$ a smooth mapping. Then
for $k=1,2,\cdots$,
\begin{align}\label{elementary-derivative-8}
\left(r\frac{\partial}{\partial r}\right)^kF(A)
=& \sum_{j=1}^{[k/2]}\left(\sum_{\substack{i_1+\cdots+i_j=k \\
0\le i_1\le\cdots\le i_j\le k-j+1}}a^{(k,j)}_{i_1,\cdots,i_j}D_r^{i_1}A\cdots D_r^{i_j}A\right) \partial_A^jF(A)\\
& +\sum_{j=0}^{[(k-1)/2]}\left(\sum_{\substack{i_1+\cdots+i_j=2j \\
0\le i_1\le\cdots\le i_j\le j+1}}b^{(k,j)}_{i_1,\cdots,i_j}D_r^{i_1}A\cdots D_r^{i_j}A\right) (D_rA)^{k-2j}\,\partial_A^{k-j}F(A), \nonumber
\end{align}
where
$D_r=r(\partial/\partial_r)$, $D^0_rA=1$, $\partial_A=d/dA$, and $a^{(k,j)}_{i_1,\cdots,i_j}$, $b^{(k,j)}_{i_1,\cdots,i_j}$ are all constants.
\end{lemma}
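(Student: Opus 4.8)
The plan is to prove \eqref{elementary-derivative-8} by induction on $k$, treating the two sums on the right-hand side as a single "template" whose shape is stable under the operator $D_r = r(\partial/\partial r)$. First I would check the base case $k=1$: applying the chain rule gives $D_r F(A) = (D_r A)\,\partial_A F(A)$, which matches \eqref{elementary-derivative-8} for $k=1$ — the first sum is empty (since $[k/2]=0$), and the second sum contributes only the $j=0$ term with $(D_rA)^{1}\partial_A F(A)$. Next, assuming the identity holds for some $k\ge1$, I would apply $D_r$ to each term on the right-hand side. A generic term has the shape $\Big(\prod_{\ell} D_r^{i_\ell}A\Big)\,(D_rA)^{m}\,\partial_A^{s}F(A)$ with $\sum_\ell i_\ell + m = k$ (counting each $D_rA$ factor as $i_\ell=1$ where convenient) and $s$ equal to the number of differentiations of $F$. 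Applying $D_r$ by the product rule produces three kinds of contributions: (a) $D_r$ hits one factor $D_r^{i_\ell}A$, raising it to $D_r^{i_\ell+1}A$; (b) $D_r$ hits a factor $D_rA$ inside $(D_rA)^m$, replacing one copy by $D_r^2 A$ and dropping the power to $(D_rA)^{m-1}$; (c) $D_r$ hits $\partial_A^s F(A)$, yielding $(D_rA)\,\partial_A^{s+1}F(A)$. In every case the total weight $\sum i_\ell + (\text{power of }D_rA)$ increases by exactly $1$, so all resulting terms have weight $k+1$, and the order of differentiation of $F$ either stays $s$ (cases a,b) or becomes $s+1$ (case c).

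The core of the argument is then bookkeeping: I must verify that after collecting all these contributions, each term still falls into one of the two prescribed families in \eqref{elementary-derivative-8} with $k$ replaced by $k+1$, and that the constraints on the multi-indices ($0\le i_1\le\cdots\le i_j\le k-j+1$ in the first family, $i_1+\cdots+i_j=2j$ with $0\le i_\ell\le j+1$ in the second) are respected. The two families are distinguished by the relation between $j$ (number of $A$-derivative factors that are "genuine", i.e.\ the index-sum parameter) and the order $\partial_A^{\cdot}$ of $F$: in the first family $F$ is differentiated $j$ times and the weights sum to $k$; in the second family $F$ is differentiated $k-j$ times, there are $k-2j$ "bare" factors $D_rA$, and the remaining weights sum to $2j$. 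I would show that cases (a) and (b) move a first-family term of parameter $j$ to a first-family term of parameter $j$ at level $k+1$ (the weight budget grows from $k$ to $k+1$, and the upper bound $k-j+1$ correspondingly grows to $(k+1)-j+1$, which is exactly what is needed to accommodate one index being raised by $1$), while case (c) converts it into a second-family term; symmetrically, acting on a second-family term of parameter $j$ produces second-family terms of parameter $j$ and $j+1$ and, via case (c), a term with $\partial_A^{k+1-j}F$ that must be re-indexed into the correct family. The passage between even and odd $k$ — where $[k/2]$ and $[(k-1)/2]$ jump — is handled exactly as in the proofs of Lemmas \ref{elementary-derivative-a} and \ref{elementary-derivative-b}, by peeling off or absorbing the boundary term $j=[k/2]$.

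The main obstacle will be purely combinatorial: organizing the index constraints so that the terms produced by case (c) (which raise the $F$-derivative order) land in the correct family without violating the inequalities $i_\ell\le j+1$ or the sum condition $\sum i_\ell = 2j$. In particular one must check that a term with a single bare factor $(D_rA)^1$ and one genuine factor — i.e.\ the boundary between "one $D_rA$ survives" and "none survive" — is consistently classified; this is where the split $j=[k/2]$ versus $j=[(k-1)/2]$ in the two sums becomes essential, and it is the only place the argument is delicate. Since the statement only claims the existence of \emph{some} constants $a^{(k,j)}_{i_1,\dots,i_j}$, $b^{(k,j)}_{i_1,\dots,i_j}$, there is no need to track their values; it suffices to show the allowed monomials in $D_r^{i}A$ and $\partial_A^{s}F(A)$ are closed under $D_r$, which the weight-counting above establishes.
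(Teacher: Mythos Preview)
Your plan is correct and follows essentially the same route as the paper: induction on $k$, with the base case $D_rF(A)=(D_rA)\partial_AF(A)$, then applying $D_r$ via the product rule to each term and sorting the resulting monomials back into the two families, handling the boundary index $j=[k/2]$ by an even/odd case split exactly as in Lemmas~\ref{elementary-derivative-a} and~\ref{elementary-derivative-b}. One small correction to your bookkeeping sketch: when case~(c) acts on a first-family term of parameter $j$, the result is generically a \emph{first}-family term of parameter $j+1$ at level $k+1$ (it has $j+1$ factors $D_r^{i_\ell}A$ summing to $k+1$ and $\partial_A^{j+1}F$), and only the boundary term $j=[k/2]$ with $k$ even gets reclassified into the second family---this is precisely the peeling-off you anticipate.
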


\begin{proof}
It is obvious that
\begin{align}\label{elementary-derivative-8-1}
r\frac{\partial}{\partial r} F(A)
=D_rA\cdot\frac{d}{dA}F(A),
\end{align}
so that (\ref{elementary-derivative-8}) is true for $k=1$.

Assuming that (\ref{elementary-derivative-8}) holds for $k$, one has
\begin{align}\label{elementary-derivative-9}
&\left(r\frac{\partial}{\partial r}\right)^{k+1}F(A)
=\sum_{j=1}^{[k/2]}\left(\sum_{\substack{i_1+\cdots+i_j=k \\
0\le i_1\le\cdots\le i_j\le k-j+1}}a^{(k,j)}_{i_1,\cdots,i_j}D_r^{i_1}A\cdots D_r^{i_j}A\right) D_rA\,\partial_A^{j+1}F(A) \\
&\qquad +\sum_{j=1}^{[k/2]}\left(\sum_{\substack{i_1+\cdots+i_j=k \\
0\le i_1\le\cdots\le i_j\le k-j+1}}a^{(k,j)}_{i_1,\cdots,i_j} \sum_{\ell=1}^j D_r^{i_1}A\cdots D_r\left(D_r^{i_{\ell}}A\right)\cdots D_r^{i_j}A\right) \partial_A^jF(A) \nonumber\\
&\qquad +\sum_{j=0}^{\left[\frac{k-1}{2}\right]}\left(\sum_{\substack{i_1+\cdots+i_j=2j \\
0\le i_1\le\cdots\le i_j\le j+1}}b^{(k,j)}_{i_1,\cdots,i_j}D_r^{i_1}A\cdots D_r^{i_j}A\right) (D_rA)^{k+1-2j}\,\partial_A^{k+1-j}F(A) \nonumber\\
&\qquad +\sum_{j=0}^{\left[\frac{k-1}{2}\right]}\left(\sum_{\substack{i_1+\cdots+i_j=2j \\
0\le i_1\le\cdots\le i_j\le j+1}}b^{(k,j)}_{i_1,\cdots,i_j}D_r^{i_1}A\cdots D_r^{i_j}A\right) (k-2j)(D_r^2A)(D_rA)^{k-2j-1}\partial_A^{k-j}F(A) \nonumber\\
&\qquad +\sum_{j=0}^{\left[\frac{k-1}{2}\right]}\left(\sum_{\substack{i_1+\cdots+i_j=2j \\
0\le i_1\le\cdots\le i_j\le j+1}}b^{(k,j)}_{i_1,\cdots,i_j} \sum_{\ell=1}^j D_r^{i_1}A\cdots D_r\left(D_r^{i_{\ell}}A\right)\cdots D_r^{i_j}A\right) (D_rA)^{k-2j}\,\partial_A^{k-j}F(A).\nonumber
\end{align}
If $k$ is odd, $[k/2]=(k-1)/2$ and $[(k+1)/2]=[k/2]+1=(k+1)/2$, and the last term in the fourth sum on the right hand side of (\ref{elementary-derivative-9}) is
\begin{align*}
\left(\sum_{\substack{i_1+\cdots+i_{\frac{k+1}{2}}=k+1 \\
0\le i_1\le\cdots\le i_{\frac{k+1}{2}}\le \frac{k+1}{2}+1}}\tilde{b}^{(k,\frac{k+1}{2})}_{i_1,\cdots,i_{\frac{k+1}{2}}}D_r^{i_1}A\cdots D_r^{i_{\frac{k+1}{2}}}A\right) \partial_A^{\frac{k+1}{2}}F(A),
\end{align*}
which is identical with the last term of the first sum on the right hand side of (\ref{elementary-derivative-8}), with $k+1$ instead of $k$ and $j=(k+1)/2=[k/2]+1$,
and the remaining terms in the fourth sum on the right hand side of (\ref{elementary-derivative-9}) can be rewritten into
\begin{align*}
\sum_{j=0}^{\left[\frac{k}{2}\right]-1}\left(\sum_{\substack{i_1+\cdots+i_{j+1}=2(j+1) \\
0\le i_1\le\cdots\le i_{j+1}\le j+2}}\tilde{b}^{(k,j)}_{i_1,\cdots,i_{j+1}}D_r^{i_1}A\cdots D_r^{i_{j+1}}A\right) (D_rA)^{k+1-2(j+1)}\partial_A^{k+1-(j+1)}F(A)\\
\end{align*}
which is identical with, by letting $j'=j+1$, the second sum on the right hand side of (\ref{elementary-derivative-8}), with $k+1$ instead of $k$.
For the last sum on the right hand side of (\ref{elementary-derivative-9}), we write $(D_rA)^{k-2j}\,\partial_A^{k-j}F(A)=D_rA\,(D_rA)^{k+1-2(j+1)}\,\partial_A^{k+1-(j+1)}F(A)$,
and then, treat its last term and remaining terms by the same way as above to arrive at similar assertions. It is easy to find that, the first two sums on the right hand side of (\ref{elementary-derivative-9}) can be converted into the form of the first sum on the right hand side of (\ref{elementary-derivative-8}) with $k+1$ instead of $k$, and the third sum is part of the second sum on the right hand side of (\ref{elementary-derivative-8}), again with $k+1$ instead of $k$.
This proves (\ref{elementary-derivative-8}) for $k+1$ instead of $k$ when $k$ is odd.

If $k$ is even, $[k/2]=[(k-1)/2]+1=k/2$ and $[(k+1)/2]=[k/2]=k/2$, and the last three sums on the right hand side of (\ref{elementary-derivative-9}) can be converted into the form of the second sum on the right hand side of (\ref{elementary-derivative-8}) with $k+1$ instead of $k$. Furthermore, the last term in the first sum on the right hand side of (\ref{elementary-derivative-9}) is
\begin{align*}
\left(\sum_{\substack{i_1+\cdots+i_{\frac{k}{2}}=k \\
0\le i_1\le\cdots\le i_{\frac{k}{2}}\le \frac{k}{2}+1}}a^{(k,\frac{k}{2})}_{i_1,\cdots,i_{\frac{k}{2}}}D_r^{i_1}A\cdots D_r^{i_{\frac{k}{2}}}A\right) D_rA\,\partial_A^{\frac{k}{2}+1}F(A),
\end{align*}
which is identical with the last term of the second sum on the right hand side of (\ref{elementary-derivative-8}), with $k+1$ instead of $k$ and $j=[k/2]=k/2$. We again prove (\ref{elementary-derivative-8}) for $k+1$ instead of $k$ when $k$ is even.
\end{proof}

%
%

\begin{theorem}\label{Cauchy-derivative-a}
For $m=1,2,\cdots$ and for $z=re^{i\theta},w\in\DD$,
\begin{align}\label{Cauchy-derivative-1}
\left|\left(r\frac{\partial}{\partial r}\right)^m\left[C(z,w)\right]\right|
\lesssim \frac{(|1-z\overline{w}|+|1-zw|)^{-2\lambda}}{|1-z\overline{w}|}
\left(\frac{1}{|1-z\overline{w}|^m}+\frac{1}{|1-zw|^m}\right).
\end{align}
\end{theorem}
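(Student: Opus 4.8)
\emph{Proof outline.} The natural starting point is the factorization from Proposition \ref{Poisson-Cauchy}: by (\ref{Cauchy-kernel-2-2}) and (\ref{Poi-0}),
\[
C(z,w)=\frac{1}{1-z\bar w}\cdot\frac{1}{|1-zw|^{2\lambda}}\cdot F\bigl(A(r)\bigr),\qquad F(t)={}_2F_1\Bigl({\lambda,\lambda\atop 2\lambda+1};t\Bigr),
\]
with $A(r)$ as in (\ref{A-definition-1}). The plan is to apply the Leibniz rule for $D_r:=r\,\partial/\partial r$ to this product of three factors, so that $(D_r)^m C$ becomes a finite sum of terms $[(D_r)^{p}(1-z\bar w)^{-1}]\,[(D_r)^{q}|1-zw|^{-2\lambda}]\,[(D_r)^{l}F(A(r))]$ with $p+q+l=m$, and to estimate the three factors separately. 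Two elementary observations are used throughout. First, since $z=re^{i\theta}$ one has $D_r(z\bar w)=z\bar w$, hence $(D_r)^{p}(1-z\bar w)^{-1}=\sum_{1\le i\le p}c_{p,i}(z\bar w)^{i}(1-z\bar w)^{-i-1}$, so $|(D_r)^{p}(1-z\bar w)^{-1}|\lesssim|1-z\bar w|^{-p-1}$. Second, a direct expansion gives $4(\operatorname{Im}z)(\operatorname{Im}w)=|1-zw|^{2}-|1-z\bar w|^{2}$, whence
\[
1-A(r)=\frac{|1-z\bar w|^{2}}{|1-zw|^{2}};
\]
together with $1-r^{2}s^{2}=(1-rs)(1+rs)\le 2|1-zw|$ (as $|zw|=rs$), this reduces everything to powers of $|1-zw|$ and $|1-z\bar w|$.

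For the factor $(D_r)^{q}|1-zw|^{-2\lambda}$ I would use the explicit expansion (\ref{elementary-derivative-2}) behind Lemma \ref{elementary-derivative-a}, not merely its bound: the powers $(1-r^{2}s^{2})^{2j-q}$ occurring there, estimated by $|1-r^{2}s^{2}|\lesssim|1-zw|$, supply exactly the extra vanishing at $zw=1$ that is needed at the end. Likewise I would expand $(D_r)^{l}F(A(r))$ by the chain-rule identity (\ref{elementary-derivative-8}) of Lemma \ref{elementary-derivative-c}, feeding in the structured bounds (\ref{elementary-derivative-5}) of Lemma \ref{elementary-derivative-b} for the $D_r^{i}A$; this writes it as $|1-zw|^{-l}$ times a combination of products of powers of $A(r)$ and of the derivatives $\partial_A^{j}F(A(r))=\frac{(\lambda)_j^{2}}{(2\lambda+1)_j}\,{}_2F_1(\lambda+j,\lambda+j;2\lambda+1+j;A(r))$, $1\le j\le l$. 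The sizes of these Gauss functions on the whole interval $A\in(-\infty,1)$ are read off from the connection formulas at the argument $1$ (where $c-a-b=1-j$) and at $\infty$, and are then converted to powers of $|1-zw|$ and $|1-z\bar w|$ via the identity for $1-A(r)$ above.

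The step I expect to be the main obstacle is a hidden cancellation. Because ${}_2F_1(\lambda,\lambda;2\lambda+1;\cdot)$ has equal upper parameters, $F(A)\sim c\,(-A)^{-\lambda}\ln(-A)$ as $A\to-\infty$, that is, when $|1-zw|\ll|1-z\bar w|$; carrying absolute values through the Leibniz and chain-rule sums then produces a spurious factor $\ln\bigl(1/|1-zw|\bigr)$, which is absent from (\ref{Cauchy-derivative-1}) (there is no logarithm for $m\ge1$). This logarithm must cancel, and the cancellation is not visible term by term. The point is that the logarithmic part of $F$ enters only through combinations of $F$ and its derivatives that, by the Pfaff transformation $u^{\lambda}F(1-u)={}_2F_1(\lambda,\lambda+1;2\lambda+1;1-u^{-1})$ --- which is exactly the equivalence of the two expressions for $P_{0}$ in (\ref{Poi-0}) --- reduce to powers of $u:=1-A(r)$ times a Gauss function with \emph{unequal} upper parameters, hence to logarithm-free quantities; the residual logarithmic contributions always carry a derivative of $F$ and are of strictly lower order than the bound in (\ref{Cauchy-derivative-1}). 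I would therefore organize the computation around the variable $u=|1-z\bar w|^{2}/|1-zw|^{2}$, equivalently around the second representation of $P_{0}$, and exploit the structural fact that $|1-zw|^{2}$ and $|1-z\bar w|^{2}$ are polynomials of degree at most $2$ in $r$ --- so that $\bigl|D_r(|1-zw|^{2})\bigr|\lesssim|1-zw|$, which is precisely the content of the $(1-r^{2}s^{2})$-factors in Lemmas \ref{elementary-derivative-a} and \ref{elementary-derivative-b} --- to keep the power count sharp. After this, each Leibniz summand is dominated by a product of powers of $|1-zw|$ and $|1-z\bar w|$ of the correct total order, and summing over $p+q+l=m$, together with the elementary inequality $|1-z\bar w|^{-(a+1)}|1-zw|^{-b}\lesssim|1-z\bar w|^{-1}\bigl(|1-z\bar w|^{-a-b}+|1-zw|^{-a-b}\bigr)$, yields (\ref{Cauchy-derivative-1}).
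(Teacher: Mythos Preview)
Your approach is essentially the paper's: Leibniz rule on the three-factor product, Lemmas \ref{elementary-derivative-a}--\ref{elementary-derivative-c} for the individual factors, and the two representations of $P_0$ in (\ref{Poi-0}) to handle the hypergeometric piece. The one organizational difference is that what you flag as ``the main obstacle'' --- a hidden logarithmic cancellation when $A\to-\infty$ --- the paper sidesteps entirely by an explicit case split at the outset: in Case~I ($|1-z\bar w|^{2}<2|1-zw|^{2}$, so $-1<A<1$) it uses the first representation and $F(A)$ is simply bounded; in Case~II ($|1-z\bar w|^{2}\ge 2|1-zw|^{2}$) it starts over from the second representation $C(z,w)=(1-z\bar w)^{-1}|1-z\bar w|^{-2\lambda}\tilde F(\tilde A(r))$ with $\tilde A\in[1/2,1)$, where the derivatives $\partial_{\tilde A}^{j}\tilde F\asymp(1-\tilde A)^{-j}$ are logarithm-free for $j\ge1$. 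In each case the hypergeometric argument stays in a fixed compact range, so no cancellation ever needs to be tracked --- this is cleaner than chasing the Pfaff identity through the Leibniz sum, though of course it is the same identity doing the work.
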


\begin{proof}
{\bf Case I:} $|1-z\bar{w}|^2<2|1-zw|^2$.

For this case, it follows from (\ref{Cauchy-kernel-2-2}) and the first equality in (\ref{Poi-0}) that
\begin{align*}
C(z,w)&=\frac{1}{1-z\bar{w}}\frac{1}{|1-zw|^{2\lambda}}F(A(r)),\qquad |zw|<1,
\end{align*}
where
\begin{align*}
F(A)={}_2\!F_{1}(\lambda,\lambda;\, 2\lambda+1;\, A),
\end{align*}
and $A=A(r)$ is given by (\ref{A-definition-1}).
Since
\begin{align}\label{A-equality-1}
1-A(r)=\frac{|1-z\bar{w}|^2}{|1-zw|^2},
\end{align}
it follows that $-1<A(r)<1$ for $|1-z\bar{w}|^2<2|1-zw|^2$, and $F(A(r))$ is bounded.

With the notation $D_r=r(\partial/\partial_r)$, Leibniz's rule gives
\begin{align*}
\left|D_r^m\left[C(z,w)\right]\right|
\lesssim \sum_{j+\ell+k=m} \left|D_r^j\left[\frac{1}{1-z\bar{w}}\right]\right|
\left|D_r^{\ell}\left[\frac{1}{|1-zw|^{2\lambda}}\right]\right|
\left|D_r^k\left[F(A(r))\right]\right|,\qquad |zw|<1.
\end{align*}
By induction, one has
$$
D_r^j\left[\frac{1}{1-z\bar{w}}\right]=\sum_{\tau=1}^{j+1}\frac{a_{j,\tau}}{(1-z\bar{w})^{\tau}}.
$$
Appealing to this and Lemma \ref{elementary-derivative-a}, we get
\begin{align}\label{Cauchy-derivative-2}
\left|D_r^m\left[C(z,w)\right]\right|
\lesssim \sum_{j+\ell+k=m} \frac{1}{|1-z\bar{w}|^{j+1}}
\frac{1}{|1-zw|^{2\lambda+\ell}}
\left|D_r^k\left[F(A(r))\right]\right|,\qquad |zw|<1.
\end{align}

Since (cf. \cite[2-1(7)]{Er})
\begin{align}\label{Gauss-2}
\partial_AF(A)
=\frac{\lambda^2}{2\lambda+1}\, {}_2\!F_{1}\left(\lambda+1,\lambda+1;2\lambda+2;A\right)
\end{align}
and
\begin{eqnarray} \label{Gauss-1-1}
{}_2\!F_{1}(a,b;a+b;t)\asymp\ln\left(\frac{1}{1-t}+2\right),\qquad -1\le t<1,
\end{eqnarray}
from (\ref{elementary-derivative-6}), (\ref{elementary-derivative-8-1}) and (\ref{A-equality-1}) we have
\begin{align}\label{elementary-derivative-10}
\left|D_r\left[F(A(r))\right]\right|
\lesssim \frac{1}{|1-zw|}\ln\left(\frac{|1-zw|^2}{|1-z\bar{w}|^2}+2\right)
\lesssim \frac{1}{|1-z\overline{w}|},\qquad z=re^{i\theta},
\end{align}
where the last inequality is based on the fact $s\ln\left(s^{-2}+2\right)\lesssim1$ for $s\in(0,2]$.
Thus from (\ref{Cauchy-derivative-2}) we get
\begin{align*}
\left|D_r\left[C(z,w)\right]\right|
&\lesssim \frac{|1-zw|^{-2\lambda}}{|1-z\bar{w}|^2} +\frac{|1-zw|^{-2\lambda-1}}{|1-z\bar{w}|}
+\frac{|1-zw|^{-2\lambda}}{|1-z\bar{w}|}\left|D_r\left[F(A(r))\right]\right|\\
&\lesssim \frac{|1-zw|^{-2\lambda}}{|1-z\bar{w}|^2},\qquad |zw|<1,
\end{align*}
so that (\ref{Cauchy-derivative-1}) is true for $m=1$ and for $|1-z\bar{w}|^2<2|1-zw|^2$.

In order to show (\ref{Cauchy-derivative-1}) for $m\ge2$, at first by Lemma \ref{elementary-derivative-b}, $\left|D_r^kA(r)\right|\lesssim|1-zw|^{-k}$, and then, by Lemma \ref{elementary-derivative-c} and (\ref{elementary-derivative-6}),
\begin{align}\label{elementary-derivative-11}
\left|D_r^k\left[F(A(r))\right]\right|
\lesssim & \frac{1}{|1-zw|^{k}}\sum_{j=1}^{[k/2]}\left|\partial_A^jF(A)\right| \nonumber\\
& \qquad  +\sum_{j=0}^{[(k-1)/2]}\frac{1}{|1-zw|^{2j}} \left(\frac{1-|zw|^2}{|1-zw|^{2}}\right)^{k-2j}\left|\partial_A^{k-j}F(A)\right|.
\end{align}

For $j\ge2$, again by \cite[2-1(7)]{Er}) we have
$$
\partial_A^jF(A)=\frac{(\lambda)_j(\lambda)_j}{(2\lambda+1)_j} {}_2\!F_{1}\left(\lambda+j,\lambda+j;2\lambda+j+1;A\right),
$$
and further, by \cite[2-1(23)]{Er}),
\begin{align*}
\partial_A^jF(A) &=\frac{(\lambda)_j(\lambda)_j}{(2\lambda+1)_j} (1-A)^{1-j} {}_2\!F_{1}\left(\lambda+1,\lambda+1;2\lambda+j+1;A\right)\\
&\asymp(1-A)^{1-j}.
\end{align*}
Thus for $k\ge2$, from (\ref{A-equality-1}), (\ref{Gauss-2}), (\ref{Gauss-1-1}) and (\ref{elementary-derivative-11}) we have
\begin{align}\label{elementary-derivative-12}
\left|D_r^k\left[F(A(r))\right]\right|
\lesssim & \frac{1}{|1-zw|^{k}}\left[\left(\frac{|1-z\bar{w}|^2}{|1-zw|^2}\right)^{1-[k/2]}+\ln\left(\frac{|1-zw|^2}{|1-z\bar{w}|^2}+2\right)\right] \nonumber\\
& \qquad  +\sum_{j=0}^{[(k-1)/2]}\frac{1}{|1-zw|^{2j}} \left(\frac{1-|zw|^2}{|1-zw|^{2}}\right)^{k-2j} \left(\frac{|1-z\bar{w}|^2}{|1-zw|^2}\right)^{j+1-k} \nonumber\\
\lesssim & \frac{1}{|1-zw|^{k}}\left[\left(\frac{|1-z\bar{w}|}{|1-zw|}\right)^{2-k}+\ln\left(\frac{|1-zw|^2}{|1-z\bar{w}|^2}+2\right)\right] \nonumber\\
& \qquad  +\sum_{j=0}^{[(k-1)/2]}\frac{|1-z\bar{w}|^2}{|1-zw|^2} \left(\frac{1-|zw|^2}{|1-z\bar{w}|}\right)^{k-2j} \frac{1}{|1-z\bar{w}|^k} \nonumber\\
\lesssim & \frac{1}{|1-z\bar{w}|^k}.
\end{align}
Applying (\ref{elementary-derivative-10}) and (\ref{elementary-derivative-12}) to (\ref{Cauchy-derivative-2}) yields
\begin{align*}
\left|D_r^m\left[C(z,w)\right]\right|
&\lesssim \sum_{j+\ell+k=m} \frac{1}{|1-z\bar{w}|^{j+1}}
\frac{1}{|1-zw|^{2\lambda+\ell}}\frac{1}{|1-z\bar{w}|^k}\\
&\lesssim \frac{|1-zw|^{-2\lambda}}{|1-z\bar{w}|^{m+1}},\qquad |zw|<1.
\end{align*}
Thus (\ref{Cauchy-derivative-1}) is proved for $|1-z\bar{w}|^2<2|1-zw|^2$.

{\bf Case II:} $|1-z\bar{w}|^2\ge2|1-zw|^2$.

For this case, it follows from (\ref{Cauchy-kernel-2-2}) and the second equality in (\ref{Poi-0}) that
\begin{align*}
C(z,w)&=\frac{1}{1-z\bar{w}}\frac{1}{|1-z\bar{w}|^{2\lambda}}\tilde{F}(\tilde{A}(r)),\qquad |zw|<1,
\end{align*}
where
\begin{align*}
\tilde{F}(\tilde{A})={}_2\!F_{1}\Big({\lambda,\lambda+1
\atop
  2\lambda+1};\tilde{A}\Big),
\qquad \tilde{A}:=\tilde{A}(r)=-\frac{4({\rm Im} z)({\rm Im}
  w)}{|1-z\bar{w}|^{2}},\qquad  z=re^{i\theta}.
\end{align*}

Since
\begin{align}\label{A-equality-2}
1-\tilde{A}(r)=\frac{|1-zw|^2}{|1-z\bar{w}|^2},
\end{align}
it follows that $1/2\le\tilde{A}(r)<1$ for $|1-z\bar{w}|^2\ge2|1-zw|^2$, and from (\ref{Gauss-1-1}),
\begin{align}\label{Gauss-function-3}
\tilde{F}(\tilde{A}(r))\asymp\ln\left(\frac{|1-z\bar{w}|^2}{|1-zw|^2}+2\right),\qquad  z=re^{i\theta},\,w\in\DD.
\end{align}

Similarly to (\ref{Cauchy-derivative-2}), one has
\begin{align}\label{Cauchy-derivative-3}
\left|D_r^m\left[C(z,w)\right]\right|
\lesssim \sum_{j+\ell+k=m}
\frac{1}{|1-z\bar{w}|^{2\lambda+j+\ell+1}}
\left|D_r^k\left[\tilde{F}(\tilde{A}(r))\right]\right|,\qquad |zw|<1.
\end{align}
Again by Lemma \ref{elementary-derivative-b}, $\left|D_r^k\tilde{A}(r)\right|\lesssim|1-z\bar{w}|^{-k}$, and then, by Lemma \ref{elementary-derivative-c} and (\ref{elementary-derivative-6}),
\begin{align}\label{elementary-derivative-13}
\left|D_r^k\left[\tilde{F}(\tilde{A}(r))\right]\right|
\lesssim & \frac{1}{|1-z\bar{w}|^{k}}\sum_{j=1}^{[k/2]}\left|\partial_{\tilde{A}}^j\tilde{F}(\tilde{A})\right| \nonumber\\
& \qquad  +\sum_{j=0}^{[(k-1)/2]}\frac{1}{|1-z\bar{w}|^{2j}} \left(\frac{1-|zw|^2}{|1-z\bar{w}|^{2}}\right)^{k-2j} \left|\partial_{\tilde{A}}^{k-j}\tilde{F}(\tilde{A})\right|.
\end{align}

For $j\ge1$, also by \cite[2-1(7) and 2-1(23)]{Er}) we have
\begin{align*}
\partial_{\tilde{A}}^j\tilde{F}(\tilde{A}) & =\frac{(\lambda)_j(\lambda+1)_j}{(2\lambda+1)_j} {}_2\!F_{1}\left(\lambda+j,\lambda+j+1;2\lambda+j+1;\tilde{A}\right)\\
&=\frac{(\lambda)_j(\lambda+1)_j}{(2\lambda+1)_j} (1-\tilde{A})^{-j} {}_2\!F_{1}\left(\lambda+1,\lambda;2\lambda+j+1;\tilde{A}\right)\\
&\asymp(1-\tilde{A})^{-j}.
\end{align*}
Thus for $k\ge1$, from (\ref{A-equality-2}) and (\ref{elementary-derivative-13}) we have
\begin{align*}
\left|D_r^k\left[\tilde{F}(\tilde{A}(r))\right]\right|
\lesssim & \frac{1}{|1-z\bar{w}|^{k}}\left(\frac{|1-zw|^2}{|1-z\bar{w}|^2}\right)^{-[k/2]} \nonumber\\
& \qquad  +\sum_{j=0}^{[(k-1)/2]}\frac{1}{|1-z\bar{w}|^{2j}} \left(\frac{1-|zw|^2}{|1-z\bar{w}|^{2}}\right)^{k-2j} \left(\frac{|1-zw|^2}{|1-z\bar{w}|^2}\right)^{j-k} \nonumber\\
\lesssim & \frac{1}{|1-z\bar{w}|^{k}}\left(\frac{|1-zw|}{|1-z\bar{w}|}\right)^{-k}
 +\sum_{j=0}^{[(k-1)/2]}\left(\frac{1-|zw|^2}{|1-zw|}\right)^{k-2j} \frac{1}{|1-zw|^k} \nonumber\\
\lesssim & \frac{1}{|1-zw|^k}.
\end{align*}
Applying this and (\ref{Gauss-function-3}) to (\ref{Cauchy-derivative-3}) yields
\begin{align*}
\left|D_r^m\left[C(z,w)\right]\right|
&\lesssim \frac{1}{|1-z\bar{w}|^{2\lambda+m+1}}\ln\left(\frac{|1-z\bar{w}|^2}{|1-zw|^2}+2\right)
 +\sum_{j+\ell+k=m}
\frac{|1-zw|^{-k}}{|1-z\bar{w}|^{2\lambda+j+\ell+1}} \\
&\lesssim \frac{|1-z\bar{w}|^{-2\lambda}}{|1-z\bar{w}||1-zw|^{m}},\qquad |zw|<1.
\end{align*}
This proves (\ref{Cauchy-derivative-1}) for $|1-z\bar{w}|^2\ge2|1-zw|^2$. The proof of Theorem \ref{Cauchy-derivative-a} is completed.
\end{proof}

\section{Estimates of the $(\lambda,\alpha)$-Bergman kernel $K_{\lambda,\alpha}(z,\zeta)$}


In comparison to the $(\lambda,\alpha)$-Bergman kernel $K_{\lambda,\alpha}(z,w)$, we often use the following simpler form
\begin{align}\label{h-kernel-2-1}
h_{\lambda,\beta}(z,w)=\sum (n+1)^{\beta}\phi_{n}(z)\overline{\phi_{n}(w)}.
\end{align}
But in what follows we work with the following variant of $K_{\lambda,\alpha}(z,w)$
\begin{align}\label{K-kernel-2-1}
\tilde{K}_{\lambda,\beta}(z,w)=\sum_{n=0}^{\infty}
\frac{\Gamma(n+\beta+2\lambda+2)}{\Gamma(n+2\lambda+2)}
\phi_{n}(z)\overline{\phi_{n}(w)}.
\end{align}
By means of (\ref{phi-bound-1}), the series in defining $K_{\lambda,\alpha}(z,\zeta)$, $h_{\lambda,\beta}(z,w)$ and $\tilde{K}_{\lambda,\beta}(z,w)$ are all absolutely and uniformly
convergent for $|zw|\le r_0$ with fixed $0<r_0<1$. Note that $\tilde{K}_{\lambda,0}(z,w)=C(z,w)$.

\begin{lemma}\label{K-kernal-2-a}
{\rm (i)} If $m$ is a nonnegative integer, then there exist constants $a_{m,j}$, $0\le j\le m$, such that
\begin{align*}
\tilde{K}_{\lambda,m}(z,w)=\sum_{j=0}^m a_{m,j}\left(r\frac{\partial}{\partial r}\right)^j\left[C(z,w)\right],\qquad z=re^{i\theta};
\end{align*}

{\rm (ii)} If $-2\lambda-2<\beta<0$, or $\beta>0$ is not an integer, and $m=\max\{0,[\beta]+1\}$, then
\begin{align}\label{K-kernel-2-4}
\tilde{K}_{\lambda,\beta}(z,w)=\frac{1}{\Gamma(m-\beta)}\int_0^1(1-t)^{m-\beta-1}t^{\beta+2\lambda+1}\tilde{K}_{\lambda,m}(tz,w)\,dt.
\end{align}
\end{lemma}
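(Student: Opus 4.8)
The plan is to prove both parts by formal manipulation of the defining power series, justifying the interchange of sum and integral by the locally uniform absolute convergence of the series $\sum (n+1)^\gamma|\phi_n(z)\overline{\phi_n(w)}|$ for $|zw|\le r_0$, which holds for every real $\gamma$ by (\ref{phi-bound-1}). For part (i), the key observation is that $\left(r\frac{\partial}{\partial r}\right)$ acts diagonally on the series: since $\phi_n(z)=\epsilon_n r^n[\cdots]$ with $z=re^{i\theta}$, one has $\left(r\frac{\partial}{\partial r}\right)^j[\phi_n(z)\overline{\phi_n(w)}]=n^j\phi_n(z)\overline{\phi_n(w)}$. Hence for the choice $\beta=m$ a nonnegative integer, the ratio $\Gamma(n+m+2\lambda+2)/\Gamma(n+2\lambda+2)=(n+2\lambda+2)(n+2\lambda+3)\cdots(n+m+2\lambda+1)$ is a polynomial in $n$ of degree $m$, say $\sum_{j=0}^m a_{m,j}n^j$ with the $a_{m,j}$ independent of $n$. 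Applying $\sum_j a_{m,j}\left(r\frac{\partial}{\partial r}\right)^j$ to $C(z,w)=\tilde K_{\lambda,0}(z,w)=\sum_n\phi_n(z)\overline{\phi_n(w)}$ term by term reproduces exactly the series defining $\tilde K_{\lambda,m}(z,w)$. The interchange of the finite differential operator with the infinite sum is legitimate because, by (\ref{phi-bound-1}), differentiating the series termwise still yields a series dominated by $\sum (n+1)^{m+2\lambda}|zw|^n$, convergent uniformly on $|zw|\le r_0$.

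For part (ii), the strategy is to recognize the integral on the right of (\ref{K-kernel-2-4}) as a Beta-type fractional integral that rescales the $n$th coefficient by the correct Gamma ratio. Substituting $\tilde K_{\lambda,m}(tz,w)=\sum_n\frac{\Gamma(n+m+2\lambda+2)}{\Gamma(n+2\lambda+2)}t^n\phi_n(z)\overline{\phi_n(w)}$ and interchanging sum and integral (again justified by uniform convergence, since $m-\beta-1>-1$ and $\beta+2\lambda+1>-1$ make the $t$-integral an honest convergent integral), the $n$th term picks up the factor
\[
\frac{1}{\Gamma(m-\beta)}\int_0^1(1-t)^{m-\beta-1}t^{\,\beta+2\lambda+1}\,t^n\,dt
=\frac{1}{\Gamma(m-\beta)}B(n+\beta+2\lambda+2,\,m-\beta)
=\frac{\Gamma(n+\beta+2\lambda+2)}{\Gamma(n+m+2\lambda+2)}.
\]
Multiplying this against $\frac{\Gamma(n+m+2\lambda+2)}{\Gamma(n+2\lambda+2)}$ telescopes to $\frac{\Gamma(n+\beta+2\lambda+2)}{\Gamma(n+2\lambda+2)}$, which is precisely the coefficient in (\ref{K-kernel-2-1}), so the right side equals $\tilde K_{\lambda,\beta}(z,w)$. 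One must check that the Beta function is well-defined: $m-\beta>0$ holds by the choice $m=\max\{0,[\beta]+1\}$ (for $\beta<0$ take $m=0$ and $m-\beta=-\beta>0$; for non-integer $\beta>0$, $m=[\beta]+1>\beta$), and $n+\beta+2\lambda+2>0$ holds for all $n\ge0$ since $\beta>-2\lambda-2$.

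The main obstacle — really the only nonroutine point — is the justification of the termwise interchanges, and in particular making sure the hypotheses on $\beta$ and $m$ are used exactly where needed: $m-\beta>0$ guarantees integrability of $(1-t)^{m-\beta-1}$ near $t=1$, while $\beta>-2\lambda-2$ guarantees both integrability of $t^{\beta+2\lambda+1}$ near $t=0$ and positivity of the first Beta parameter; the case split in the definition of $m$ is precisely what reconciles these. Once these bookkeeping conditions are in place, the convergence estimates from (\ref{phi-bound-1}) allow Fubini/Tonelli and termwise differentiation without further effort, and both identities drop out of the coefficient computation.
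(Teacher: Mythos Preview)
Your proof is correct and follows essentially the same approach as the paper. For part (i), the paper argues via the recursion
\[
\tilde K_{\lambda,\beta}(z,w)=r\frac{\partial}{\partial r}\tilde K_{\lambda,\beta-1}(z,w)+(\beta+2\lambda+1)\tilde K_{\lambda,\beta-1}(z,w)
\]
and induction on $m$, whereas you observe directly that the Gamma ratio is a degree-$m$ polynomial in $n$ and that $(r\partial_r)^j$ acts by multiplication by $n^j$; these are equivalent packagings of the same computation. For part (ii) both you and the paper use termwise integration under the hypothesis $m>\beta>-2\lambda-2$, and your explicit Beta-function bookkeeping is exactly what the paper has in mind.
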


\begin{proof}
For $\beta>0$, one has the following iteration
\begin{align}\label{K-kernel-2-2}
\tilde{K}_{\lambda,\beta}(z,w)=r\frac{\partial}{\partial r}\left[\tilde{K}_{\lambda,\beta-1}(z,w)\right]
+(\beta+2\lambda+1)\tilde{K}_{\lambda,\beta-1}(z,w),\qquad z=re^{i\theta}.
\end{align}
Part (i) follows from (\ref{K-kernel-2-2}) and by induction, and part (ii) is verified by termwise integration since $m>\beta>-2\lambda-2$.
\end{proof}

\begin{theorem}\label{K-kernal-b}
{\rm (i)} For $\beta>0$,
\begin{align*}
|\tilde{K}_{\lambda,\beta}(z,w)|\lesssim \frac{(|1-z\overline{w}|+|1-zw|)^{-2\lambda}}{|1-z\overline{w}|}
\left(\frac{1}{|1-z\overline{w}|^{\beta}}+\frac{1}{|1-zw|^{\beta}}\right),\quad z,w\in\DD;
\end{align*}

{\rm (ii)} for $\beta=0$,
\begin{align*}
|\tilde{K}_{\lambda,0}(z,w)|\lesssim \frac{(|1-z\overline{w}|+|1-zw|)^{-2\lambda}}{|1-z\overline{w}|}
\ln\left(\frac{|1-z\overline{w}|}{|1-zw|}+2\right),\quad z,w\in\DD;
\end{align*}

{\rm (iii)} for $-1<\beta<0$,
\begin{align*}
|\tilde{K}_{\lambda,\beta}(z,w)|\lesssim \frac{(|1-z\overline{w}|+|1-zw|)^{-2\lambda}}{|1-z\overline{w}|^{\beta+1}},\quad z,w\in\DD;
\end{align*}

{\rm (iv)} for $\beta=-1$,
\begin{align*}
|\tilde{K}_{\lambda,-1}(z,w)|\lesssim (|1-z\overline{w}|+|1-zw|)^{-2\lambda}\ln\left(\frac{|1-zw|}{|1-z\overline{w}|}+2\right),\quad z,w\in\DD;
\end{align*}

{\rm (v)} for $-2\lambda-1<\beta<-1$,
\begin{align*}
|\tilde{K}_{\lambda,\beta}(z,w)|\lesssim (|1-z\overline{w}|+|1-zw|)^{-\beta-2\lambda-1},\quad z,w\in\DD;
\end{align*}

{\rm (vi)} for $\beta=-2\lambda-1$,
\begin{align*}
|\tilde{K}_{\lambda,\beta}(z,w)|\lesssim \ln\left(\frac{1}{|1-z\overline{w}|+|1-zw|}+2\right),\quad z,w\in\DD;
\end{align*}

{\rm (vii)} for $-2\lambda-2<\beta<-2\lambda-1$, $\tilde{K}_{\lambda,\beta}(z,w)$ is continuous for $z,w$ satisfying $|zw|\le1$.
\end{theorem}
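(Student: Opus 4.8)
The plan is to derive all seven bounds from two facts already available---the pointwise estimate of the $\lambda$-Cauchy kernel in Lemma~\ref{Cauchy-estimate-a} and the estimate of its radial derivatives in Theorem~\ref{Cauchy-derivative-a}---fed through the two identities in Lemma~\ref{K-kernal-2-a}. Part (vii) I would dispose of directly: by (\ref{phi-bound-1}) one has $|\phi_{n}(z)\overline{\phi_{n}(w)}|\lesssim(n+1)^{2\lambda}|zw|^{n}\le(n+1)^{2\lambda}$ on $\{|zw|\le1\}$, while $\Gamma(n+\beta+2\lambda+2)/\Gamma(n+2\lambda+2)\asymp(n+1)^{\beta}$; since $\beta+2\lambda<-1$ whenever $-2\lambda-2<\beta<-2\lambda-1$, the series (\ref{K-kernel-2-1}) defining $\tilde K_{\lambda,\beta}$ converges absolutely and uniformly on $\{|zw|\le1\}$, so $\tilde K_{\lambda,\beta}$ is continuous there.

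Next I would treat the integer case of (i) and, as its $\beta=0$ instance, part (ii). By Lemma~\ref{K-kernal-2-a}(i), $\tilde K_{\lambda,m}(z,w)=\sum_{j=0}^{m}a_{m,j}(r\,\partial/\partial r)^{j}[C(z,w)]$, so the $j=0$ term is bounded by Lemma~\ref{Cauchy-estimate-a} and each $j\ge1$ term by Theorem~\ref{Cauchy-derivative-a}. Since $|1-z\bar w|\le2$ and $|1-zw|\le2$, every power $|1-z\bar w|^{-j}$ or $|1-zw|^{-j}$ with $j\le m$ is dominated by $|1-z\bar w|^{-m}+|1-zw|^{-m}$, and the logarithm in the $j=0$ bound is absorbed into the same quantity via $\ln\big(|1-z\bar w|^{2}|1-zw|^{-2}+2\big)\le\ln\big(4|1-zw|^{-2}+2\big)\lesssim|1-zw|^{-m}$ for $m\ge1$; adding up yields (i) for positive-integer $\beta$. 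Part (ii) is then the $m=0$ case $\tilde K_{\lambda,0}=C$ of Lemma~\ref{Cauchy-estimate-a}, after replacing $\ln(s^{2}+2)$ by the comparable $\ln(s+2)$.

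For (i) with $\beta>0$ non-integer and for (iii)--(vi) I would invoke Lemma~\ref{K-kernal-2-a}(ii) with $m=\max\{0,[\beta]+1\}$ (so $m=0$ and $\tilde K_{\lambda,0}=C$ when $\beta<0$), substitute the bound for $|\tilde K_{\lambda,m}(tz,w)|$ just obtained, and reduce to one-dimensional integrals of the form $\int_{0}^{1}(1-t)^{m-\beta-1}t^{\beta+2\lambda+1}G(tz,w)\,dt$, where $G$ is a product of $|1-tz\bar w|^{-1}$, $|1-tzw|^{-1}$, $(|1-tz\bar w|+|1-tzw|)^{-2\lambda}$ and (when $m=0$) the logarithmic ratio. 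Using $|1-t\zeta|\asymp(1-t)+|1-\zeta|$ for $t\in[0,1]$, $\zeta\in\overline{\DD}$ (applied to $\zeta=z\bar w$ and $\zeta=zw$), these reduce to the Forelli--Rudin-type estimate $\int_{0}^{1}(1-t)^{c-1}\big((1-t)+a\big)^{-d}\,dt\asymp a^{c-d}$ for $d>c>0$, $\asymp\ln(a^{-1}+2)$ for $d=c$, and $\asymp1$ for $d<c$. Taking $c=m-\beta$ and matching against the singularity order of $\tilde K_{\lambda,m}$ produces the claimed exponents, the logarithmic borderlines $d=c$ occurring precisely at $\beta=-1$ (part (iv)) and $\beta=-2\lambda-1$ (part (vi)).

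The main obstacle I anticipate is the bookkeeping in these $t$-integrations when $|1-z\bar w|$ and $|1-zw|$ have very different magnitudes: the factor $(|1-tz\bar w|+|1-tzw|)^{-2\lambda}$ and the logarithm $\ln\big(|1-tz\bar w|^{2}|1-tzw|^{-2}+2\big)$ must be apportioned correctly between the two denominators, and one must track how each of $|1-tz\bar w|$ and $|1-tzw|$ moves as $t$ runs over $[0,1]$. As in the proof of Theorem~\ref{Cauchy-derivative-a}, I would split into the three regimes $|1-z\bar w|\asymp|1-zw|$, $|1-z\bar w|\gg|1-zw|$ and $|1-z\bar w|\ll|1-zw|$, estimate $G$ separately in each, and verify that no factor worse than the asserted logarithm survives at the borderlines; the remaining computations are then routine.
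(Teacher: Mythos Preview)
Your plan is correct and matches the paper's own argument essentially step for step: part (vii) by absolute convergence from (\ref{phi-bound-1}); integer $\beta\ge0$ from Lemma~\ref{K-kernal-2-a}(i) together with Lemma~\ref{Cauchy-estimate-a} and Theorem~\ref{Cauchy-derivative-a}; and all remaining $\beta$ from the integral representation in Lemma~\ref{K-kernal-2-a}(ii) after replacing $|1-t\zeta|$ by $(1-t)+|1-\zeta|$ and splitting according to the relative sizes of $|1-z\bar w|$ and $|1-zw|$. The paper uses only two regimes (not three) and carries out the $t$-integration by the explicit substitution $s=(1-t)/|1-z\bar w|$ rather than quoting a Forelli--Rudin lemma, but these are cosmetic differences; the bookkeeping you flag as the main obstacle is exactly what the paper works through in (\ref{K-kernel-2-12})--(\ref{K-kernel-2-14}).
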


\begin{proof}
By Lemmas \ref{Cauchy-estimate-a}, \ref{K-kernal-2-a}(i), and Theorem \ref{Cauchy-derivative-a}, for a positive integer $m$ we have
\begin{align}\label{K-kernel-2-11}
\left|\tilde{K}_{\lambda,m}(z,w)\right|
 \lesssim \frac{(|1-zw|+|1-z\bar{w}|)^{-2\lambda}}{|1-z\bar{w}|}
 \left[\frac{1}{|1-z\overline{w}|^m}+\frac{1}{|1-zw|^m}\right], \qquad z,w\in\DD.
\end{align}
Note the logarithmic function appearing in (\ref{Cauchy-inequality-1}) is certainly controlled by the expression within the square brackets above.

If $\beta>0$ is not an integer and $m=[\beta]+1$, it follows from (\ref{K-kernel-2-4}) and (\ref{K-kernel-2-11}) that
\begin{align*}
\left|\tilde{K}_{\lambda,\beta}(z,w)\right|
\lesssim \int_0^1 \frac{(1-t)^{m-\beta-1}t^{\beta+2\lambda+1}}{(|1-tzw|+|1-tz\bar{w}|)^{2\lambda}|1-tz\bar{w}|}
 \left[\frac{1}{|1-tz\overline{w}|^m}+\frac{1}{|1-tzw|^m}\right]\,dt.
\end{align*}
Since, similarly to (\ref{elementary-equality-1}),
\begin{align}\label{elementary-equality-1-1}
|1-tz|\asymp 1-t+1-r+\left|\sin\theta/2\right| \asymp 1-t+|1-z|,\qquad z=re^{i\theta}\in\DD,\,\,t\in[0,1),
\end{align}
we have
\begin{align*}
\left|\tilde{K}_{\lambda,\beta}(z,w)\right|
\lesssim &\frac{|1-z\bar{w}|^{-1}}{(|1-zw|+|1-z\bar{w}|)^{2\lambda}} \nonumber\\
& \qquad\qquad \times\left[\int_0^1\frac{(1-t)^{m-\beta-1}dt}{(1-t+|1-z\overline{w}|)^m}
+\int_0^1\frac{(1-t)^{m-\beta-1}dt}{(1-t+|1-zw|)^m}\right]\nonumber\\
\lesssim &\frac{|1-z\bar{w}|^{-1}}{(|1-zw|+|1-z\bar{w}|)^{2\lambda}}\left[\frac{1}{|1-z\overline{w}|^{\beta}}
+\frac{1}{|1-zw|^{\beta}}\right].
\end{align*}
Part (i) of the theorem is proved.

Since $\tilde{K}_{\lambda,0}(z,w)=C(z,w)$, part (ii) is consistent with Lemma \ref{Cauchy-estimate-a}.

If $-2\lambda-1<\beta<0$, it follows from (\ref{K-kernel-2-4}) and part (ii) that
\begin{align*}
\left|\tilde{K}_{\lambda,\beta}(z,w)\right|
\lesssim \int_0^1 \frac{(1-t)^{-\beta-1}t^{\beta+2\lambda+1}}{(|1-tzw|+|1-tz\bar{w}|)^{2\lambda}|1-tz\bar{w}|}
 \ln\left(\frac{|1-tz\overline{w}|}{|1-tzw|}+2\right)\,dt,
\end{align*}
and on account of (\ref{elementary-equality-1-1}),
\begin{align}\label{K-kernel-2-12}
\left|\tilde{K}_{\lambda,\beta}(z,w)\right|
\lesssim \int_0^1 \frac{\ln\left(\frac{1-t+|1-z\overline{w}|}{1-t+|1-zw|}+2\right)(1-t)^{-\beta-1}}
{(1-t+|1-zw|+|1-z\bar{w}|)^{2\lambda}(1-t+|1-z\bar{w}|)}
\,dt,
\end{align}

If $|1-z\bar{w}|\le|1-zw|$, from (\ref{K-kernel-2-12}) one has
\begin{align}\label{K-kernel-2-13}
\left|\tilde{K}_{\lambda,\beta}(z,w)\right|
& \lesssim \int_0^1 \frac{(1-t)^{-\beta-1}}
{(1-t+|1-zw|)^{2\lambda}(1-t+|1-z\bar{w}|)}\,dt \nonumber\\
& =\frac{1}{|1-z\bar{w}|^{2\lambda+\beta+1}} \int_0^{B_1} \frac{s^{-\beta-1}}
{(s+B_2)^{2\lambda}(s+1)}\,ds,
\end{align}
where
$$
B_1=\frac{1}{|1-z\bar{w}|},\qquad B_2=\frac{|1-zw|}{|1-z\bar{w}|}.
$$
We divide the last integral in (\ref{K-kernel-2-13}) into two parts over the intervals $[0,B_2/2]$ and $[B_2/2,B_1]$ respectively,
where the second one is obviously dominated by $\int_{B_2/2}^{B_1}s^{-2\lambda-\beta-2}ds\lesssim B_2^{-2\lambda-\beta-1}$, and the first one
by $B_2^{-2\lambda}\int_0^{B_2/2} s^{-\beta-1}ds/(s+1)$, and further by a multiple of $B_2^{-2\lambda}$ if $-1<\beta<0$; $B_2^{-2\lambda}\ln(B+2)$ if $\beta=-1$; and $B_2^{-2\lambda-\beta-1}$  if $-2\lambda-1<\beta<-1$.
Inserting these estimates into (\ref{K-kernel-2-13}) proves parts (iii), (iv) and (v) for $|1-z\bar{w}|\le|1-zw|$ simultaneously.

If $|1-z\bar{w}|>|1-zw|$, from (\ref{K-kernel-2-12}) one has
\begin{align}\label{K-kernel-2-14}
\left|\tilde{K}_{\lambda,\beta}(z,w)\right|
& \lesssim \int_0^1 \frac{(1-t)^{-\beta-1}}
{(1-t+|1-z\bar{w}|)^{2\lambda+1}} \ln\left(\frac{|1-z\overline{w}|}{1-t}+2\right)\,dt \nonumber\\
& =\frac{1}{|1-z\bar{w}|^{2\lambda+\beta+1}} \int_0^{B_1} \frac{s^{-\beta-1}}{(s+1)^{2\lambda+1}} \ln\left(\frac{1}{s}+2\right)\,ds,
\end{align}
which proves parts (iii), (iv) and (v) again since the last integral above is bounded.

For part (vi), with $\beta=-2\lambda-1$, (\ref{K-kernel-2-12}) holds still; and when $|1-z\bar{w}|\le|1-zw|$, (\ref{K-kernel-2-13}) becomes
\begin{align*}
\left|\tilde{K}_{\lambda,\beta}(z,w)\right|
\lesssim\int_0^{B_1} \frac{s^{2\lambda}}{(s+B_2)^{2\lambda}(s+1)}\,ds.
\end{align*}
Evaluating this integral by the same way as before, we have
\begin{align*}
\left|\tilde{K}_{\lambda,\beta}(z,w)\right|
\lesssim B_2^{-2\lambda}\int_0^{B_2/2} s^{2\lambda-1}\,ds
+\int_{B_2/2}^{B_1} \frac{1}{s}\,ds
\lesssim \ln\frac{2}{|1-zw|}
\end{align*}

When $|1-z\bar{w}|>|1-zw|$, (\ref{K-kernel-2-14}) becomes
\begin{align*}
\left|\tilde{K}_{\lambda,\beta}(z,w)\right|
=\int_0^{B_1} \frac{s^{2\lambda}}{(s+1)^{2\lambda+1}} \ln\left(\frac{1}{s}+2\right)\,ds,
\lesssim \ln\left(\frac{1}{|1-z\bar{w}|}+1\right).
\end{align*}
Combining the above two cases proves part (vi).

Part (vii) is a direct consequence of the definition (\ref{K-kernel-2-1}) of $\tilde{K}_{\lambda,\beta}(z,w)$ and (\ref{phi-bound-1}). The proof of the theorem is completed.
\end{proof}

\begin{corollary}\label{h-kernel-2-a}
The function $h_{\lambda,\beta}(z,w)$ defined by (\ref{h-kernel-2-1}) has the same estimates as those for $\tilde{K}_{\lambda,\beta}(z,w)$ given in Theorem \ref{K-kernal-b}(i)-(vi).
Moreover, for $\beta<-2\lambda-1$, $h_{\lambda,\beta}(z,w)$ is continuous for $z,w$ satisfying $|zw|\le1$.
\end{corollary}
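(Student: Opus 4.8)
The plan is to express $h_{\lambda,\beta}(z,w)$ as a finite linear combination of the kernels $\tilde{K}_{\lambda,\beta-k}(z,w)$, $k=0,1,\dots,K$ — whose estimates are already furnished by Theorem \ref{K-kernal-b} — plus a remainder kernel so smoothing that it extends continuously to $\{|zw|\le1\}$. The reason this works is that the two multiplier sequences $(n+1)^{\beta}$ and $\Gamma(n+\beta+2\lambda+2)/\Gamma(n+2\lambda+2)$ agree to leading order and differ only by lower-order terms, which can be matched, to any prescribed order, by the coefficient sequences $\Gamma(n+\beta+2\lambda+2-k)/\Gamma(n+2\lambda+2)$ of $\tilde{K}_{\lambda,\beta-k}$. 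The ``moreover'' assertion will fall out with no extra work.

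First I would record the elementary fact that, for fixed $\beta,\lambda$ and every integer $K\ge0$, there are constants $a_0=1,a_1,\dots,a_K$ (depending on $\beta,\lambda,K$) and a sequence $R_K$ with $|R_K(n)|\lesssim(n+1)^{\beta-K-1}$ for all $n\ge0$ such that
\begin{align*}
(n+1)^{\beta}=\sum_{k=0}^{K}a_k\,\frac{\Gamma(n+\beta+2\lambda+2-k)}{\Gamma(n+2\lambda+2)}+R_K(n).
\end{align*}
This comes from the asymptotic expansion of $\Gamma(n+a)/\Gamma(n+b)$ in descending powers of $n$: dividing the sequences above by the $k=0$ term produces a triangular system in the variable $1/n$ (the $k$-th term being $\asymp n^{-k}$), so the $a_k$ are determined recursively so as to cancel the expansion of $(n+1)^{\beta}$ term by term down to order $K$, and the remainder estimate, uniform in $n\ge0$, follows since $R_K$ is a fixed function that is $O((n+1)^{\beta-K-1})$ as $n\to\infty$. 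I would then fix $K=\lfloor\beta+2\lambda\rfloor+1$ (this is $\ge0$ in cases (i)--(vi), where $\beta\ge-2\lambda-1$). This choice guarantees simultaneously that $-2\lambda-1\le\beta-k$ for $0\le k\le K$ — so every $\tilde{K}_{\lambda,\beta-k}$ is covered by cases (i)--(vi) of Theorem \ref{K-kernal-b}, and moreover $n+\beta+2\lambda+2-k\ge n+1\ge1$, so no Gamma function above is evaluated at a pole — and that $\beta-K-1+2\lambda<-1$.

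Multiplying the displayed identity by $\phi_n(z)\overline{\phi_n(w)}$ and summing over $n$ (legitimate for $|zw|<1$ by the absolute convergence from (\ref{phi-bound-1})) yields
\begin{align*}
h_{\lambda,\beta}(z,w)=\sum_{k=0}^{K}a_k\,\tilde{K}_{\lambda,\beta-k}(z,w)+E_K(z,w),\qquad E_K(z,w):=\sum_{n=0}^{\infty}R_K(n)\phi_n(z)\overline{\phi_n(w)}.
\end{align*}
By (\ref{phi-bound-1}), $|R_K(n)\phi_n(z)\overline{\phi_n(w)}|\lesssim(n+1)^{\beta-K-1+2\lambda}|zw|^{n}$, and since $\beta-K-1+2\lambda<-1$ this series converges absolutely and uniformly on $\{|zw|\le1\}$; hence $E_K$ is continuous, in particular bounded, there. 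Each $\tilde{K}_{\lambda,\beta-k}$ is estimated by Theorem \ref{K-kernal-b} with parameter $\beta-k$. The last point is to verify that, for $z,w\in\DD$, the bound of Theorem \ref{K-kernal-b} for any parameter $\gamma$ with $-2\lambda-1\le\gamma\le\beta$ is $\lesssim$ the bound for the parameter $\beta$, and that the bounded function $E_K$ is as well. Both follow from $|1-z\overline w|\le2$, $|1-zw|\le2$ and $|1-z\overline w|+|1-zw|\le4$ on $\DD\times\DD$: lowering the parameter by one unit multiplies the bound by a factor $\lesssim1$ (the logarithmic factors at the transition exponents $\gamma=0,-1,-2\lambda-1$ being absorbed via $\ln(t^{-1}+2)\lesssim t^{-\varepsilon}$), and each of the bounds in (i)--(vi) is $\gtrsim1$ on $\DD\times\DD$, which absorbs $E_K$. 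Running through the finitely many parameter ranges then gives the asserted estimate for $h_{\lambda,\beta}$. Finally, for $\beta<-2\lambda-1$ the ``moreover'' part needs none of the above: by (\ref{phi-bound-1}), $|(n+1)^{\beta}\phi_n(z)\overline{\phi_n(w)}|\lesssim(n+1)^{\beta+2\lambda}|zw|^{n}$ with $\beta+2\lambda<-1$, so the defining series of $h_{\lambda,\beta}$ converges absolutely and uniformly on $\{|zw|\le1\}$.

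I expect the main obstacle to be the bookkeeping in the last step — checking that the Theorem \ref{K-kernal-b} bounds are monotone in the parameter across all of their regimes and across the special exponents $0,-1,-2\lambda-1$, and making the asymptotic-matching identity quantitative with a remainder bound valid for every $n\ge0$. Neither is conceptually difficult, but the case analysis must be carried out carefully; the degenerate case $\lambda=0$, where $\phi_n(z)=z^{n}$ and all the kernels reduce to classical ones on the disc, may simply be quoted from the classical theory.
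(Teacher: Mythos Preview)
Your proposal is correct and follows essentially the same route as the paper: the paper writes the identity
\[
(n+1)^{\beta}=\sum_{j=0}^{M}a_{\beta,j}\,\frac{\Gamma(n+\beta-j+2\lambda+2)}{\Gamma(n+2\lambda+2)}+O\!\left((n+1)^{\beta-M-1}\right),\qquad M=[\beta+2\lambda+1],
\]
and then says the corollary follows from Theorem \ref{K-kernal-b} immediately. Your $K=\lfloor\beta+2\lambda\rfloor+1$ coincides with the paper's $M$, and you have simply made explicit the two points the paper leaves to the reader --- that the remainder kernel extends continuously to $\{|zw|\le1\}$ by (\ref{phi-bound-1}), and that the Theorem \ref{K-kernal-b} bounds for parameters $\beta-k\in[-2\lambda-1,\beta]$ are each dominated by the bound at the top parameter $\beta$.
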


The last assertion in the corollary for $\beta<-2\lambda-1$ is a direct consequence of (\ref{phi-bound-1}). If $\beta\ge-2\lambda-1$, one has
\begin{align*}
(n+1)^{\beta}=\sum_{j=0}^{M}a_{\beta,j}
\frac{\Gamma(n+\beta-j+2\lambda+2)}{\Gamma(n+2\lambda+2)}+O\left((n+1)^{\beta-M-1}\right)
\end{align*}
for $n\ge0$, where $M=[\beta+2\lambda+1]$, and so the corollary follows from Theorem \ref{K-kernal-b} immediately.

\begin{theorem}\label{Bergman-type-kernel-a}
For $\lambda\ge0$ and $\alpha>-1$, the $(\lambda,\alpha)$-Bergman kernel $K_{\lambda,\alpha}(z,w)$ has the following estimate
\begin{align}\label{Bergman-type-kernel-2}
|K_{\lambda,\alpha}(z,w)|\lesssim \frac{|1-zw|^{-1}}{(|1-z\overline{w}|+|1-zw|)^{2\lambda}}
\left(\frac{1}{|1-z\overline{w}|^{\alpha+1}}+\frac{1}{|1-zw|^{\alpha+1}}\right),\quad z,w\in\DD.
\end{align}
\end{theorem}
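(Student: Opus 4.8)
The plan is to reduce the estimate to the kernels $h_{\lambda,\beta}$ treated in Corollary~\ref{h-kernel-2-a}. Because the coefficients of $\phi_n^{\lambda}$ and the numbers $\epsilon_n$ are real, $\phi_n(\bar w)=\overline{\phi_n(w)}$, hence $\phi_n(w)=\overline{\phi_n(\bar w)}$, so that, with $\tau_n$ as in (\ref{tau-notation-1}),
\begin{align*}
K_{\lambda,\alpha}(z,w)=\sum_{n=0}^{\infty}\tau_n\,\phi_n(z)\,\overline{\phi_n(\bar w)},\qquad
\tau_n=\frac{\Gamma(\lambda+1)\Gamma(n+\lambda+\alpha+2)}{\Gamma(\lambda+\alpha+2)\Gamma(n+\lambda+1)}\asymp(n+1)^{\alpha+1}.
\end{align*}
Thus $K_{\lambda,\alpha}(z,w)$ has the same shape as $h_{\lambda,\alpha+1}$, evaluated at the pair $(z,\bar w)$; replacing $w$ by $\bar w$ in Corollary~\ref{h-kernel-2-a} interchanges $|1-z\bar w|$ and $|1-zw|$ (since $z\,\overline{\bar w}=zw$), which is exactly the configuration appearing on the right side of (\ref{Bergman-type-kernel-2}).

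Next I would expand the coefficient: fixing an integer $M$ with $\alpha-M+2\lambda<-1$, the standard asymptotic expansion of a ratio of Gamma functions gives constants $a_0=\Gamma(\lambda+1)/\Gamma(\lambda+\alpha+2)$, $a_1,\dots,a_M$ with $\tau_n=\sum_{j=0}^{M}a_j(n+1)^{\alpha+1-j}+O((n+1)^{\alpha-M})$ for $n\ge0$, and hence
\begin{align*}
K_{\lambda,\alpha}(z,w)=\sum_{j=0}^{M}a_j\,h_{\lambda,\alpha+1-j}(z,\bar w)+R(z,w),
\end{align*}
where $R(z,w)=\sum_{n}e_n\phi_n(z)\overline{\phi_n(\bar w)}$ with $|e_n|\lesssim(n+1)^{\alpha-M}$ (the convergence and the rearrangements being justified by (\ref{phi-bound-1})). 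Again by (\ref{phi-bound-1}), $|R(z,w)|\lesssim\sum_{n}(n+1)^{\alpha-M+2\lambda}|zw|^{n}\lesssim1$ for $z,w\in\DD$, and this term is harmless since the right side of (\ref{Bergman-type-kernel-2}) is bounded below by a positive constant, all of $|1-zw|$, $|1-z\bar w|$, $|1-zw|+|1-z\bar w|$ being $\le4$ while $\alpha+1>0$.

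It then remains to bound each $h_{\lambda,\alpha+1-j}(z,\bar w)$, $0\le j\le M$, by the right side of (\ref{Bergman-type-kernel-2}). For $j=0$ this is precisely the estimate of Theorem~\ref{K-kernal-b}(i) for $h_{\lambda,\alpha+1}$ (valid by Corollary~\ref{h-kernel-2-a}), written with $w$ replaced by $\bar w$, since $\alpha+1>0$. For $j\ge1$ one invokes the estimate of Theorem~\ref{K-kernal-b}(i)--(vi) for $h_{\lambda,\alpha+1-j}$, or the continuity statement of Corollary~\ref{h-kernel-2-a} in case $\alpha+1-j<-2\lambda-1$, corresponding to the value of $\alpha+1-j$, and checks in each case that, after the substitution $w\mapsto\bar w$, the resulting bound is at most a constant multiple of the right side of (\ref{Bergman-type-kernel-2}); this uses only $\alpha+1>0$, the elementary estimates $\ln(|1-zw|^{-1}+2)\lesssim|1-zw|^{-\varepsilon}$ and $\ln(|1-z\bar w|^{-1}+2)\lesssim|1-z\bar w|^{-\varepsilon}$ (any fixed $\varepsilon>0$, for $z,w\in\DD$), and the boundedness of $|1-zw|$ and $|1-z\bar w|$. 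Summing the finitely many terms together with $R$ then yields (\ref{Bergman-type-kernel-2}). The main obstacle is this final case-by-case comparison --- in particular absorbing the logarithmic factors (the cases $\beta=0,-1,-2\lambda-1$) and the exponent $-\beta-2\lambda-1$ of case~(v) --- all of which is routine once the pointwise bounds of Corollary~\ref{h-kernel-2-a} are available, with no further analytic input needed.
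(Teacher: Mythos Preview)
Your proposal is correct and follows essentially the same route as the paper: the paper simply states that the theorem is a consequence of Corollary~\ref{h-kernel-2-a} via the asymptotic expansion (\ref{elementary-equality-4}) of $\tau_n$ with $M=[\alpha+2\lambda+2]$, which is exactly the decomposition $K_{\lambda,\alpha}(z,w)=\sum_{j=0}^{M}a_j h_{\lambda,\alpha+1-j}(z,\bar w)+R(z,w)$ you spell out. Your write-up merely makes explicit what the paper leaves implicit --- the $w\mapsto\bar w$ swap, the remainder bound, and the case-by-case absorption of the lower-order $h_{\lambda,\alpha+1-j}$ terms.
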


The theorem is a consequence of Corollary \ref{h-kernel-2-a}, since
\begin{align}\label{elementary-equality-4}
\frac{\Gamma(\lambda+1)\Gamma(n+\lambda+\alpha+2)}{\Gamma(\lambda+\alpha+2)\Gamma(n+\lambda+1)}=\sum_{j=0}^{M}\tilde{a}_{\alpha,j}(n+1)^{\alpha+1-j}+O\left((n+1)^{\alpha-M}\right)
\end{align}
for $n\ge0$, where $M=[\alpha+2\lambda+2]$.

We now come to the proof of Lemma \ref{kernel-p-mean-a}, which is restated as follows.

\begin{theorem}\label{kernel-p-mean-b}
For $0<p\le1$ and $\alpha>(2\lambda+1)(p^{-1}-1)-1$,
\begin{align}\label{kernel-p-mean-2}
\int_{-\pi}^{\pi}|K_{\lambda,\alpha}(z,se^{i\varphi})|^{p}\,dm_{\lambda}(\varphi)
\lesssim \frac{|1-s^{2}z^{2}|^{2\lambda(1-p)}}{(1-s|z|)^{(2+\alpha)p-1}},\qquad z\in\DD,\,\,\,s\in[0,1).
\end{align}
\end{theorem}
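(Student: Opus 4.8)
The plan is to derive (\ref{kernel-p-mean-2}) from the pointwise bound on $K_{\lambda,\alpha}$ in Theorem~\ref{Bergman-type-kernel-a}, by carrying out the angular integration. Write $z=re^{i\theta}$, $w=se^{i\varphi}$ and set $\rho:=rs=s|z|$, so that $zw=\rho e^{i(\theta+\varphi)}$ and $z\bar w=\rho e^{i(\theta-\varphi)}$. Raising the estimate of Theorem~\ref{Bergman-type-kernel-a} to the $p$-th power,
\begin{align*}
|K_{\lambda,\alpha}(z,se^{i\varphi})|^{p}\lesssim\frac{1}{|1-zw|^{p}\,(|1-zw|+|1-z\bar w|)^{2\lambda p}}\Bigl(\frac{1}{|1-z\bar w|^{(\alpha+1)p}}+\frac{1}{|1-zw|^{(\alpha+1)p}}\Bigr).
\end{align*}
If $\rho<\tfrac12$ every quantity here is $\asymp1$ and (\ref{kernel-p-mean-2}) is trivial, so assume $\rho\ge\tfrac12$. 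Since $|1-s^{2}z^{2}|=|1-\rho^{2}e^{2i\theta}|\asymp(1-\rho)+|\sin\theta|=:\eta$ (cf.~(\ref{elementary-equality-1})) and $1-s|z|=1-\rho$, the right-hand side of (\ref{kernel-p-mean-2}) is $\asymp\eta^{2\lambda(1-p)}(1-\rho)^{1-(\alpha+2)p}$; this is what I will bound.

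The first step absorbs the Jacobi weight $|\sin\varphi|^{2\lambda}$ from $dm_{\lambda}(\varphi)$. Using $|\sin\varphi|\le|\sin(\theta+\varphi)|+|\sin\theta|$, the elementary inequality $|\sin\psi|\le2|1-\rho e^{i\psi}|$ (valid for $\rho\ge\tfrac12$) and $|\sin\theta|\lesssim|1-s^{2}z^{2}|=|1-(zw)(z\bar w)|\le|1-zw|+|1-z\bar w|$, one gets $|\sin\varphi|^{2\lambda}\lesssim(|1-zw|+|1-z\bar w|)^{2\lambda}$, so that multiplying the displayed bound by $|\sin\varphi|^{2\lambda}$ converts the factor $(|1-zw|+|1-z\bar w|)^{-2\lambda p}$ into the \emph{gain} $(|1-zw|+|1-z\bar w|)^{2\lambda(1-p)}$. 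Consequently (\ref{kernel-p-mean-2}) reduces to proving, for $(a,b)$ equal to $(p,(\alpha+1)p)$ and to $((\alpha+2)p,0)$ — so $a+b=(\alpha+2)p>1$ in both cases —
\begin{align*}
\mathcal J(a,b):=\int_{-\pi}^{\pi}\frac{\bigl(|1-\rho e^{i(\theta+\varphi)}|+|1-\rho e^{i(\theta-\varphi)}|\bigr)^{2\lambda(1-p)}}{|1-\rho e^{i(\theta+\varphi)}|^{a}\,|1-\rho e^{i(\theta-\varphi)}|^{b}}\,d\varphi\lesssim\eta^{2\lambda(1-p)}(1-\rho)^{1-(\alpha+2)p}.
\end{align*}

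To estimate $\mathcal J(a,b)$ I would decompose $[-\pi,\pi]$ around the two singular points $\varphi\equiv-\theta$ and $\varphi\equiv\theta$. With $d_{-}:=\operatorname{dist}(\varphi,-\theta)$ and $d_{+}:=\operatorname{dist}(\varphi,\theta)$ on the circle, $|1-zw|\asymp(1-\rho)+d_{-}$ and $|1-z\bar w|\asymp(1-\rho)+d_{+}$, and since $\operatorname{dist}(-\theta,\theta)\asymp|\sin\theta|\le\eta$, on the arc $\{d_{-}\lesssim\eta\}$ one has $|1-z\bar w|\asymp\eta$ (symmetrically on $\{d_{+}\lesssim\eta\}$), while on the remaining arc $|1-zw|\asymp|1-z\bar w|\asymp d:=\operatorname{dist}(\varphi,\{-\theta,\theta\})\gtrsim\eta$. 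On the first arc the numerator is $\asymp\eta^{2\lambda(1-p)}$ and what remains is the one-variable integral $\int_{0}^{c\eta}\bigl((1-\rho)+t\bigr)^{-a}\,dt$, which is $\lesssim(1-\rho)^{1-a}$ for $a>1$, $\lesssim\log\tfrac{\eta}{1-\rho}$ for $a=1$, and $\lesssim\eta^{1-a}$ for $a<1$; the second arc is symmetric; on the remaining arc the integrand is $\asymp d^{\,2\lambda(1-p)-(\alpha+2)p}$ with exponent $<-1$ (by the hypothesis on $\alpha$), so the contribution is $\lesssim\eta^{1+2\lambda(1-p)-(\alpha+2)p}$. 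In each of these cases, using $a+b=(\alpha+2)p$ and $1-\rho\le\eta\le C$, the ratio of the outcome to $\eta^{2\lambda(1-p)}(1-\rho)^{1-(\alpha+2)p}$ works out to $\bigl(\tfrac{1-\rho}{\eta}\bigr)^{\gamma}$ for some $\gamma>0$ (possibly times $\log\tfrac{\eta}{1-\rho}$, which is harmless because $x^{-\gamma}\log x$ is bounded for $x\ge1$), hence is $\lesssim1$; translating back via $\eta\asymp|1-s^{2}z^{2}|$ and $1-\rho=1-s|z|$ finishes the proof.

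The main obstacle is this last piece of bookkeeping. Because the exponents $a,b$ are not restricted to $(1,\infty)$ or $[0,1]$ the one-variable sub-integrals have three different behaviours, and — the delicate point — on the arc far from the singularities the gain $(|1-zw|+|1-z\bar w|)^{2\lambda(1-p)}$ must be \emph{retained}, not bounded by a constant, since it is precisely its decay that brings the exponent of the integrand below $-1$ and produces the correct power. The hypothesis $\alpha>(2\lambda+1)(p^{-1}-1)-1$ enters exactly here: it is equivalent to $(\alpha+2)p-1>2\lambda(1-p)\ (\ge0)$, i.e.\ simultaneously to $a+b>1$ and to $2\lambda(1-p)-(\alpha+2)p<-1$, which is what makes all three integrals converge, the exponents $\gamma$ positive, and $(1-\rho)^{1-(\alpha+2)p}$ negative enough in power to dominate the logarithmic leftover. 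A little routine care is needed near the ``corners'' $\theta\equiv\pm\pi$, where $|\sin\theta|$ degenerates and each of $\varphi\equiv\pm\theta$ splits into two nearby points; there one argues with half-angles $|\sin(\tfrac{\theta\pm\varphi}{2})|$ in place of distances.
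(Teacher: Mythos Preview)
Your argument is correct and gives a genuinely different proof from the one in the paper. Both start from the pointwise estimate of Theorem~\ref{Bergman-type-kernel-a}, but the subsequent reduction is organized differently.

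The paper first symmetrizes in $\varphi\mapsto-\varphi$, so that the bound becomes symmetric in $|1-zw|$ and $|1-z\bar w|$, and then restricts to $\theta,\varphi\in[0,\pi]$ where the elementary monotonicity $|1-rse^{i(\theta-\varphi)}|\le|1-rse^{i(\theta+\varphi)}|$ collapses everything to the single integrand $|1-rse^{i(\theta+\varphi)}|^{-2\lambda p}\,|1-rse^{i(\theta-\varphi)}|^{-(\alpha+2)p}$. After a further symmetry reduction to $\theta\in[0,\pi/2]$, only one singularity ($\varphi=\theta$) survives, and the three-piece split is around $\varphi=0$ (where the weight $\varphi^{2\lambda}$ vanishes), around $\varphi=\theta$, and far from both. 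This keeps the case analysis minimal: there is one singular integral with exponent $(\alpha+2)p>1$ and no trichotomy on $a$ and $b$.

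Your route is to absorb the Jacobi weight via $|\sin\varphi|\lesssim|1-zw|+|1-z\bar w|$, which flips $(|1-zw|+|1-z\bar w|)^{-2\lambda p}$ into the \emph{gain} $(|1-zw|+|1-z\bar w|)^{2\lambda(1-p)}$, and then to decompose around the two genuine kernel singularities $\varphi=\pm\theta$. This avoids all the symmetry reductions and is arguably more transparent, but the price is the trichotomy on $a$ and on $b$ that you describe. One small correction: in the case $(a,b)=((\alpha+2)p,0)$, near $\varphi=-\theta$ the ratio to the target is exactly $1$ (so $\gamma=0$, not $\gamma>0$); this is of course still $\lesssim1$. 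The rest of your bookkeeping checks out, including the crucial observation that on the far arc the retained factor $(|1-zw|+|1-z\bar w|)^{2\lambda(1-p)}$ is what pushes the exponent below $-1$, and that this is precisely where the hypothesis $\alpha>(2\lambda+1)(p^{-1}-1)-1$ is used.
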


\begin{proof}
Let $z=re^{i\theta}$ with $\theta\in[0,\pi]$. If $\theta\in[-\pi,0]$, the conclusion follows from the equality $K_{\lambda,\alpha}(z,se^{i\varphi})=\overline{K_{\lambda,\alpha}(\bar{z},se^{-i\varphi})}$.
We first have
\begin{align*}
\int_{-\pi}^{\pi}|K_{\lambda,\alpha}(z,se^{i\varphi})|^{p}\,dm_{\lambda}(\varphi)
\lesssim \int_0^{\pi}\left(|K_{\lambda,\alpha}(z,se^{i\varphi})| +|K_{\lambda,\alpha}(z,se^{-i\varphi})|\right)^{p} \,dm_{\lambda}(\varphi),
\end{align*}
and then, by Theorem \ref{Bergman-type-kernel-a}, this is further dominated by
\begin{align*}
\int_0^{\pi}
\left(\frac{1}{|1-z\overline{w}|^{\alpha+2}}+\frac{1}{|1-zw|^{\alpha+2}}\right)^p
\frac{dm_{\lambda}(\varphi)}{(|1-z\overline{w}|+|1-zw|)^{2\lambda p}},
\end{align*}
where $w=se^{i\varphi}$.
Since for $\theta,\varphi\in[0,\pi]$,
\begin{eqnarray*}
|1-re^{i(\theta-\varphi)}|^{2}=1-2r\cos(\theta-\varphi)+r^2\le|1-re^{i(\theta+\varphi)}|^{2},
\end{eqnarray*}
we obtain
\begin{align}\label{kernel-p-mean-3}
\int_{-\pi}^{\pi}|K_{\lambda,\alpha}(z,se^{i\varphi})|^{p}\,dm_{\lambda}(\varphi)
\lesssim \int_{0}^{\pi}\frac{|1-rse^{i(\theta+\varphi)}|^{-2\lambda p}}{|1-rse^{i(\theta-\varphi)}|^{(\alpha+2)p}}\,dm_{\lambda}(\varphi).
\end{align}

In what follows, we consider the case for $\theta\in[0,\pi/2]$. If $\theta\in(\pi/2,\pi]$, the same procedure works by setting $\theta'=\pi-\theta$ and $\varphi'=\pi-\varphi$ since $K_{\lambda,\alpha}(re^{i\theta},se^{i\varphi})=\overline{K_{\lambda,\alpha}(re^{i\theta'},se^{i\varphi'})}$.

From (\ref{kernel-p-mean-3}) one has
\begin{align*}
\int_{-\pi}^{\pi}|K_{\lambda,\alpha}(z,se^{i\varphi})|^{p}\,dm_{\lambda}(\varphi)
\lesssim \int_{0}^{\pi/2}\left(\frac{|1-rse^{i(\theta+\varphi)}|^{-2\lambda p}}{|1-rse^{i(\theta-\varphi)}|^{(\alpha+2)p}}
+\frac{|1+rse^{i(\theta-\varphi)}|^{-2\lambda p}}{|1+rse^{i(\theta+\varphi)}|^{(\alpha+2)p}}\right)\,dm_{\lambda}(\varphi),
\end{align*}
and since, for $\theta,\varphi\in[0,\pi/2]$,
\begin{eqnarray*}
|1-re^{i(\theta-\varphi)}|^{2}\le|1+re^{i(\theta+\varphi)}|^{2}\quad\hbox{and}\quad |1-re^{i(\theta+\varphi)}|^{2}\le|1+re^{i(\theta-\varphi)}|^{2},
\end{eqnarray*}
it follows that
\begin{align*}
\int_{-\pi}^{\pi}|K_{\lambda,\alpha}(z,se^{i\varphi})|^{p}\,dm_{\lambda}(\varphi)
\lesssim \int_{0}^{\pi/2} \frac{|1-rse^{i(\theta+\varphi)}|^{-2\lambda p}}{|1-rse^{i(\theta-\varphi)}|^{(\alpha+2)p}}
\,dm_{\lambda}(\varphi).
\end{align*}
Furthermore, since for $\theta,\varphi\in[0,\pi/2]$,
\begin{align*}
|1-re^{i(\theta-\varphi)}|^{2}\asymp(1-r+|\theta-\varphi|)^2,\qquad
|1-re^{i(\theta+\varphi)}|^{2}\asymp(1-r+\theta+\varphi)^2,
\end{align*}
we get
\begin{align*}
\int_{-\pi}^{\pi}|K_{\lambda,\alpha}(z,se^{i\varphi})|^{p}\,dm_{\lambda}(\varphi)
\lesssim \int_{0}^{\pi/2} \frac{(1-rs+\theta+\varphi)^{-2\lambda p}\varphi^{2\lambda}}{(1-rs+|\theta-\varphi|)^{(\alpha+2)p}}
\,d\varphi.
\end{align*}

Now We split the last integral above into three parts, that are over $[0,\theta/2]$, $[\theta/2,3\theta/2]$ and $[3\theta/2,\pi/2]$ respectively. The first part is controlled by a multiple of
$$
\theta^{2\lambda+1}/(1-rs+\theta)^{(\alpha+2)p+2\lambda p}\lesssim (1-rs+\theta)^{2\lambda(1-p)}/(1-rs)^{(\alpha+2)p-1},
$$
the second part by
$$
\frac{\theta^{2\lambda}}{(1-rs+\theta)^{2\lambda p}} \int_{\theta/2}^{3\theta/2}\frac{d\varphi}{(1-rs+|\theta-\varphi|)^{(\alpha+2)p}}
\lesssim\frac{(1-rs+\theta)^{2\lambda(1-p)}}{(1-rs)^{(\alpha+2)p-1}},
$$
and the last part by
$$
\int_{3\theta/2}^{\pi/2} \frac{d\varphi}{(1-rs+\varphi)^{(\alpha+2)p-2\lambda(1-p)}}
\lesssim \frac{1}{(1-rs+\theta)^{(\alpha+2)p-2\lambda(1-p)-1}}
\lesssim\frac{(1-rs+\theta)^{2\lambda(1-p)}}{(1-rs)^{(\alpha+2)p-1}}.
$$
Collecting these estimates yields, for $\theta\in[0,\pi/2]$,
\begin{align*}
\int_{-\pi}^{\pi}|K_{\lambda,\alpha}(z,se^{i\varphi})|^{p}\,dm_{\lambda}(\varphi)
\lesssim \frac{(1-rs+|\sin\theta|)^{2\lambda(1-p)}}{(1-rs)^{(\alpha+2)p-1}}.
\end{align*}
As we interpreted earlier, the above conclusion is true for all $\theta\in[-\pi,\pi]$, and moreover, this is identical with (\ref{kernel-p-mean-2}), in view of the fact $|1-s^2z^2|\asymp 1-rs+|\sin\theta|$ for $z=re^{i\theta}\in\DD$, $s\in[0,1)$. The proof of the theorem is completed.
\end{proof}

\section{On the weighted spaces with weight $w(s)=(1-s)^{\alpha-1}$}

In this section we apply our main theorems in Section 4 to the weighted Bergman spaces, denoted by $A^{p}_{\lambda,\alpha}(\DD)$, with respect to the power weights $w(s)=(1-s)^{\alpha-1}$ for $\alpha>0$.
We shall characterize the dual space $(A^{p}_{\lambda,\alpha}(\DD))^*$ of $A^{p}_{\lambda,\alpha}(\DD)$ for $p_0\le p\le1$, and give a sufficient condition in terms of a Carleson type measure for a multiplication operator to be bounded from $A^{p_1}_{\lambda,\alpha}(\DD)$ into a unweighted  space $L_{\lambda}^{p_2}(\DD)$ for $p_0\le p_1\le1\le p_2<\infty$.

\subsection{The dual spaces of $A^{p}_{\lambda,\alpha}(\DD)$ for $p_0\le p\le1$}

As usual, the dual space $(A^{p}_{\lambda,w}(\DD))^*$ of a weighted $\lambda$-Bergman space $A^{p}_{\lambda,w}(\DD)$ consists of all linear continuous functionals on $A^{p}_{\lambda,w}(\DD)$, and the norm of ${\mathcal{L}}\in(A^{p}_{\lambda,w}(\DD))^*$ is defined by $\|{\mathcal{L}}\|=\sup_{\|f\|_{A^{p}_{\lambda,w}}=1}|{\mathcal{L}}(f)|$.

For the weight function $w(s)=(1-s)^{\alpha-1}$ with $\alpha>0$, the weighted $\lambda$-Bergman space $A^{p}_{\lambda,\alpha}(\DD)$ with $0<p<\infty$ is the set of $\lambda$-analytic functions $f$ in $\DD$ for which
$$
\|f\|_{A_{\lambda,\alpha}^{p}}:=\left(\int_{\DD}|f(z)|^{p}(1-|z|)^{\alpha-1}d\sigma_{\lambda}(z)\right)^{1/p}<\infty.
$$

\begin{lemma}\label{g-derivative-Bergman-b}
Let $a,b$ be two real numbers so that $0\le a<b\le a+1\le2$, and assume that $g$ is $\lambda$-analytic in $\DD$ and satisfies the condition
\begin{align}\label{derivative-estimate-8-1}
|D_{z}\left(zg(z)\right)|\lesssim \frac{|1-z^{2}|^a}{(1-|z|)^b}, \qquad z\in\DD.
\end{align}
If $a>0$, then $|g(z)|\lesssim 1$ when $0<b<1$, $1+|1-z^{2}|^a\log(|1-z^{2}|/(1-|z|))$ when $b=1$, $1+|1-z^{2}|^a/(1-|z|)^{b-1}$ when $1<b<a+1$, and $(|1-z^{2}|/(1-|z|))^a+\log(3/|1-z^{2}|)$ when $b=a+1$; and if $a=0$, then $|g(z)|\lesssim 1$ when $0<b<1$, $\log(2/(1-|z|))$ when $b=1$.
\end{lemma}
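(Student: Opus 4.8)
The plan is to invert the operator $g\mapsto D_{z}(zg)$, so as to write $g$ as an average of the known quantity $D_{z}(zg)$, substitute the hypothesis (\ref{derivative-estimate-8-1}), and reduce the problem to estimating one elementary integral of a single real variable.

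\emph{Step 1 (an integral representation for $g$).} Write $g(z)=\sum_{n\ge0}c_{n}\phi_{n}(z)$ by Proposition \ref{anal-thm}. Each $\phi_{n}$ is homogeneous of degree $n$, so $\phi_{n}(tz)=t^{n}\phi_{n}(z)$ for $t\in[0,1]$, and by (\ref{Tzphi-2}) one has $D_{z}(z\phi_{n}(z))=(n+\lambda+1)\phi_{n}(z)$. Hence $D_{z}(zg(z))$ is the $\lambda$-analytic function $h(z):=\sum_{n\ge0}(n+\lambda+1)c_{n}\phi_{n}(z)$, whose series converges locally uniformly in $\DD$ by Proposition \ref{anal-thm} and (\ref{phi-bound-1}), these two facts also justifying the termwise manipulations. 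Since $\int_{0}^{1}t^{n+\lambda}\,dt=(n+\lambda+1)^{-1}$ and $t^{\lambda}h(tz)=\sum_{n\ge0}(n+\lambda+1)c_{n}t^{n+\lambda}\phi_{n}(z)$, a term-by-term integration (legitimate for the same reasons) gives
\begin{equation*}
g(z)=\int_{0}^{1}t^{\lambda}\,h(tz)\,dt,\qquad z\in\DD.
\end{equation*}

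\emph{Step 2 (inserting the hypothesis).} Put $r=|z|$, $d=1-|z|$ and $\delta=|1-z^{2}|$, and note the elementary inequalities $d\le 1-|z|^{2}\le\delta\le2$, $|1-t^{2}z^{2}|\le\delta+|z|^{2}(1-t^{2})\le\delta+2(1-t)$, and $1-tr\ge\max\{d,\,1-t\}\ge\tfrac12\big((1-t)+d\big)$. Applying (\ref{derivative-estimate-8-1}) at the point $tz\in\DD$ gives $|h(tz)|\lesssim|1-t^{2}z^{2}|^{a}\,(1-tr)^{-b}$, so, bounding $t^{\lambda}\le1$ and substituting $u=1-t$,
\begin{equation*}
|g(z)|\ \lesssim\ \int_{0}^{1}\frac{(u+\delta)^{a}}{(u+d)^{b}}\,du .
\end{equation*}

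\emph{Step 3 (the elementary integral).} By the subadditivity $(u+\delta)^{a}\le u^{a}+\delta^{a}$ (valid because $0\le a\le1$), split the last integral as $J_{1}+\delta^{a}J_{2}$, where $J_{1}=\int_{0}^{1}u^{a}(u+d)^{-b}\,du$ and $J_{2}=\int_{0}^{1}(u+d)^{-b}\,du$. Direct computation gives $J_{2}\lesssim1$ when $0<b<1$, $J_{2}\lesssim\log(2/d)$ when $b=1$, and $J_{2}\lesssim d^{1-b}$ when $b>1$; while $J_{1}\lesssim1$ when $b<a+1$, and $J_{1}\lesssim1+\log(1/d)$ when $b=a+1$ (seen via $u=dv$). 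Feeding these back, and using $\delta\le2$, $\delta\ge d$ (so $\log(\delta/d)\ge0$), together with the elementary bounds $x^{a}\log(2/x)\lesssim1$ on $(0,2]$ and $\log x\le a^{-1}x^{a}$ for $x\ge1$, one reads off in each of the ranges ($a>0$): $0<b<1$, $b=1$, $1<b<a+1$, $b=a+1$; and ($a=0$): $0<b<1$, $b=1$ --- precisely the asserted estimate for $|g(z)|$, after restoring $\delta=|1-z^{2}|$ and $d=1-|z|$.

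\emph{Where the difficulty lies.} None of the steps is deep. The one place demanding care is Step 1: making the representation $g(z)=\int_{0}^{1}t^{\lambda}h(tz)\,dt$ rigorous, that is, justifying the termwise application of the Dunkl operator $D_{z}$ to $zg$ and the termwise integration. Both are controlled entirely by the locally uniform convergence of the relevant $\phi_{n}$-series furnished by Proposition \ref{anal-thm} and by the coefficient bound (\ref{phi-bound-1}). Once Step 1 is in hand, Steps 2 and 3 are routine; the only bookkeeping is the absorption of the constants and of the logarithmic terms into the exact form of the claimed bounds, which uses nothing beyond $1-|z|\le|1-z^{2}|\le2$.
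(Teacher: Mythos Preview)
Your proof is correct and takes a genuinely different, more elementary route than the paper's.

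The paper proves this lemma by invoking the two-dimensional reproducing formula of Lemma \ref{g-derivative-Bergman-a},
\[
g(z)=\frac{\lambda+2}{2}\int_{\DD}K_{\lambda,1}(z,\overline{w})\,D_{w}(wg(w))(1-|w|^{2})^{2}\,d\sigma_{\lambda}(w),
\]
and then feeds in the pointwise bound (\ref{Bergman-type-kernel-2}) for the kernel $K_{\lambda,1}$; this reduces matters to a double integral over $\DD$ in the variables $(s,\varphi)$, after which the case analysis is carried out. That route therefore leans on the machinery of Sections 6--7 (the delicate estimates of the $\lambda$-Cauchy kernel derivatives and of $K_{\lambda,\alpha}$).

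You instead invert the multiplier $c_{n}\mapsto(n+\lambda+1)c_{n}$ by the one-dimensional radial formula
\[
g(z)=\int_{0}^{1}t^{\lambda}\,h(tz)\,dt,\qquad h=D_{z}(zg),
\]
which follows from the homogeneity $\phi_{n}(tz)=t^{n}\phi_{n}(z)$ and (\ref{Tzphi-2}), together with the locally uniform convergence guaranteed by Proposition \ref{anal-thm}. After substituting the hypothesis and the trivialities $|1-t^{2}z^{2}|\lesssim(1-t)+|1-z^{2}|$, $1-t|z|\asymp(1-t)+(1-|z|)$, everything collapses to the single real integral $\int_{0}^{1}(u+\delta)^{a}(u+d)^{-b}\,du$, whose case-by-case evaluation is routine. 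Your approach is self-contained and avoids the Bergman-kernel estimates entirely; the paper's approach, while heavier here, reuses tools it needs anyway for the kernel results in Section 7. Both are valid, but for this lemma yours is the shorter path.
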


The proof of the lemma is based on a reproducing formula in terms of the ``derivative" given below.

\begin{lemma}\label{g-derivative-Bergman-a} {\rm (\cite[Lemma 6.7]{LW1})}
If $f$ is $\lambda$-analytic in $\DD$ and satisfies $(1-|z|^{2})^2D_{z}\left(zf(z)\right)\in L^{1}_{\lambda}(\DD)$, then
\begin{align}\label{reproducing-formula-8-1}
f(z)=\frac{\lambda+2}{2}\int_{\DD}K_{\lambda,1}(z,\overline{w})D_{w}\left(wf(w)\right)(1-|w|^{2})^2d\sigma_{\lambda}(w),\qquad z\in\DD,
\end{align}
where $K_{\lambda,1}$ is given by (\ref{Bergman-kernel-1}) with $\alpha=1$.
\end{lemma}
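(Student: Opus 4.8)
The plan is to prove the reproducing identity by testing it against the orthonormal system $\{\phi_n\}$, thereby reducing it to a chain of elementary Gamma-function identities. Writing $f(z)=\sum_{n\ge0}c_n\phi_n(z)$ as in the series representation (\ref{anal-series-1}), I would first compute the ``derivative'' termwise. By (\ref{Tzphi-2}) of Proposition \ref{anal-basis}, relabelling $n-1\mapsto n$ gives $D_w(w\phi_n(w))=(n+\lambda+1)\phi_n(w)$, so that $D_w(wf(w))=\sum_{n\ge0}(n+\lambda+1)c_n\phi_n(w)$. Expanding the kernel from its definition (\ref{Bergman-kernel-1}) with $\alpha=1$, and using $\phi_n(\overline w)=\overline{\phi_n(w)}$, the right-hand side of (\ref{reproducing-formula-8-1}) becomes a double series in $m,n$ whose $w$-integral I then evaluate by orthogonality.

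The engine of the proof is the evaluation of $\int_{\DD}\overline{\phi_m(w)}\phi_n(w)(1-|w|^2)^2\,d\sigma_\lambda(w)$. Passing to polar coordinates $w=\rho e^{i\theta}$ with the factorization $d\sigma_\lambda(w)=(2\lambda+2)\rho^{2\lambda+1}\,dm_\lambda(\theta)\,d\rho$, and invoking both the homogeneity $\phi_n(\rho e^{i\theta})=\rho^n\phi_n(e^{i\theta})$ read off from (\ref{basis-1}) and the orthonormality $\int_{-\pi}^{\pi}\overline{\phi_m(e^{i\theta})}\phi_n(e^{i\theta})\,dm_\lambda(\theta)=\delta_{mn}$, the angular integral forces $m=n$ and the radial integral collapses to a Beta integral $\int_0^1\rho^{2n+2\lambda+1}(1-\rho^2)^2\,d\rho=\tfrac12 B(n+\lambda+1,3)$, that is, a ratio of Gamma functions. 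Multiplying this by the kernel coefficient $\Gamma(\lambda+1)\Gamma(n+\lambda+3)/[\Gamma(\lambda+3)\Gamma(n+\lambda+1)]$ and by the factor $(n+\lambda+1)$ contributed by $D_w$, the Gamma ratios telescope, and the normalizing constant $(\lambda+2)/2$ is precisely what reduces every surviving coefficient to $1$. Hence the right-hand side collapses to $\sum_{n}c_n\phi_n(z)=f(z)$. This settles the identity for every $\lambda$-analytic polynomial, where the interchange of the (finite) sum with the integral is automatic.

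The remaining work is to pass from polynomials to a general $f$ satisfying $(1-|w|^2)^2D_w(wf(w))\in L^1_\lambda(\DD)$. The decisive simplification is that for a fixed interior point $z$ the kernel is harmless: since $|zw|\le|z|<1$, both $|1-z\overline w|$ and $|1-zw|$ exceed $1-|z|$, so Theorem \ref{Bergman-type-kernel-a} yields $\sup_{w\in\DD}|K_{\lambda,1}(z,\overline w)|<\infty$. Thus the integrand in (\ref{reproducing-formula-8-1}) is dominated by a constant (depending on $z$) times $(1-|w|^2)^2|D_w(wf(w))|$, and the integral converges absolutely under the hypothesis. To transfer the computation, I would apply it to the dilations $f_\rho(w)=f(\rho w)$: their coefficients $c_n\rho^n$ decay geometrically, so the double series converges absolutely and termwise integration against the (bounded, for fixed $z$) kernel is justified exactly as for polynomials; consequently (\ref{reproducing-formula-8-1}) holds with $f$ replaced by $f_\rho$ for each $\rho\in(0,1)$, the partial-sum approximation of Lemma \ref{partial-sum-converge-a-1} underpinning the absolute convergence. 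Letting $\rho\to1^-$, the left side tends to $f(z)$ pointwise, and the right side is handled by dominated convergence.

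I expect the genuine obstacle to lie in this final limit: one must show $D_w(wf_\rho(w))\to D_w(wf(w))$ strongly enough to pass to the limit under the integral sign, and the difficulty is that a derivative need not behave well under dilation near the boundary. The crux is to dominate $(1-|w|^2)^2|D_w(wf_\rho(w))|$ by a single $L^1_\lambda$-majorant independent of $\rho$, rather than merely pointwise. I would obtain this by comparing $D_w(wf_\rho(w))=\sum_n(n+\lambda+1)c_n\rho^n\phi_n(w)$ termwise with $D_w(wf(w))$, using the absolute convergence of the tail on compact sets together with the uniform kernel bound above to secure uniform $L^1_\lambda$ domination; once that majorant is in hand, dominated convergence closes the argument.
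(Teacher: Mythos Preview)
The paper does not prove this lemma here; it is quoted verbatim from \cite[Lemma~6.7]{LW1}, so there is no in-paper argument to compare against. Your strategy---verify the identity on each $\phi_n$ via orthogonality and a Beta integral, then pass to general $f$ by dilation and dominated convergence---is the natural one and is almost certainly what \cite{LW1} does as well.

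There is, however, a genuine gap in your write-up: you assert that ``the Gamma ratios telescope, and the normalizing constant $(\lambda+2)/2$ is precisely what reduces every surviving coefficient to $1$,'' but you never actually carry out the computation, and in fact with the constants as printed it does \emph{not} telescope. Take $f=\phi_0\equiv1$: then $D_w(wf(w))=\lambda+1$, the kernel coefficient is $\tau_0=1$, and
\[
\int_{\DD}(1-|w|^2)^2\,d\sigma_\lambda(w)=2(\lambda+1)\int_0^1\rho^{2\lambda+1}(1-\rho^2)^2\,d\rho=\frac{2}{(\lambda+3)(\lambda+2)},
\]
so the right-hand side of (\ref{reproducing-formula-8-1}) evaluates to $\tfrac{\lambda+2}{2}\cdot(\lambda+1)\cdot\tfrac{2}{(\lambda+3)(\lambda+2)}=\tfrac{\lambda+1}{\lambda+3}\neq1$. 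More generally the $n$th coefficient comes out as $(n+\lambda+1)/(n+\lambda+3)$. This strongly suggests a transcription slip somewhere in the statement as copied into the present paper (the use made of the lemma in the proof of Lemma~\ref{g-derivative-Bergman-b} is only up to constants, so the application is unaffected), but it also means your proof sketch is incomplete: the one step you declared ``precise'' is the one you did not check.

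Your limiting step is also left unresolved, as you acknowledge. A cleaner way than hunting for a $\rho$-uniform majorant is to note, from the series, that $D_w\bigl(wf_\rho(w)\bigr)=\bigl[D_\zeta(\zeta f(\zeta))\bigr]_{\zeta=\rho w}$; changing variables $\zeta=\rho w$ in the integral then lets you invoke the $L^1_\lambda$ hypothesis on $(1-|\zeta|^2)^2D_\zeta(\zeta f(\zeta))$ directly, together with the boundedness and continuity of $K_{\lambda,1}(z,\cdot)$ on $\overline{\DD}$ for fixed $z\in\DD$, to justify dominated convergence as $\rho\to1^-$.
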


Now applying (\ref{Bergman-type-kernel-2}) and (\ref{derivative-estimate-8-1}) to (\ref{reproducing-formula-8-1}) we have
\begin{align*}
|f(z)|\lesssim \int_{\DD}\frac{|1-z^{2}|^a(1-|z|)^{2-b}}{(|1-z\overline{w}|+|1-zw|)^{2\lambda}} \frac{d\sigma_{\lambda}(w)}{|1-z\overline{w}|^3}.
\end{align*}
Let $z=re^{i\theta}$, $w=se^{i\varphi}$. Appealing to (\ref{elementary-equality-2}) and $|1-z\overline{w}|+|1-zw|\gtrsim |\sin\varphi|$, we get
\begin{align*}
|f(z)|\lesssim \int_0^1\int_{-\pi}^{\pi}\frac{(1-r+|\sin\theta|)^a(1-s)^{2-b}}{(1-rs+|\sin(\varphi-\theta)/2|)^3}\, d\varphi ds.
\end{align*}
According to this, the desired estimates in Lemma \ref{g-derivative-Bergman-b} will be obtained after an elementary process.

What we really use later is the following nearly trivial corollary of Lemma \ref{g-derivative-Bergman-b}.

\begin{corollary}\label{g-derivative-Bergman-c}
Let $a,b$ be two real numbers so that $0\le a<b\le a+1\le2$, and assume that $g$ is $\lambda$-analytic in $\DD$ and satisfies the condition (\ref{derivative-estimate-8-1}).
Then
\begin{align*}
|f(z)|\lesssim \frac{|1-z^{2}|^a}{(1-|z|)^b}, \qquad z\in\DD.
\end{align*}
\end{corollary}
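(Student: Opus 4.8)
The plan is to obtain the Corollary as an immediate consequence of Lemma \ref{g-derivative-Bergman-b}, by verifying that in each of the cases listed there the stated bound for $|g(z)|$ is itself $\lesssim|1-z^{2}|^{a}/(1-|z|)^{b}$. No new machinery is needed: the proof reduces to a short list of elementary comparisons, all uniform in $z\in\DD$. (Note that the $f$ in the displayed conclusion should read $g$, the function appearing in the hypothesis (\ref{derivative-estimate-8-1}).)

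First I would record two facts used throughout. From (\ref{elementary-equality-2}) one has $1-|z|\lesssim|1-z^{2}|\le 2$ on $\DD$; since the hypothesis forces $a<b$, this gives
\[
\frac{|1-z^{2}|^{a}}{(1-|z|)^{b}}\gtrsim(1-|z|)^{a-b}\ge 1,\qquad z\in\DD,
\]
so the additive constant $1$ appearing in several of the bounds of Lemma \ref{g-derivative-Bergman-b} may be dropped. Secondly, $\log(C/t)\lesssim 1/t$ for $t\in(0,2]$ and any fixed $C>0$ (because $t\mapsto t\log(C/t)$ is bounded on $(0,2]$), which will absorb the logarithmic terms.

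Then I would run through the cases of Lemma \ref{g-derivative-Bergman-b}. For $a>0$: when $0<b<1$ the bound $|g(z)|\lesssim 1$ is absorbed by the displayed inequality; when $b=1$ one estimates $|1-z^{2}|^{a}\log(|1-z^{2}|/(1-|z|))\le|1-z^{2}|^{a}\log(2/(1-|z|))\lesssim|1-z^{2}|^{a}/(1-|z|)$; when $1<b<a+1$ one uses $(1-|z|)^{b-1}\ge(1-|z|)^{b}$ to pass from $|1-z^{2}|^{a}/(1-|z|)^{b-1}$ to the required bound; and when $b=a+1$ one writes $(|1-z^{2}|/(1-|z|))^{a}=|1-z^{2}|^{a}/(1-|z|)^{a}\lesssim|1-z^{2}|^{a}/(1-|z|)^{a+1}$, while $\log(3/|1-z^{2}|)\lesssim\log(3/(1-|z|))\lesssim 1/(1-|z|)\lesssim|1-z^{2}|^{a}/(1-|z|)^{a+1}$, again using $|1-z^{2}|\gtrsim 1-|z|$. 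For $a=0$ the only cases allowed by $0\le a<b\le a+1\le 2$ are $0<b<1$ and $b=1$, where the bounds $|g(z)|\lesssim 1$ and $|g(z)|\lesssim\log(2/(1-|z|))$ are handled exactly as above. This exhausts all cases.

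I do not expect a genuine obstacle here, since all the analytic content already resides in Lemma \ref{g-derivative-Bergman-b} (whose proof in turn rests on Lemma \ref{g-derivative-Bergman-a} and the kernel estimate (\ref{Bergman-type-kernel-2})). The only point requiring a little care is that the comparisons above be uniform on all of $\DD$; this is exactly what the two preliminary facts guarantee, the decisive one being $a<b$, which makes $(1-|z|)^{a-b}$ stay bounded below on compact subsets of $\DD$ and blow up in the favourable direction as $|z|\to 1^-$.
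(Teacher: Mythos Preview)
Your proposal is correct and follows exactly the route the paper intends: the authors state the result as a ``nearly trivial corollary of Lemma~\ref{g-derivative-Bergman-b}'' and give no further argument, so the case-by-case comparison you carry out is precisely what is implicit. Your elementary bounds (in particular the observation that $a<b$ forces $|1-z^{2}|^{a}/(1-|z|)^{b}\gtrsim1$, and the absorption of the logarithms via $\log(C/t)\lesssim1/t$) are all sound.
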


The following theorem characterizes the dual space $(A^{p}_{\lambda,\alpha}(\DD))^*$ of $A^{p}_{\lambda,\alpha}(\DD)$ for $p_0\le p\le1$.

\begin{theorem} \label{dual-Bergman-space-a}
Let $p_0\le p\le1$, $\alpha>0$, and
$$
m=\left[\frac{\alpha+2\lambda+1}{p}-2\lambda\right].
$$
Then each ${\mathcal{L}}\in(A^{p}_{\lambda,\alpha}(\DD))^*$ is uniquely determined by a $\lambda$-analytic function $g$ on $\DD$, $g(z)=\sum_{k=0}^{\infty}b_{k}\phi_{k}^{\lambda}(z)$ says, satisfying the condition
\begin{align}\label{Bergman-functional-estimate-1}
\left|(D_z\circ z)^mg(z)\right|\lesssim \frac{|1-z^{2}|^{2\lambda(p^{-1}-1)}}{(1-|z|)^{m+1-(\alpha+1)/p}}, \qquad z\in\DD,
\end{align}
under the pairing duality given by
\begin{align*}
{\mathcal{L}}(f)=\sum_{k=0}^na_kb_k
\end{align*}
for all $\lambda$-analytic polynomials $f(z)=\sum_{k=0}^na_{k}\phi_{k}^{\lambda}(z)$, $n=0,1,\cdots$.
\end{theorem}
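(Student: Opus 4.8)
The plan is to apply the general duality machinery built from Theorems \ref{operator-boundedness-main-a}--\ref{operator-boundedness-main-c} to the Banach space $X=\CC$ with $T={\mathcal{L}}$, and to recognize the resulting vector-valued $\lambda$-analytic function $F_{\lambda,\alpha}$ as (a fractional derivative of) the function $g$. First I would observe that the power weight $w(s)=(1-s)^{\alpha-1}$ with $\alpha>0$ satisfies the Dini condition (\ref{weight-condition-1-1}) since $\int_r^1(1-s)^{\alpha-1}ds=\alpha^{-1}(1-r)^\alpha\asymp(1-r)w(r)$, and it satisfies the $b_q$-type condition (\ref{weight-condition-1-2}) for every $q>\alpha$ because $\int_0^r(1-s)^{\alpha-1-q}ds\asymp(1-r)^{\alpha-q}$. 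Hence Theorem \ref{operator-boundedness-main-c} applies: a linear functional ${\mathcal{L}}$ on $\lambda$-analytic polynomials extends boundedly to $A^p_{\lambda,\alpha}(\DD)$ if and only if, setting $x_n={\mathcal{L}}(\phi_n)\in\CC$, the scalar function
\begin{align*}
F_{\lambda,\alpha}(z)=\sum_{n=0}^\infty\tau_n\,x_n\,\phi_n(z),\qquad \tau_n=\frac{\Gamma(\lambda+1)\Gamma(n+\lambda+\alpha+2)}{\Gamma(\lambda+\alpha+2)\Gamma(n+\lambda+1)},
\end{align*}
is $\lambda$-analytic on $\DD$ and satisfies the growth bound (\ref{F-function-norm-1}), which for $w=(1-s)^{\alpha-1}$ and the exponent $\alpha$ from (\ref{parameter-equation-1}) reads $|F_{\lambda,\alpha}(z)|^p\lesssim|1-z^2|^{2\lambda(1-p)}(1-|z|)^{-(\alpha+2)p+1}\cdot(1-|z|^{1/2})^\alpha\asymp|1-z^2|^{2\lambda(1-p)}(1-|z|)^{-(\alpha+1)p+1}$. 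Here the parameter $q$ is chosen so that (\ref{parameter-equation-1}) makes the $\alpha$ in $F_{\lambda,\alpha}$ equal to the exponent of the power weight; a short computation with (\ref{parameter-equation-1}) gives $q=\alpha$, which is exactly the borderline value, so I would instead run the argument with a slightly larger $q$ and the observation in the Remark that the number $1/2$ can be replaced by any fixed $\delta\in(0,1)$, absorbing the harmless constant.

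Next I would pass from $F_{\lambda,\alpha}$ to $g$. The pairing ${\mathcal{L}}(f)=\sum_k a_kb_k$ for $f=\sum a_k\phi_k$ forces $b_k=x_k={\mathcal{L}}(\phi_k)$, so $g(z)=\sum_k x_k\phi_k(z)$ and $F_{\lambda,\alpha}$ is obtained from $g$ by multiplying the $n$th coefficient by $\tau_n\asymp(n+1)^{\alpha+1}$. On the other hand, by Proposition \ref{anal-basis}, the operator $D_z\circ z$ (that is, $f\mapsto D_z(zf)$) multiplies the $n$th coefficient $c_n\phi_n$ by $(n+\lambda+1)$ via (\ref{Tzphi-2}); iterating $m$ times multiplies the $n$th coefficient by $(n+\lambda+1)(n+\lambda+2)\cdots(n+\lambda+m)=\Gamma(n+\lambda+m+1)/\Gamma(n+\lambda+1)\asymp(n+1)^m$. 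Thus $(D_z\circ z)^mg$ and $F_{\lambda,\alpha}$ differ coefficientwise only by a bounded factor of order $(n+1)^{\alpha+1-m}$. With $m=[(\alpha+2\lambda+1)/p-2\lambda]$ and $\alpha+2=(q+2\lambda+1)/p-2\lambda$ (so $\alpha+1=(\alpha+2\lambda+1)/p-2\lambda$ when $q=\alpha$), one has $m\le\alpha+1<m+1$, so $\alpha+1-m\in[0,1)$; hence the discrepancy is a fractional-integration-type operator of nonnegative order $<1$. I would show that such an operator preserves the relevant growth class: writing it via the integral representations for $K_{\lambda,\beta}$ in the style of Lemma \ref{K-kernal-2-a}(ii) and using the kernel estimates of Theorem \ref{K-kernal-b}, one converts the bound $|F_{\lambda,\alpha}(z)|^p\lesssim|1-z^2|^{2\lambda(1-p)}(1-|z|)^{-(\alpha+1)p+1}$, i.e. $|F_{\lambda,\alpha}(z)|\lesssim|1-z^2|^{2\lambda(p^{-1}-1)}(1-|z|)^{-(\alpha+1)+1/p}$, into $|(D_z\circ z)^mg(z)|\lesssim|1-z^2|^{2\lambda(p^{-1}-1)}(1-|z|)^{-(m+1)+(\alpha+1)/p}$, which is exactly (\ref{Bergman-functional-estimate-1}). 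The converse direction — given $g$ satisfying (\ref{Bergman-functional-estimate-1}), recover the bound on $F_{\lambda,\alpha}$ and conclude boundedness of ${\mathcal{L}}$ via Theorem \ref{operator-boundedness-main-a} — runs the same estimate the other way, now using that differentiation $(D_z\circ z)$ raises the growth exponent by one and is controlled by the derivative-of-Cauchy-kernel bounds of Theorem \ref{Cauchy-derivative-a} together with the reproducing formula of Lemma \ref{g-derivative-Bergman-a} and Corollary \ref{g-derivative-Bergman-c}.

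Finally, uniqueness: if $g$ and $\tilde g$ both represent ${\mathcal{L}}$ then $x_k={\mathcal{L}}(\phi_k)$ determines all coefficients, so $g=\tilde g$ as $\lambda$-analytic functions; and every functional arises this way because $\lambda$-analytic polynomials are dense in $A^p_{\lambda,\alpha}(\DD)$ by Theorem \ref{Bergman-density-w-a}, so ${\mathcal{L}}$ is determined by its values on the $\phi_k$. The density theorem also guarantees that the pairing formula ${\mathcal{L}}(f)=\sum_{k=0}^n a_kb_k$ on polynomials extends uniquely and continuously.

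\textbf{Main obstacle.} The delicate point is the passage between $F_{\lambda,\alpha}$ and $(D_z\circ z)^mg$ at the level of growth estimates when $\alpha+1-m$ is a genuine non-integer in $(0,1)$: one must show that the fractional-order operator interpolating between "identity" ($m=\alpha+1$) and "one derivative" behaves, on the scale of weighted sup-norm classes $|1-z^2|^{2\lambda(p^{-1}-1)}(1-|z|)^{-c}$, exactly like shifting $c$ by the order. This requires the kernel estimates of Section 7 (Theorem \ref{K-kernal-b} / Corollary \ref{h-kernel-2-a}) applied with the right $\beta$, plus the elementary integral asymptotics of the type carried out in the proof of Lemma \ref{g-derivative-Bergman-b}; handling the factor $|1-z^2|^{2\lambda(p^{-1}-1)}$ correctly (it is the same weight appearing in (\ref{F-function-norm-1}) and in the kernel $p$-mean estimate Lemma \ref{kernel-p-mean-a}) rather than losing a power of $|1-z^2|$ is where care is needed. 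A secondary nuisance is the borderline choice $q=\alpha$ in (\ref{parameter-equation-1}), handled by the flexibility in the constant $\delta$ noted in the Remark after Theorem \ref{operator-boundedness-main-c}.
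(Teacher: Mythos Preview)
Your overall framework---apply Theorem \ref{operator-boundedness-main-c} with $X=\CC$, identify $b_n={\mathcal{L}}(\phi_n)$, and compare $F_{\lambda,\alpha}$ with $(D_z\circ z)^m g$---is exactly the paper's strategy. But you miss the one idea that makes the argument clean, and this is precisely the obstacle you flag at the end.

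The point is that the parameter $q$ in (\ref{weight-condition-1-2}) is \emph{free}: the power weight $w(s)=(1-s)^{\alpha-1}$ satisfies (\ref{weight-condition-1-2}) for \emph{every} $q>\alpha$, so you may choose $q$ to your advantage. The paper takes
\[
q=(m+2\lambda+1)p-2\lambda-1,
\]
which by (\ref{parameter-equation-1}) forces the index in $F_{\lambda,\alpha}$ to be exactly $m-1$. (One checks $q>\alpha$ directly from the definition of $m$ as a floor.) With this choice the coefficient multiplier $\tau_n$ in (\ref{tau-notation-1}) is a genuine polynomial in $n$ of degree $m$, namely a constant times $\Gamma(n+\lambda+m+1)/\Gamma(n+\lambda+1)$. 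On the other side, (\ref{Tzphi-2}) gives $(D_z\circ z)\phi_n=(n+\lambda+1)\phi_n$, so $(D_z\circ z)^m g$ has $n$th coefficient $(n+\lambda+1)^m b_n$ (not the rising factorial, as you wrote). Expanding $\tau_n$ as a linear combination of $(n+\lambda+1)^{m-j}$ plus a remainder of order $(n+1)^{-1}$ yields
\[
G_{\lambda,m-1}(z)=\sum_{j=0}^{M}c_j\,(D_z\circ z)^{m-j}g(z)+g_M(z)
\]
with $g_M$ bounded. The equivalence of the growth bound (\ref{F-function-norm-1}) for $G_{\lambda,m-1}$ and the bound (\ref{Bergman-functional-estimate-1}) for $(D_z\circ z)^m g$ then follows from Corollary \ref{g-derivative-Bergman-c} applied to the lower-order terms---no fractional operators, no kernel estimates from Section 7, no handling of the borderline $q=\alpha$.

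Your attempt to make the index of $F_{\lambda,\alpha}$ equal to the weight exponent $\alpha$ is both unnecessary and miscomputed (it gives $q=(\alpha+2\lambda+2)p-2\lambda-1$, not $q=\alpha$; and then $\alpha_{\text{in }F}+1-m\in[-1,0)$, not $[0,1)$). The resulting fractional-order analysis could in principle be pushed through using Lemma \ref{K-kernal-2-a}(ii) and Theorem \ref{K-kernal-b}, but it is the wrong route: exploit the freedom in $q$ instead.
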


\begin{proof}
First note that $w(s)=(1-s)^{\alpha-1}$ with $\alpha>0$ satisfies the conditions (\ref{weight-condition-1-1}) and (\ref{weight-condition-1-2}) for all $q>\alpha$. In what follows we take
$$
q=(m+2\lambda+1)p-2\lambda-1
$$
and
\begin{eqnarray*}
G_{\lambda,\gamma}(z)=\sum_{n=0}^{\infty}
\frac{\Gamma(\lambda+1)\Gamma(n+\lambda+\gamma+2)}{\Gamma(\lambda+\gamma+2)\Gamma(n+\lambda+1)}\,
b_n\phi_{n}(z),\qquad z\in\DD,
\end{eqnarray*}
where $b_n={\mathcal{L}}(\phi_n)$ for $n=0,1,\cdots$.
By Theorem \ref{operator-boundedness-main-c}, a linear functional ${\mathcal{L}}$ on $A^{p}_{\lambda,\alpha}(\DD)$ is continuous if and only if $G_{\lambda,m-1}(z)$ defines a $\lambda$-analytic function on $\DD$ and satisfies  \begin{align}\label{G-function-norm-1}
|G_{\lambda,m-1}(z)|\lesssim \frac{|1-z^{2}|^{2\lambda(p^{-1}-1)}}{(1-|z|)^{m+1-(\alpha+1)/p}}, \qquad z\in\DD.
\end{align}

If we set $a=2\lambda(p^{-1}-1)$ and $b=m+1-(\alpha+1)/p$, it is easy to see that $0\le a<b\le a+1\le2$ for $p_0\le p\le1$ and $\alpha>0$. If $g$ satisfies (\ref{Bergman-functional-estimate-1}), then by Corollary \ref{g-derivative-Bergman-c}, $g\in A^{1}_{\lambda,b+1}(\DD)$, and by \cite[(59)]{LW1} and (\ref{phi-bound-1}), $|b_n|\lesssim n^{\lambda+b+1}$. Thus $G_{\lambda,m-1}(z)$ defines a $\lambda$-analytic function on $\DD$.
Since by (\ref{A-operator-4}),
\begin{align*}
\left|\partial_{\bar{z}}\phi_n(z)\right|\le\epsilon_n^{-1}n|z|^{n-1}\asymp n^{\lambda+1}|z|^{n-1}/\sqrt{\Gamma(2\lambda+1)},
\end{align*}
which allows us to take termwise differentiation $\partial_z$ in $\DD$ to $zg(z)$, by (\ref{Tzphi-2}) we have
\begin{align*}
(D_z\circ z)^mg(z)=\sum_{n=0}^{\infty}b_{n}(n+\lambda+1)^m\phi_{n}(z),\qquad z\in\DD.
\end{align*}
Now using a similar expansion to (\ref{elementary-equality-4}), with $m-1$ replacing $\alpha$ and an appropriate large $M$, gives
\begin{align*}
G_{\lambda,m-1}(z)=\sum_{j=0}^{M}\tilde{a}_{m-1,j}(D_z\circ z)^{m-j}g(z)+ g_M(z),
\end{align*}
where $g_M$ is a bounded $\lambda$-analytic function on $\DD$. The estimate (\ref{G-function-norm-1}) of $G_{\lambda,m-1}$ follows from (\ref{Bergman-functional-estimate-1}) and Corollary \ref{g-derivative-Bergman-c} immediately.

Conversely, if $G_{\lambda,m-1}$ satisfies (\ref{G-function-norm-1}), one can obtain the estimate (\ref{Bergman-functional-estimate-1}) for the function $g$ by the same process. The proof of the theorem is finished.
\end{proof}

\subsection{Multiplication operators on $A^{p}_{\lambda,\alpha}(\DD)$ for $p_0\le p\le1$}

For a given $\lambda$-analytic function $g$ on $\DD$, we consider the associated multiplication operator $T_g$ defined by
$$
T_gf(z)=g(z)f(z).
$$
In this subsection we shall give a sufficient condition of $g$ in terms of a Carleson type measure so that the multiplication operator $T_g$ becomes a bounded mapping from the weighted $\lambda$-Bergman spaces $A^{p_1}_{\lambda,\alpha}(\DD)$ into the unweighted  space $L_{\lambda}^{p_2}(\DD)$
for $p_0\le p_1\le1\le p_2<\infty$.

As usual, for given $\theta_{0}\in[0,2\pi)$ and $0<h<1$ define
$$
S_{\theta_{0}}(h)=\{z\in\DD:\,\,\, |z-e^{i\theta_{0}}|\le 2h\}.
$$

\begin{definition}
Let $d\mu$ be a finite Borel measure on the disc $\DD$. For $\lambda\ge0$, $\alpha>0$, we say $d\mu$ to be a $(\lambda,\alpha)$-Carleson measure if
\begin{align*}
\mu\left(S_{\theta}(h)\right)=O\left((h+|\sin\theta|)^{2\lambda}h^{\alpha}\right),\qquad 0\le\theta<2\pi,\,\, 0<h<1.
\end{align*}
 \end{definition}
If $\alpha=1$, $d\mu$ is called a $\lambda$-Carleson measure.

\begin{theorem} \label{multiplication-Bergman-a}
Let $p_0\le p_1\le1\le p_2<\infty$ and $\alpha>0$.
If the $\lambda$-analytic function $g$ on $\DD$ satisfies the condition that $d\mu_g(z):=|g(z)|^{p_2}d\sigma_{\lambda}(z)$ is a $(\lambda p_2/p_1,(\alpha+1)p_2/p_1)$-Carleson measure, then
the multiplication operator $T_g$ is bounded from $A^{p_1}_{\lambda,\alpha}(\DD)$ into $L_{\lambda}^{p_2}(\DD)$.
\end{theorem}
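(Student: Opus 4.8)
The plan is to deduce the statement from Theorem~\ref{operator-boundedness-main-a}, applied with $X=L_{\lambda}^{p_2}(\DD)$ (a Banach space since $p_2\ge1$) and with $T=T_g$ defined on $\lambda$-analytic polynomials by $T\phi_n=g\phi_n$. As recalled in the proof of Theorem~\ref{dual-Bergman-space-a}, the weight $w(s)=(1-s)^{\alpha-1}$ satisfies both (\ref{weight-condition-1-1}) and (\ref{weight-condition-1-2}) for every $q>\alpha$; fix one such $q$ and put $\alpha'=(q+2\lambda+1)/p_1-2\lambda-2$ as in (\ref{parameter-equation-1}), so that $\alpha'>(2\lambda+1)(p_1^{-1}-1)-1$ (this uses only $\alpha>0$). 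With $x_n=T\phi_n=g\phi_n\in X$, the function of (\ref{F-functin-1}) is $F_{\lambda,\alpha'}(z)=g(\cdot)\,K_{\lambda,\alpha'}(z,\cdot)$, where $K_{\lambda,\alpha'}$ is the kernel (\ref{Bergman-kernel-1}). The Carleson hypothesis forces $\mu_g(\DD)<\infty$ (cover $\DD$ by finitely many boxes $S_{\theta_j}(3/4)$), hence $g\in L_{\lambda}^{p_2}(\DD)$; combining this with $|\phi_n(z)|\lesssim(n+1)^{\lambda}|z|^n$ from (\ref{phi-bound-1}) and $\tau_n\asymp(n+1)^{\alpha'+1}$ from (\ref{tau-notation-1}) shows that the series defining $F_{\lambda,\alpha'}$ converges in $X$ uniformly on compact subsets of $\DD$, so $F_{\lambda,\alpha'}$ is an $X$-valued $\lambda$-analytic function.

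By Theorem~\ref{operator-boundedness-main-a} it then remains to verify the norm estimate (\ref{F-function-norm-1}) for this $F_{\lambda,\alpha'}$. Since $\int_{|z|^{1/2}}^1(1-s)^{\alpha-1}\,ds\asymp(1-|z|)^{\alpha}$ and $(\alpha'+2)p_1-1=q+2\lambda(1-p_1)$, and since $\|F_{\lambda,\alpha'}(z)\|_{X}^{p_2}=\int_{\DD}|K_{\lambda,\alpha'}(z,\zeta)|^{p_2}\,d\mu_g(\zeta)$, the estimate (\ref{F-function-norm-1}) is equivalent to
\[
\int_{\DD}|K_{\lambda,\alpha'}(z,\zeta)|^{p_2}\,d\mu_g(\zeta)\lesssim\frac{|1-z^{2}|^{2\lambda(1-p_1)p_2/p_1}}{(1-|z|)^{(q+2\lambda(1-p_1)-\alpha)p_2/p_1}},\qquad z\in\DD.
\]
To prove this I would start from the kernel bound of Theorem~\ref{Bergman-type-kernel-a}; writing $z=re^{i\theta}$ and $\zeta=se^{i\varphi}$, split the $\zeta$-disk according to whether $|1-z\bar\zeta|\le|1-z\zeta|$ or not, on each part replace $(|1-z\bar\zeta|+|1-z\zeta|)^{-2\lambda}$ and the bracketed factor in Theorem~\ref{Bergman-type-kernel-a} by their orders of magnitude, and then decompose into the Carleson boxes $S_{\pm\theta}(2^{k}(1-r))$, $k\ge0$, together with the complementary angular regions. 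On each box the integrand is comparable to a constant, while the $(\lambda p_2/p_1,(\alpha+1)p_2/p_1)$-Carleson bound controls $\mu_g$ of that box; summing the resulting geometric-type series — which converges precisely because $\alpha'>(\alpha+2\lambda+1)/p_1-2\lambda-2$ — and using $|1-z^{2}|\asymp1-r+|\sin\theta|$ from (\ref{elementary-equality-2}) yields exactly the right-hand side above. The exponent arithmetic parallels the computation in the proof of Theorem~\ref{kernel-p-mean-b}, now with $p_1\cdot(p_2/p_1)=p_2$.

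Once (\ref{F-function-norm-1}) holds, Theorem~\ref{operator-boundedness-main-a} gives a bounded extension of $T_g$ from $A_{\lambda,\alpha}^{p_1}(\DD)$ into $L_{\lambda}^{p_2}(\DD)$, and it only remains to identify this extension with pointwise multiplication: for $f\in A_{\lambda,\alpha}^{p_1}(\DD)$ take $\lambda$-analytic polynomials $f_j\to f$ in the $A_{\lambda,\alpha}^{p_1}$-quasinorm (Theorem~\ref{Bergman-density-w-a}); then $gf_j$ converges to the extension in $L_{\lambda}^{p_2}(\DD)$, while $f_j\to f$, hence $gf_j\to gf$, locally uniformly on $\DD$ (Lemma~\ref{partial-sum-converge-a-1}), so the extension equals $gf$ and $\|gf\|_{L_{\lambda}^{p_2}}\lesssim\|f\|_{A_{\lambda,\alpha}^{p_1}}$. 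The only genuinely technical step is the Carleson-box estimate displayed above; the obstacle there, compared with the classical disk, is the two-pole structure of $K_{\lambda,\alpha'}$ (both $|1-z\bar\zeta|$ and $|1-z\zeta|$ occur) together with the reflection $\zeta\mapsto\bar\zeta$, which forces the box decomposition to be carried out simultaneously in the two angular coordinates $(\theta-\varphi)/2$ and $(\theta+\varphi)/2$; after that the summation is routine.
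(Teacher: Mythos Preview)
Your proposal is correct and follows essentially the same route as the paper: reduce to the $X$-valued norm estimate (\ref{F-function-norm-1}) via Theorem~\ref{operator-boundedness-main-a}, identify $F_{\lambda,\alpha'}(z)=g(\cdot)K_{\lambda,\alpha'}(z,\cdot)$, apply the pointwise kernel bound of Theorem~\ref{Bergman-type-kernel-a}, and estimate the resulting $\mu_g$-integral by a dyadic Carleson-box decomposition whose geometric sums converge because $q>\alpha$. The paper makes the particular choice $\alpha'=m-1$ with $m=\lfloor(\alpha+2\lambda+1)/p_1-2\lambda\rfloor$ (so an integer kernel index), splits the Theorem~\ref{Bergman-type-kernel-a} bound into two pieces $J_1,J_2$ (one for each pole $|1-z\bar w|$, $|1-zw|$), and runs the box decomposition $E_k=S_{\theta}(2^k(1-r))$ only for $J_1$, treating $J_2$ symmetrically; your plan to work with both centers $\pm\theta$ simultaneously is the same computation organized slightly differently. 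One minor remark: for the identification step at the end, the locally uniform convergence $f_j\to f$ follows from the point-evaluation estimate (Theorem~\ref{Bergman-point-evaluation-w-a}) rather than from Lemma~\ref{partial-sum-converge-a-1}.
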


\begin{proof}
What we need to prove is $\|gf\|_{L_{\lambda}^{p_2}(\DD)}\lesssim \|f\|_{A_{\lambda,\alpha}^{p_1}}$ for $f\in A^{p_1}_{\lambda,\alpha}(\DD)$, that is,
\begin{align}\label{key-estimate-Carleson-0}
\left(\int_{\DD}|f(z)|^{p_2}d\mu_g(z)\right)^{1/p_2}
\lesssim
\left(\int_{\DD}|f(z)|^{p_1}(1-|z|)^{\alpha-1}d\sigma_{\lambda}(z)\right)^{1/p_1}, \qquad f\in A^{p_1}_{\lambda,\alpha}(\DD).
\end{align}
Again, $w(s)=(1-s)^{\alpha-1}$ with $\alpha>0$ satisfies the conditions (\ref{weight-condition-1-1}) and (\ref{weight-condition-1-2}) for all $q>\alpha$. We take
$$
q=(m+2\lambda+1)p_1-2\lambda-1,
$$
where
$$
m=\left[\frac{\alpha+2\lambda+1}{p_1}-2\lambda\right].
$$
Obviously $q>\alpha$, and since $\left(T_g\phi_n\right)(z)=g(z)\phi_n(z)$, by Theorem \ref{operator-boundedness-main-b} proving (\ref{key-estimate-Carleson-0}) is equivalent to showing
\begin{align}\label{key-estimate-Carleson-1}
\left(\int_{\DD}|K_{\lambda,m-1}(z,w)|^{p_2}d\mu_g(w)\right)^{1/p_2}
\lesssim
\frac{|1-z^{2}|^{2\lambda(p_1^{-1}-1)}}{(1-|z|)^{m+1-(\alpha+1)/p_1}},\qquad z\in\DD.
\end{align}

By Theorem \ref{Bergman-type-kernel-a}, we have
\begin{align}\label{key-estimate-Carleson-2}
\int_{\DD}|K_{\lambda,m-1}(z,w)|^{p_2}d\mu_g(w)
\lesssim J_1(z)+J_2(z),
\end{align}
where
\begin{align*}
J_1(z)= \int_{\DD} \frac{|1-z\overline{w}|^{-(m+1)p_2}\,d\mu_g(w)}{(|1-z\overline{w}|+|1-zw|)^{2\lambda p_2}},\qquad
J_2(z)= \int_{\DD} \frac{|1-zw|^{-(m+1)p_2}\,d\mu_g(w)}{(|1-z\overline{w}|+|1-zw|)^{2\lambda p_2}}.
\end{align*}

Let $z=re^{i\theta}$ with $3/4\le r<1$ and define the sets $\{E_k\}$ by $E_{-1}=\emptyset$, and for $k\ge0$,
\begin{align*}
E_{k}&=S_{\theta}(2^k(1-r)).
\end{align*}
Take a positive integer $N$ so that $2^N(1-r)\ge2$, and write
\begin{align*}
J_1(z)
\lesssim \sum_{k=0}^N\int_{E_k\setminus E_{k-1}} \frac{|1-z\overline{w}|^{-(m+1)p_2}}{(|1-z\overline{w}|+|1-zw|)^{2\lambda p_2}}\,d\mu_g(w).
\end{align*}

Since, with $w=se^{i\varphi}$,
\begin{align*}
|1-z\overline{w}|^2+|1-zw|^2&=2(1-rs)^2+4rs(1-\cos\theta\cos\varphi)\\
&\gtrsim\left(1-rs+|\sin\theta|+|\sin\varphi|\right)^2,
\end{align*}
we have
\begin{align*}
J_1(z)
&\lesssim \sum_{2^k(1-r)\le 1-r+|\sin\theta|} \int_{E_k\setminus E_{k-1}} \frac{|1-z\overline{w}|^{-(m+1)p_2}}{(1-r+|\sin\theta|)^{2\lambda p_2}}\,d\mu_g(w)\\
&\qquad\qquad +\sum_{2^k(1-r)>1-r+|\sin\theta|} \int_{E_k\setminus E_{k-1}} \frac{d\mu_g(w)}{|1-z\overline{w}|^{(m+1)p_2+2\lambda p_2}}.
\end{align*}
Note that for $w\in E_k\setminus E_{k-1}$ ($k\ge1$),
$$
|1-z\overline{w}|\ge2^{k-1}(1-r),
$$
and so
\begin{align*}
J_1(z)
&\lesssim \frac{(1-r)^{-(m+1)p_2}}{(1-r+|\sin\theta|)^{2\lambda p_2}}\sum_{2^k(1-r)\le 1-r+|\sin\theta|} \frac{\mu_g\left(E_k\right)}{2^{(m+1)p_2k}}\\
&\qquad\qquad +\sum_{2^k(1-r)>1-r+|\sin\theta|} \frac{\mu_g\left(E_k\right)}{(2^{k}(1-r))^{(m+1)p_2+2\lambda p_2}}.
\end{align*}

By the assumption that $d\mu_g(z):=|g(z)|^{p_2}d\sigma_{\lambda}(z)$ is a $(\lambda p_2/p_1,(\alpha+1)p_2/p_1)$-Carleson measure, it follows that
$$
\mu_g\left(E_k\right)\lesssim(2^k(1-r)+|\sin\theta|)^{2\lambda p_2/p_1}(2^k(1-r))^{(\alpha+1)p_2/p_1},
$$
and hence
\begin{align}\label{key-estimate-Carleson-3}
J_1(z)
&\lesssim \frac{(1-r+|\sin\theta|)^{2\lambda p_2(p_1^{-1}-1)}}{(1-r)^{(m+1)p_2-(\alpha+1)p_2/p_1}}
\sum_{2^k(1-r)\le 1-r+|\sin\theta|} \frac{1}{2^{[m+1-(\alpha+1)/p_1]p_2k}} \nonumber\\
&\qquad\qquad +\sum_{2^k(1-r)>1-r+|\sin\theta|} \frac{(1-r)^{-[m+2\lambda+1-(\alpha+2\lambda+1)/p_1]p_2}}{2^{[m+2\lambda+1-(\alpha+2\lambda+1)/p_1]p_2k}}.
\end{align}
But since
\begin{align*}
m+1-\frac{\alpha+1}{p_1}\ge m+2\lambda+1-\frac{\alpha+2\lambda+1}{p_1}>0,
\end{align*}
the first sum in (\ref{key-estimate-Carleson-3}) is bounded, and
the second sum is controlled by a multiple of
\begin{align*}
(1-r+|\sin\theta|)^{-[m+2\lambda+1-(\alpha+2\lambda+1)/p_1]p_2}
\lesssim\frac{(1-r+|\sin\theta|)^{2\lambda p_2(p_1^{-1}-1)}}{(1-r)^{(m+1)p_2-(\alpha+1)p_2/p_1}}.
\end{align*}
Thus we conclude
\begin{align*}
J_1(z)
\lesssim \frac{(1-r+|\sin\theta|)^{2\lambda p_2(p_1^{-1}-1)}}{(1-r)^{(m+1)p_2-(\alpha+1)p_2/p_1}}.
\end{align*}
Similarly one can get the same estimate for $J_2(z)$ as above, and substituting these into (\ref{key-estimate-Carleson-2}) proves (\ref{key-estimate-Carleson-1}) in view of (\ref{elementary-equality-1}).
\end{proof}

\section{Sequence multipliers on the weighted Bergman spaces}

In this section we consider the operators mapping the weighted $\lambda$-Bergman space $A^{p}_{\lambda,w}(\DD)$ for $p_0\le p\le1$ into the sequence space $\ell^s$ for $1\le s<\infty$ with norm $\|\{c_n\}\|_{\ell^s}:=\left(\sum_{s=0}^{\infty}|c_n|^s\right)^{1/s}$. A given sequence $\eta=\{\eta_n\}$ of complex numbers is said to a sequence multiplier from $A^{p}_{\lambda,w}(\DD)$ to $\ell^s$ if $T_{\eta}f:=\{\eta_n a_n\}\in\ell^s$ whenever  $f(z)=\sum_{n=0}^{\infty}a_{n}\phi_{n}^{\lambda}(z)\in A^{p}_{\lambda,w}(\DD)$. By the closed graph theorem, this is equivalent to the boundedness of the operator $T_{\eta}$, that is,
$$
\left(\sum_{n=0}^{\infty}|\eta_n a_n|^s\right)^{1/s}\lesssim\|f\|_{A^{p}_{\lambda,w}}.
$$

We shall need several lemmas.

\begin{lemma} \label{weight-estimate-a}
Let $w$ be a weight function satisfying the condition (\ref{weight-condition-1-2}) for some $q>0$. Then
$$
\int_0^1\frac{w(t)}{(1-rt)^q}dt\lesssim\frac{1}{(1-r)^q}\int_r^1w(s)ds,\qquad r\in(0,1).
$$
\end{lemma}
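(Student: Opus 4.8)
The plan is to split the integral $\int_0^1 w(t)(1-rt)^{-q}\,dt$ at the point $t=r$ and estimate the two pieces separately, using the hypothesis \eqref{weight-condition-1-2} on the lower range and a trivial monotonicity bound on the upper range. First I would write
\[
\int_0^1\frac{w(t)}{(1-rt)^q}\,dt
=\int_0^r\frac{w(t)}{(1-rt)^q}\,dt+\int_r^1\frac{w(t)}{(1-rt)^q}\,dt
=:I_1(r)+I_2(r),\qquad r\in(0,1).
\]
For $I_2(r)$, since $t\ge r$ forces $1-rt\ge 1-r^2\ge 1-r$, hence $(1-rt)^{-q}\le(1-r)^{-q}$, we immediately get $I_2(r)\le (1-r)^{-q}\int_r^1 w(s)\,ds$, which is exactly the right-hand side (up to the implied constant).

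For $I_1(r)$, the key observation is that for $t\in(0,r)$ one has $1-rt\ge 1-t$ (because $rt\le t$), so $(1-rt)^{-q}\le (1-t)^{-q}$ and therefore
\[
I_1(r)\le\int_0^r\frac{w(t)}{(1-t)^q}\,dt.
\]
Now apply the hypothesis \eqref{weight-condition-1-2} directly: the right-hand side is $\lesssim (1-r)^{-q}\int_r^1 w(s)\,ds$. Adding the two estimates gives the claimed bound.

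I do not expect any genuine obstacle here; the only point requiring a little care is making sure the two elementary comparisons $1-rt\ge 1-r$ (for $t\ge r$) and $1-rt\ge 1-t$ (for $t\le r$) are used on the correct subintervals, and that the constant in \eqref{weight-condition-1-2} is the only source of the implied constant $\lesssim$. Since $w$ is assumed nonnegative and locally integrable on $[0,1)$, all the integrals above are well defined, and the argument goes through verbatim for every $r\in(0,1)$.
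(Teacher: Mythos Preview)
Your approach is exactly the one the paper uses: split at $t=r$, bound the piece on $[r,1)$ by $(1-r)^{-q}\int_r^1 w(s)\,ds$, bound the piece on $[0,r]$ by $\int_0^r w(t)(1-t)^{-q}\,dt$, and then invoke \eqref{weight-condition-1-2}. One small slip in your write-up: for $t\ge r$ you do \emph{not} have $1-rt\ge 1-r^2$ (in fact $rt\ge r^2$ there, so the inequality goes the other way); the correct justification is simply that $t\le 1$ gives $rt\le r$ and hence $1-rt\ge 1-r$, which is all you need for $I_2$.
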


Indeed, dividing the integral on the left hand side above into two parts over $[0,r]$ and $[r,1)$ respectively, one has
\begin{align*}
\int_0^1\frac{w(t)}{(1-rt)^q}dt
\le \int_0^r\frac{w(t)}{(1-t)^q}dt+\frac{1}{(1-r)^q}\int_r^1w(s)ds.
\end{align*}
The lemma follows by applying the condition (\ref{weight-condition-1-2}) to the first integral on the right hand side.

\begin{lemma} \label{sequence-sum-a}
Let $w$ be a nonzero weight function satisfying the condition (\ref{weight-condition-1-2}) for some $q>0$, and let $a,b$ satisfy $b-q+1\ge a\ge0$. Then for a sequence $\{\alpha_n\}$ of nonnegative numbers, the following are equivalent:

{\rm (i)} for $r\in[0,1)$,
$$
\sum_{n=1}^{\infty}\alpha_nr^n\lesssim\frac{(1-r+|\sin\theta|)^a}{(1-r)^b}\int_r^1w(t)dt;
$$

{\rm (ii)} for $N\ge1$,
$$
\sum_{n=1}^N\alpha_n\lesssim (N^{-1}+|\sin\theta|)^a N^b\int_{1-N^{-1}}^1w(t)dt.
$$.
\end{lemma}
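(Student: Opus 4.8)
The plan is to prove the equivalence of (i) and (ii) by the standard technique of passing between power series and partial sums via dyadic blocks, adapted to the weight factor $\int_r^1 w(t)\,dt$.

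\textbf{From (i) to (ii).} Suppose (i) holds. Given $N\ge1$, I would choose $r=1-N^{-1}$. Since $\alpha_n\ge0$ and $r^n\ge r^N\gtrsim 1$ for $1\le n\le N$ (because $(1-N^{-1})^N$ is bounded below by a positive constant), we have
\[
\sum_{n=1}^N\alpha_n\lesssim\sum_{n=1}^N\alpha_n r^n\le\sum_{n=1}^\infty\alpha_n r^n\lesssim\frac{(1-r+|\sin\theta|)^a}{(1-r)^b}\int_r^1w(t)\,dt.
\]
Substituting $1-r=N^{-1}$ gives exactly the bound in (ii). This direction is immediate and uses nothing about the weight beyond positivity.

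\textbf{From (ii) to (i).} This is the direction requiring more care, and where the weight condition (\ref{weight-condition-1-2}) enters. Fix $r\in[0,1)$ and set $N_0=\lceil (1-r)^{-1}\rceil$, so $1-r\asymp N_0^{-1}$. Split the series $\sum_{n\ge1}\alpha_n r^n$ into dyadic blocks $\sum_{k\ge0}\sum_{2^k\le n<2^{k+1}}\alpha_n r^n$. On the block with index $k$, use $r^n\le r^{2^k}\le e^{-c\,2^k(1-r)}$ and bound $\sum_{2^k\le n<2^{k+1}}\alpha_n$ using hypothesis (ii) with $N=2^{k+1}$:
\[
\sum_{2^k\le n<2^{k+1}}\alpha_n\le\sum_{n=1}^{2^{k+1}}\alpha_n\lesssim (2^{-k}+|\sin\theta|)^a\,2^{kb}\int_{1-2^{-k}}^1w(t)\,dt.
\]
Thus $\sum_{n\ge1}\alpha_n r^n\lesssim\sum_{k\ge0} e^{-c\,2^k(1-r)}(2^{-k}+|\sin\theta|)^a 2^{kb}\int_{1-2^{-k}}^1 w(t)\,dt$. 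For blocks with $2^k\le N_0$ the exponential is harmless ($\le 1$) and I would control $\int_{1-2^{-k}}^1 w \le \int_r^1 w$ together with the geometric-type sum $\sum_{2^k\le N_0}(2^{-k}+|\sin\theta|)^a 2^{kb}$; since $b\ge a$ and $b-q+1\ge a$, the exponent $b$ (resp.\ $b-a$ after pulling out $|\sin\theta|^a$ in the regime $2^{-k}\lesssim|\sin\theta|$) is positive, so the sum is dominated by its last term $\lesssim N_0^b\asymp (1-r)^{-b}$ (resp.\ $|\sin\theta|^a N_0^{b-a}$), giving the bound $(1-r+|\sin\theta|)^a(1-r)^{-b}\int_r^1 w$ after recombining. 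For blocks with $2^k>N_0$ the exponential decay $e^{-c\,2^k(1-r)}$ beats any polynomial factor, so that tail is $\lesssim (1-r)^{-b}(1-r+|\sin\theta|)^a\int_{1-N_0^{-1}}^1 w\lesssim$ the same bound; here I must still estimate $\int_{1-2^{-k}}^1 w$, and since $2^{-k}<1-r$ this is $\le\int_r^1 w$ trivially. The one subtlety is handling the factor $\int_{1-2^{-k}}^1 w(t)\,dt$ uniformly; using (\ref{weight-condition-1-2}) (or rather its consequence Lemma \ref{weight-estimate-a}, and the elementary monotonicity that $\int_s^1 w$ is non-increasing in $s$) lets me dominate all these by $\int_r^1 w(t)\,dt$ up to a constant, which is what makes the geometric sums converge to the claimed right-hand side.

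\textbf{Main obstacle.} The delicate point is the block-summation in the direction (ii)$\Rightarrow$(i): one must verify that the arithmetic conditions $b\ge a$ and $a\ge 0$ and $b-q+1\ge a$ are exactly what guarantee the dyadic sum $\sum_k (2^{-k}+|\sin\theta|)^a 2^{kb} e^{-c 2^k(1-r)}\int_{1-2^{-k}}^1 w$ is comparable to (not merely bounded by a constant times, but genuinely of the order of) $\frac{(1-r+|\sin\theta|)^a}{(1-r)^b}\int_r^1 w$. I would handle the two regimes $|\sin\theta|\le 1-r$ and $|\sin\theta|>1-r$ separately, and in the latter further split the $k$-sum at the threshold $2^{-k}\asymp|\sin\theta|$, so that in each piece $(2^{-k}+|\sin\theta|)^a$ is comparable to a single power of $2^{-k}$ or of $|\sin\theta|$; then each resulting sum is a clean geometric series whose ratio is controlled by the sign of the relevant exponent. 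The role of the condition $b-q+1\ge a$ is presumably to ensure one of these geometric series (the one also carrying the weight integral, estimated via Lemma \ref{weight-estimate-a} which costs a factor $(1-rt)^{-q}$-type loss) still converges on the correct side; I would keep track of exactly where the parameter $q$ appears and confirm this inequality is tight.
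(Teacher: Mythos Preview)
Your direction (i)$\Rightarrow$(ii) is fine and matches the paper. The direction (ii)$\Rightarrow$(i), however, has a real gap.

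First, a sign error: for the low blocks $2^{k}\le N_{0}$ you have $2^{-k}\ge 1-r$, hence $1-2^{-k}\le r$, so $\int_{1-2^{-k}}^{1}w\ge\int_{r}^{1}w$, not $\le$. Monotonicity of $s\mapsto\int_{s}^{1}w$ helps only for the high blocks, which you already handled correctly. There is no uniform bound $\int_{1-2^{-k}}^{1}w\lesssim\int_{r}^{1}w$; the best that condition~(\ref{weight-condition-1-2}) yields is a growth like $(2^{-k}/(1-r))^{q}\int_{r}^{1}w$.

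Second, and more seriously, even with the correct weight bound your dyadic scheme is one power short. Bounding $\sum_{n}\alpha_{n}r^{n}\le\sum_{k}r^{2^{k}}s_{2^{k+1}}$ over-counts each $\alpha_{n}$ roughly $\log(N_{0}/n)$ times, whereas Abel summation
\[
\sum_{n\ge1}\alpha_{n}r^{n}=(1-r)\sum_{n\ge1}s_{n}r^{n},\qquad s_{n}=\sum_{k=1}^{n}\alpha_{k},
\]
carries the compensating factor $(1-r)$. Working through your sum with the correct weight estimate leads to $\int_{0}^{1}w(t)(1-rt)^{-(b-a)}\,dt$, and controlling this via Lemma~\ref{weight-estimate-a} requires $b-a\ge q$; the hypothesis only gives $b-a\ge q-1$. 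A concrete failure: take $w(t)=(1-t)^{\alpha-1}$, $a=0$, $b=\alpha$, and any $q\in(\alpha,\alpha+1]$ (so the hypotheses hold). Condition (ii) reads $s_{N}\lesssim 1$; condition (i) reads $\sum\alpha_{n}r^{n}\lesssim 1$. Your bound yields $\sum\alpha_{n}r^{n}\lesssim\sum_{k}r^{2^{k}}\asymp\log\frac{1}{1-r}$, which does not close.

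The paper's argument is exactly the Abel-summation route: apply (ii) to $s_{n}$, replace $\int_{1-n^{-1}}^{1}w\lesssim\int_{0}^{1}w(t)t^{n}\,dt$, swap sum and integral, use $\sum_{n}n^{c}(rt)^{n}\lesssim(1-rt)^{-c-1}$, and finally split off $(1-rt)^{-q}$ using $b-a+1\ge q$ before invoking Lemma~\ref{weight-estimate-a}. The extra ``$+1$'' in the exponent, coming from the $(1-r)$ prefactor, is precisely what makes the stated hypothesis $b-q+1\ge a$ sharp.
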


\begin{proof}
Assuming that part (i) holds and choosing $r=1-N^{-1}$ for $N\ge2$, part (ii) follows immediately since
$\sum_{n=1}^N\alpha_n\lesssim \sum_{n=1}^N\alpha_n(1-N^{-1})^n$.

Conversely assume that part (ii) holds. It follows that
\begin{align*}
\frac{1}{1-r}\sum_{n=1}^{\infty}\alpha_nr^n=\sum_{n=1}^{\infty}\left(\sum_{k=1}^n\alpha_k\right)r^n
\lesssim \sum_{n=1}^{\infty}(n^{-1}+|\sin\theta|)^a n^b\left(\int_{1-n^{-1}}^1w(t)dt\right)r^n.
\end{align*}
Noting that $\int_{1-n^{-1}}^1w(t)dt\lesssim\int_0^1w(t)t^ndt$ and $(n^{-1}+|\sin\theta|)^a\asymp n^{-a}+|\sin\theta|^a$, we have
\begin{align*}
\frac{1}{1-r}\sum_{n=1}^{\infty}\alpha_nr^n
&\lesssim \int_0^1w(t)\left(\sum_{n=1}^{\infty}n^{b-a}(rt)^n
+|\sin\theta|^a\sum_{n=1}^{\infty} n^b(rt)^n\right)dt\\
&\lesssim \int_0^1w(t)\left(\frac{1}{(1-rt)^{b-a+1}}
+\frac{|\sin\theta|^a}{(1-rt)^{b+1}}\right)dt\\
&\lesssim \left(\frac{1}{(1-r)^{b-a-q+1}}
+\frac{|\sin\theta|^a}{(1-r)^{b-q+1}}\right)\int_0^1\frac{w(t)}{(1-rt)^q}dt.
\end{align*}
This concludes part (i) by Lemma \ref{weight-estimate-a}.
\end{proof}

\begin{lemma} \label{sequence-sum-b}
Let $w$ be a nonzero weight function satisfying the conditions (\ref{weight-condition-1-1}) and (\ref{weight-condition-1-2}) for some $q>0$, and let $a,b$ satisfy $b-q\ge a\ge0$. Then for a sequence $\{\alpha_n\}$ of nonnegative numbers, the following three statements are equivalent:

{\rm (i)} for $N\ge1$,
$$
\sum_{n=1}^Nn^b\alpha_n
\lesssim (N^{-1}+|\sin\theta|)^a N^b\int_{1-N^{-1}}^1w(t)dt;
$$

{\rm (ii)} for $N\ge1$,
$$
\sum_{n=N}^{\infty}\alpha_n
\lesssim (N^{-1}+|\sin\theta|)^a \int_{1-N^{-1}}^1w(t)dt;
$$

{\rm (iii)} for $N\ge1$,
$$
\sum_{n=N}^{2N}\alpha_n\lesssim (N^{-1}+|\sin\theta|)^a \int_{1-N^{-1}}^1w(t)dt.
$$.
\end{lemma}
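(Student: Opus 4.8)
The plan is to prove the chain of implications (i) $\Rightarrow$ (ii) $\Rightarrow$ (iii) $\Rightarrow$ (i), since (iii) is a weaker-looking statement that will turn out to be equivalent to the others by a dyadic summation trick. The implication (ii) $\Rightarrow$ (iii) is trivial: the sum $\sum_{n=N}^{2N}\alpha_n$ is dominated by $\sum_{n=N}^\infty\alpha_n$, so part (ii) gives the bound in (iii) directly. The implication (i) $\Rightarrow$ (ii) is an Abel-summation / layer-cake argument. Writing $\sum_{n=N}^\infty\alpha_n=\sum_{n=N}^\infty n^b\alpha_n\cdot n^{-b}$ and summing by parts, one expresses this in terms of the partial sums $S_M=\sum_{n=1}^M n^b\alpha_n$; using the hypothesis $S_M\lesssim (M^{-1}+|\sin\theta|)^a M^b\int_{1-M^{-1}}^1 w$, together with the telescoping weights $n^{-b}-(n+1)^{-b}\asymp n^{-b-1}$, one gets $\sum_{n=N}^\infty\alpha_n\lesssim \sum_{M\ge N}(M^{-1}+|\sin\theta|)^a M^{b}\bigl(\int_{1-M^{-1}}^1 w\bigr) M^{-b-1}$. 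Since $M\mapsto M^{-1}$ decreases and $(M^{-1}+|\sin\theta|)^a$ is decreasing in $M$ while $\int_{1-M^{-1}}^1 w$ decreases as well, the sum is controlled, up to the constant, by its first term $\asymp (N^{-1}+|\sin\theta|)^a\int_{1-N^{-1}}^1 w$; here the hypothesis $b-q\ge a\ge 0$ and the weight condition (\ref{weight-condition-1-1}), which gives $\int_{1-M^{-1}}^1 w\lesssim M^{-1}w(1-M^{-1})$ and hence a geometric-type decay, are what make the series converge and be dominated by its initial term.

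The substantive step is (iii) $\Rightarrow$ (i). Here I would fix $N$ and write $[1,\infty)\cap\mathbb{Z}$ as the union of dyadic blocks $B_j=\{n: 2^{j}\le n<2^{j+1}\}$, $j\ge j_0$, with $2^{j_0}\asymp N$, plus the finitely many terms below $N$. For the block $B_j$ one has by (iii) (applied with $N$ replaced by $2^j$, after splitting $\{2^j,\dots,2^{j+1}\}$ into the $O(1)$ overlapping pieces $[2^j,2\cdot 2^j]$ covered by the statement — actually $B_j\subseteq\{2^j,\dots,2\cdot 2^j\}$ already) the bound
\[
\sum_{n\in B_j}n^b\alpha_n\lesssim 2^{jb}\sum_{n\in B_j}\alpha_n\lesssim 2^{jb}(2^{-j}+|\sin\theta|)^a\int_{1-2^{-j}}^1 w(t)\,dt.
\]
Summing over $j$ with $2^j\ge N$ and using once more the monotone behaviour of $(2^{-j}+|\sin\theta|)^a$ and of $\int_{1-2^{-j}}^1 w$, together with (\ref{weight-condition-1-1}) to sum the geometric-like series $\sum_j 2^{jb}\int_{1-2^{-j}}^1 w$, gives $\sum_{n\ge N}n^b\alpha_n\lesssim N^b(N^{-1}+|\sin\theta|)^a\int_{1-N^{-1}}^1 w$, which is exactly (i). The contribution of $n<N$ is handled the same way by summing over the finitely many blocks below $j_0$, now using that $\int_{1-2^{-j}}^1 w$ is \emph{increasing} in $j$ toward $\int_0^1 w$ for $j\le j_0$ combined with the factor $2^{jb}$ growing — so the top block $j\approx j_0$ dominates, and this is again $\asymp N^b(N^{-1}+|\sin\theta|)^a\int_{1-N^{-1}}^1 w$.

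The main obstacle I expect is bookkeeping the two competing monotonicities in the geometric summations: the factor $N^b$ (or $2^{jb}$) grows while $\int_{1-N^{-1}}^1 w(t)\,dt$ and $(N^{-1}+|\sin\theta|)^a$ shrink, and one must verify that the product, as a function of the dyadic level, is dominated by a geometric series whose ratio is bounded away from $1$ uniformly in $\theta$. This is precisely where the Dini condition (\ref{weight-condition-1-1}) enters: it forces $\int_{1-2^{-j-1}}^1 w \le C\,\int_{1-2^{-j}}^1 w$ with — more to the point — a factor that, combined with $2^{jb}$ and the constraint $b-q\ge a\ge 0$ and condition (\ref{weight-condition-1-2}), yields summability. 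I would isolate this as a short preliminary observation: for weights satisfying (\ref{weight-condition-1-1}) and (\ref{weight-condition-1-2}), the quantity $\rho\mapsto (1-\rho)^{-c}\int_\rho^1 w$ is essentially monotone (increasing for $c\ge q$, decreasing for $c\le 0$) up to constants, and then feed the relevant case into each of the three summations above. Once that lemma is in hand, the three implications are routine dyadic estimates. I should also note in passing that the case $|\sin\theta|$ large versus small need not be separated, because the inequality $(N^{-1}+|\sin\theta|)^a\asymp N^{-a}+|\sin\theta|^a$ lets one treat the two summands independently throughout, and for the $|\sin\theta|^a$ summand the exponent constraint becomes $b\ge q$ (from $b-q\ge a\ge 0$), which is exactly what (\ref{weight-condition-1-2}) and Lemma \ref{weight-estimate-a} are designed for.
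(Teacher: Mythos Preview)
Your overall chain (i)$\Rightarrow$(ii)$\Rightarrow$(iii)$\Rightarrow$(i) is fine, and your sketch of (i)$\Rightarrow$(ii) by Abel summation is exactly what the paper does. The paper closes the loop differently, proving (ii)$\Rightarrow$(i) by Abel summation with $\tilde s_n=\sum_{k\ge n}\alpha_k$ and then (iii)$\Rightarrow$(ii) by a dyadic tail sum $\sum_{k\ge0}\sum_{n=2^kN}^{2^{k+1}N}\alpha_n$; your direct (iii)$\Rightarrow$(i) via dyadic blocks is a legitimate alternative, and the block sum $\sum_{j\le j_0}2^{jb}(2^{-j}+|\sin\theta|)^a\int_{1-2^{-j}}^1 w$ is the dyadic shadow of the paper's integral $\int_0^{1-N^{-1}}(1-r)^{-b}(1-r+|\sin\theta|)^a w(r)\,dr$.

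However, your (iii)$\Rightarrow$(i) write-up contains a real error, not just sloppiness. You spend the main paragraph bounding $\sum_{n\ge N}n^b\alpha_n$ and declare that this ``is exactly (i)''. It is not: statement (i) concerns $\sum_{n=1}^N n^b\alpha_n$, and the tail you bound can be \emph{infinite} under (iii). For instance, with $w\equiv1$ (which satisfies (\ref{weight-condition-1-1}) and (\ref{weight-condition-1-2}) for any $q>1$), $a=0$, $b\ge q$, and $\alpha_n=n^{-b-1}$, condition (iii) holds while $\sum_{n\ge N}n^b\alpha_n=\sum_{n\ge N}n^{-1}=\infty$. So that whole paragraph should be deleted; only your ``contribution of $n<N$'' paragraph is relevant.

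That paragraph, in turn, is too vague and has the monotonicity backwards: $j\mapsto\int_{1-2^{-j}}^1 w$ is \emph{decreasing}, not increasing, so ``$2^{jb}$ growing'' alone does not explain why the top block dominates. What actually makes it work is that, after splitting $(2^{-j}+|\sin\theta|)^a\asymp 2^{-ja}+|\sin\theta|^a$, you must show $\sum_{j\le j_0}2^{jc}\int_{1-2^{-j}}^1 w\lesssim 2^{j_0 c}\int_{1-2^{-j_0}}^1 w$ for $c\in\{b,\,b-a\}$, both of which satisfy $c\ge q$ by hypothesis. This requires \emph{both} weight conditions: use (\ref{weight-condition-1-1}) to replace $\int_{1-2^{-j}}^1 w$ by $\lesssim 2^{-j}w(1-2^{-j})$, and then (\ref{weight-condition-1-2}) (with $c\ge q$) to sum $\sum_{j\le j_0}2^{j(c-1)}w(1-2^{-j})\asymp\int_0^{1-2^{-j_0}}(1-s)^{-c}w(s)\,ds\lesssim 2^{j_0 c}\int_{1-2^{-j_0}}^1 w$. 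This is precisely the computation the paper carries out (in its (ii)$\Rightarrow$(i) step), and without it your argument is incomplete.
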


\begin{proof}
Assuming that part (i) holds and setting $s_n=\sum_{k=1}^nk^b\alpha_k$, for $M>N\ge1$ we have
\begin{align*}
\sum_{n=N}^M\alpha_n
&=\sum_{n=N}^{M-1}s_n(n^{-b}-(n+1)^{-b})+M^{-b}s_M-N^{-b}s_{N-1}\\
&\lesssim \sum_{n=N}^{M-1}n^{-1}(n^{-1}+|\sin\theta|)^a \int_{1-n^{-1}}^1w(t)dt
+(M^{-1}+|\sin\theta|)^a \int_{1-M^{-1}}^1w(t)dt.
\end{align*}
Letting $M\rightarrow\infty$ yields
\begin{align*}
\sum_{n=N}^{\infty}\alpha_n
\lesssim (N^{-1}+|\sin\theta|)^a \sum_{n=N}^{\infty}n^{-1}\int_{1-n^{-1}}^1w(t)dt.
\end{align*}
But since
\begin{align}\label{sequence-sum-1}
n^{-1}\int_{1-n^{-1}}^1w(t)dt
\lesssim \int_{1-(n-1)^{-1}}^{1-n^{-1}}\frac{1}{1-r}\int_{r}^1w(t)dtdr,
\end{align}
by the condition (\ref{weight-condition-1-1}) we obtain
\begin{align*}
\sum_{n=N}^{\infty}\alpha_n
&\lesssim (N^{-1}+|\sin\theta|)^a \left(N^{-1}\int_{1-N^{-1}}^1w(t)dt
+\sum_{n=N+1}^{\infty}\int_{1-(n-1)^{-1}}^{1-n^{-1}}w(r)dr\right)\\
&\lesssim (N^{-1}+|\sin\theta|)^a \int_{1-N^{-1}}^1w(t)dt.
\end{align*}
Thus part (ii) is proved under the assumption in part (i).

Conversely assume that part (ii) holds, and let $\tilde{s}_n=\sum_{k=n}^{\infty}\alpha_k$.
For $N\ge2$ we have
\begin{align*}
\sum_{n=1}^N n^b\alpha_n
&=\sum_{n=1}^N \tilde{s}_n(n^{b}-(n-1)^{b})-N^{b}\tilde{s}_{N+1}\\
&\lesssim \sum_{n=1}^N n^{b-1}(n^{-1}+|\sin\theta|)^a \int_{1-n^{-1}}^1w(t)dt.
\end{align*}
In a similar way to (\ref{sequence-sum-1}),
\begin{align*}
\sum_{n=1}^N n^b\alpha_n
&\lesssim \alpha_0+\sum_{n=2}^N \int_{1-(n-1)^{-1}}^{1-n^{-1}}\frac{(1-r+|\sin\theta|)^a}{(1-r)^{b+1}} \int_r^1w(t)dt dr,
\end{align*}
and then, applying the conditions (\ref{weight-condition-1-1}) gives
\begin{align*}
\sum_{n=1}^N n^b\alpha_n
&\lesssim \int_0^{1-N^{-1}}\left(\frac{1}{(1-r)^{b-a}}+ \frac{|\sin\theta|^a}{(1-r)^{b}}\right) w(r)dr\\
&\lesssim \left(N^{-a}+|\sin\theta|^a\right)N^{b-q} \int_0^{1-N^{-1}}
\frac{w(r)}{(1-r)^q} dr.
\end{align*}
Thus part (i) follows by means of the condition (\ref{weight-condition-1-2}).

That part (ii) implies part (iii) is obvious. It remains to show part (ii) under the assumption in part (iii).

For $N\ge1$, by part (iii) we have
\begin{align*}
\sum_{n=N}^{\infty}\alpha_n
\lesssim \sum_{k=0}^{\infty}\sum_{n=2^kN}^{2^{k+1}N}\alpha_n
\lesssim \left(N^{-1}+|\sin\theta|\right)^a \sum_{k=0}^{\infty}\int_{1-(2^kN)^{-1}}^1w(t)dt.
\end{align*}
Again in a similar way to (\ref{sequence-sum-1}),
\begin{align*}
\int_{1-(2^kN)^{-1}}^1w(t)dt
\lesssim \int_{1-(2^{k-1}N)^{-1}}^{1-(2^kN)^{-1}}\frac{1}{1-r}\int_{r}^1w(t)dtdr,
\end{align*}
and appealing to the conditions (\ref{weight-condition-1-1}), we get
\begin{align*}
\sum_{n=N}^{\infty}\alpha_n
&\lesssim \left(N^{-1}+|\sin\theta|\right)^a \left(\int_{1-N^{-1}}^1w(t)dt+\sum_{k=1}^{\infty}\int_{1-(2^{k-1}N)^{-1}}^{1-(2^kN)^{-1}}w(r)dr\right)\\
&\lesssim \left(N^{-1}+|\sin\theta|\right)^a \int_{1-N^{-1}}^1w(t)dt.
\end{align*}
Thus part (ii) is proved. The proof of the lemma is completed.
\end{proof}

\begin{lemma}\label{basis-sharp-estimate-a}
Assume that $\lambda>0$. We have
\begin{align*}
\left|\phi_{n}^{\lambda}(e^{i\theta})\right|\asymp
\left(|\sin\theta|+n^{-1}\right)^{-\lambda}\qquad\hbox{for}\,\,\, \theta\in[-\pi,\pi]\,\,\hbox{and}\,\,n\ge1.
\end{align*}
\end{lemma}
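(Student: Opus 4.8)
The plan is to pass from $\phi_n^{\lambda}$ to Gegenbauer polynomials via (\ref{basis-1}) and then combine a Liouville-normal-form (Sonin--P\'olya) amplitude argument in the ``bulk'' with the Mehler--Heine asymptotics near $\theta=0$. By (\ref{basis-1}) and (\ref{epsilon-bound-1}), $|\phi_n^{\lambda}(e^{i\theta})|^2=\epsilon_n^2\,\Phi_n(\theta)$ where $\epsilon_n\asymp n^{-\lambda}$ and $\Phi_n(\theta):=\big(\tfrac{n+2\lambda}{2\lambda}\big)^2P_n^{\lambda}(\cos\theta)^2+\sin^2\theta\,P_{n-1}^{\lambda+1}(\cos\theta)^2$, so the assertion is equivalent to $\Phi_n(\theta)\asymp n^{2\lambda}(|\sin\theta|+n^{-1})^{-2\lambda}$. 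Since $P_n^{\lambda}(-t)=(-1)^nP_n^{\lambda}(t)$ and $P_{n-1}^{\lambda+1}(-t)=(-1)^{n-1}P_{n-1}^{\lambda+1}(t)$, the bracket in (\ref{basis-1}) passes to its complex conjugate (up to a sign) under $\theta\mapsto-\theta$ and under $\theta\mapsto\pi-\theta$, so both sides of the claimed relation enjoy these symmetries and it suffices to treat $\theta\in[0,\pi/2]$. Also $P_n^{\lambda}$ has only simple zeros in $(-1,1)$ and $P_{n-1}^{\lambda+1}=(2\lambda)^{-1}(P_n^{\lambda})'$, so $P_n^{\lambda}$ and $P_{n-1}^{\lambda+1}$ never vanish together in $(-1,1)$ while $P_n^{\lambda}(\pm1)\neq0$; hence $\Phi_n$ is continuous and strictly positive on $[0,\pi]$, which settles every fixed $n$, and I may assume $n\ge n_0$. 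Writing $g(\theta)=P_n^{\lambda}(\cos\theta)$ one has $\Phi_n=\tfrac1{4\lambda^2}[(n+2\lambda)^2g^2+g'^2]$; the substitution $u:=(\sin\theta)^{\lambda}g$ solves the Liouville normal form $u''+Q_nu=0$ with $Q_n(\theta)=(n+\lambda)^2+\lambda(1-\lambda)/\sin^2\theta$, and a direct computation gives $(\sin\theta)^{2\lambda}\Phi_n=\tfrac1{4\lambda^2}[(n+2\lambda)^2u^2+(u'-\lambda\cot\theta\,u)^2]$. Fix a large constant $c=c(\lambda)$ so that $Q_n(\theta)\asymp n^2$ on $[c/n,\pi/2]$ for every $n\ge n_0$ and, simultaneously, $J_{\lambda-1/2}(c)\neq0$; both are possible since $\sin\theta\gtrsim c/n$ on that interval and the zeros of $J_{\lambda-1/2}$ are discrete.

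On $[c/n,\pi/2]$ I would invoke the Sonin--P\'olya amplitude $E_n:=u^2+u'^2/Q_n$, for which $E_n'=-u'^2Q_n'/Q_n^2$; since $Q_n'(\theta)=-2\lambda(1-\lambda)\cos\theta/\sin^3\theta$ keeps a fixed sign on $(0,\pi/2]$, $E_n$ is monotone there. Evaluating at $\theta=\pi/2$, where $P_n^{\lambda}(0)$ and $P_{n-1}^{\lambda+1}(0)$ are known explicitly (exactly one of them vanishes, according to the parity of $n$, the other being $\asymp n^{\lambda-1}$, resp.\ $\asymp n^{\lambda}$), one gets $E_n(\pi/2)\asymp n^{2\lambda-2}$.

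For $\theta\in[0,c/n]$ I would instead use the uniform Mehler--Heine asymptotics $n^{1-2\lambda}P_n^{\lambda}(\cos\tfrac{z}{n})\to a_{\lambda}(z/2)^{1/2-\lambda}J_{\lambda-1/2}(z)$ and $n^{-1-2\lambda}P_{n-1}^{\lambda+1}(\cos\tfrac{z}{n})\to b_{\lambda}(z/2)^{-1/2-\lambda}J_{\lambda+1/2}(z)$, uniformly for $z\in[0,c]$ (with $a_{\lambda},b_{\lambda}>0$), together with $\epsilon_n\asymp n^{-\lambda}$: these yield $n^{-2\lambda}|\phi_n^{\lambda}(e^{iz/n})|^2\to\Psi_{\lambda}(z)=(z/2)^{1-2\lambda}[c_1J_{\lambda-1/2}(z)^2+c_2J_{\lambda+1/2}(z)^2]$ uniformly on $[0,c]$, for some $c_1,c_2>0$. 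Now $\Psi_{\lambda}$ is continuous on $[0,c]$ (as $z\to0$ the first term tends to $c_1/\Gamma(\lambda+\tfrac12)^2$ and the second to $0$) and strictly positive there, because the Bessel functions $J_{\lambda-1/2}$ and $J_{\lambda+1/2}$, being of consecutive orders, have no common positive zero; hence $\Psi_{\lambda}\asymp1$ on $[0,c]$, so $|\phi_n^{\lambda}(e^{i\theta})|^2\asymp n^{2\lambda}\asymp(|\sin\theta|+n^{-1})^{-2\lambda}$ for $\theta\in[0,c/n]$ and $n\ge n_0$. In particular $\Phi_n(c/n)\asymp n^{4\lambda}$, so by the identity above (and $Q_n(c/n)\asymp n^2$, $|\cot(c/n)|\lesssim n$) also $E_n(c/n)\asymp n^{2\lambda-2}$.

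Finally, on $[c/n,\pi/2]$ the monotone $E_n$ lies between the comparable endpoint values $E_n(c/n)\asymp E_n(\pi/2)\asymp n^{2\lambda-2}$, hence $E_n(\theta)\asymp n^{2\lambda-2}$ throughout; plugging $Q_n\asymp n^2$ and $|\cot\theta|\lesssim n$ into the identity gives $(\sin\theta)^{2\lambda}\Phi_n(\theta)\asymp n^2E_n(\theta)\asymp n^{2\lambda}$, so $|\phi_n^{\lambda}(e^{i\theta})|^2=\epsilon_n^2\Phi_n(\theta)\asymp(\sin\theta)^{-2\lambda}\asymp(|\sin\theta|+n^{-1})^{-2\lambda}$ because $\sin\theta\gtrsim n^{-1}$ on that interval. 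Combining this with the range $[0,c/n]$ and the reductions of the first paragraph finishes the proof. The step I expect to be the main obstacle is the coordination of the constant $c$: it must render $Q_n$ uniformly comparable to $n^2$ on $[c/n,\pi/2]$ --- delicate only when $\lambda>1$, where $Q_n$ has a turning point at $\theta\asymp1/n$ --- while avoiding the discrete zero set of $J_{\lambda-1/2}$; and one must use the \emph{uniform} Mehler--Heine limit together with the no-common-zero property of consecutive Bessel functions to ensure that the limiting modulus $\Psi_{\lambda}$ stays bounded away from $0$. The parity split at $\theta=\pi/2$ is routine.
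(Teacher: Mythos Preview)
Your argument is correct and complete in outline; the Liouville normal form computation, the Sonin--P\'olya monotonicity of $E_n$, the endpoint evaluations, and the Mehler--Heine step (with the interlacing of zeros of $J_{\lambda-1/2}$ and $J_{\lambda+1/2}$) all check out, including the passage between $(\sin\theta)^{2\lambda}\Phi_n$ and $Q_nE_n$, which is a genuine two-sided comparison once $Q_n\asymp n^2$ and $|\cot\theta|\lesssim n$ are in force.

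The paper, however, proceeds quite differently. For the upper bound it simply quotes Szeg\H{o}'s pointwise estimate $|P_n^{\lambda}(\cos\theta)|\lesssim n^{\lambda-1}(\sin\theta+n^{-1})^{-\lambda}$. For the lower bound it splits the other way around: on the bulk $c_1/n\le|\theta|\le\pi-c_1/n$ it uses the Darboux asymptotic (Szeg\H{o} (8.21.18)) to write $\phi_n^{\lambda}(e^{i\theta})$ as $|\sin\theta|^{-\lambda}$ times a unimodular phase plus a small error, which gives the bound directly with no amplitude machinery. Near the endpoint it introduces $h(x)=n(n+2\lambda)P_n^{\lambda}(x)^2+4\lambda^2(1-x^2)P_{n-1}^{\lambda+1}(x)^2$ (essentially your $\Phi_n$, but with the coefficient tuned so that $h'(x)=4\lambda x\,(P_n^{\lambda})'(x)^2\ge0$ on $[0,1]$), hence $h(\cos\theta)\ge h(\cos\theta^*)$ for the nearest zero $\theta^*$ of $P_n^{\lambda}$, and then evaluates $h(\cos\theta^*)$ via Szeg\H{o}'s bounds on $(P_n^{\lambda})'$ at its extrema. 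So the paper's monotonicity trick lives near the endpoint and is a one-line derivative computation, whereas your Sonin--P\'olya monotonicity lives in the bulk and needs the normal form; conversely, the paper handles the bulk by citing the sharp oscillatory asymptotic, while you handle the endpoint by Mehler--Heine and Bessel-zero interlacing. Your route is somewhat more self-contained in the oscillatory region (no $O(1/(n\sin\theta))$ remainder to track), at the price of importing the uniform Mehler--Heine limit and the Bessel fact; the paper's route is shorter if one is willing to quote the finer Szeg\H{o} asymptotics.
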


\begin{proof}
From Szeg\"o \cite[(7.33.6)]{Sz},
\begin{align*}
\left|P_{n}^{\lambda}(\cos\theta)\right|\lesssim n^{\lambda-1}\left(\sin\theta+n^{-1}\right)^{-\lambda}, \qquad \theta\in[0,\pi].
\end{align*}
Applying this to (\ref{basis-1}) and in view of (\ref{epsilon-bound-1}), we obtain
\begin{align*}
\left|\phi_{n}^{\lambda}(e^{i\theta})\right|\lesssim
\left(|\sin\theta|+n^{-1}\right)^{-\lambda}\qquad\hbox{for}\,\,\,\theta\in[-\pi,\pi]\,\,\hbox{and}\,\,n\ge1.
\end{align*}

The proof of the converse estimate needs a delicate analysis.
From \cite[(4.7.1) and (8.21.18)]{Sz}, for a given positive number $c_1$ we have
\begin{align*}
P_{n}^{\lambda}(\cos\theta)=\frac{2^{1-\lambda}}{\Gamma(\lambda)}\frac{n^{\lambda-1}}{(\sin\theta)^{\lambda}}
\left[\cos\left((n+\lambda)\theta-\frac{\lambda}{2}\pi\right) +O\left(\frac{1}{n\sin\theta}\right)\right], \,\,\,\theta\in\left[\frac{c_1}{n},\pi-\frac{c_1}{n}\right].
\end{align*}
Using this in (\ref{basis-1}) yields
\begin{align*}
\phi_{n}^{\lambda}(e^{i\theta})=\frac{|\sin\theta|^{-\lambda}}{\sqrt{2\pi\tilde{c}_{\lambda}}}
\left[e^{i\left((n+\lambda)\theta-\frac{\lambda}{2}({\rm sign}\,\theta)\pi\right)} +O\left(\frac{1}{n|\sin\theta|}\right)\right], \qquad\frac{c_1}{n}\le |\theta|\le\pi-\frac{c_1}{n}.
\end{align*}
Choosing $c_1$ appropriately large but fixed, we get
\begin{align}\label{basis-lower-bound-1}
\left|\phi_{n}^{\lambda}(e^{i\theta})\right|\gtrsim|\sin\theta|^{-\lambda}\asymp\left(|\sin\theta|+n^{-1}\right)^{-\lambda}, \qquad\frac{c_1}{n}\le |\theta|\le\pi-\frac{c_1}{n}.
\end{align}

To obtain a similar estimate to (\ref{basis-lower-bound-1}) in the ``complementary intervals" of $[-\pi,\pi]$, we appeal to a different method. Consider the function $h$ on $[-1,1]$ defined by, for $n\ge1$,
$$
h(x)=n(n+2\lambda)\left(P_{n}^{\lambda}(x)\right)^2+4\lambda^2(1-x^2)\left(P_{n-1}^{\lambda+1}(x)\right)^2,\qquad x\in[-1,1].
$$
In view of (\ref{epsilon-bound-1}) and (\ref{basis-1}), it is obvious that
\begin{align}\label{basis-estimate-1}
\left|\phi_{n}^{\lambda}(e^{i\theta})\right|\asymp n^{-\lambda}\sqrt{h(\cos\theta)}, \qquad\theta\in[-\pi,\pi].
\end{align}

Noting that $2\lambda P_{n-1}^{\lambda+1}(x)=P_{n}^{\lambda}(x)'$ (cf. \cite[(4.7.14)]{Sz}), we have
\begin{align*}
h'(x)=2P_{n}^{\lambda}(x)'\left[n(n+2\lambda)P_{n}^{\lambda}(x)+(1-x^2)P_{n}^{\lambda}(x)''\right]-2x\left(P_{n}^{\lambda}(x)'\right)^2;
\end{align*}
but by \cite[(4.7.5)]{Sz}, the expression in the square bracket above is identical with $(2\lambda+1)xP_{n}^{\lambda}(x)'$, so that
\begin{align*}
h'(x)=4\lambda x\left(P_{n}^{\lambda}(x)'\right)^2\ge0\qquad \hbox{for}\,\,\,x\in[0,1].
\end{align*}
Consequently the function $h(x)$ is nondecreasing on $[0,1]$.

Let $\theta^*$ and $\theta^{**}$ be the two adjacent zeros of $P_{n}^{\lambda}(\cos\theta)$ such that $\theta^{**}\le c_1/n<\theta^*$. It then follows that, for $0\le\theta\le c_1/n$,
\begin{align}\label{h-lower-bound-1}
h(\cos\theta)\ge h(\cos\theta^*)=4\lambda^2\sin^2\theta^*\left(P_{n-1}^{\lambda+1}(\cos\theta^*)\right)^2;
\end{align}
but by \cite[Theorem 8.9.1]{Sz}, $c_1/n<\theta^*\le c_2/n$ for some constant $c_2$($>c_1$) independent of $n$, and by \cite[(4.7.1), (4.7.14) and (8.9.2)]{Sz},
$$
2\lambda \left|P_{n-1}^{\lambda+1}(\cos\theta^*)\right|=\left|\left.\frac{d}{dx}P_{n}^{\lambda}(x)\right|_{x=\cos\theta^*}\right|\asymp n^{\lambda-\frac{1}{2}}\cdot n^{\lambda+\frac{3}{2}}=n^{2\lambda+1}.
$$
Combining these with (\ref{h-lower-bound-1}) gives us
\begin{align*}
h(\cos\theta)\ge h(\cos\theta^*)\asymp n^{4\lambda},\qquad 0\le\theta\le c_1/n,
\end{align*}
and from (\ref{basis-estimate-1}), we get $\left|\phi_{n}^{\lambda}(e^{i\theta})\right|\gtrsim n^{\lambda}$ for $0\le\theta\le c_1/n$, or equivalently,
\begin{align}\label{basis-lower-bound-2}
\left|\phi_{n}^{\lambda}(e^{i\theta})\right|\gtrsim \left(|\sin\theta|+n^{-1}\right)^{-\lambda}.
\end{align}

It is easy to check that (\ref{basis-lower-bound-2}) is true also for $-c_1/n\le\theta<0$ and $\pi-c_1/n\le|\theta|\le\pi$, and on account of (\ref{basis-lower-bound-1}), we have proved (\ref{basis-lower-bound-2}) for $\theta\in[-\pi,\pi]$ and for $n\ge1$.
The proof of Lemma \ref{basis-sharp-estimate-a} is finished.
\end{proof}

We now come to our main theorem in this section.

\begin{theorem}\label{sequence-multiplier-a}
Let $p_0\le p_1\le1\le p_2<\infty$ and $0<q<\infty$, and let $w$ be a nonzero weight function satisfying the conditions (\ref{weight-condition-1-1}) and (\ref{weight-condition-1-2}). Then a sequence $\eta=\{\eta_n\}$ of complex numbers is a sequence multiplier from $A^{p_1}_{\lambda,w}(\DD)$ to $\ell^{p_2}$, if and only if $\eta=\{\eta_n\}$ satisfies the condition
\begin{align}\label{sequence-multiplier-condition-1}
\left(\sum_{k=N}^{2N}|\eta_k|^{p_2}\right)^{p_1/p_2}\lesssim N^{(\lambda+1)p_1-2\lambda-1}\int_{1-N^{-1}}^1w(t)dt,
\qquad N=1,2,\cdots.
\end{align}
\end{theorem}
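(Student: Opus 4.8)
My plan is to recognize $T_\eta$ as a special case of the operator $T$ treated in Section~4, with target Banach space $X=\ell^{p_2}$ (indeed Banach, since $p_2\ge1$), and then to translate the criterion of Theorem~\ref{operator-boundedness-main-c} into the dyadic condition (\ref{sequence-multiplier-condition-1}). If $\{e_n\}$ is the standard basis of $\ell^{p_2}$, then $T_\eta\phi_n^{\lambda}=\eta_n e_n$, so the vector-valued function (\ref{F-functin-1}) attached to $T=T_\eta$ is $F_{\lambda,\alpha}(z)=\sum_n\tau_n\eta_n e_n\phi_n(z)$, with $n$th coordinate $\tau_n\eta_n\phi_n(z)$; hence $\|F_{\lambda,\alpha}(z)\|_{\ell^{p_2}}^{p_2}=\sum_n\tau_n^{p_2}|\eta_n|^{p_2}|\phi_n(z)|^{p_2}$. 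I apply Theorem~\ref{operator-boundedness-main-c} with $p=p_1$ and with its free parameter taken to be a sufficiently large $q$ — legitimate because a weight obeying (\ref{weight-condition-1-2}) for one exponent obeys it for all larger ones, and because (\ref{weight-condition-1-2}) also forces the doubling $\int_{r^{\delta}}^1w\asymp\int_r^1w$ for each fixed $\delta\in(0,1)$ (it bounds the dyadic ratios $\int_{1-2^{-j}}^1w\big/\int_{1-2^{-j-1}}^1w$). Using the Remark after Theorem~\ref{operator-boundedness-main-c} together with this doubling, $(\alpha+2)p_1-1=q+2\lambda(1-p_1)$ from (\ref{parameter-equation-1}), and $|1-z^2|\asymp1-|z|+|\sin\theta|$ from (\ref{elementary-equality-2}), the assertion becomes: $\eta$ satisfies (\ref{sequence-multiplier-condition-1}) iff $F_{\lambda,\alpha}$ is an $\ell^{p_2}$-valued $\lambda$-analytic function on $\DD$ and
\[
\Big(\sum_n\tau_n^{p_2}|\eta_n|^{p_2}|\phi_n(re^{i\theta})|^{p_2}\Big)^{p_1/p_2}\lesssim\frac{(1-r+|\sin\theta|)^{2\lambda(1-p_1)}}{(1-r)^{q+2\lambda(1-p_1)}}\int_r^1w(s)\,ds.
\]
Now $\phi_n(re^{i\theta})=r^n\phi_n(e^{i\theta})$ by (\ref{basis-1}), $|\phi_n(e^{i\theta})|\asymp(|\sin\theta|+n^{-1})^{-\lambda}$ by Lemma~\ref{basis-sharp-estimate-a} (trivially so when $\lambda=0$), and $\tau_n\asymp(n+1)^{\alpha+1}$; grouping $n$ into the dyadic blocks $I_j=\{2^j\le n<2^{j+1}\}$, on each of which these three factors are constant up to bounded multiples, turns the left side into $\asymp\big(\sum_j2^{j(\alpha+1)p_2}(|\sin\theta|+2^{-j})^{-\lambda p_2}r^{2^jp_2}E_j\big)^{p_1/p_2}$ with $E_j:=\sum_{n\in I_j}|\eta_n|^{p_2}$ (upper bound via $r^{np_2}\le r^{2^jp_2}$ on $I_j$, lower bound via $r^{np_2}\ge r^{2^{j+1}p_2}$, at the cost of a bounded factor), while (\ref{sequence-multiplier-condition-1}) itself is equivalent — using the same doubling — to the dyadic form $E_j^{p_1/p_2}\lesssim2^{j[(\lambda+1)p_1-2\lambda-1]}\int_{1-2^{-j}}^1w$, $j\ge0$.

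For necessity, supposing $T_\eta$ bounded, I fix $j$, choose $r$ with $1-r\asymp2^{-j}$ (so $r^{2^{j+1}p_2}\asymp1$), keep only the block $I_j$ in the lower estimate for $\|F_{\lambda,\alpha}(re^{i\theta})\|_{\ell^{p_2}}^{p_2}$, and feed this into the displayed inequality. Raising to the power $p_1/p_2$ and collapsing exponents with (\ref{parameter-equation-1}) — the crucial identities being $q+2\lambda(1-p_1)-(\alpha+1)p_1=p_1-1$ and $2\lambda(1-p_1)+\lambda p_1=\lambda(2-p_1)$ — yields $E_j^{p_1/p_2}\lesssim2^{j(p_1-1)}(|\sin\theta|+2^{-j})^{\lambda(2-p_1)}\int_{1-2^{-j}}^1w$; letting $\theta\to0$ minimizes the right side and gives precisely the dyadic form of (\ref{sequence-multiplier-condition-1}), whence (\ref{sequence-multiplier-condition-1}) follows by adding two neighbouring blocks.

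The converse is the heart of the matter. Inserting the dyadic form of (\ref{sequence-multiplier-condition-1}), i.e. $E_j\lesssim2^{j[(\lambda+1)p_1-2\lambda-1]p_2/p_1}\big(\int_{1-2^{-j}}^1w\big)^{p_2/p_1}$, into the block expression and simplifying once more via (\ref{parameter-equation-1}) gives $\|F_{\lambda,\alpha}(re^{i\theta})\|_{\ell^{p_2}}^{p_2}\lesssim\sum_j\gamma_j^{p_2}$, where $\gamma_j=2^{j(q/p_1-\lambda)}(|\sin\theta|+2^{-j})^{-\lambda}r^{2^j}\big(\int_{1-2^{-j}}^1w\big)^{1/p_1}$. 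The main difficulty I foresee is the gap between the $\ell^{p_2}$-norm and the $p_1$-quasi-norm of the domain, which the crude inequality $(\sum_j\gamma_j^{p_2})^{p_1/p_2}\le\sum_j\gamma_j^{p_1}$ would waste; the way around it is to note that, with $q$ fixed large enough, $\{\gamma_j\}$ decays geometrically on both sides of its maximum, since $2^{jq/p_1}$ grows geometrically and outpaces both the at-most-geometric decay of $\big(\int_{1-2^{-j}}^1w\big)^{1/p_1}$ (again the dyadic-ratio bound from (\ref{weight-condition-1-2})) and the single local bump of $(|\sin\theta|+2^{-j})^{-\lambda}$ near $2^j\sim|\sin\theta|^{-1}$, while $r^{2^j}$ suppresses the tail once $2^j\gtrsim(1-r)^{-1}$. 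Hence $\sum_j\gamma_j^{p_2}\lesssim(\max_j\gamma_j)^{p_2}$, so $\|F_{\lambda,\alpha}(re^{i\theta})\|_{\ell^{p_2}}^{p_1}\lesssim(\max_j\gamma_j)^{p_1}$, and it remains to verify, in the regimes $|\sin\theta|\le1-r$ and $|\sin\theta|>1-r$ (and, in the latter, according as $2^j\le|\sin\theta|^{-1}$ or not), that $\max_j\gamma_j\lesssim\big(\frac{(1-r+|\sin\theta|)^{2\lambda(1-p_1)}}{(1-r)^{q+2\lambda(1-p_1)}}\int_r^1w\big)^{1/p_1}$, using $\int_r^1w\asymp\int_{1-2^{-j^{*}}}^1w$ at the maximizing index $j^{*}$ and $(1-r)+|\sin\theta|\asymp\max\{1-r,|\sin\theta|\}$; the local convergence of the defining series of $F_{\lambda,\alpha}$ (hence its $\lambda$-analyticity) is immediate from (\ref{sequence-multiplier-condition-1}) and (\ref{phi-bound-1}). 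This last round of power-series/dyadic estimates is exactly the bookkeeping that Lemmas~\ref{weight-estimate-a}--\ref{sequence-sum-b} (which invoke (\ref{weight-condition-1-1}) and (\ref{weight-condition-1-2})) are designed for, and Lemma~\ref{basis-sharp-estimate-a} is what makes the two-sided block estimate — hence both directions — possible.
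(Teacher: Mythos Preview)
Your proposal is correct and shares the same skeleton as the paper's proof: reduce via Theorem~\ref{operator-boundedness-main-c} (with the Remark) to the pointwise estimate (\ref{F-function-norm-1}) for $\|F_{\lambda,\alpha}(z)\|_{\ell^{p_2}}$, invoke Lemma~\ref{basis-sharp-estimate-a} to replace $|\phi_n(e^{i\theta})|$ by $(|\sin\theta|+n^{-1})^{-\lambda}$, and pass to dyadic blocks. The necessity step (keep one block, take $r=1-2^{-j}$, let $\theta\to0$) is effectively what the paper does after its sup over $\beta_n$ and inf over $\theta$. The sufficiency, however, is organized differently. The paper linearizes the $\ell^{p_2/p_1}$ norm by duality, obtaining a condition of the form $\sum_n\beta_n\alpha_n r^{p_1n}\lesssim(\cdots)$ uniformly over $\|\{\beta_n\}\|_{(\ell^{p_2/p_1})^{*}}\le1$, and then appeals to the general equivalences of Lemmas~\ref{sequence-sum-a} and~\ref{sequence-sum-b} (power series $\leftrightarrow$ partial sums $\leftrightarrow$ dyadic blocks), all with the \emph{given} $q$. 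You instead exploit the monotonicity of condition~(\ref{weight-condition-1-2}) in $q$ (which you correctly note, though the paper does not state it) to replace $q$ by a large $Q$; this forces the block sequence $\{\gamma_j\}$ to be geometrically peaked at $j^{*}\approx\log_2(1-r)^{-1}$, so that $\sum_j\gamma_j^{p_2}\asymp\gamma_{j^{*}}^{p_2}$ and the $\ell^{p_2}$--$\ell^{p_1}$ gap collapses, leaving a single-term check at $j^{*}$. Your route is more hands-on and bypasses both the duality trick and Lemmas~\ref{sequence-sum-a}--\ref{sequence-sum-b}, at the price of the extra ingredient of enlarging $q$; the paper's route packages the weight calculus into reusable lemmas and works with the original $q$ at the borderline $b-q=a$ of Lemma~\ref{sequence-sum-b}.
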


\begin{proof}
Assume that $\eta_0=0$ without loss of generality. Define the operator by $T_{\eta}f:=\{\eta_n a_n\}$ whenever  $f(z)=\sum_{n=0}^{\infty}a_{n}\phi_{n}^{\lambda}(z)\in A^{p_1}_{\lambda,w}(\DD)$; and in particular $T\phi_n^{\lambda}=\eta_n e_n$, where $\{e_0,e_1,\cdots\}$ is the canonic basis of $\ell^{p_2}$. Thus the associated $\ell^{p_2}$-valued function $F_{\lambda,\alpha}$ defined by (\ref{F-functin-1}) becomes
\begin{eqnarray*}
F_{\lambda,\alpha}(z)=\sum_{n=0}^{\infty}\eta_n\tau_n \phi_{n}(z)\,e_n,\qquad z\in\DD,
\end{eqnarray*}
where $\tau_n$ is given by (\ref{tau-notation-1}) and
\begin{align*}
\alpha=\frac{q+2\lambda+1}{p_1}-2\lambda-2,
\end{align*}
so that $\|F_{\lambda,\alpha}(z)\|_{\ell^{p_2}}=\left(\sum_{n=0}^{\infty}|\eta_n\tau_n \phi_{n}(z)|^{p_2}\right)^{1/p_2}$. Now by Theorem \ref{operator-boundedness-main-c} and the remark following it, the sequence $\eta=\{\eta_n\}$ is a sequence multiplier from $A^{p_1}_{\lambda,w}(\DD)$ to $\ell^{p_2}$, if and only if
\begin{align*}
\left(\sum_{n=0}^{\infty}|\eta_n\tau_n \phi_{n}(z)|^{p_2}\right)^{p_1/p_2}
\lesssim \frac{|1-z^{2}|^{2\lambda(1-p_1)}}{(1-|z|)^{(\alpha+2)p_1-1}}\int_{|z|^{p_1}}^1w(s)ds, \qquad z\in\DD;
\end{align*}
and by Lemma \ref{basis-sharp-estimate-a} and in view of (\ref{elementary-equality-1}) and (\ref{tau-notation-1}), this is equivalent to say
\begin{align}\label{sequence-multiplier-condition-2}
\sum_{n=0}^{\infty}\frac{\beta_n n^{(\alpha+1)p_1}|\eta_n|^{p_1}} {\left(n^{-1}+|\sin\theta|\right)^{\lambda p_1}}r^{p_1n}
\lesssim \frac{(1-r+|\sin\theta|)^{2\lambda(1-p_1)}}{(1-r)^{(\alpha+2)p_1-1}}\int_{r^{p_1}}^1w(s)ds, \qquad r\in[0,1),
\end{align}
for $\theta\in[-\pi,\pi]$ and for all $\{\beta_n\}\in\left(\ell^{p_2/p_1}\right)^*$ satisfying $\|\{\beta_n\}\|_{\left(\ell^{p_2/p_1}\right)^*}\le1$.

But according to Lemma \ref{sequence-sum-a}, with $a=2\lambda(1-p_1)$ and $b=(\alpha+2)p_1-1$, (\ref{sequence-multiplier-condition-2}) can be rephrased by
\begin{align*}
\sum_{n=1}^N\frac{\beta_n n^{(\alpha+1)p_1}|\eta_n|^{p_1}} {\left(n^{-1}+|\sin\theta|\right)^{\lambda p_1}}
\lesssim (N^{-1}+|\sin\theta|)^{2\lambda(1-p_1)} N^{(\alpha+2)p_1-1}\int_{1-N^{-1}}^1w(t)dt, \qquad N\ge1,
\end{align*}
and furthermore, according to Lemma \ref{sequence-sum-b}(iii), by
\begin{align*}
\sum_{n=N}^{2N}\frac{\beta_n n^{1-p_1}|\eta_n|^{p_1}} {\left(n^{-1}+|\sin\theta|\right)^{\lambda p_1}}
\lesssim (N^{-1}+|\sin\theta|)^{2\lambda(1-p_1)}\int_{1-N^{-1}}^1w(t)dt, \qquad N\ge1,
\end{align*}
or equivalently,
\begin{align*}
\sum_{n=N}^{2N}\beta_n|\eta_n|^{p_1}
\lesssim N^{p_1-1}(N^{-1}+|\sin\theta|)^{\lambda(2-p_1)}\int_{1-N^{-1}}^1w(t)dt,\qquad N\ge1.
\end{align*}
Finally, taking the supremum over all $\{\beta_n\}\in\left(\ell^{p_2/p_1}\right)^*$ satisfying $\|\{\beta_n\}\|_{\left(\ell^{p_2/p_1}\right)^*}\le1$, and the infimum over $\theta\in[-\pi,\pi]$,
we conclude that $\{\eta_n\}$ is a sequence multiplier from $A^{p_1}_{\lambda,w}(\DD)$ to $\ell^{p_2}$ if and only if (\ref{sequence-multiplier-condition-1}) holds for $\{\eta_n\}$. The proof of the theorem is completed.
\end{proof}

\section*{Declarations of interest: None}

\end{document}